\documentclass[a4paper,10pt]{amsart}
\usepackage{graphicx, amssymb, amsmath, amscd}
\usepackage[arrow,tips,matrix]{xy}
\usepackage{pst-node, hhline, pst-plot}
\usepackage{subfigure}

\newgray{lightgray}{0.9}

\theoremstyle{plain}
\newtheorem{theorem}{Theorem}[section]
\newtheorem{main}{Theorem} 
\newtheorem{proposition}[theorem]{Proposition}
\newtheorem{lemma}[theorem]{Lemma}
\newtheorem{corollary}[theorem]{Corollary}

\theoremstyle{definition}

\newtheorem{definition}[theorem]{Definition}
\newtheorem{remark}[theorem]{Remark}
\newtheorem{notation}[theorem]{Notation}

\numberwithin{table}{section} \numberwithin{figure}{section}
\numberwithin{equation}{section}

\DeclareMathOperator{\id}{id} \DeclareMathOperator{\D}{D}
 \DeclareMathOperator{\orthogonal}{O}
\DeclareMathOperator{\SO}{SO} 
\DeclareMathOperator{\res}{res} \DeclareMathOperator{\ext}{ext}
\DeclareMathOperator{\Iso}{Iso} \DeclareMathOperator{\Isom}{Isom}
 \DeclareMathOperator{\Irr}{Irr}
\DeclareMathOperator{\Vect}{Vect} \DeclareMathOperator{\Rep}{Rep}

\newcommand{\complex}[1]{\mathcal{#1}}
\newcommand{\complexK}{\complex{K}}
\newcommand{\complexL}{\complex{L}}

\newcommand{\vect}{\mathrm{vect}}
\newcommand{\sq}{\mathrm{sq}}
\newcommand{\eq}{\mathrm{eq}}
\newcommand{\Aff}{\mathrm{Aff}}
\newcommand{\aff}{\mathrm{aff}}
\newcommand{\F}{\mathfrak{F}}
\newcommand{\Area}{\mathrm{Area}}

\newcommand{\lineK}{{\bar{\complexK}}}
\newcommand{\lineL}{{\bar{\complexL}}}
\newcommand{\hatL}{{\hat{\complexL}}}

\newcommand{\field}[1]{\mathbb{#1}}

\newcommand{\R}{\field{R}}
\newcommand{\Q}{\field{Q}}
\newcommand{\Z}{\field{Z}}
\newcommand{\N}{\field{N}}

\begin{document}

\title[equivariant vector bundles over two-torus]{Classification of equivariant vector bundles over two-torus}
\author{Min Kyu Kim}
\address{Department of Mathematics Education,
Gyeongin National University of Education, San 59-12, Gyesan-dong,
Gyeyang-gu, Incheon, 407-753, Korea}

\email{mkkim@kias.re.kr}

\subjclass[2000]{Primary 57S25, 55P91 ; Secondary 20C99}

\keywords{equivariant vector bundle, equivariant homotopy, representation}

\begin{abstract}
We exhaustively classify topological equivariant complex vector
bundles over two-torus under a compact Lie group (not necessarily
effective) action. It is shown that inequivariant Chern classes and
isotropy representations at (at most) six points are sufficient to
classify equivariant vector bundles except a few cases. To do it, we
calculate homotopy of the set of equivariant clutching maps.
\end{abstract}

\maketitle

\section{Introduction} \label{section: introduction}

In the previous paper \cite{Ki}, we have exhaustively classified
topological equivariant complex vector bundles over two-sphere under
a compact Lie group (not necessarily effective) action. To do it, we
have calculated homotopy of equivariant clutching maps. Two-sphere
has relatively small number of effective finite group actions and
those actions deliver typical equivariant simplicial complex
structures(for example, regular polyhedron). By using this
simplicial complex structure, we have reduced the calculation from
the whole $1$-skeleton to an edge, and this makes the calculation
possible. But, readers might have doubted if our method would apply
to other spaces.

In this paper, we would do the same thing over two-torus. However, a
finite group action on two-torus does not have a typical equivariant
simplicial complex structure. Also, the calculation of homotopy of
equivariant clutching maps is not reduced to an edge, but to a union
of edges(sometimes a loop). This is a general situation which we
should face in classification of equivariant vector bundles. So,
results of this paper will make readers confident that our method is
general and applies to other spaces.

To state our results, let us introduce notations. Let $\Lambda$ be a
lattice of $\R^2.$ Let $\Iso (\R^2, \Lambda)$ be the group $\{ A \in
\Iso (\R^2) | A (\Lambda) = \Lambda \}.$ Let $\Aff (\R^2)$ and $\Aff
(\R^2, \Lambda)$ be sets of affine isomorphisms $\bar{x} \mapsto A
\bar{x} + \bar{c}$ for $\bar{x}, \bar{c} \in \R^2,$ $A \in
\Iso(\R^2)$ and $\in \Iso(\R^2, \Lambda),$ respectively. Let $\Aff_t
(\R^2, \Lambda)$ be the set of all translations in $\Aff (\R^2,
\Lambda).$ Let $\pi : \R^2 \rightarrow \R^2/\Lambda,$ $\bar{x}
\mapsto [\bar{x}]$ be the quotient map, and let $\Aff
(\R^2/\Lambda)$ be the group of affine isomorphisms of
$\R^2/\Lambda,$ i.e. the group of all $[\bar{x}] \mapsto [A \bar{x}
+ \bar{c}]$'s for $A \in \Iso (\R^2, \Lambda).$ Let $\Aff (\R^2)$
and $\Aff (\R^2 / \Lambda)$ (and their subgroups) act naturally on
$\R^2$ and $\R^2/\Lambda,$ respectively. Let $\pi_\aff : \Aff (\R^2,
\Lambda) \rightarrow \Aff (\R^2/\Lambda), A \bar{x} + \bar{c}
\mapsto [A \bar{x} + \bar{c}].$ Hereafter, we identify $\Aff_t
(\R^2, \Lambda)$ with $\R^2$ so that $\ker \pi_\aff = \Lambda.$ Let
$\Aff_t (\R^2/\Lambda)$ be the group $\pi_\aff( \Aff_t (\R^2,
\Lambda) ).$ Let a compact Lie group $G$ act affinely (not
necessarily effectively) on two torus $\R^2/\Lambda$ through a
homomorphism $\rho : G \rightarrow \Aff (\R^2/\Lambda).$ For a
bundle $E$ in $\Vect_G (\R^2/\Lambda)$ and a point $x$ in
$\R^2/\Lambda,$ denote by $E_x$ the isotropy $G_x$-representation on
the fiber at $x.$ Put $H = \ker \rho,$ i.e. the kernel of the
$G$-action on $\R^2/\Lambda.$ Let $\Irr(H)$ be the set of characters
of irreducible $H$-representations which has a $G$-action defined as
\begin{equation*}
(g \cdot \chi) (h) = \chi (g^{-1} h g)
\end{equation*}
for $h \in H,$ $g \in G,$ and $\chi \in \Irr(H).$ For $\chi \in
\Irr(H)$ and a compact Lie group $K$ satisfying $H \lhd K < G$ and
$K \cdot \chi = \chi,$ a $K$-representation $W$ is called
$\chi$-\textit{isotypical} if $\res_H^K W$ is $\chi$-isotypical, and
denote by $\Vect_K (\R^2/\Lambda, \chi)$ the set
\begin{equation*}
\{ E \in \Vect_K (\R^2/\Lambda) ~ | ~ E_x \text{ is
$\chi$-isotypical for each $x \in \R^2/\Lambda$} \}
\end{equation*}
for $\chi \in \Irr(H)$ where $\R^2/\Lambda$ has the restricted
$K$-action. In the previous paper \cite{Ki}, classification of
$\Vect_G (\R^2/\Lambda)$ has been reduced to classification of
$\Vect_{G_\chi} (\R^2/\Lambda, \chi)$ for each $\chi \in \Irr(H).$

\begin{table}[ht]
{\footnotesize
\begin{tabular}{c||c|c|c}
 $R$                                         & $R/R_t$                           & $\Lambda_t$        & $\Lambda$         \\
\hhline{=#=|=|=}
 $R_t \rtimes R/R_t$                         & $\Z_2, \D_{2,2}, \D_2,$           & $\Lambda^\sq$      & a square lattice  \\
                                             & $\Z_4, \D_4$                      &                    &  \\
\hline
 $R_t \rtimes R/R_t$                         & $\D_2, \D_{2,3}, \D_3, \Z_6,$     & $\Lambda^\eq$      & an equilateral  \\
                                             & $\D_6, \Z_3, \D_{3,2}$            &                    & triangle lattice  \\
\hline
 $R_t \rtimes R/R_t$                         & $\langle \id \rangle$             & $\Lambda^\sq$      & $\langle (m_1,0), (0,m_2) \rangle$  \\
                                             &                                   &                    & for $m_1, m_2 \in \N$             \\
\hline
 $\langle R_t, [A\bar{x}+\bar{c}] \rangle,$  & $\D_1, \D_{1,4}$                  & $\Lambda^\sq$      & \\
 $A\bar{c}+\bar{c}=0$                        &                                   &                    & \\
\hline
 $\langle R_t, [A\bar{x}+\bar{c}] \rangle,$  & $\D_1, \D_{1,4}$                  & $\Lambda^\sq$      & \\
 $A\bar{c}+\bar{c}=\bar{l}_0$                &                                   &                    & \\
\end{tabular}}
\caption{\label{table: finite group of aff} Finite subgroups of
$\Aff (\R^2/\Lambda)$}
\end{table}

The classification of $\Vect_{G_\chi} (\R^2/\Lambda, \chi)$ is
highly dependent on the $G_\chi$-action on the base space
$\R^2/\Lambda,$ i.e. on the image $\rho(G_\chi),$ and classification
is actually given case by case according to $\rho(G_\chi).$ So, we
need to describe $\rho(G_\chi)$ in a moderate way. For this, we
would list all possible finite $\rho(G_\chi)$'s up to conjugacy, and
then assign a fundamental domain to each $\rho(G_\chi).$ Cases of
nonzero-dimensional $\rho(G_\chi)$ are relatively simple and
separately dealt with as special cases.

Now, we would list all possible finite $\rho(G_\chi)$ up to
conjugacy. Let $R$ be a finite subgroup of $\Aff (\R^2/\Lambda),$
i.e. $R$ acts effectively and affinely on $\R^2/\Lambda$ through the
natural action of $\Aff (\R^2/\Lambda)$ on $\R^2/\Lambda.$ Let $R_t$
be the translation part $R \cap \Aff_t (\R^2/\Lambda)$ of $R,$ and
put $\Lambda_t = \pi_\aff^{-1} ( R_t ).$ Then, $R / R_t$ is
isomorphic to the subgroup $\{ A \in \Iso (\R^2, \Lambda) ~ | ~
[A\bar{x}+\bar{c}] \in R \},$ and we will henceforward use the
notation $R/R_t$ to denote the subgroup. For simplicity, $R/R_t$ is
confused with the subgroup $\pi_\aff ( R/R_t )$ of $\Aff
(\R^2/\Lambda)$ according to context. Let $\Lambda^\sq$ and
$\Lambda^\eq$ be lattices $\langle (1,0), (0,1) \rangle$ and
$\langle (1,0), (1/2,\sqrt{3}/2) \rangle$ in $\R^2,$ respectively.
Let $\Z_n$ be the cyclic group $\langle a_n \rangle$ which is the
subgroup of $\Iso(\R^2)$ where $a_n$ is the rotation through the
angle $2\pi/n.$ Let $\D_n $ and $\D_{n, l}$ be dihedral groups
$\langle a_n, b \rangle$ and $\langle a_n, a_{l n} b \rangle$ with
$l \in \Q^*$ which are subgroups of $\Iso (\R^2)$ where $b$ is the
reflection in the $x$-axis and $a_{l n}$ is the rotation through the
angle $2\pi/nl.$ If $\Lambda$ in $\R^2$ is preserved by one of these
cyclic and dihedral groups, then the cyclic or dihedral group is
also regarded as a subgroup of $\Aff ( \R^2 / \Lambda ).$ In Section
\ref{section: closed subgroup}, it is shown that each finite group
in $\Aff ( \R^2 / \Lambda )$ for some $\Lambda$ is conjugate to an
$R$ entry of Table \ref{table: finite group of aff}. Let us explain
for the table except cases when $R/R_t = \D_1, \D_{1,4}.$ Cases of
$R/R_t = \D_1, \D_{1,4}$ are explained in Section \ref{section:
closed subgroup}. In each row, pick one of groups in the $R/R_t$
entry, and call it $Q.$ And, denote by $L_t$ the $\Lambda_t$ entry
in the row. Also, pick any sublattice in $L_t$ which is preserved by
$Q$ and satisfies the restriction stated in the $\Lambda$ entry in
the row, and call it just $\Lambda.$ Then, $Q, L_t, \Lambda$
determine the semidirect product $\pi_\aff (L_t) \rtimes \pi_\aff
(Q)$ in $\Aff(\R^2/\Lambda).$ Denote it by $R.$ So defined $R$
satisfies $R_t = \pi_\aff (L_t), \Lambda_t = L_t, R/R_t = Q.$ In
this way, $R/R_t, \Lambda_t, \Lambda$ entries of each row determine
the unique $R.$ Since each finite group in $\Aff ( \R^2 / \Lambda )$
for some $\Lambda$ is conjugate to one of these $R$'s, it is assumed
that $\rho(G_\chi)=R$ for some $R$ of Table \ref{table: finite group
of aff} in this section. In the below, it is observed that two
groups with the same $R/R_t$- and $\Lambda_t$-entries behave
similarly even though their $\Lambda$'s are different.

\begin{figure}[!ht]
\begin{center}

\mbox{

\subfigure[$P_4^\sq$]{
\begin{pspicture}(-0.5,-0.5)(1.5,1.5)\footnotesize

\psgrid[gridcolor=white, subgridcolor=white,
gridlabels=7pt](0,0)(0,0)(1,1)
\pspolygon[fillstyle=solid,fillcolor=lightgray,linestyle=none](0,0)(1,0)(1,1)(0,1)
\psline[linewidth=1pt](0,0)(1,0)(1,1)(0,1)

\psaxes[labels=none, ticks=none]{->}(0,0)(-0.5,-0.5)(1.5,1.5)
\psdots[dotsize=4pt](0,0)(1,0)(1,1)(0,1)

\end{pspicture}
}

\subfigure[$P_4^{\prime \sq}$]{
\begin{pspicture}(-1.5,-0.5)(1.5,1.5)\footnotesize

\psgrid[gridcolor=white, subgridcolor=white,
gridlabels=7pt](0,0)(-1,0)(1,1)
\pspolygon[fillstyle=solid,fillcolor=lightgray,linestyle=none](0,0)(0.5,0.5)(0,1)(-0.5,0.5)(0,0)
\psline[linewidth=1pt](0,0)(0.5,0.5)(0,1)(-0.5,0.5)(0,0)
\psaxes[labels=none, ticks=none]{->}(0,0)(-1.5,-0.5)(1.5,1.5)
\psdots[dotsize=4pt](0,0)(1,0)(-1,0)(-1,1)(0,1)(1,1)

\end{pspicture}
}

\subfigure[$P_3^\eq$]{
\begin{pspicture}(-0.5,-0.5)(1.5,1.5)\footnotesize

\psgrid[gridcolor=white, subgridcolor=white,
gridlabels=7pt](0,0)(-0.5,0.5)(1.4,1)

\psaxes[labels=none, ticks=none]{->}(0,0)(-0.5,-0.5)(1.5,1.5)
\pspolygon[fillstyle=solid,fillcolor=lightgray,linestyle=none](0,0)(1,0)(0.5,0.866)(0,0)
\psline[linewidth=1pt](0,0)(1,0)(0.5,0.866)(0,0)

\psdots[dotsize=4pt](0,0)(1,0)
\rput{60}(0,0){\psdots[dotsize=4pt](1,0)}
\end{pspicture}
}

}

\mbox{

\subfigure[$P_6^\eq$]{
\begin{pspicture}(-0.5,-0.5)(1.5,1.5)\footnotesize

\psgrid[gridcolor=white, subgridcolor=white,
gridlabels=7pt](0,0)(-0.5,-0.5)(1.4,1)
\pspolygon[fillstyle=solid,fillcolor=lightgray,linestyle=none](0,0)(0.5,-0.288)(1,0)(1,0.578)(0.5,0.866)(0,0.578)(0,0)
\psline[linewidth=1pt](0,0)(0.5,-0.288)(1,0)(1,0.578)(0.5,0.866)(0,0.578)(0,0)
\psaxes[labels=none, ticks=none]{->}(0,0)(-0.5,-0.5)(1.5,1.5)
\psdots[dotsize=4pt](0,0)(1,0)
\rput{60}(0,0){\psdots[dotsize=4pt](1,0)}

\end{pspicture}
}

\subfigure[$P_4^\eq$]{
\begin{pspicture}(-1.5,-0.5)(1.5,1.5)\footnotesize

\psgrid[gridcolor=white, subgridcolor=white,
gridlabels=7pt](0,0)(0,0)(1,1) \psaxes[labels=none,
ticks=none]{->}(0,0)(-0.5,-0.5)(1.5,1.5)
\pspolygon[fillstyle=solid,fillcolor=lightgray,linestyle=none](0,0)(0.25,0)(0.25,0.866)(0,0.866)
\psline[linewidth=1pt](0,0)(0.25,0)(0.25,0.866)(0,0.866)(0,0)
\psdots[dotsize=4pt](0,0)(1,0)
\rput{60}(0,0){\psdots[dotsize=4pt](1,0)}

\end{pspicture}
}

\subfigure[$P_4^{\prime \eq}$]{
\begin{pspicture}(-0.5,-0.5)(1.5,1.5)\footnotesize

\psgrid[gridcolor=white, subgridcolor=white,
gridlabels=7pt](0,0)(0,0)(1,1)
\pspolygon[fillstyle=solid,fillcolor=lightgray,linestyle=none](0,0)(-0.125,0.216)(0.625,0.649)(0.75,0.433)
\psline[linewidth=1pt](0,0)(-0.125,0.216)(0.625,0.649)(0.75,0.433)(0,0)
\psaxes[labels=none, ticks=none]{->}(0,0)(-0.5,-0.5)(1.5,1.5)
\psaxes[labels=none, ticks=none]{->}(0,0)(-0.5,-0.5)(1.5,1.5)
\psdots[dotsize=4pt](0,0)(1,0)
\rput{60}(0,0){\psdots[dotsize=4pt](1,0)}

\end{pspicture}
} }

\end{center}
\caption{\label{figure: polygonal area} Some polygonal domain}
\end{figure}

To each finite group $R$ in Table \ref{table: finite group of aff},
we assign a polygonal area $|P_R|$ in Table \ref{table:
two-dimensional fundamental domain} called a \textit{two-dimensional
fundamental domain} according to its $R/R_t$ and $\Lambda_t.$ Let
$P_4^\sq, P_4^{\prime \sq},$ $P_3^\eq, P_6^\eq$ be gray colored
areas defined by regular polygons in Figure \ref{figure: polygonal
area} where dots are points of $\Lambda^\sq$ or $\Lambda^\eq.$ And,
let $P_4^\eq, P_4^{\prime \eq}$ be convex hulls of
\begin{equation*}
\Big\{ O, \Big(0, \frac {\sqrt{3}} 2 \Big), \Big( \frac 1 4, \frac
{\sqrt{3}} 2 \Big), \Big( \frac 1 4, 0 \Big) \Big\}, \quad \Big\{ O,
\Big(- \frac 1 8, \frac {\sqrt{3}} 8 \Big), \Big( \frac 5 8, \frac
{3\sqrt{3}} 8 \Big), \Big( \frac 3 4, \frac {\sqrt{3}} 4 \Big)
\Big\},
\end{equation*}
respectively.

\begin{table}[!ht]
{\footnotesize
\begin{tabular}{c|c||c}
$R/R_t$                       & $\Lambda_t$    & $|P_R|$      \\
\hhline{=|=#=}
$\Z_2 $                       & $\Lambda^\sq$  & $\frac 1 2 P_4^\sq \cup ((\frac 1 2, 0)+\frac 1 2 P_4^\sq)$  \\
$\D_{2,2}$                    &                & $\frac 1 2 P_4^{\prime \sq} \cup ((\frac 1 4, \frac 1 4)+\frac 1 2 P_4^{\prime \sq})$  \\
$\D_2, \Z_4, \D_4$            &                & $\frac 1 2 P_4^\sq$  \\
\hline
$\D_2$                        & $\Lambda^\eq$  & $P_4^\eq$  \\
$\D_{2,3}$                    &                & $P_4^{\prime \eq}$  \\
$\D_3, \Z_6, \D_6$            &                & $P_3^\eq$  \\
$\Z_3, \D_{3,2}$              &                & $P_6^\eq$  \\
\hline
$\langle \id \rangle$         & $\Lambda^\sq$  & $P_4^\sq$  \\
\hline
$\D_1,$                       & $\Lambda^\sq$  & $\frac 1 2 \bar{c} + \big( \frac 1 2 P_4^\sq \cup (\frac 1 2, 0)+\frac 1 2 P_4^\sq \big)$  \\
$A\bar{c}+\bar{c}=0$          &                &             \\
\hline
$\D_{1,4},$                   & $\Lambda^\sq$  & $\frac 1 2 \bar{c} + P_4^{\prime \sq}$  \\
$A\bar{c}+\bar{c}=0$          &                &             \\
\hline
$\D_1,$                       & $\Lambda^\sq$  & $\frac 1 2 \bar{c} + \big ( \frac 1 2 P_4^\sq \cup (0, -\frac 1 2)+\frac 1 2 P_4^\sq \big)$  \\
$A\bar{c}+\bar{c}=\bar{l}_0$  &                &             \\
\hline
$\D_{1,4},$                   & $\Lambda^\sq$  & $\frac 1 2 \bar{c} + (\frac 1 4, -\frac 1 4)+P_4^{\prime \sq}$  \\
$A\bar{c}+\bar{c}=\bar{l}_0$  &                &             \\
\end{tabular}}
\caption{\label{table: two-dimensional fundamental domain}
Two-dimensional fundamental domain}
\end{table}

\begin{table}[!ht]
{\footnotesize
\begin{tabular}{c|c||c|l}
$R/R_t$                 & $\Lambda_t$    & $\bar{D}_R$                                                   & $\sim$      \\
\hhline{=|=#=|=}
 $\Z_2$                 & $\Lambda^\sq$  & $[b(\bar{e}^1), \bar{v}^2, \bar{v}^3, b(\bar{e}^3))]$         &   \\
 $\D_{2,2}$             &                & $[\bar{v}^0, \bar{v}^1, \bar{v}^2, \bar{v}^3, \bar{v}^0]$     & $v^1 \sim v^2$    \\
 $\D_2$                 &                & $[\bar{v}^0, \bar{v}^1, \bar{v}^2, \bar{v}^3, \bar{v}^0]$     &   \\
 $\Z_4$                 &                & $[\bar{v}^0, \bar{v}^1, \bar{v}^2]$                           &   \\
 $\D_4$                 &                & $[\bar{v}^0, \bar{v}^1, \bar{v}^2]$                           &   \\
\hline
 $\D_2$                 & $\Lambda^\eq$  & $[b(\bar{e}^2), \bar{v}^3, \bar{v}^0, \bar{v}^1, \bar{v}^2]$  & $v^2 \sim v^3$ \\
 $\D_{2,3}$             &                & $[b(\bar{e}^1), \bar{v}^2, \bar{v}^3, \bar{v}^0, \bar{v}^1]$  & $v^1 \sim v^2$ \\
 $\D_3$                 &                & $[b(\bar{e}^2), \bar{v}^0, b(\bar{e}^0)]$                     &   \\
 $\Z_6$                 &                & $[\bar{v}^0, b(\bar{e}^0)]$                                   &   \\
 $\D_6$                 &                & $[\bar{v}^0, b(\bar{e}^0)]$                                   &   \\
\hline
 $\Z_3$                 & $\Lambda^\eq$  & $[\bar{v}^0, \bar{v}^1]$                                      &   \\
 $\D_{3,2}$             &                & $[\bar{v}^0, \bar{v}^1]$                                      &   \\
\hline
 $\langle \id \rangle$  & $\Lambda^\sq$  & $[\bar{v}^2, \bar{v}^3, \bar{v}^0]$                           & $v^i \sim v^{i^\prime}$  \\
\hline
 $\D_1,$                & $\Lambda^\sq$  & $[\bar{v}^1, \bar{v}^2, \bar{v}^3, \bar{v}^0]$                & $v^0 \sim v^3, v^1 \sim v^2$  \\
 $A\bar{c}+\bar{c}=0$               &                &                                                               &    \\
\hline
 $\D_{1,4},$            & $\Lambda^\sq$  & $[\bar{v}^1, \bar{v}^2, \bar{v}^3, \bar{v}^0]$                & $v^0 \sim v^2, v^1 \sim v^3$ \\
 $A\bar{c}+\bar{c}=0$               &                &                                                               &    \\
\hline
 $\D_1,$                & $\Lambda^\sq$  & $[\bar{v}^2, \bar{v}^3, \bar{v}^0]$                           & $v^i \sim v^{i^\prime}$ \\
 $A\bar{c}+\bar{c}=\bar{l}_0$             &                &                                                               &    \\
\hline
 $\D_{1,4},$            & $\Lambda^\sq$  & $[\bar{v}^1, \bar{v}^2, \bar{v}^3, \bar{v}^0]$                & $v^0 \sim v^2, v^1 \sim v^3$ \\
 $A\bar{c}+\bar{c}=\bar{l}_0$             &                &                                                               &    \\

\end{tabular}}
\caption{\label{table: one-dimensional fundamental domain}
One-dimensional fundamental domain}
\end{table}

In dealing with equivariant vector bundles over two-torus, we need
to consider isotropy representations at a few (at most six) points
of $\R^2/\Lambda.$ We would specify those points. For this, we endow
$|P_R|$ with the natural simplicial complex structure denoted by
$P_R,$ and we would label its vertices and edges. For simplicity, we
allow that a face of a two-dimensional simplicial complex need not
be a triangle, and we consider an $n$-gon as the simplicial complex
with one face, $n$ edges, $n$ vertices. Define $i_R$ as the integer
such that $|P_R|$ is an $i_R$-gon. Put $\bar{v}^0 = O$ when $R/R_t
\ne \D_1, \D_{1,4}$ where $O$ is the origin of $\R^2.$ Put
$\bar{v}^0 = 1/2 \bar{c}$ when $R/R_t = \D_1, \D_{1,4}$ with
$A\bar{c}+\bar{c}=0,$ and put $\bar{v}^0 = 1/2 \bar{c} - 1/2
\bar{l}_0^\perp$ when $R/R_t = \D_1, \D_{1,4}$ with
$A\bar{c}+\bar{c}=\bar{l}_0.$ Label vertices of $P_R$ as $\bar{v}^i$
so that $\bar{v}^0, \bar{v}^1, \bar{v}^2, \cdots, \bar{v}^{i_R-1}$
are consecutive vertices arranged in the clockwise way around $P_R.$
We use $\Z_{i_R}$ as the index set of vertices. Also, denote by
$\bar{e}^i$ the edge of $P_R$ connecting $\bar{v}^i$ and
$\bar{v}^{i+1}$ for $i \in \Z_{i_R}.$ For any two points $\bar{a},
\bar{b}$ in $|P_R|,$ denote by $[\bar{a}, \bar{b}]$ the line
connecting $\bar{a}$ and $\bar{b}.$ Also for any points
$\bar{a}^i$'s for $i=0, \cdots, n$ in $|P_R|,$ we denote by
$[\bar{a}^0, \cdots ,\bar{a}^n]$ the union of $[\bar{a}^i,
\bar{a}^{i+1}]$'s for $i=0, \cdots ,n-1.$ To each finite group $R$
in Table \ref{table: finite group of aff}, we assign the union
$\bar{D}_R$ of lines in Table \ref{table: one-dimensional
fundamental domain} called a \textit{one-dimensional fundamental
domain} according to its $R/R_t$ and $\Lambda_t.$ Here, $b(\sigma)$
is the barycenter of $\sigma$ for any simplex $\sigma.$ For each
$\bar{D}_R$ in Table \ref{table: one-dimensional fundamental
domain}, express $\bar{D}_R$ as $[\bar{a}^0, \cdots, \bar{a}^i,
\cdots, \bar{a}^{n_R-1}]$ where we respect the order of
$\bar{a}^i$'s and $n_R$ is the number of $\bar{a}^i$'s. If
$[\bar{a}^0, \bar{a}^1] \subset |\bar{e}^{i_0}|$ with $0 \le i_0 \le
i_G -1,$ then put $\bar{d}^i = \bar{a}^{i-i_0}$ for each $i_0 \le i
\le i_0+n_R-1.$ In addition to these, put $\bar{d}^{-1} = b(P_R).$
Denote $\{ i \in \Z | i_0 \le i \le i_0 + n_R-1 \}$ by $I_R,$ and
denote $I_R-\{ i_0 + n_R-1 \},$ $I_R \cup \{ -1 \}$ by $I_R^-,$
$I_R^+,$ respectively. So, the set of all $\bar{d}^i$'s and
$\bar{D}_R$ are expressed as $(\bar{d}^i)_{i \in I_R^+}$ and
$\bigcup_{i \in I_R^-} [\bar{d}^i, \bar{d}^{i+1}],$ respectively.
For example, in the case of $R/R_t = \D_{2,3},$
$\bar{d}^1=b(\bar{e}^1),$ $\bar{d}^2=\bar{v}^2,$
$\bar{d}^3=\bar{v}^3,$ $\bar{d}^4=\bar{v}^0,$ $\bar{d}^5=\bar{v}^1,$
and $I_R=\{ 1, 2, 3, 4, 5 \}.$ Also, denote by $d^i$ the image
$\pi(\bar{d}^i)$ for each $i \in I_R^+.$ These $d^i$'s are wanted
points. The $\sim$ entry of Table \ref{table: one-dimensional
fundamental domain} is explained later.

When $R=\rho(G_\chi),$ we simply denote $P_R, \bar{D}_R, i_R, I_R$
by $P_\rho, \bar{D}_\rho, i_\rho, I_\rho,$ respectively. Next, we
define a semigroup with which we would describe isotropy
representations at $(d^i)_{i \in I_\rho^+}$ of bundles in
$\Vect_{G_\chi} (\R^2/\Lambda, \chi).$

\begin{definition} \label{definition: A_G}
Let $A_{G_\chi} (\R^2/\Lambda, \chi)$ be the set of
$(|I_\rho^+|)$-tuple $(W_{d^i})_{i \in I_\rho^+}$'s such that
\begin{enumerate}
  \item $W_{d^i} \in \Rep((G_\chi)_{d^i})$ for each $i \in I_\rho^+,$
  \item $W_{d^{-1}}$ is $\chi$-isotypical,
  \item $W_{d^{i^\prime}} \cong ~ ^g W_{d^i}$
        if $d^{i^\prime} = g \cdot d^i$ for
        some $g \in G_\chi,$ $i, i^\prime \in I_\rho,$
  \item $\res_{(G_\chi)_{C(\bar{d}^i)}}^{(G_\chi)_{d^{-1}}} W_{d^{-1}}
        \cong \res_{(G_\chi)_{C(\bar{d}^i)}}^{(G_\chi)_{d^i}} W_{d^i}$
        for each $i \in I_\rho,$
  \item $\res_{(G_\chi)_{|e^i|}}^{(G_\chi)_{d^i}} W_{d^i}
        \cong \res_{(G_\chi)_{|e^i|}}^{(G_\chi)_{d^{i+1}}} W_{d^{i+1}}$
        for each $i \in I_\rho^-.$
\end{enumerate}
Here, $C(\bar{x}) = \pi([\bar{x}, b(P_\rho)])$ for $\bar{x} \in
\partial |P_\rho|,$ and $|e^i|$ is the image $\pi(|\bar{e}^i|).$
Also, $(G_\chi)_{C(\bar{x})}$ and $(G_\chi)_{|e^i|}$ are subgroups
of $G_\chi$ fixing $C(\bar{x})$ and $|e^i|,$ respectively. And, let
$p_{\vect} : \Vect_{G_\chi} (\R^2/\Lambda, \chi) \rightarrow
A_{G_\chi} ( \R^2/\Lambda, \chi )$ be the map defined as $[E]
\mapsto ( E_{d^i} )_{i \in I_\rho^+}.$
\end{definition}
Groups $(G_\chi)_{C(\bar{x})}$ and $(G_\chi)_{|e^i|}$ are calculated
in Section \ref{section: equivariant simplicial complex}, and
$p_{\vect}$ is well defined.

Now, we can state our results. Let $c_1 : \Vect_{G_\chi}
(\R^2/\Lambda, \chi) \rightarrow H^2 (\R^2/\Lambda)$ be the map
defined as $[E] \mapsto c_1 (E).$ Denote by $l_\rho$ the number
$|\rho(G_\chi)|.$

\begin{main} \label{main: by isotropy and chern}
Assume that $R/R_t = \Z_n$ for $n=1, 2, 3, 4, 6$ when $\rho(G_\chi)
= R$ for some $R$ of Table \ref{table: finite group of aff}. Then,
\begin{equation*}
p_\vect \times c_1 : \Vect_{G_\chi} (\R^2/\Lambda, \chi) \rightarrow
A_{G_\chi} ( \R^2/\Lambda, \chi ) \times H^2 (\R^2/\Lambda)
\end{equation*}
is injective. For each element in $A_{G_\chi} ( \R^2/\Lambda, \chi
),$ the set of the first Chern classes of bundles in its preimage
under $p_{\vect}$ is equal to the set $\{ \chi( \id ) ( l_\rho k +
k_0 ) ~ | ~ k \in \Z \}$ where $k_0$ is dependent on it.
\end{main}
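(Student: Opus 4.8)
The plan is to prove the theorem in two stages corresponding to its two assertions: first the injectivity of $p_\vect \times c_1$, and second the identification of the attainable first Chern classes over each fixed isotropy datum. Throughout I would exploit the hypothesis $R/R_t = \Z_n$, whose force is that the one-dimensional fundamental domain $\bar{D}_\rho$ carries \emph{no} reflection symmetry collapsing it onto a proper subarc; equivalently, in the $\Z_n$ rows of Table \ref{table: one-dimensional fundamental domain} the $\sim$ column imposes only identifications coming from the genuine $G_\chi$-orbit relations among the $d^i$, not from a reflection fixing an edge. This is what makes the clutching-map analysis tractable, and it is the reason the theorem is stated only for the cyclic cases.

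For injectivity, I would follow the strategy announced in the introduction: reduce classification of $\Vect_{G_\chi}(\R^2/\Lambda,\chi)$ to the homotopy of equivariant clutching maps along the one-skeleton, here the union $\bigcup_{i \in I_\rho^-}[\bar{d}^i,\bar{d}^{i+1}]$ together with the spoke edges $C(\bar{d}^i)$ emanating from the barycenter $b(P_\rho)=\bar{d}^{-1}$. The input datum $(W_{d^i})_{i\in I_\rho^+} = p_\vect([E])$ fixes the isotropy representation at every distinguished point, and conditions (1)–(5) of Definition \ref{definition: A_G} are precisely the compatibility constraints needed for an equivariant bundle with those fibers to exist along each edge. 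Given two bundles $E, E'$ with $p_\vect([E]) = p_\vect([E'])$ and $c_1(E) = c_1(E')$, I would build an equivariant isomorphism edge by edge: first identify the fibers at the $d^i$ using the supplied representation isomorphisms, then extend over each one-dimensional piece by showing the space of equivariant clutching maps over a single edge (rel endpoints, with the prescribed isotropy along the edge interior $(G_\chi)_{|e^i|}$) is connected once the endpoint data agree. The role of $c_1$ is to pin down the remaining topological degree of freedom — the ``winding'' around the loop or union of edges that is \emph{not} seen by local isotropy data — so that matching $c_1$ forces the clutching maps into the same path component. Assembling these edgewise isomorphisms into a global equivariant isomorphism over $\R^2/\Lambda$ gives $[E]=[E']$.

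For the Chern-class statement, I would fix an element $(W_{d^i})_{i\in I_\rho^+}$ of $A_{G_\chi}(\R^2/\Lambda,\chi)$ and analyze the fiber of $p_\vect$ over it. Having fixed all isotropy representations, the remaining moduli are governed by the connected components of the space of equivariant clutching maps compatible with that datum; by the first part this fiber injects into $H^2(\R^2/\Lambda)\cong \Z$ via $c_1$, so it suffices to compute which integers occur. I would show that modifying a bundle by an equivariant line-bundle-type twist supported on the interior of the fundamental domain changes $c_1$ by a fixed quantum. Because the fundamental domain $|P_\rho|$ is a $\frac{1}{l_\rho}$-fraction of $\R^2/\Lambda$ (its $\rho(G_\chi)$-translates tile the torus), and because each fiber has $\chi$-multiplicity $\chi(\id)$, a full equivariant twist over the torus must be assembled from $l_\rho$ copies of the local twist, each contributing $\chi(\id)$ to the degree; hence $c_1$ changes in steps of $\chi(\id)\,l_\rho$. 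The base value $k_0$ is the minimal degree realized by some bundle with the given isotropy datum, determined by the obstruction to trivializing the clutching data, and it depends on $(W_{d^i})$ through the rotation numbers the representations force at the fixed points. This yields exactly $\{\chi(\id)(l_\rho k + k_0)\mid k\in\Z\}$.

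The main obstacle I anticipate is the edgewise connectivity argument underlying both parts: proving that the space of equivariant clutching maps over a single edge, rel the prescribed endpoint and interior isotropy data, has the expected $\pi_0$ (a single component after fixing $c_1$, and a $\Z$-torsor of components before fixing it). This is where the cyclic hypothesis $R/R_t=\Z_n$ is essential — a reflection in $R/R_t$ would fold an edge and introduce a $\Z/2$ real-structure constraint on the clutching maps, changing the component count and breaking the clean $\chi(\id)l_\rho$ spacing. Concretely, the hard computation is the homotopy-group calculation for the relevant equivariant mapping space along $\bar{D}_\rho$, which I expect to reduce — via the semidirect structure $R_t \rtimes \Z_n$ and the $\chi$-isotypical hypothesis reducing the structure group to the centralizer of $\chi$ — to a fundamental-group computation for a based loop space of a unitary group, yielding the factor $\Z$ that $c_1$ detects.
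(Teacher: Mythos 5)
Your proposal is correct and follows essentially the same route as the paper: reduce to the homotopy of equivariant clutching maps on the one-dimensional fundamental domain, show that for cyclic $R/R_t$ the component set $\pi_0(\Omega_{\hat{D}_\rho,(W_{d^i})})$ is a $\Z$-torsor of winding numbers concentrated on the edge with trivial isotropy (the other edges contributing simply connected pointwise-clutching components), and detect that $\Z$ by $c_1$ with step size $\chi(\id)\,l_\rho$ coming from Schur's lemma together with the fact that the $l_\rho$ orientation-preserving translates of the local twist contribute with the same sign. The only caveat worth recording is that the paper treats $n=1$ separately (the fundamental domain closes up, all vertices are identified, and the clutching condition becomes a single cyclic word relation at one vertex), but your framework accommodates that case without a change of method.
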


\begin{main} \label{main: not by isotropy and chern class}
Assume that $R/R_t = \D_1$ with $A\bar{c}+\bar{c}=\bar{l}_0$ when
$\rho(G_\chi) = R$ for some $R$ of Table \ref{table: finite group of
aff}. For each element in $A_{G_\chi} (\R^2/\Lambda, \chi),$ its
preimage under $p_{\vect}$ has two elements which have the same
Chern class.
\end{main}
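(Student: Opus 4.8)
The plan is to reuse the machinery behind Theorem~\ref{main: by isotropy and chern} --- the reduction of $\Vect_{G_\chi}(\R^2/\Lambda,\chi)$ to homotopy classes of equivariant clutching maps over the one-dimensional fundamental domain $\bar{D}_\rho$ --- and to isolate the single place where the glide reflection changes the count. Fix $a=(W_{d^i})_{i\in I_\rho^+}$ in $A_{G_\chi}(\R^2/\Lambda,\chi)$. Over the contractible two-dimensional fundamental domain $|P_\rho|$ the bundle is the constant representation $W=W_{d^{-1}}$, so each bundle in $p_\vect^{-1}(a)$ is $|P_\rho|\times W$ glued along $\partial|P_\rho|$ by an equivariant clutching map, and isomorphism classes in $p_\vect^{-1}(a)$ are the homotopy classes of such maps whose values at the $d^i$ are pinned down by $a$. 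I would describe this homotopy set as a torsor over the relevant homotopy group of the clutching space, exactly as in Theorem~\ref{main: by isotropy and chern}.

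I would then specialize the combinatorics. By Table~\ref{table: one-dimensional fundamental domain}, for $R/R_t=\D_1$ with $A\bar{c}+\bar{c}=\bar{l}_0$ the domain $\bar{D}_\rho=[\bar{v}^2,\bar{v}^3,\bar{v}^0]$ and the identification $v^i\sim v^{i'}$ are \emph{identical} to those of the case $R/R_t=\langle\id\rangle$ treated by Theorem~\ref{main: by isotropy and chern} with $n=1$; the sole difference is the map that closes $\bar{D}_\rho$ into a loop in $\R^2/\Lambda$. In the trivial case the two ends are identified by a translation, whereas here they are identified by the glide reflection $\gamma=[A\bar{x}+\bar{c}]$, whose square is the translation $t_{\bar{l}_0}$ by $\bar{l}_0$, a nontrivial element of $R_t$. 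A clutching map is therefore a path $\phi$ of unitary isomorphisms of $W$ joining the prescribed endpoints and satisfying the twisted closing condition
\begin{equation*}
\phi(\text{end})\circ T_\gamma = T_\gamma\circ\phi(\text{start}),
\end{equation*}
where $T_\gamma$ is a lift of $\gamma$ to the fiber $W$. Since $\gamma$ is a genuine glide reflection it acts freely on $\R^2/\Lambda$; in particular it fixes none of the $d^i$, so $a$ imposes no condition on $T_\gamma$.

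The heart of the argument is the homotopy count of this twisted path space, run in parallel with the $n=1$ computation of Theorem~\ref{main: by isotropy and chern}. For a fixed admissible lift $T_\gamma$, the set of homotopy classes of $\phi$ is a torsor over $\Z$ just as in Theorem~\ref{main: by isotropy and chern}, the $\Z$ being the winding of $\phi$, which equals $c_1$ after the normalization by $\chi(\id)$ and reproduces the arithmetic progression $\{\chi(\id)(l_\rho k+k_0)\}$. The genuinely new degree of freedom is the choice of $T_\gamma$ itself: since $T_\gamma^2$ must equal the fixed unitary by which $t_{\bar{l}_0}$ acts on $W$, the two square roots $T_\gamma$ and $-T_\gamma$ are both admissible and both compatible with $a$, yet they produce non-isomorphic bundles --- an equivariant isomorphism intertwining them would have to conjugate $T_\gamma$ into $-T_\gamma$ over the fixed-point-free glide axis, which is obstructed by a $\Z_2$ sign. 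Passing from $T_\gamma$ to $-T_\gamma$ leaves the underlying inequivariant clutching map, hence $c_1$, unchanged. Therefore $p_\vect^{-1}(a)$ contains two bundles with equal Chern class, which is the assertion of Theorem~\ref{main: not by isotropy and chern class}.

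The main obstacle is precisely to show that the glide closing contributes \emph{exactly} one extra $\Z_2$: that the two signs give genuinely distinct bundles (the lower bound) and that no further components arise (the upper bound). For the lower bound I would exhibit the sign as an honest $\Z_2$-valued invariant, obtained by comparing the two gluings of the fiber across a single $\gamma$-orbit and checking that it is unchanged under the gauge transformations that preserve $a$; for the upper bound I would rerun the fibration and path-lifting argument that establishes the $\Z$-torsor structure in Theorem~\ref{main: by isotropy and chern}, now over the space of $T_\gamma$-twisted clutching maps, to see that its set of components is exactly $\Z\times\Z_2$. Keeping the computation aligned with the combinatorially identical $\langle\id\rangle$ case is what isolates the glide as the unique source of the discrepancy and keeps the bookkeeping under control.
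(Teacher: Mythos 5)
Your proposed structure of the preimage is not the one that actually occurs, and the discrepancy is fatal. You claim that for a fixed lift $T_\gamma$ of the glide the homotopy classes of clutching maps form a $\Z$-torsor detected by $c_1$, with an extra $\Z_2$ coming from the choice of $T_\gamma$ versus $-T_\gamma$; this would make $p_\vect^{-1}(a)$ infinite, whereas the theorem asserts it has exactly two elements. What actually happens is the opposite, and the reason is that the glide is orientation-reversing: (i) the winding number $k$ of $\varphi^3$ is only a homotopy invariant modulo $2$ --- the twisted closing condition $\varphi^2(1)^{-1}\,(g_2 g_1)\varphi^3(1)(g_2 g_1)^{-1}\, g_1^{-1}\varphi^2(0) g_1\, \varphi^3(0)=\id$ forces $[L^3(1,t)]=[L^3(0,t)^{-1}]$ along any homotopy, which leaves only $k \bmod 2$ as an invariant (Lemma \ref{lemma: simple homotopy D_1}) --- and (ii) the Chern class is \emph{constant} on the whole preimage, because the degree contributions of $\hat{e}^3$ and of its image under $g_1^{-1}g_2\circ|c|$ cancel regardless of $k$ (Proposition \ref{proposition: two bundles}). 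So the two elements are the two parities of $k$, not two signs of a lift, and your ``arithmetic progression of Chern classes'' does not exist in this case.

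Moreover, the ``choice of $T_\gamma$'' is not a degree of freedom in this framework: the lift of $g_2$ to the fibers is prescribed by the $G_\chi$-equivariant structure of $F_{V_B}=G_\chi\times_{H}(|\bar{f}^{-1}|\times W_{d^{-1}})$, which is fixed once $V_B$ is fixed, and the clutching map is required to be equivariant for that given action, so there is no residual square-root ambiguity to exploit. Finally, even granting a two-element $\pi_0$, you still owe the hardest step: that the two homotopy classes give non-isomorphic bundles. Since neither $c_1$ nor the isotropy data separates them, this cannot be read off from invariants; one must normalize an alleged intertwining isomorphism $\Theta$ so that $\Theta(\bar{v}^i)=\id$ (Lemma \ref{lemma: simple isomorphism}) and then derive the parity contradiction $k\equiv k^\prime \bmod 2$ from the relation $g_1^{-1}\theta^1(1-s)g_1\,\varphi^3(s)\,\theta^3(s)^{-1}=\varphi^{\prime 3}(s)$ together with the triviality of $[\theta]$ in $\pi_1(\Iso_H(V_{\bar{f}^{-1}}))$. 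Your appeal to an obstruction ``over the fixed-point-free glide axis'' addresses the wrong invariant and does not substitute for this argument.
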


\begin{main} \label{main: only by isotropy}
Assume that finite $\rho(G_\chi)$ is not equal to one of groups
appearing in the above two theorems. Then, $p_{\vect}$ is an
isomorphism.
\end{main}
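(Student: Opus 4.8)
The plan is to follow the clutching-map strategy of \cite{Ki}, working inside $\Vect_{G_\chi}(\R^2/\Lambda, \chi)$ after the reduction to $\chi$-isotypical bundles already recorded in the introduction. Since the two-dimensional fundamental domain $|P_\rho|$ is contractible and meets each $G_\chi$-orbit in a controlled way, a bundle is recovered from its restriction to (the orbit of) the one-skeleton $\bar{D}_\rho = \bigcup_{i \in I_\rho^-}[\bar{d}^i, \bar{d}^{i+1}]$ together with the way the boundary edges are glued by group elements. To get surjectivity of $p_\vect$, I would fix, for each $(W_{d^i})_{i \in I_\rho^+} \in A_{G_\chi}(\R^2/\Lambda, \chi)$, a local model over a neighborhood of each $d^i$ carrying the prescribed $(G_\chi)_{d^i}$-representation, and then patch. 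Conditions (4) and (5) of Definition \ref{definition: A_G} are exactly what guarantee that adjacent models agree along the cones $C(\bar{d}^i)$ and the edges $|e^i|$, while condition (3) makes the patching compatible with the orbit identifications; extending over the two-cell $|P_\rho|$ and spreading out by equivariance then produces a bundle with the given image under $p_\vect$.

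For injectivity, I would compare two bundles with the same image. After trivializing both over the contractible $|P_\rho|$ using the common isotropy data, their difference is an equivariant clutching map supported on $\bar{D}_\rho$ that is the identity at every vertex $d^i$. The set of such clutching maps modulo equivariant homotopy (rel the vertex data) is precisely the object whose homotopy is computed in the later sections, and injectivity of $p_\vect$ is equivalent to the assertion that this set is a single point. Edge by edge, this reduces to a statement about paths in the equivariant automorphism groups of the fiber representations — products of unitary groups by Schur's lemma — with endpoints constrained by the representations at the two vertices of each edge.

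The decisive point, and the reason the conclusion differs from Theorem \ref{main: by isotropy and chern}, is that $R/R_t$ now contains a reflection. In the cyclic cases the one-skeleton closes into a loop and the admissible clutching maps form a $\Z$-torsor detected by the winding number in the relevant $\pi_1(\mathrm{U}(n)) \cong \Z$, which $c_1$ reads off; this is exactly why $p_\vect$ alone fails to be injective there and why the Chern classes sweep out the progression $\{\chi(\id)(l_\rho k + k_0) \mid k \in \Z\}$. A reflection in $R/R_t$ is orientation-reversing on the base, so the corresponding element folds an edge of $\bar{D}_\rho$ onto its reverse and imposes an involution (a reality constraint) on the clutching-map space. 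I would show that, once this constraint together with the vertex identifications recorded in the $\sim$ column of Table \ref{table: one-dimensional fundamental domain} are imposed, the $\Z$-torsor collapses to a single equivariant-homotopy class. Hence the admissible clutching maps with fixed isotropy form a connected set, $p_\vect$ is injective, and $c_1$ is forced by the isotropy data, yielding that $p_\vect$ is an isomorphism.

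The hard part will be precisely this last homotopy computation. One must keep careful track of which subgroup of $G_\chi$ fixes each edge and each reflection axis, verify that the induced involution on the relevant $\pi_1$ is genuinely the sign change with no nontrivial offset — the offset being exactly the phenomenon that survives in the exceptional case $\D_1$ with $A\bar{c}+\bar{c}=\bar{l}_0$ of Theorem \ref{main: not by isotropy and chern class}, producing two bundles there — and confirm that no residual $\pi_0$ remains after folding. Once the connectivity of the clutching-map space is established for each dihedral entry of Table \ref{table: finite group of aff}, surjectivity and injectivity together give that $p_\vect$ is an isomorphism.
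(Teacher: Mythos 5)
Your overall strategy coincides with the paper's: show that for each isotropy datum $(W_{d^i})_{i \in I_\rho^+}$ the set $\pi_0\bigl(\Omega_{\hat{D}_\rho, (W_{d^i})_{i \in I_\rho^+}}\bigr)$ is nonempty and a single point, then invoke Lemma \ref{lemma: lemma for isomorphism}.(2). The surjectivity half is fine in outline. The gap is in the connectivity computation, and it starts with a misidentification of the mechanism. The ``reality constraint'' that makes the pointwise clutching spaces along an edge have simply connected components comes from a reflection \emph{whose axis contains the edge}, i.e.\ an element of $(G_\chi)_{|e^i|}$ swapping the two sheets over each interior point of $|e^i|$ (Lemma \ref{lemma: isotropy at a point in an edge}.(2) together with Proposition \ref{proposition: psi for cyclic}.(3)). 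An element that ``folds an edge of $\bar{D}_\rho$ onto its reverse'' is a half-turn or a perpendicular reflection; by Proposition \ref{proposition: D_R} it merely halves the edge in $\bar{D}_\rho$ and leaves $(G_\chi)_x/H$ trivial at interior points, so the clutching space along that half-edge is all of $\Iso_H(V_{\bar{f}^{-1}}, V_{\bar{f}^i})$, with $\pi_1 \cong \Z$ by Lemma \ref{lemma: reduce to smaller matrix}. This is exactly what happens for the groups in the paper's ``second category'' ($\D_2, \D_{2,3}$ with $\Lambda_t = \Lambda^\eq$, $\D_1$ with $A\bar{c}+\bar{c}=0$, and $\D_{1,4}$): e.g.\ for $\D_2$ with $\Lambda^\eq$ one has $R_{b(e^2)} = \Z_2$, which does not fix $|e^2|$.

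Consequently your proposed reduction to a homotopy computation \emph{rel the vertex data} fails for those groups: with the endpoints pinned, the edge whose stabilizer contains no reflection along it contributes a free winding number, and the rel-vertex $\pi_0$ is $\Z$, not a point. The collapse to a single class requires letting the equivariant pointwise clutching maps $\psi_{\bar{v}^i}$ move within their path components (which leaves the determined isotropy representations unchanged by Proposition \ref{proposition: bijectivity with extensions}); those components have $\pi_1 \cong \Z$ by Proposition \ref{proposition: psi for Z_2}, and it is a loop there --- propagated through the identifications in the $\sim$ column of Table \ref{table: one-dimensional fundamental domain} and filled in over the simply connected clutching spaces of the neighbouring edges --- that absorbs the residual winding. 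This is the content of the family $\psi_{\bar{d}^3}^t$ in the proof of Theorem \ref{theorem: one point case}, and it is also why the excluded case $\D_1$ with $A\bar{c}+\bar{c}=\bar{l}_0$ genuinely retains two classes: there the vertex stabilizers are trivial, so no such absorption is available. Without making this extra degree of freedom explicit, your argument does not close for roughly half of the groups covered by the theorem.
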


\begin{main} \label{main: nonzero-dimensional}
Assume that $\rho(G_\chi)$ is one-dimensional. Then, there exists a
circle $C$ in $\R^2/\Lambda$ such that the map $G_\chi
\times_{(G_\chi)_C} C \rightarrow \R^2/\Lambda,$ $[g, x] \mapsto gx$
for $g \in G_\chi, x \in C$ is an equivariant isomorphism where
$(G_\chi)_C$ is the subgroup of $G_\chi$ preserving $C.$ And,
\begin{equation*}
\Vect_{G_\chi} (\R^2/\Lambda, \chi) \rightarrow \Vect_{(G_\chi)_C}
(C, \chi), \quad E \mapsto E|_C
\end{equation*}
is an isomorphism.
\end{main}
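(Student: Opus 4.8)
The plan is to reduce everything to the action of the image $\Gamma := \rho(G_\chi)$ on $\R^2/\Lambda$, since the kernel $H=\ker\rho$ acts trivially and is contained in every stabilizer. First I would pin down the structure of a one-dimensional $\Gamma$. Because $\Aff(\R^2/\Lambda)$ is the translation torus $\Aff_t(\R^2/\Lambda)$ extended by a finite point group, the identity component $S:=\Gamma^{0}$ maps trivially to the point group and so lies in $\Aff_t(\R^2/\Lambda)$; being a closed connected one-dimensional subgroup of the torus it is the circle of translations along a primitive lattice vector $\bar v_0\in\Lambda$, and it acts freely. Every $\gamma\in\Gamma$ normalizes $S$, which forces its linear part $A$ to satisfy $A\bar v_0=\pm\bar v_0$; hence $A$ belongs to the Klein four-group generated by $-\id$ and the two reflections with axes $\R\bar v_0$ and $\R\bar v_0^{\perp}$.

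Next I would construct $C$. Since $\bar v_0\in\Q^2$ its perpendicular direction is again rational, so I may choose a primitive $\bar w_0\in\Lambda$ with $\bar w_0\perp\bar v_0$ and set $C:=\pi(\R\bar w_0)$, a circle transverse to the $S$-orbits. The decisive observation is that each of the four linear parts above preserves the direction $\bar w_0$ up to sign; therefore, for every $\gamma=[A\bar x+\bar c]\in\Gamma$, the translate $\gamma C=\pi(\R\bar w_0+\bar c)$ is a line parallel to $C$. Parallel lines in the torus are either equal or disjoint, so $\gamma C\cap C\ne\emptyset$ forces $\gamma C=C$, i.e. $\gamma\in\Gamma_C$; in particular $\Gamma_C$ is finite. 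Surjectivity is immediate from $S\cdot C=\pi(\R\bar w_0+\R\bar v_0)=\R^2/\Lambda$, using $\bar v_0\perp\bar w_0\ne 0$.

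With this in hand I would prove the first assertion. The natural map $\Gamma\times_{\Gamma_C}C\to\R^2/\Lambda$ is continuous and surjective; it is injective because if $g_1x_1=g_2x_2$ with $x_1,x_2\in C$, then $\gamma:=g_1^{-1}g_2$ satisfies $x_1=\gamma x_2$, so $\gamma C\cap C\ne\emptyset$, whence $\gamma\in\Gamma_C$ and $[g_2,x_2]=[g_1,x_1]$. An equivariant continuous bijection between compact Hausdorff spaces is a homeomorphism. Since $H$ acts trivially one has $(G_\chi)_C=\rho^{-1}(\Gamma_C)$ and $G_\chi\times_{(G_\chi)_C}C=\Gamma\times_{\Gamma_C}C$, which gives the asserted equivariant isomorphism in Theorem~\ref{main: nonzero-dimensional}.

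For the second assertion I would invoke the standard induction--restriction equivalence for equivariant bundles: over an induced space $G\times_K Y$, restriction to $\{e\}\times Y\cong Y$ gives an isomorphism $\Vect_G(G\times_K Y)\xrightarrow{\ \cong\ }\Vect_K(Y)$ with inverse $F\mapsto G\times_K F$. Applied to $G=G_\chi$, $K=(G_\chi)_C$, $Y=C$ this yields $\Vect_{G_\chi}(\R^2/\Lambda)\cong\Vect_{(G_\chi)_C}(C)$ via $E\mapsto E|_C$. It then remains to match the $\chi$-isotypical conditions: every point of $\R^2/\Lambda$ is $G_\chi$-conjugate to a point of $C$, and since $g\cdot\chi=\chi$ for $g\in G_\chi$ the property of $E_x$ being $\chi$-isotypical is constant along $G_\chi$-orbits, so $E$ is $\chi$-isotypical everywhere exactly when $E|_C$ is; hence the isomorphism restricts to $\Vect_{G_\chi}(\R^2/\Lambda,\chi)\cong\Vect_{(G_\chi)_C}(C,\chi)$. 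I expect the only genuinely delicate points to be the structural step identifying $S$ together with the constraint $A\bar v_0=\pm\bar v_0$ (which leans on the earlier classification of closed subgroups of $\Aff(\R^2/\Lambda)$), and the verification that the induction--restriction equivalence holds in the purely topological category and preserves the $\chi$-isotypical locus; the construction of $C$ itself is elementary once the perpendicular rational direction is exploited.
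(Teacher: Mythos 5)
Your overall strategy is the same as the paper's (identify the identity component as a translation circle, find a transversal circle $C$ whose $\Gamma$-translates are pairwise equal or disjoint, then apply the induction--restriction equivalence), and the second half of your argument --- the bijectivity of $\Gamma\times_{\Gamma_C}C\to\R^2/\Lambda$ given the ``equal or disjoint'' property, and the matching of the $\chi$-isotypical loci --- is fine. But there is a genuine gap in the structural step. From $A\bar v_0=\pm\bar v_0$, $A(\Lambda)=\Lambda$ and $A$ of finite order you conclude that $A$ lies in the orthogonal Klein four-group and hence preserves $\bar v_0^{\perp}$ up to sign. That is false: here $\Iso(\R^2,\Lambda)$ consists of lattice-preserving linear \emph{isomorphisms}, not isometries, so in a basis $(\bar v_0,\bar u)$ of $\Lambda$ the matrix of $A$ is upper triangular with diagonal entries $\pm1$ and an arbitrary integer off-diagonal entry; finite order then forces $A=\pm\id$ or $A=\pm M(a)$ with $M(a)=\left(\begin{smallmatrix}1&a\\0&-1\end{smallmatrix}\right)$, exactly as in Lemma \ref{lemma: one-dimensional subgroups}. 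For $a\ne0$ the involution $M(a)$ is an oblique reflection which does \emph{not} send $\bar w_0=\bar v_0^{\perp}$ to $\pm\bar w_0$, so your circle $C=\pi(\R\bar w_0)$ is not carried to a parallel circle. Concretely, for $\Gamma=\langle S^1,[M(1)\bar x]\rangle$ and $C=\pi(\{0\}\times\R)$ the element $\gamma=[M(1)\bar x]$ sends $C$ to $\pi(\R\,(1,-1))$, which meets $C$ only at $[(0,0)]$; thus $\gamma\notin\Gamma_C$ although $\gamma$ fixes a point of $C$, and $[\gamma,(0,0)]$ and $[e,(0,0)]$ are distinct points of $\Gamma\times_{\Gamma_C}C$ with the same image. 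This cannot be repaired by re-choosing coordinates, since conjugation by lattice-preserving maps normalizing $S^1$ changes $a$ only by sign and even integers, so $a\bmod 2$ is an invariant.

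The repair is what Proposition \ref{proposition: affine circle} does: instead of the orthogonal complement, take $C$ along the $(-1)$-eigenline of $M(a)$, i.e.\ the image of $y=-\tfrac{2}{a}x$ (or $x=0$ when $a=0$), whose direction vector $(-a,2)$ lies in $\Lambda^{\sq}$ and is preserved up to sign by $\pm\id$ and $\pm M(a)$. With that choice every element of $\Gamma$ carries $C$ to a parallel circle, and the rest of your argument (parallel circles in the torus are equal or disjoint; a continuous equivariant bijection of compact Hausdorff spaces is a homeomorphism; $E\mapsto E|_C$ inverse to $F\mapsto G_\chi\times_{(G_\chi)_C}F$) goes through verbatim.
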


In Theorem \ref{main: not by isotropy and chern class}, two bundles
in each preimage are not distinguished by Chern classes and isotropy
representations, so it might be regarded as an exceptional case. In
\cite{Ki2}, we explain for this case by showing that the two bundles
are pull backs of equivariant vector bundles over Klein bottle which
are distinguished by Chern classes and isotropy representations.
Since equivariant complex vector bundles over circle are classified
in \cite{CKMS}, Theorem \ref{main: nonzero-dimensional} gives
classification of $\Vect_{G_\chi} (\R^2/\Lambda, \chi)$ when
$\rho(G_\chi)$ is one-dimensional.

This paper is organized as follows. In Section \ref{section: closed
subgroup}, we list all finite subgroups of $\Aff (\R^2/\Lambda)$ up
to conjugacy. In Section \ref{section: nonzero dimensional case}, we
prove Theorem \ref{main: nonzero-dimensional}. In Section
\ref{section: equivariant simplicial complex}, we consider
$\R^2/\Lambda$ as a quotient space of the underlying space of an
equivariant simplicial complex which will be denoted by
$\lineK_\rho.$ And, we investigate equivariance of $\R^2/\Lambda$ by
calculating isotropy groups at some points which come from vertices
or barycenters of $|\lineK_\rho|.$ In Section \ref{section:
preclutching map}, we consider an equivariant vector bundle over
$\R^2/\Lambda$ as an equivariant clutching construction of an
equivariant vector bundle over $|\lineK_\rho|.$ For this, we define
equivariant clutching map. In Section \ref{section: relation}, we
investigate relations between homotopy of equivariant clutching
maps, $\Vect_{G_\chi} (\R^2/\Lambda, \chi),$ and $A_{G_\chi}
(\R^2/\Lambda, \chi).$ From these relations, it is shown that our
classifications in most cases are obtained by calculation of
homotopy of equivariant clutching maps. In Section \ref{section:
pointwise clutching map}, we review the concept and results of
equivariant pointwise clutching map from the previous paper
\cite{Ki}, and supplement these with two more cases. In calculation
of homotopy of equivariant clutching maps, equivariant pointwise
clutching map plays a key role because an equivariant clutching map
can be considered as a continuous collection of equivariant
pointwise clutching maps. In Section \ref{section: lemmas on
fundamental group}, we recall two useful lemmas on fundamental
groups from \cite{Ki}. In Section \ref{section: equivariant
clutching maps}, we prove technical results needed in dealing with
equivariant clutching maps through equivariant pointwise clutching
maps. In Section \ref{section: not id and D_1 cases} and
\ref{section: id and D_1 cases}, we prove Theorem \ref{main: by
isotropy and chern}, \ref{main: not by isotropy and chern class},
\ref{main: only by isotropy}.

\section{finite subgroups of $\Aff (\R^2/\Lambda)$}
 \label{section: closed subgroup}

In this section, we list all finite subgroups of $\Aff
(\R^2/\Lambda)$ up to conjugacy, and explain for two-dimensional
fundamental domain.

First, we introduce notations on the group of affine isomorphisms on
two torus. Let $\Lambda$ be a lattice of $\R^2.$ Let $\Iso (\R^2,
\Lambda)$ be the group $\{ A \in \Iso (\R^2) | A (\Lambda) = \Lambda
\},$ and let $\Aff (\R^2, \Lambda)$ be the set of affine
isomorphisms $\bar{x} \mapsto A \bar{x} + \bar{c}$ for $\bar{x},
\bar{c} \in \R^2$ and $A \in \Iso(\R^2, \Lambda).$ Let $\Aff_t
(\R^2, \Lambda)$ and $\Aff_0 (\R^2, \Lambda)$ be sets of all
translations and all orientation preserving affine isomorphisms in
$\Aff (\R^2, \Lambda),$ respectively. Let $\pi : \R^2 \rightarrow
\R^2/\Lambda, \bar{x} \mapsto [\bar{x}]$ be the quotient map, and
let $\Aff (\R^2/\Lambda)$ be the group of affine isomorphisms of
$\R^2/\Lambda,$ i.e. the group of all $[\bar{x}] \mapsto [A \bar{x}
+ \bar{c}]$'s for $A \in \Iso (\R^2, \Lambda).$ Let $\Aff (\R^2)$
and $\Aff (\R^2 / \Lambda)$ (and their subgroups) act naturally on
$\R^2$ and $\R^2/\Lambda,$ respectively. And, a compact Lie group
$R$ is said to act \textit{naturally} on $\R^2$ or $\R^2/\Lambda$ if
the action satisfies the following two conditions:
\begin{enumerate}
  \item $R$ is a subgroup of $\Aff (\R^2)$ or
  $\Aff (\R^2 / \Lambda),$
  \item the $R$-action is the action as a subgroup
  of $\Aff (\R^2)$ or $\Aff (\R^2 / \Lambda),$ respectively.
\end{enumerate}
Let $\pi_\aff : \Aff (\R^2, \Lambda) \rightarrow \Aff
(\R^2/\Lambda), ~ A \bar{x} + \bar{c} \mapsto [A \bar{x} +
\bar{c}].$ We identify $\Aff_t (\R^2, \Lambda)$ with $\R^2$ so that
$\ker \pi_\aff = \Lambda.$ Let $\Aff_t (\R^2/\Lambda)$ and $\Aff_0
(\R^2/\Lambda)$ be groups $\pi_\aff( \Aff_t (\R^2, \Lambda) )$ and
$\pi_\aff( \Aff_0 (\R^2, \Lambda) ),$ respectively.

Next, we introduce notations on finite subgroups of $\Aff
(\R^2/\Lambda).$ Let $R$ be a finite subgroup of $\Aff
(\R^2/\Lambda),$ i.e. the finite group $R$ acts naturally on
$\R^2/\Lambda.$ Let $R_0$ and $R_t$ be $R \cap \Aff_0
(\R^2/\Lambda)$ and $R \cap \Aff_t (\R^2/\Lambda),$ respectively.
Put $\Lambda_t = \pi_\aff^{-1} ( R_t ).$ Note that $R_t \lhd R$ and
$R_t \cong \Lambda_t / \Lambda$ so that $R_t$ is isomorphic to
$\Z_{m_1} \times \Z_{m_2}$ because $\Lambda_t$ has two generators
$\bar{l}_1, \bar{l}_2$ such that $\Lambda = \langle m_1 \bar{l}_1,
m_2 \bar{l}_2 \rangle$ for some $m_1, m_2 \in \N.$ For each element
$g=[A\bar{x}+\bar{c}]$ of $R,$ it can be shown that $A ( \Lambda_t )
= \Lambda_t$ by normality of $R_t$ in $R,$ i.e. $R / R_t$ preserves
$\Lambda_t$ in addition to $\Lambda.$ Through the map
\begin{equation*}
R \rightarrow \Iso (\R^2, \Lambda), \quad [A\bar{x}+\bar{c}] \mapsto
A,
\end{equation*}
$R / R_t$ is isomorphic to the subgroup $\{ A \in \Iso (\R^2,
\Lambda) ~ | ~ [A\bar{x}+\bar{c}] \in R \},$ and we will use the
notation $R/R_t$ to denote the subgroup. For simplicity, $R/R_t$ is
confused with the subgroup $\pi_\aff ( R/R_t )$ of $\Aff
(\R^2/\Lambda)$ according to context.

For a compact subgroup $R$ of $\Aff (\R^2/\Lambda)$ and its natural
action on $\R^2/\Lambda,$ we define conjugacy. Given an affine
isomorphism
\begin{equation*}
\bar{\eta} : \R^2 \rightarrow \R^2, \quad \bar{x} \mapsto A^\prime
\bar{x}+ \bar{c}^\prime
\end{equation*}
for some $A^\prime \in \Iso(\R^2),$ $\bar{c}^\prime \in \R^2,$ it
induces the affine isomorphism
\begin{equation*}
\eta: \R^2/\Lambda \rightarrow \R^2/ A^\prime (\Lambda), \quad
[\bar{x}] \mapsto [A^\prime \bar{x}+ \bar{c}^\prime]
\end{equation*}
and the group isomorphism
\begin{equation*}
\eta^*: \Aff(\R^2/\Lambda) \rightarrow \Aff(\R^2/ A^\prime
(\Lambda)), \quad B \mapsto \eta B \eta^{-1}.
\end{equation*}
Then, $\eta^*(R)$ is called \textit{conjugate} to $R,$ and the
natural action of $\eta^*(R)$ on $\R^2/ A^\prime (\Lambda)$ is
called the \textit{conjugate action} induced by $\bar{\eta}.$ It can
be easily checked that
\begin{equation}
\label{equation: conjugate action} \eta^*(R)_t = \eta^*(R_t) \quad
\text{and} \quad \Lambda_t^\prime = A^\prime (\Lambda_t)
\end{equation}
where $\Lambda_t^\prime$ is equal to $\pi_\aff^{-1}(\eta^*(R)_t)$
for $\pi_\aff : \Aff(\R^2, A^\prime(\Lambda)) \rightarrow \Aff(\R^2
/ A^\prime(\Lambda)).$ For a subgroup in $\Aff (\R^2, \Lambda)$ and
the isomorphism $\bar{\eta},$ we can similarly define the conjugate
subgroup in $\Aff (\R^2, A^\prime(\Lambda)).$

Up to conjugacy, it suffices to consider only isometries on two
torus instead of all affine isomorphisms on two torus. Let $\Isom
(\R^2/\Lambda)$ be the isometry group of $\R^2/\Lambda$ where
$\R^2/\Lambda$ delivers the metric induced by the usual metric on
$\R^2.$ And, put $\Isom_0 (\R^2/\Lambda) =$ $\Isom (\R^2/\Lambda)
\cap \Aff_0 (\R^2/\Lambda)$ and $\Isom_t (\R^2/\Lambda) =$ $\Aff_t
(\R^2/\Lambda).$

Now, we would list all finite subgroup of $\Aff (\R^2/\Lambda)$ up
to conjugacy. First, we explain for upper two rows of Table
\ref{table: finite group of aff}.

\begin{proposition} \label{proposition: finite group not D_1}
Let a finite group $R$ act naturally on $\R^2/\Lambda$ for some
$\Lambda.$ Assume that $R_0/R_t$ is nontrivial. Then, the $R$-action
is conjugate to one of the following:
\begin{enumerate}
  \item the subgroup $R$ in $\Isom(\R^2 / \Lambda)$
        such that $\Lambda_t = \Lambda^\sq$ and $R/R_t$
        is one of
        \begin{equation*}
        \Z_2, \D_{2,2}, \D_2, \Z_4, \D_4
        \end{equation*}
        where $\Lambda$ is a square sublattice of $\Lambda^\sq$
        preserved by $R/R_t,$
  \item the subgroup $R$ in $\Isom(\R^2 / \Lambda)$
        such that $\Lambda_t = \Lambda^\eq$ and $R/R_t$
        is one of
        \begin{equation*}
        \Z_2, \D_2, \D_{2,3}, \Z_3, \D_3, \D_{3,2},
        \Z_6, \D_6
        \end{equation*}
        where $\Lambda$ is an equilateral triangle sublattice
        of $\Lambda^\eq$ preserved by $R/R_t.$
\end{enumerate}
In both cases, $R$ is equal to the semidirect product $R_t \rtimes
R/R_t.$ Two $\Z_2$-actions of (1) and (2) are conjugate.

\end{proposition}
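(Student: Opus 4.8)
The plan is to run the classical crystallographic argument on the point group and then pin down the translation part separately. First I would invoke the preceding reduction to assume $R \subset \Isom(\R^2/\Lambda)$, so that $P := R/R_t$ is realized as a finite subgroup of $\orthogonal(2)$ preserving the lattice $\Lambda_t$. A finite subgroup of $\orthogonal(2)$ is, up to conjugacy, either cyclic $\Z_n$ or dihedral $\D_n$, and the hypothesis that $R_0/R_t \ne \langle \id \rangle$ says precisely that the rotation subgroup $P \cap \SO(2)$ is nontrivial; write it $\Z_n$ with $n \ge 2$. Since a generating rotation $A$ carries $\Lambda_t$ to itself, its matrix in any $\Z$-basis of $\Lambda_t$ is integral, so $2\cos(2\pi/n) = \tr A \in \Z$; this forces $n \in \{2,3,4,6\}$, the crystallographic restriction, and already separates the cyclic entries $\Z_2, \Z_3, \Z_4, \Z_6$ listed in (1) and (2).

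Next I would read off the lattice type from $n$. When $n = 4$ the order-four rotation acts on $\Lambda_t$ as multiplication by a primitive fourth root of unity, which forces $\Lambda_t$ to be a square lattice; conjugating by the linear map normalizing it, I may take $\Lambda_t = \Lambda^\sq$. When $n = 3$ or $6$ the analogous computation forces $\Lambda_t$ to be hexagonal, and I normalize to $\Lambda_t = \Lambda^\eq$. The case $n = 2$ is the loose one, since $-I$ preserves every lattice and the lattice type is constrained only once reflections are present. For each lattice I would then enumerate the reflections of $\Iso(\R^2)$ preserving it; the admissible axes (coordinate directions versus diagonals, recorded by the subscript in $\D_{n,l}$) are finite in number and yield exactly the dihedral entries $\D_2, \D_{2,2}, \D_4$ and $\D_2, \D_{2,3}, \D_3, \D_{3,2}, \D_6$. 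The two $\Z_2$-cases are reconciled at the very end: because $-I$ is central, any linear isomorphism carrying $\Lambda^\sq$ onto $\Lambda^\eq$ conjugates one $\Z_2$-action to the other, so they coincide up to conjugacy.

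The step I expect to be the real obstacle is establishing $R = R_t \rtimes R/R_t$, i.e.\ producing a subgroup of $R$ mapping isomorphically onto $P$. Because $n \ge 2$, a lift of the rotation generator to $\Aff(\R^2, \Lambda)$ is a finite-order orientation-preserving affine map, and solving $(I - A)\bar{p} = \bar{c}$ gives it a fixed point; recentering at $\bar{p}$ turns the rotation subgroup $\Z_n$ into a genuine linear subgroup fixing the origin, which is the complement in the cyclic case. The delicate part is the dihedral case, where a lift $h$ of a reflection $b$ satisfies $h^2 \in \Lambda_t$ and can be straightened to an involution through the common center only when its \emph{glide} component can be cleared modulo $\Lambda$. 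This is exactly the symmorphic-versus-nonsymmorphic dichotomy, and it is where the rotation hypothesis and the specific lattice must be used: one must check, for the square and hexagonal configurations already produced, that the reflection lifts can be taken through the fixed rotation center so that the dihedral complement closes up. I would organize this as a finite case check over the pairs $(P, \Lambda_t)$ obtained above, and I fully anticipate that the genuinely glide-obstructed configurations are the subtle point of the whole argument — these are what the author isolates as the separately treated $\D_1, \D_{1,4}$ families with the conditions $A\bar{c}+\bar{c}=0$ and $A\bar{c}+\bar{c}=\bar{l}_0$. Once a complement is in hand, conjugating by the linear map carrying the normalized lattice onto $\Lambda_t$ places $R$ in the tabulated semidirect-product form $R_t \rtimes R/R_t$.
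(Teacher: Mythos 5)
Your skeleton matches the paper's up to the point where the real work happens: the reduction to isometries, the crystallographic restriction on the rotation part, and the recentering of the rotation generator at its fixed point (the paper conjugates by the translation $-(\id-A_0)^{-1}\bar{c}_0$, which is exactly your ``solve $(I-A)\bar{p}=\bar{c}$'' step) all agree with the paper's proof. The genuine gap is the dihedral step, which you correctly single out as the crux and then do not carry out. Moreover, the way you propose to dispose of the obstructed configurations is a misreading of the paper: the $\D_1,\D_{1,4}$ families with $A\bar{c}+\bar{c}=0$ or $\bar{l}_0$ are the content of Proposition \ref{proposition: finite group D_1}, which covers precisely the groups with \emph{trivial} $R_0/R_t$; they are not a receptacle for glide-obstructed groups whose point group contains a nontrivial rotation. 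Under the hypothesis $R_0/R_t\ne\langle\id\rangle$ the present proposition asserts that \emph{no} glide obstruction survives -- every orientation-reversing element admits a representative $[A\bar{x}+\bar{\lambda}]$ with $\bar{\lambda}\in\Lambda_t$ -- and that assertion is exactly what has to be proved. Announcing ``a finite case check over the pairs $(P,\Lambda_t)$'' restates the problem rather than solving it.

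The paper's argument for this step is the following computation, for which your proposal offers no substitute: for $g[\bar{x}]=[A\bar{x}+\bar{c}]\in R-R_0$ and the already-linearized rotation generator $g_0=[A_0\bar{x}]$, both $g^2$ and $(gg_0)^2$ lie in $R_t$ (their linear parts are squares of reflections), giving $A\bar{c}+\bar{c}\in\Lambda_t$ and $AA_0\bar{c}+\bar{c}\in\Lambda_t$; subtracting and using $A(\Lambda_t)=\Lambda_t$ yields $(\id-A_0)\bar{c}\in\Lambda_t$, from which the paper concludes $\bar{c}\in\Lambda_t$ and hence $[A\bar{x}]\in R$. You should be aware that this final implication is itself the delicate point and deserves more scrutiny than either you or the paper give it: $(\id-A_0)^{-1}$ does not preserve $\Lambda_t$ in general (one has $\det(\id-A_0)=4,3,2,1$ for $A_0$ of order $2,3,4,6$, and the inverse formula quoted in the paper is not correct), and the nonsymmorphic wallpaper groups with point groups $\D_2$ and $\D_4$ (pmg, pgg, p4g) are exactly the configurations your case check would have to confront and eliminate. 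In short, the step you defer is not bookkeeping; it is the theorem, and your proposal neither proves it nor correctly identifies how the paper claims to prove it.
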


\begin{proof}
We may assume that $R$ is a subgroup of $\Isom(\R^2 / \Lambda)$ for
some lattice $\Lambda.$ So, $R/R_t \subset \orthogonal(2).$ We start
with $R_0$ instead of the whole $R.$ Since $R_0 / R_t$ is a
nontrivial subgroup of $\SO(2),$ it is a nontrivial cyclic group.
Pick $g_0$ in $R_0$ such that $g_0$ gives a generator in $R_0 /
R_t,$ and put $g_0 [\bar{x}] = [A_0 \bar{x} + \bar{c}_0]$ for some
$A_0 \in \SO(2),$ $\bar{c}_0 \in \R^2.$ Put $\bar{c}_1 = - (\id -
A_0)^{-1} \bar{c}_0,$ and put
\begin{equation*}
\bar{\eta} : \R^2 \rightarrow \R^2, \quad \bar{x} \mapsto A^\prime
\bar{x}+ \bar{c}^\prime \quad \text{with } A^\prime = \id, ~
\bar{c}^\prime = \bar{c}_1
\end{equation*}
where $\id - A_0$ is invertible because $A_0$ is a nontrivial
rotation. Here, note that $A^\prime(\Lambda) = \Lambda,$
$A^\prime(\Lambda_t) = \Lambda_t,$ and $\eta^*(R_t) = R_t.$ In the
remaining proof, we use the conjugate $\eta^*(R)$-action induced by
$\bar{\eta},$ and denote $\eta^*(R)$ just by $R.$ Since the action
by $\eta^*(g_0)$ is equal to $[A_0 \bar{x}],$ $R_0$ contains $R_0 /
R_t.$ Since $R_t \cap (R_0 / R_t)$ is trivial and $R_t \lhd R_0,$
$R_0$ is the semidirect product of $R_0 / R_t$ and $R_t.$ It is well
known that if a lattice in $\R^2$ is preserved by a nontrivial
rotation, then it is a square or an equilateral triangle lattice.
Since $R_0 / R_t$ is nontrivial and $R_0 / R_t$ preserves $\Lambda$
and $\Lambda_t,$ both lattices are simultaneously square or
equilateral triangle lattices. Here, we use the fact that a
sublattice of a square lattice can not be an equilateral triangle
lattice and vice versa. Take a suitable conformal linear isomorphism
$\bar{\eta}^\prime : \R^2 \rightarrow \R^2$ such that
$\bar{\eta}^\prime (\Lambda_t)=\Lambda^\sq$ or $\Lambda^\eq.$ In the
remaining proof, we use the conjugate $\eta^{\prime
*}(R)$-action induced by $\bar{\eta}^\prime,$ and denote $\eta^{\prime
*}(R),$ $\bar{\eta}^\prime (\Lambda),$ $\bar{\eta}^\prime (\Lambda_t)$
just by $R,$ $\Lambda,$ $\Lambda_t,$ respectively. Since $R_0 / R_t$
preserves $\Lambda_t,$ $R_0 / R_t$ becomes one of $\Z_2, \Z_4,$ or
$\Z_2, \Z_3, \Z_6$ according to $\Lambda_t = \Lambda^\sq$ or
$\Lambda^\eq,$ respectively.

If $R=R_0,$ then proof is done. So, assume that $R_0 \lneq R.$ Let
$g [\bar{x}] = [ A \bar{x} + \bar{c} ]$ be an arbitrary element of
$R - R_0.$ Then, $g^2 [\bar{x}] = [\bar{x} + A\bar{c} + \bar{c}]$
because $R / R_t$ is a dihedral group. Here, note that $A\bar{c} +
\bar{c}$ is in $\Lambda_t,$ and that $\eta^{\prime *}( \eta^*(g_0) )
= \eta^*(g_0)$ because $\bar{\eta}^\prime$ is conformal. Similar to
$g^2 [\bar{x}],$ we have $(g \eta^*(g_0))^2 [\bar{x}] = [ \bar{x} +
A A_0 \bar{c} + \bar{c} ]$ so that $A A_0 \bar{c} + \bar{c}$ is in
$\Lambda_t.$ By these two elements $A\bar{c} + \bar{c}$ and $A A_0
\bar{c} + \bar{c}$ of $\Lambda_t,$ we have $A (\id-A_0)\bar{c} \in
\Lambda_t.$ Since $A (\Lambda_t) = \Lambda_t,$ this gives us
$(\id-A_0)\bar{c} \in \Lambda_t.$ So, $\bar{c} \in \Lambda_t$
because $(\id-A_0)^{-1} = \id + A_0 + \cdots + A_0^{n-1}$ and $A_0
(\Lambda_t) = \Lambda_t$ when $A_0$ has order $n.$ Since $g
[\bar{x}] = [ A \bar{x} + \bar{c} ]$ with $\bar{c} \in \Lambda_t,$
the affine map $[A \bar{x}]$ is in $R.$ So, we obtain that $R / R_t$
is in $R.$ Since $R_t \cap (R / R_t)$ is trivial and $R_t \lhd R,$
$R$ is the semidirect product of $R / R_t$ and $R_t.$ Since $R/R_t$
preserves $\Lambda_t,$ possible actions of $R/R_t$ are very
restrictive and are conjugate to one of $\D_{2,2}, \D_2, \D_4,$ or
$\D_2, \D_{2,3}, \D_3, \D_{3,2}, \D_6$ induced by some suitable
isometry on $\R^2$ preserving $\Lambda_t$ according to $\Lambda_t =
\Lambda^\sq$ or $\Lambda^\eq,$ respectively.

Conjugacy of two $\Z_2$-actions of (1) and (2) is easy.
\end{proof}

To each finite group $R$ in the previous proposition, we assign the
two-dimensional fundamental domain $|P_R|$ with which we describe
the $R$-action.

\begin{corollary} \label{corollary: finite group not D_1}
To each finite group $R$ in the previous proposition, we assign the
area $|P_R|$ in Table \ref{table: two-dimensional fundamental
domain}. Then, the $\pi_\aff^{-1}(R)$-orbit of $|P_R|$ cover $\R^2$
and there is no interior intersection between different areas in the
orbit. Also, the $R$-orbit of $\pi(|P_R|)$ cover $\R^2 / \Lambda$
and there is no interior intersection between different areas in the
orbit.
\end{corollary}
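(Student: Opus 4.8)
The plan is to verify, for each of the finite groups $R$ listed in Proposition \ref{proposition: finite group not D_1}, that the assigned polygonal area $|P_R|$ from Table \ref{table: two-dimensional fundamental domain} is a genuine fundamental domain for the $\pi_\aff^{-1}(R)$-action on $\R^2$ (and hence, after projecting, for the $R$-action on $\R^2/\Lambda$). Recall that $\pi_\aff^{-1}(R) = R_t \rtimes R/R_t$ sits inside $\Aff(\R^2,\Lambda)$ with translation part $\Lambda_t$, so a fundamental domain for this group can be built in two stages: first produce a fundamental domain for the point group $R/R_t$ acting linearly on $\R^2$, then cut it down by the translation lattice $\Lambda_t$. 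Since $R = R_t \rtimes R/R_t$ by the proposition, the claim splits cleanly into a \emph{rotation/reflection} count and a \emph{translation} count, and the total number of copies of $|P_R|$ needed to tile a fundamental domain of $\Lambda_t$ in $\R^2$ must equal $|R/R_t|$.

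\textbf{Step one: reduce to a single lattice cell.} First I would fix attention on the lattice $\Lambda_t$ (either $\Lambda^\sq$ or $\Lambda^\eq$) and observe that the translation subgroup $\Lambda$ tiles $\R^2$ by the standard square or rhombic cell. It suffices to show that the finitely many translates of $|P_R|$ under $R_t \rtimes R/R_t$ that meet a fixed fundamental cell of $\Lambda$ cover that cell exactly once up to boundary. Equivalently, I claim $|R/R_t| \cdot \Area(|P_R|) = \Area(\R^2/\Lambda_t)$, i.e. the area of $|P_R|$ is exactly $1/|R/R_t|$ times the area of one $\Lambda_t$-cell. This is a direct computation from the explicit coordinates of the polygons $P_4^\sq, P_4^{\prime\sq}, P_3^\eq, P_6^\eq, P_4^\eq, P_4^{\prime\eq}$ given in Figure \ref{figure: polygonal area} and the scaling/translation prescriptions in Table \ref{table: two-dimensional fundamental domain}; for instance, $\tfrac12 P_4^\sq$ for $R/R_t = \D_2, \Z_4, \D_4$ has area $\tfrac14$, matching $1/|\D_2| = 1/4$ against the unit square $\Lambda^\sq$-cell of area $1$.

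\textbf{Step two: verify the tiling geometrically.} With the area bookkeeping in place, I would check that the $R$-translates of $|P_R|$ have pairwise disjoint interiors and together cover. For the point-group part this is the classical statement that a regular polygon (or half/quarter thereof) is a fundamental domain for the cyclic or dihedral symmetry group of the corresponding lattice; the reflection in $\D_n$ halves the rotational sector, and the cases $\D_{2,2}$ and $\D_{2,3}$ use the tilted polygons $P_4^{\prime\sq}, P_4^{\prime\eq}$ precisely because their mirror axes are rotated. For the translation part, the two-piece domains (e.g. $\tfrac12 P_4^\sq \cup ((\tfrac12,0)+\tfrac12 P_4^\sq)$ for $R/R_t = \Z_2$) must be checked to tile $\Lambda_t/\Lambda$ correctly, reflecting that $\Lambda_t \supsetneq \Lambda$ contributes extra translates. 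The disjointness of interiors follows because $|P_R|$ is a fundamental domain for the point group and the chosen offsets are coset representatives of $\Lambda_t/\Lambda$.

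\textbf{The main obstacle.} The genuinely delicate cases are $\D_{2,2}$ and $\D_{2,3}$, where the fundamental domain is a rotated polygon rather than a piece of the standard cell, so the interior-disjointness of the reflected copies is not visually obvious and must be confirmed by tracking the mirror axes explicitly against the lattice. A uniform, coordinate-free argument would be preferable to a case-by-case picture check, and producing one is the hardest part; I expect the cleanest route is to combine the exact area identity of Step one (which forces \emph{exact} covering once disjointness is known) with the observation that $|P_R|$ meets each $R$-orbit in $\R^2/\Lambda$ at most once on its interior, reducing the whole statement to the single numerical check of areas plus a finite boundary-matching verification.
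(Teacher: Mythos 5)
Your overall strategy (identify $\pi_\aff^{-1}(R)=\Lambda_t\rtimes R/R_t$ and then verify the tiling case by case) is the same as the paper's, but your Step one contains a genuine error that would derail the argument. The identity $|R/R_t|\cdot\Area(|P_R|)=\Area(\R^2/\Lambda_t)$ is false for $\D_4$, $\D_3$, $\Z_6$, $\D_6$, $\Z_3$, and $\D_{3,2}$. For instance, for $R/R_t=\D_4$ one has $|P_R|=\tfrac12 P_4^\sq$ of area $\tfrac14$, and $|\D_4|\cdot\tfrac14=2\ne 1$; for $R/R_t=\Z_6$ one has $|P_R|=P_3^\eq$ of area $\tfrac{\sqrt3}{4}$, and $6\cdot\tfrac{\sqrt3}{4}=\tfrac{3\sqrt3}{2}\ne\tfrac{\sqrt3}{2}$. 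The point you are missing is that in these cases $|P_R|$ is \emph{not} a fundamental domain for $\pi_\aff^{-1}(R)$ in the strict sense: the polygon has a nontrivial setwise stabilizer in $\Lambda_t\rtimes R/R_t$ (the diagonal reflection preserves $[0,\tfrac12]^2$ when $R/R_t=\D_4$; the rotation by $2\pi/3$ about the barycenter of $P_3^\eq$, which equals $[a_3\bar x+(1,0)]$ and hence lies in $\Lambda^\eq\rtimes\Z_3$, preserves $P_3^\eq$; the hexagon $P_6^\eq$ is preserved by an even larger subgroup). The corollary is deliberately phrased in terms of ``different areas in the orbit,'' i.e.\ distinct subsets, precisely to accommodate this. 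The correct bookkeeping is
\begin{equation*}
\frac{|R/R_t|}{|\mathrm{Stab}(|P_R|)|}\cdot\Area(|P_R|)=\Area(\R^2/\Lambda_t),
\end{equation*}
where $\mathrm{Stab}(|P_R|)$ is the setwise stabilizer in $\Lambda_t\rtimes R/R_t$ (which injects into $R/R_t$ since no nontrivial translation preserves a bounded set). This is exactly the content the paper records later as equation (\ref{equation: R_t and B}) together with the stabilizers $R_{b(f^{-1})}$ of Table \ref{table: isotropy of face}: note that $R_{b(f^{-1})}$ is nontrivial precisely in the cases where your identity fails.

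The same oversight affects your Step two: your two-stage reduction (fundamental sector for the point group, then cut by the translation lattice) does not produce $P_3^\eq$ or $P_6^\eq$ for the hexagonal cases, since e.g.\ a fundamental domain for the linear $\Z_6$-action is a $60^\circ$ sector and no intersection with a lattice cell yields the triangle; the orbit of $P_3^\eq$ tiles only because rotations about non-origin points (compositions of origin rotations with lattice translations) are also available. So the ``exact covering forced by the area identity once disjointness is known'' mechanism you propose would, as written, lead you to reject the correct domains in six of the cases. Once you replace the count $|R/R_t|$ by the index of the setwise stabilizer, the rest of your plan (finite case-by-case disjointness check, with special care for the tilted domains $P_4^{\prime\sq}$, $P_4^{\prime\eq}$) matches what the paper's terse proof intends.
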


\begin{proof}
For each $R \subset \Aff(\R^2/\Lambda)$ in the previous proposition,
$\pi_\aff^{-1} (R)$ is equal to $\Lambda_t \rtimes R/R_t$ because
$R$ is equal to $R_t \rtimes R/R_t$ and $\Lambda_t = \pi_\aff^{-1}
(R_t).$ So, $\pi_\aff^{-1} (R) = \Lambda_t \cdot R/R_t$ and proof of
the first argument is done easily case by case. And, the second
argument follows from the first.
\end{proof}

Next, we need investigate the case of trivial $R_0 / R_t.$ Before
it, we need an elementary lemma.

\begin{lemma} \label{lemma: reflection}
Let $\Lambda$ be a lattice of $\R^2.$ Let $A$ be an orient reversing
element of $\Iso (\R^2, \Lambda)$ such that $A^2 = \id.$ Then, the
group $\langle A \rangle$ in $\Iso (\R^2, \Lambda)$ is linearly
conjugate to one of $\D_1$ or $\D_{1,4}$ in $\Iso (\R^2,
\Lambda^\sq).$
\end{lemma}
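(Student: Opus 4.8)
The plan is to exploit the eigenstructure of the reflection $A$ together with elementary divisor theory for the rank-one sublattices it cuts out along its axes. Since $\Iso(\R^2)$ here denotes the linear isometry group $\orthogonal(2)$, an orientation-reversing $A$ with $A^2=\id$ is a reflection across a line through the origin; let $V_+$ and $V_-$ be its $(+1)$- and $(-1)$-eigenlines, which are orthogonal. First I would observe that for every $\lambda\in\Lambda$ the vectors $\lambda+A\lambda$ and $\lambda-A\lambda$ lie in $\Lambda$ and satisfy $A(\lambda+A\lambda)=\lambda+A\lambda$ and $A(\lambda-A\lambda)=-(\lambda-A\lambda)$, hence lie on $V_+$ and $V_-$ respectively. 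Because $A\neq\pm\id$, neither $\lambda\mapsto\lambda+A\lambda$ nor $\lambda\mapsto\lambda-A\lambda$ is identically zero, so the sublattices $\Lambda_+:=\Lambda\cap V_+$ and $\Lambda_-:=\Lambda\cap V_-$ are both of rank one. Let $f_+$ and $f_-$ be primitive generators of $\Lambda_+$ and $\Lambda_-$.

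The crux is to pin down $\Lambda$ between $\Lambda_+\oplus\Lambda_-$ and its ``half''. From $2\lambda=(\lambda+A\lambda)+(\lambda-A\lambda)$ together with the containments above one gets
\[
\Lambda_+\oplus\Lambda_-\subseteq\Lambda\subseteq\tfrac12(\Lambda_+\oplus\Lambda_-),
\]
so $\Lambda/(\Lambda_+\oplus\Lambda_-)$ embeds into $(\tfrac12\Lambda_+/\Lambda_+)\oplus(\tfrac12\Lambda_-/\Lambda_-)\cong\Z_2\oplus\Z_2$. Here the primitivity of $f_\pm$ in $\Lambda_\pm$ forbids $\tfrac12 f_+$ or $\tfrac12 f_-$ from lying in $\Lambda$, which excludes the two ``single coordinate'' cosets and hence the index-$4$ possibility. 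Thus only two cases survive: either the index is $1$, so $\{f_+,f_-\}$ is a basis of $\Lambda$ (the rectangular case); or the index is $2$ with the nontrivial coset represented by $m:=\tfrac12(f_++f_-)\in\Lambda$, so that $\{m,f_-\}$ is a basis of $\Lambda$ (the centered-rectangular case). This case division is where the real content lies, and I expect it to be the main obstacle: everything hinges on correctly ruling out index $4$ and identifying which half-vector can actually occur.

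It then remains to exhibit, in each case, a linear isomorphism $\bar{\eta}'\colon\R^2\to\R^2$ realizing the conjugacy in the sense of the excerpt; since $f_+$ and $f_-$ point along the orthogonal eigenaxes of $A$, sending them to eigenaxes of a standard reflection automatically yields an orthogonal conjugate. In the rectangular case I would take $\bar{\eta}'$ with $f_+\mapsto(1,0)$ and $f_-\mapsto(0,1)$; then $\bar{\eta}'(\Lambda)=\Lambda^\sq$ and $\bar{\eta}'A\bar{\eta}'^{-1}$ fixes $(1,0)$ and negates $(0,1)$, so it equals the standard reflection $b$ and $\langle A\rangle$ is conjugate to $\D_1$. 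In the centered case I would instead take $f_+\mapsto(1,1)$ and $f_-\mapsto(1,-1)$, whence $m\mapsto(1,0)$; one checks $\bar{\eta}'(\Lambda)=\Lambda^\sq$, and $\bar{\eta}'A\bar{\eta}'^{-1}$ fixes $(1,1)$ while negating $(1,-1)$, i.e.\ it is the diagonal reflection $a_4 b$, so $\langle A\rangle$ is conjugate to $\D_{1,4}$. Verifying the two lattice equalities $\bar{\eta}'(\Lambda)=\Lambda^\sq$ is the only remaining routine computation.
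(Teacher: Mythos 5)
Your argument is correct, but it is not the route the paper takes. The paper's proof splits into cases according to the shape of $\Lambda$ itself: if $\Lambda$ is square it normalizes to $\Lambda^\sq$ and inspects the possible images of $(1,0)$; if $\Lambda$ has an orthogonal basis it rescales the two axes; and in the remaining case it invokes a shortest-vector fact (two independent vectors of minimal length form a basis) to build an explicit basis $\{\bar{w}_1,\bar{w}_2\}$ with $A\bar{w}_1=\bar{w}_2$, which is the $\D_{1,4}$ case. You instead run the standard crystallographic argument: form the eigenlattices $\Lambda_\pm=\Lambda\cap V_\pm$, establish the sandwich $\Lambda_+\oplus\Lambda_-\subseteq\Lambda\subseteq\frac12(\Lambda_+\oplus\Lambda_-)$ via $\lambda\pm A\lambda$, and use primitivity of the generators $f_\pm$ to kill the index-$4$ subgroup and the two single-coordinate index-$2$ subgroups of $\Z_2\oplus\Z_2$, leaving exactly the rectangular and centered-rectangular cases, which map onto $\D_1$ and $\D_{1,4}$ respectively by the bases $\{f_+,f_-\}$ and $\{\frac12(f_++f_-),f_-\}$. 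All the steps check out: both eigenlattices are nonzero since $A\neq\pm\id$, the excluded cosets would force $\frac12 f_\pm\in\Lambda\cap V_\pm$, and in each case the images of the orthogonal eigenvectors are again orthogonal, so the conjugated involution really is the orthogonal reflection $b$ or $a_4b$ preserving $\Lambda^\sq$. Your version is more structural and shorter on case analysis; it also produces the index $[\Lambda:\Lambda_+\oplus\Lambda_-]\in\{1,2\}$ as a conjugation invariant that explains at a glance why $\D_1$ and $\D_{1,4}$ cannot be conjugate over the integers. The paper's version is more computational but yields explicit short bases adapted to $A$, which is in the spirit of the shortest-vector techniques it reuses elsewhere in Section 2.
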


\begin{proof}
We may assume that $\langle A \rangle$ acts naturally on $\R^2$ with
$A \in \orthogonal(2).$ If $\Lambda$ is a square lattice, then it
might be assumed that it is $\Lambda^\sq$ by a linear conformal map.
For $\bar{u}_1=(1,0)$ and $\bar{u}_2=(0,1),$ $A$ is determined by
$A(\bar{u}_1),$ and possible $A(\bar{u}_1)$'s are $\pm \bar{u}_1,
\pm \bar{u}_2$ because $A$ is a reflection and preserves
$\Lambda^\sq.$ By a suitable rotation, the action of $\langle A
\rangle$ is conjugate to $\D_1$ or $\D_{1,4}$ in $\Iso (\R^2,
\Lambda^\sq).$

Next, if $\Lambda$ is not a square lattice but has an orthogonal
basis, then it might be assumed that $\{ \bar{u}_1, c \cdot
\bar{u}_2 \}$ with $c>1$ is a basis of $\Lambda$ by a linear
conformal map. So, possible $A(\bar{u}_1)$'s are $\pm \bar{u}_1$ and
the action of $\langle A \rangle$ is conjugate to $\D_1$ in $\Iso
(\R^2, \Lambda^\sq)$ induced by a suitable linear isomorphism.

Before we go further, we state an elementary fact. If two elements
$\bar{w}_1$ and $\bar{w}_2$ in $\Lambda^* = \Lambda - O$ are
linearly independent and have the same smallest length, then $\{
\bar{w}_1, \bar{w}_2 \}$ is a basis of $\Lambda.$ We return to our
proof. Assume that $\Lambda$ does not have an orthogonal basis. Let
$\bar{w}_0$ be an element of $\Lambda^*$ with the smallest length.
If $A \bar{w}_0 \ne \pm \bar{w}_0,$ then $\bar{w}_0$ and $A
\bar{w}_0$ are linearly independent so that they become a basis by
the previous elementary fact. By a suitable linear isomorphism, the
action is conjugate to $\D_{1,4}$ in $\Iso (\R^2, \Lambda^\sq).$
Next, assume that $A \bar{w}_0 = \pm \bar{w}_0.$ Let $\bar{w}$ be an
element of $\Lambda - \R \cdot \bar{w}_0$ which has the smallest
distance to $\R \cdot \bar{w}_0.$ Let $L$ be the line parallel to
$\R \cdot \bar{w}_0$ containing $\bar{w}.$ Pick an element
$\bar{w}_1$ of $L \cap \Lambda$ which has the smallest length. Put
$\bar{w}_2^\prime = \mp A \bar{w}_1.$ Since $\Lambda$ does not have
an orthogonal basis, $\bar{w}_1 \ne \bar{w}_2^\prime.$ Also,
$|\bar{w}_1 - \bar{w}_2^\prime| = |\bar{w}_0|$ because $\bar{w}_1$
has the smallest length in $L \cap \Lambda.$ And, the convex hull of
$\{ O, \bar{w}_1, \bar{w}_2^\prime \}$ contains no element of
$\Lambda$ except $O, \bar{w}_1, \bar{w}_2^\prime$ so that $\{
\bar{w}_1, \bar{w}_2^\prime \}$ is a basis for $\Lambda.$ If we put
$\bar{w}_2 = A \bar{w}_1 = \mp \bar{w}_2^\prime,$ then $\{
\bar{w}_1, \bar{w}_2 \}$ is a basis of $\Lambda$ with the same
length such that $A \bar{w}_1 = \bar{w}_2.$ By a suitable linear
isomorphism, the action of $\langle A \rangle$ is conjugate to the
action of the $\D_{1,4}$ in $\Iso (\R^2, \Lambda^\sq).$ Therefore,
we obtain a proof.
\end{proof}

To deal with the case of trivial $R_0/R_t,$ we define some
notations. When $R/R_t = \D_1$ or $\D_{1,4},$ let $\bar{l}_0$ be
$(1,0)$ or $(1,1),$ and let $\bar{l}_0^\perp$ be $(0,1)$ or
$(-1/2,1/2),$ respectively. For those two cases, let $L$ in $\R^2$
be the line fixed by $R/R_t,$ and let $L^\perp$ be the line
perpendicular to $L$ passing through $O.$ Here, $\bar{l}_0$ is an
element of $(\Lambda^\sq)^* \cap L$ with the smallest length, and
the length of $\bar{l}_0^\perp$ is the distance from $L$ to
$\Lambda^\sq-L.$ For a line $L^\prime$ and a vector
$\bar{c}^\prime$(possibly zero) in $\R^2$ with $\bar{c}^\prime
\parallel L^\prime,$ the composition of the reflection in $L^\prime$
and the translation by $\bar{c}^\prime$ is called the \textit{glide}
through $L^\prime$ and $\bar{c}^\prime.$ Now, we can explain for
lower three rows of Table \ref{table: finite group of aff}.

\begin{proposition} \label{proposition: finite group D_1}
Let a finite group $R$ act naturally on $\R^2/\Lambda$ for some
$\Lambda.$ Assume that $R_0/R_t$ is trivial. Then, the $R$-action is
conjugate to one of the following:
\begin{enumerate}
  \item the subgroup $R$ in $\Isom (\R^2 / \Lambda)$ with
  $R = R_t$ and $\Lambda_t = \Lambda^\sq$ where
  $\Lambda$ is equal to $\langle (m_1, 0), (0, m_2) \rangle$
  in $\Lambda^\sq$
  for some $m_1, m_2 \in \N.$

  \item the subgroup $R$ in $\Isom (\R^2 / \Lambda)$ with
  $R = \langle R_t, [A\bar{x}+\bar{c}] \rangle$ and $\Lambda_t = \Lambda^\sq$
  where $R/R_t = \langle A \rangle = \D_1$ or $\D_{1,4}.$
  Here, $\bar{c} \in L^\perp$ or $\frac 1 2 \bar{l}_0+ L^\perp,$ and $A\bar{x}+\bar{c}$ is
  the glide through $\frac 1 2 \bar{c}+L$ and $\frac 1 2 (A\bar{c}+\bar{c}).$
\end{enumerate}
\end{proposition}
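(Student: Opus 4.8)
The plan is to mirror the structure of the proof of Proposition~\ref{proposition: finite group not D_1}: first conjugate so that the point group $R/R_t$ is orthogonal, then split according to whether $R/R_t$ is trivial or is generated by a single reflection, and in each case normalize the lattice data by a suitable (possibly non-conformal) linear conjugation permitted by the definition of conjugacy. To begin, since $R/R_t$ is a finite subgroup of $\Iso(\R^2,\Lambda_t)$, after a linear conjugation we may assume $R/R_t\subset\orthogonal(2)$, and then every element of $R$ has orthogonal linear part, so $R\subset\Isom(\R^2/\Lambda)$. The hypothesis that $R_0/R_t=(R/R_t)\cap\SO(2)$ is trivial means $R/R_t$ contains no nontrivial rotation; being a finite subgroup of $\orthogonal(2)$ it is therefore either trivial or equal to $\langle A\rangle$ for a single orientation reversing involution $A$. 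These two alternatives produce cases (1) and (2).

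For case (1), $R=R_t$ consists only of translations, so the sole datum is the finite-index pair $\Lambda\subset\Lambda_t$. Since there is now no rotation or reflection to be preserved, I am free to conjugate by an arbitrary linear isomorphism $A'\in\Iso(\R^2)$. By the structure theorem for sublattices (Smith normal form) I can pick a basis $\bar{l}_1,\bar{l}_2$ of $\Lambda_t$ and $m_1,m_2\in\N$ with $\Lambda=\langle m_1\bar{l}_1,m_2\bar{l}_2\rangle$; conjugating by the linear map $\bar{l}_i\mapsto\bar{e}_i$ then sends $\Lambda_t$ to $\Lambda^\sq$ and $\Lambda$ to $\langle(m_1,0),(0,m_2)\rangle$, while the translations stay isometries of the new flat torus. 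This is exactly alternative (1).

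For case (2), apply Lemma~\ref{lemma: reflection} to the $A$-invariant lattice $\Lambda_t$: after a linear conjugation we may assume $\Lambda_t=\Lambda^\sq$ and $A$ is a standard reflection, so $R/R_t=\D_1$ or $\D_{1,4}$. Because $[R:R_t]=2$, choosing any $g=[A\bar{x}+\bar{c}]\in R\setminus R_t$ gives $R=\langle R_t,g\rangle$. Writing $\bar{c}=\bar{c}_\parallel+\bar{c}_\perp$ along $L$ and $L^\perp$, the relation $g^2=[\bar{x}+(A\bar{c}+\bar{c})]=[\bar{x}+2\bar{c}_\parallel]\in R_t$ forces $2\bar{c}_\parallel\in\Lambda_t\cap L=\Z\bar{l}_0$. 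Replacing $g$ by $g\cdot t$ with $t\in R_t$ changes $A\bar{c}+\bar{c}$ by an element of $(\id+A)\Lambda_t\subset\Z\bar{l}_0$, and by such an adjustment I can arrange $A\bar{c}+\bar{c}\in\{0,\bar{l}_0\}$, that is $\bar{c}_\parallel\in\{0,\tfrac12\bar{l}_0\}$, so $\bar{c}\in L^\perp$ or $\bar{c}\in\tfrac12\bar{l}_0+L^\perp$. Rewriting $\bar{x}\mapsto A\bar{x}+\bar{c}$ in glide form then identifies its axis as $\tfrac12\bar{c}+L$ and its glide vector as $\tfrac12(A\bar{c}+\bar{c})=\bar{c}_\parallel$, giving the description in (2).

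The step needing the most care is the bookkeeping at the end of case (2): one must check that $A\bar{c}+\bar{c}$ is well defined modulo $(\id+A)\Lambda_t$ and compute this image separately for $\D_1$ and $\D_{1,4}$, since the two groups behave differently. For $\D_1$ one finds $(\id+A)\Lambda_t=2\Z\bar{l}_0$, so the parity of $m$ in $A\bar{c}+\bar{c}=m\bar{l}_0$ is a conjugacy invariant and both values $0$ and $\bar{l}_0$ genuinely occur, whereas for $\D_{1,4}$ the image is larger. Tracking which normalizations can be absorbed by a translation that preserves both $\Lambda$ and $\Lambda_t$, rather than only $\Lambda_t$, is precisely what separates the distinct conjugacy classes recorded in Table~\ref{table: finite group of aff}, and this is the part I expect to be the main obstacle and that is completed in the case-by-case discussion accompanying the table.
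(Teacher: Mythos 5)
Your proof is correct and follows essentially the same route as the paper's: conjugate the point group into $\orthogonal(2)$, dispose of the translation-only case by normalizing the pair $\Lambda\subset\Lambda_t$ with a linear isomorphism (your Smith normal form step just makes explicit what the paper calls ``a suitable linear isomorphism''), and in the reflection case invoke Lemma \ref{lemma: reflection} to standardize $\Lambda_t=\Lambda^\sq$ and $A$, use $g^2\in R_t$ to place $A\bar{c}+\bar{c}$ in $\Lambda_t\cap L=\Z\bar{l}_0$, and absorb $(\id+A)\Lambda_t$ by changing the coset representative to reach $A\bar{c}+\bar{c}\in\{0,\bar{l}_0\}$. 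Your closing paragraph about which of the two values of $A\bar{c}+\bar{c}$ are genuinely non-conjugate is not required for the proposition as stated, which only asserts that $R$ is conjugate to one of the listed forms, so nothing essential is left open there.
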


\begin{proof}
First, if $R/R_t$ is trivial, then $R=R_t$ and we easily obtain (1)
by a suitable linear isomorphism. For (2), assume that $R/R_t$ is
nontrivial. Then, $R/R_t$ in $\Aff(\R^2, \Lambda)$ is order two, and
there exists a linear isomorphism $\bar{\eta} : \R^2 \rightarrow
\R^2$ such that $\bar{\eta}(\Lambda_t) = \Lambda^\sq$ and the
conjugate action on $\R^2$ induced by $\bar{\eta}$ is $\D_1$ or
$\D_{1,4}$ in $\Aff (\R^2, \Lambda^\sq)$ by Lemma \ref{lemma:
reflection}. In the remaining proof, we use the conjugate
$\eta^*(R)$-action on $\R^2 / \bar{\eta}(\Lambda)$ induced by
$\bar{\eta},$ and denote $\eta^*(R),$ $\bar{\eta} (\Lambda),$
$\bar{\eta} (\Lambda_t)$ just by $R,$ $\Lambda,$ $\Lambda_t,$
respectively. Then, $\Lambda_t=\Lambda^\sq$ is obtained by
(\ref{equation: conjugate action}). Pick an element $g$ in $R-R_t$
which is $[A\bar{x}+\bar{c}]$ with $\langle A \rangle = \D_1$ or
$\D_{1,4}.$ By $g^2 \in R_t,$ we obtain $A\bar{c}+\bar{c} \in
\Lambda^\sq$ which is fixed by $A,$ i.e. $A\bar{c}+\bar{c} \in L.$
Since $[A\bar{x}+(\bar{c}+\bar{\lambda})] \in R$ for each
$\bar{\lambda} \in \Lambda^\sq,$
$A(\bar{c}+\bar{\lambda})+(\bar{c}+\bar{\lambda})$ is also in
$\Lambda^\sq$ so that we may assume that $A\bar{c}+\bar{c}$ is equal
to $0$ or $\bar{l}_0.$ Then, $\bar{c}$ should be in $L^\perp$ or
$\frac 1 2 \bar{l}_0 + L^\perp$ because $A$ is the reflection in
$L,$ and (2) is easily obtained.
\end{proof}

\begin{figure}[ht]
\begin{center}

\mbox{

\subfigure[$R/R_t=D_1$ with $\bar{c}=(0,0)$]{
\begin{pspicture}(-1.5,-2)(4,2)\footnotesize

\pspolygon[fillstyle=solid,fillcolor=lightgray,linestyle=none](0,0)(0,0.5)(1,0.5)(1,0)(0,0)

\psaxes[labels=none,ticks=none]{->}(0,0)(-1.5,-2)(3.5,2)

\psgrid[gridwidth=2pt, subgridwidth=2pt, gridcolor=black,
subgridcolor=black, subgriddiv=2, griddots=1, subgriddots=1,
gridlabels=7pt](0,0)(-0.4,-1.4)(2.4,1.4)

\psline[linewidth=1.5pt](-1,0)(3,0)

\uput[ur](3,0){$\frac 1 2 \bar{c}+L$}

\psline[linewidth=1.5pt](0,0)(0,0.5)(1,0.5)(1,0)(0,0)

\psline[linewidth=0.3pt](0,0)(0,0.5)(2,0.5)(2,0)(0,0)
\psline[linewidth=0.3pt](1,0.5)(1,0)

\rput(0,0.5){ \psline[linewidth=0.3pt](0,0)(0,0.5)(2,0.5)(2,0)(0,0)
\psline[linewidth=0.3pt](1,0.5)(1,0) }

\rput(0,-0.5){ \psline[linewidth=0.3pt](0,0)(0,0.5)(2,0.5)(2,0)(0,0)
\psline[linewidth=0.3pt](1,0.5)(1,0) }

\rput(0,-1){ \psline[linewidth=0.3pt](0,0)(0,0.5)(2,0.5)(2,0)(0,0)
\psline[linewidth=0.3pt](1,0.5)(1,0) }

\end{pspicture}
}

\subfigure[$R/R_t=D_{1,4}$ with $\bar{c}=(0,0)$]{
\begin{pspicture}(-1.5,-1)(4,3)\footnotesize

\pspolygon[fillstyle=solid,fillcolor=lightgray,linestyle=none](0,0)(0.5,0.5)(0,1)(-0.5,0.5)(0,0)

\psaxes[labels=none,ticks=none]{->}(0,0)(-1.5,-1)(3.5,3)

\psgrid[gridwidth=2pt, subgridwidth=2pt, gridcolor=black,
subgridcolor=black, subgriddiv=2, griddots=1, subgriddots=1,
gridlabels=7pt](0,0)(-0.5,-0.5)(2.4,2.4)

\psline[linewidth=1.5pt](-1,-1)(3,3)

\uput[r](2.7,2.5){$\frac 1 2 \bar{c}+L$}

\psline[linewidth=1.5pt](0,0)(0.5,0.5)(0,1)(-0.5,0.5)(0,0)

\psline[linewidth=0.3pt](0,0)(1,1)(0.5,1.5)(-0.5,0.5)(0,0)
\psline[linewidth=0.3pt](0.5,0.5)(0,1)

\rput(1,1){
\psline[linewidth=0.3pt](0,0)(1,1)(0.5,1.5)(-0.5,0.5)(0,0)
\psline[linewidth=0.3pt](0.5,0.5)(0,1) }

\rput(0.5,-0.5){
\psline[linewidth=0.3pt](0,0)(1,1)(0.5,1.5)(-0.5,0.5)(0,0)
\psline[linewidth=0.3pt](0.5,0.5)(0,1) }

\rput(1.5,0.5){
\psline[linewidth=0.3pt](0,0)(1,1)(0.5,1.5)(-0.5,0.5)(0,0)
\psline[linewidth=0.3pt](0.5,0.5)(0,1) }

\end{pspicture}
} }

\mbox{

\subfigure[$R/R_t=D_1$ with $\bar{c}=(1/2,0)$]{
\begin{pspicture}(-1.5,-2)(4,2)\footnotesize

\pspolygon[fillstyle=solid,fillcolor=lightgray,linestyle=none](0.25,0.5)(0.75,0.5)(0.75,-0.5)(0.25,-0.5)(0.25,0.5)

\psaxes[labels=none,ticks=none]{->}(0,0)(-1.5,-2)(3.5,2)

\psgrid[gridwidth=2pt, subgridwidth=2pt, gridcolor=black,
subgridcolor=black, subgriddiv=2, griddots=1, subgriddots=1,
gridlabels=7pt](0,0)(-0.4,-1.4)(2.4,1.4)

\psline[linewidth=1.5pt](-1,0)(3,0)

\uput[ur](3,0){$\frac 1 2 \bar{c}+L$}

\psline[linewidth=1.5pt](0.25,0.5)(0.75,0.5)(0.75,-0.5)(0.25,-0.5)(0.25,0.5)

\rput(0.25,0){

\psline[linewidth=0.3pt](0,0.5)(1,0.5)(1,-0.5)(0,-0.5)(0,0.5)
\psline[linewidth=0.3pt](0.5,0.5)(0.5,-0.5)

\rput(1,0){
\psline[linewidth=0.3pt](0,0.5)(1,0.5)(1,-0.5)(0,-0.5)(0,0.5)
\psline[linewidth=0.3pt](0.5,0.5)(0.5,-0.5) }

\rput(0,1){
\psline[linewidth=0.3pt](0,0.5)(1,0.5)(1,-0.5)(0,-0.5)(0,0.5)
\psline[linewidth=0.3pt](0.5,0.5)(0.5,-0.5) }

\rput(1,1){
\psline[linewidth=0.3pt](0,0.5)(1,0.5)(1,-0.5)(0,-0.5)(0,0.5)
\psline[linewidth=0.3pt](0.5,0.5)(0.5,-0.5) }

\rput(0,-1){
\psline[linewidth=0.3pt](0,0.5)(1,0.5)(1,-0.5)(0,-0.5)(0,0.5)
\psline[linewidth=0.3pt](0.5,0.5)(0.5,-0.5) }

\rput(1,-1){
\psline[linewidth=0.3pt](0,0.5)(1,0.5)(1,-0.5)(0,-0.5)(0,0.5)
\psline[linewidth=0.3pt](0.5,0.5)(0.5,-0.5) } }

\end{pspicture}
}

\subfigure[$R/R_t=D_{1,4}$ with $\bar{c}=(1/2,1/2)$]{
\begin{pspicture}(-1.5,-1)(4,3)\footnotesize

\pspolygon[fillstyle=solid,fillcolor=lightgray,linestyle=none](0,0.5)(0.5,1)(1,0.5)(0.5,0)(0,0.5)

\psaxes[labels=none,ticks=none]{->}(0,0)(-1.5,-1)(3.5,3)

\psgrid[gridwidth=2pt, subgridwidth=2pt, gridcolor=black,
subgridcolor=black, subgriddiv=2, griddots=1, subgriddots=1,
gridlabels=7pt](0,0)(-0.5,-0.5)(2.4,2.4)

\psline[linewidth=1.5pt](-1,-1)(3,3)

\uput[r](2.7,2.5){$\frac 1 2 \bar{c}+L$}

\psline[linewidth=1.5pt](0,0.5)(0.5,1)(1,0.5)(0.5,0)(0,0.5)

\rput(0.25,0.25){

\psline[linewidth=0.3pt](-0.25,0.25)(0.75,1.25)(1.25,0.75)(0.25,-0.25)(-0.25,0.25)
\psline[linewidth=0.3pt](0.25,0.75)(0.75,0.25)

\rput(1,1){
\psline[linewidth=0.3pt](-0.25,0.25)(0.75,1.25)(1.25,0.75)(0.25,-0.25)(-0.25,0.25)
\psline[linewidth=0.3pt](0.25,0.75)(0.75,0.25) }

\rput(-0.5,0.5){
\psline[linewidth=0.3pt](-0.25,0.25)(0.75,1.25)(1.25,0.75)(0.25,-0.25)(-0.25,0.25)
\psline[linewidth=0.3pt](0.25,0.75)(0.75,0.25) }

\rput(0.5,1.5){
\psline[linewidth=0.3pt](-0.25,0.25)(0.75,1.25)(1.25,0.75)(0.25,-0.25)(-0.25,0.25)
\psline[linewidth=0.3pt](0.25,0.75)(0.75,0.25) }

\rput(0.5,-0.5){
\psline[linewidth=0.3pt](-0.25,0.25)(0.75,1.25)(1.25,0.75)(0.25,-0.25)(-0.25,0.25)
\psline[linewidth=0.3pt](0.25,0.75)(0.75,0.25) }

\rput(1.5,0.5){
\psline[linewidth=0.3pt](-0.25,0.25)(0.75,1.25)(1.25,0.75)(0.25,-0.25)(-0.25,0.25)
\psline[linewidth=0.3pt](0.25,0.75)(0.75,0.25) } }

\end{pspicture}
} }

\end{center}
\caption{\label{figure: polygonal area with reflection} Some
polygonal areas with glides}
\end{figure}


\begin{corollary} \label{corollary: finite group D_1}
To a finite group $R$ in the previous proposition, we assign the
two-dimensional fundamental domain $|P_R|$ in Table \ref{table:
two-dimensional fundamental domain}. Then, the
$\pi_\aff^{-1}(R)$-orbit of $|P_R|$ cover $\R^2$ and there is no
interior intersection between different areas in the orbit. Also,
the $R$-orbit of $\pi(|P_R|)$ cover $\R^2 / \Lambda$ and there is no
interior intersection between different areas in the orbit. In
Figure \ref{figure: polygonal area with reflection}, we illustrate
gray colored $|P_R|$ for some cases.
\end{corollary}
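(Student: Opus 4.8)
The plan is to reproduce, in the present setting, the template of Corollary~\ref{corollary: finite group not D_1}: first compute the preimage $\pi_\aff^{-1}(R)$ explicitly as a group of isometries of $\R^2$, then verify the tiling of $\R^2$ by hand in each case, and finally deduce the statement on $\R^2/\Lambda$ by pushing the tiling down the quotient map. For case (1) of Proposition~\ref{proposition: finite group D_1} this is immediate: there $R=R_t$, so $\pi_\aff^{-1}(R)=\Lambda_t=\Lambda^\sq$ and $|P_R|=P_4^\sq$ is the closed unit square, whose $\Lambda^\sq$-translates tile $\R^2$ in the obvious way.

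For case (2), write $s$ for the affine map $\bar{x}\mapsto A\bar{x}+\bar{c}$, so that $\pi_\aff^{-1}(R)=\langle\Lambda^\sq,s\rangle$ (this generated group really is the full preimage, since it surjects onto $R$ and contains $\Lambda=\ker\pi_\aff\subset\Lambda^\sq$). First I would record the structural identity $s^2(\bar{x})=\bar{x}+(A\bar{c}+\bar{c})$, valid because $A^2=\id$; as $A\bar{c}+\bar{c}\in\{0,\bar{l}_0\}\subset\Lambda^\sq$ and $A$ preserves $\Lambda^\sq$, the translation subgroup has index two and
\[
\pi_\aff^{-1}(R)=\Lambda^\sq\ \sqcup\ \Lambda^\sq\, s.
\]
Decomposing $\bar{c}=\bar{c}_L+\bar{c}_\perp$ along $L$ and $L^\perp$ one checks $A\bar{c}+\bar{c}=2\bar{c}_L$, so the subcases $A\bar{c}+\bar{c}=0$ and $A\bar{c}+\bar{c}=\bar{l}_0$ correspond exactly to $s$ being a pure reflection in the axis $\tfrac12\bar{c}+L$, respectively a genuine glide through that axis with glide vector $\tfrac12(A\bar{c}+\bar{c})$. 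This is the normalization already recorded in Proposition~\ref{proposition: finite group D_1}, and it explains the shift by $\tfrac12\bar{c}$ in the $|P_R|$-entries of Table~\ref{table: two-dimensional fundamental domain}.

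The core of the argument is then the following purely geometric claim, to be checked case by case against the explicit domains and the pictures in Figure~\ref{figure: polygonal area with reflection}: the polygon $|P_R|$ and its image $s\,|P_R|$ have disjoint interiors, and their union is a fundamental cell for $\Lambda^\sq$. Granting this, the decomposition $\pi_\aff^{-1}(R)=\Lambda^\sq\sqcup\Lambda^\sq s$ immediately upgrades it to a tiling of $\R^2$ by the $\pi_\aff^{-1}(R)$-orbit of $|P_R|$, which is the first assertion. Concretely, in the $\D_1$ rows the axis is horizontal and $|P_R|$ is (up to the translation by $\tfrac12\bar{c}$) a horizontal half-cell whose reflection image is the complementary half when $A\bar{c}+\bar{c}=0$, and a vertical half-cell whose glide image is its horizontal half-period translate when $A\bar{c}+\bar{c}=\bar{l}_0$; in the $\D_{1,4}$ rows the axis is the diagonal and $|P_R|$ is the $45^\circ$-rotated square $P_4^{\prime\sq}$ (respectively its $(\tfrac14,-\tfrac14)$-translate), whose image under the diagonal reflection or glide is the other half of a unit cell. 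The second assertion then follows from the first exactly as in Corollary~\ref{corollary: finite group not D_1}: since $\Lambda\subset\Lambda^\sq\subset\pi_\aff^{-1}(R)$ and $\pi_\aff$ carries $\pi_\aff^{-1}(R)$ onto $R$ intertwining the two natural actions, every point of $\R^2/\Lambda$ lifts into some $g\,|P_R|$ with $g\in\pi_\aff^{-1}(R)$, giving covering by $R$-translates of $\pi(|P_R|)$, while any interior overlap of two such translates would lift, using $\Lambda\subset\pi_\aff^{-1}(R)$, to an interior overlap of two distinct $\pi_\aff^{-1}(R)$-translates of $|P_R|$, contradicting the first assertion.

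I expect the main obstacle to lie in the case-by-case geometry of the core claim, and especially in the two genuine-glide cases $A\bar{c}+\bar{c}=\bar{l}_0$: there $s\,|P_R|$ is produced from $|P_R|$ by a reflection composed with a half-period translation rather than by a plain reflection, so one must verify carefully that the moved copy still completes $|P_R|$ to a single $\Lambda^\sq$-cell with no gap or overlap. The diagonal-axis $\D_{1,4}$ cases add the minor bookkeeping of confirming that the rhombus $P_4^{\prime\sq}$, whose vertices are given explicitly, is a genuine fundamental half-cell for the centered pattern; this is routine from the coordinates.
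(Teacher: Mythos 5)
Your proposal is correct and follows essentially the same route as the paper: the paper's proof likewise identifies $\pi_\aff^{-1}(R)$ as $\langle \Lambda_t, A\bar{x}+\bar{c}\rangle$, verifies the planar tiling case by case, and deduces the torus statement from it (mirroring Corollary \ref{corollary: finite group not D_1}). You have merely filled in the details the paper leaves implicit, in particular the index-two decomposition $\pi_\aff^{-1}(R)=\Lambda^\sq\sqcup\Lambda^\sq s$ and the explicit half-cell checks, all of which are accurate.
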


\begin{proof}
If $R=R_t,$ then proof is easy. Otherwise, $\pi_\aff^{-1} (R)$ is
equal to $\langle \Lambda_t, A\bar{x}+\bar{c} \rangle,$ and proof is
done similar to Corollary \ref{corollary: finite group not D_1}.
\end{proof}

In summary, any finite subgroup $R$ in $\Aff(\R^2/\Lambda)$ for some
$\Lambda$ is conjugate to one of groups in Table \ref{table: finite
group of aff}.

\section{one-dimensional $\rho(G_\chi)$ case}
 \label{section: nonzero dimensional case}

In this section, we prove Theorem \ref{main: nonzero-dimensional}.
First, we would list all one-dimensional subgroups of $\Aff(\R^2 /
\Lambda)$ for some $\Lambda$ up to conjugacy. Let $S^1$ be the
rotation group $\{ [\bar{x} +(t,0)] \in \Aff(\R^2 / \Lambda^\sq) ~|~
t \in [0,1] \}.$ Let $M(a)$ be the matrix $\left(
             \begin{array}{cc}
               1 & a  \\
               0 & -1 \\
             \end{array}
           \right)$
for $a \in \Z.$

\begin{lemma}
 \label{lemma: one-dimensional subgroups}
Let a compact one-dimensional group $R$ act naturally on
$\R^2/\Lambda$ for some $\Lambda.$ Then, $R$ is conjugate to one of
the following subgroups in $\Aff (\R^2 / \Lambda^\sq)$:
\begin{enumerate}
  \item $\langle S^1, [\bar{x}+(0, \frac 1 n)] \rangle$ for
  some $n \in \N,$
  \item $\langle S^1, [\bar{x}+(0, \frac 1 n)],
  [-\bar{x}+(0, t_1)] \rangle$ for some $n \in \N,$ $t_1 \in [0,1],$
  \item $\langle K, [M(a) \bar{x} + (0, t_2)] \rangle$
  or $\langle K, [-M(a) \bar{x} + (0, t_2)] \rangle$ for some $a \in
  \Z,$ $t_2 \in [0,1]$ where $K$ is equal to one of (1), (2)
  and $[\pm M(a) \bar{x} + (0, t_2)]^2 \in K.$
\end{enumerate}
\end{lemma}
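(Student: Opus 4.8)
The plan is to analyze $R$ through its identity component and the finite group of its linear parts. Write $R^\circ$ for the identity component of $R$. Since $\Iso(\R^2,\Lambda)$ is discrete, the identity component of $\Aff(\R^2/\Lambda)$ is exactly the translation torus $\Aff_t(\R^2/\Lambda)$, so $R^\circ$ is a compact connected one-dimensional group of translations, i.e. a circle determined by a primitive direction $\bar{l}\in\Lambda$. First I would pick a basis $\{\bar{l},\bar{m}\}$ of $\Lambda$ and conjugate by the linear isomorphism $\bar\eta$ sending $\bar{l}\mapsto(1,0)$ and $\bar{m}\mapsto(0,1)$; as with the conjugations used in Proposition \ref{proposition: finite group not D_1} and Proposition \ref{proposition: finite group D_1}, this is permissible and simultaneously arranges $\Lambda=\Lambda^\sq$ and $R^\circ=S^1$. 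From here on I work inside $\Aff(\R^2/\Lambda^\sq)$ and retain only conjugations preserving this normalization, namely translations and linear parts fixing the line $\R\cdot(1,0)$.

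Next I would pin down the translation part $R_t=R\cap\Aff_t(\R^2/\Lambda^\sq)$. It is a closed subgroup of the torus containing the circle $S^1=\{[\bar x+(t,0)]\}$, so its image in $\Aff_t(\R^2/\Lambda^\sq)/S^1\cong\R/\Z$ is a closed subgroup; since $R$ is one-dimensional this image is proper, hence a finite cyclic group $\Z_n$. Choosing a coset representative with vanishing $x$-coordinate then gives $R_t=\langle S^1,[\bar x+(0,\tfrac1n)]\rangle$ for some $n\in\N$, which is the building block (1).

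Then I would determine the admissible linear parts via the homomorphism $\phi\colon R\to\Iso(\R^2,\Lambda^\sq)$, $[A\bar x+\bar c]\mapsto A$, whose kernel is $R_t$. Because $R^\circ\lhd R$, conjugating $[\bar x+(t,0)]\in S^1$ by $g=[A\bar x+\bar c]$ yields $[\bar x+tA(1,0)]$, which must again lie in $S^1$; hence $A(1,0)$ is a primitive lattice vector on the $x$-axis, so $A(1,0)=\pm(1,0)$. Compactness forces $\phi(R)$ to be a finite subgroup of the discrete group $\Iso(\R^2,\Lambda^\sq)$, so every such $A$ has finite order; writing $A=\left(\begin{smallmatrix}\pm1 & p\\ 0 & q\end{smallmatrix}\right)$ with $q=\pm1$ and imposing finite order forces $A\in\{\id,-\id,M(a),-M(a)\mid a\in\Z\}$, the shears $\left(\begin{smallmatrix}1&p\\0&1\end{smallmatrix}\right)$ and $\left(\begin{smallmatrix}-1&p\\0&-1\end{smallmatrix}\right)$ with $p\ne0$ being excluded as infinite order. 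A short check shows that the only finite subgroups lying among these matrices are $\{\id\}$, $\{\id,-\id\}$, $\{\id,\pm M(a)\}$, and the Klein four group $\{\id,-\id,M(a),-M(a)\}$, because any two distinct $M(a),M(b)$, or $M(a),-M(b)$ with $a\ne b$, multiply to an infinite-order shear. These four possibilities give precisely case (1), case (2), case (3) with $K$ of type (1), and case (3) with $K$ of type (2).

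Finally I would normalize the translation parts of the non-translation generators. For a generator $g$ with linear part $B\in\{-\id,\pm M(a)\}$, since $B(1,0)=\pm(1,0)$, composing $g$ with an element $[\bar x+(s_0,0)]\in S^1\subset R$ replaces the $x$-component $c_x$ of its translation vector by $c_x\pm s_0$; a suitable $s_0$ then produces a representative $[B\bar x+(0,t)]$ with $t\in[0,1]$, giving the vectors $(0,t_1)$ and $(0,t_2)$ in (2) and (3). The relation $[\pm M(a)\bar x+(0,t_2)]^2\in K$ is automatic, as this square has trivial linear part and hence lies in $R_t\subseteq K$. The main obstacle is the third step: verifying that the three constraints $A(1,0)=\pm(1,0)$, $A\in\Iso(\R^2,\Lambda^\sq)$, and $A$ of finite order cut the linear parts down to exactly these four matrix types and that they assemble only into the listed subgroups. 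Once the admissible linear parts are fixed, the normalizations in the remaining steps are routine.
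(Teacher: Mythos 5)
Your proof is correct and follows essentially the same route as the paper's: normalize the identity component to the standard $S^1$ with $\Lambda=\Lambda^\sq$, use normality of $S^1$ in $R$ to force the linear part $A$ to satisfy $A(1,0)=\pm(1,0)$, combine this with finite order and $A(\Lambda^\sq)=\Lambda^\sq$ to get $A\in\{\pm\id,\pm M(a)\}$, and then normalize the translation vectors by composing with elements of $S^1$. Your explicit enumeration of the finite subgroups among the admissible linear parts is just a more detailed repackaging of the paper's orientation-preserving/reversing dichotomy, and your algebraic computation of $gzg^{-1}$ is equivalent to the paper's geometric ``horizontal orbits go to horizontal orbits'' argument.
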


\begin{proof}
If $R$ is connected, then it is easy that $R$ is conjugate to $S^1$
in $\Aff(\R^2 / \Lambda^\sq).$ So, if $R$ is not connected, then we
may assume that $R$ is a subgroup in $\Aff (\R^2 / \Lambda^\sq)$
whose identity component is $S^1.$ For arbitrary $g = [A \bar{x} +
\bar{c}] \in R$ and $z \in S^1,$
\begin{equation}
\tag{*} g z [ \bar{x} ] = (g z g^{-1}) g [ \bar{x} ]
\end{equation}
where $g z g^{-1} \in S^1$ by normality of $S^1$ in $R.$ Since
$S^1$-orbits in $\R^2 / \Lambda^\sq$ are horizontal, the right term
of (*) is horizontal when $g z g^{-1}$ moves in $S^1.$ So, $g$ moves
the horizontal $S^1$-orbit $z [ \bar{x} ]$ for $z \in S^1$ to a
horizontal $S^1$-orbit. From this, we can observe that $A$ sends the
$x$-axis to the $x$-axis itself, i.e. the $(2,1)$ entry of $A$
should be zero. Moreover, observe that $A$ has a finite order. From
this and $A(\Lambda^\sq)=\Lambda^\sq,$ we can show that possible
$A$'s are $\pm \id, \pm M(a)$ for $a \in \Z$ by simple calculation.
Also, we may assume that the first entry of $\bar{c}$ is 0 because
$zg \in R$ for each $z \in S^1.$ By these arguments, we would show
that possible $R$ is one of (1)$\sim$(3).

First, assume that the $R$-action is orient preserving. So, $A$ of
each $g = [A \bar{x} + \bar{c}]$ in $R$ is equal to $\pm \id.$ Since
the subgroup of elements of the form $[\bar{x} + (0,t)]$ in $R$ is
cyclic, it is generated by an element of the form $[\bar{x} +
(0,1/n)]$ for some $n \in \N.$ So, if $R$ contains no element of the
form $[-\bar{x} + (0, t_1)],$ then $R$ is equal to (1). And, if $R$
contains an element of the form $[-\bar{x} + (0, t_1)],$ then $R$ is
equal to (2).

Next, assume that the $R$-action is not orient preserving. Then, $R$
contains an element $[\pm M(a) \bar{x} + (0, t_2)]$ for some $a \in
\Z,$ $t_2 \in [0,1].$ So, $R$ is equal to $\langle R_0, [\pm M(a)
\bar{x} + (0, t_2)] \rangle.$ From this, we obtain this lemma
because $R_0$ is equal to (1) or (2).
\end{proof}

For each action of the above lemma, we can find a circle in
two-torus by which equivariance of the torus is simply expressed.

\begin{proposition}
 \label{proposition: affine circle}
Let a compact one-dimensional group $R$ act naturally on
$\R^2/\Lambda$ for some $\Lambda.$ Then, there exists a circle $C$
in the torus such that the map $R \times_{R_C} C \rightarrow \R^2 /
\Lambda,$ $[g, x] \mapsto gx$ for $g \in R, x \in C$ is an
equivariant isomorphism where $R_C$ is the subgroup of $R$
preserving $C.$
\end{proposition}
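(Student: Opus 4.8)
The plan is to exploit the classification in Lemma \ref{lemma: one-dimensional subgroups}, which reduces the problem to the three explicit families of subgroups $R$ in $\Aff(\R^2/\Lambda^\sq)$. In every case the identity component is the horizontal rotation circle $S^1 = \{[\bar{x}+(t,0)]\}$, so the orbit foliation of $\R^2/\Lambda^\sq$ under $R_0$ is by horizontal circles. The natural candidate for $C$ is a \emph{transversal} to this foliation, namely a vertical circle. Concretely, I would take $C = \pi(\{0\}\times\R) = \{[(0,s)] \in \R^2/\Lambda^\sq \mid s \in \R\}$, which is indeed a circle since $(0,1) \in \Lambda^\sq$. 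The point is that $C$ meets every horizontal $S^1$-orbit, and the elements of $R-R_0$ that I must account for all have linear part $\pm\id$ or $\pm M(a)$, each of which preserves the $x$-axis (hence the vertical direction up to the shear $M(a)$); so one expects the $R$-translates of $C$ to sweep out the whole torus with controlled overlaps.

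The key steps I would carry out, case by case, are the following. First, I would compute $R_C$, the stabilizer of the circle $C$ inside $R$. Since $C$ is the image of the $y$-axis, an element $[A\bar{x}+(0,t)]$ preserves $C$ exactly when $A$ fixes the $y$-axis setwise, i.e. when $A \in \{\pm\id\}$ (note $M(a)$ does \emph{not} preserve the $y$-axis when $a \ne 0$, as it shears it). Thus $R_C$ is the subgroup of orientation-behaviour governed by the $[\pm\bar{x}+(0,t)]$ elements together with the discrete vertical translations $[\bar{x}+(0,1/n)]$, which is a finite extension of a finite cyclic group — in particular $R_C$ is \emph{finite}. Second, I would verify that the natural map $R\times_{R_C}C \to \R^2/\Lambda^\sq$, $[g,x]\mapsto gx$, is surjective: given any $[(u,v)] \in \R^2/\Lambda^\sq$, the $S^1$-action slides it horizontally to a point with first coordinate $0$, which lies on $C$, so $[(u,v)]$ is in the $R_0$-orbit (hence $R$-orbit) of $C$. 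Third, I would check injectivity on the associated bundle: if $g_1 x_1 = g_2 x_2$ with $x_1,x_2\in C$, then $g_2^{-1}g_1$ carries a point of $C$ to a point of $C$; using that $S^1$ acts freely and transitively on each horizontal orbit while $C$ is a transversal meeting each orbit in exactly $|R_C \cap S^1|$ points, I would deduce $g_2^{-1}g_1 \in R_C$ and $[g_1,x_1]=[g_2,x_2]$. Equivariance of the map is immediate from its definition.

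The main obstacle is the third (non-orientation-preserving) case of Lemma \ref{lemma: one-dimensional subgroups}, where $R$ contains an element with linear part $\pm M(a)$. Here $C$ as chosen above is \emph{not} preserved by such elements, but that is fine for the induction-type argument — what matters is that $M(a)$ still maps horizontal orbits to horizontal orbits (its $(2,1)$-entry is zero, as established in the proof of Lemma \ref{lemma: one-dimensional subgroups}), so $C$ remains a global transversal and surjectivity is unaffected; the delicate point is recomputing $R_C$ and confirming that the shearing elements land \emph{outside} $R_C$ so that they contribute genuinely distinct cosets rather than collapsing the orbit count. I expect the cleanest way to handle all three cases uniformly is to observe that $R/R_0$ is finite and acts on the set of horizontal $S^1$-orbits, that $C$ is a complete transversal, and that $R_C$ is precisely the isotropy of the transversal; then the isomorphism $R\times_{R_C}C\cong\R^2/\Lambda^\sq$ follows from the general slice-type principle that a compact group acting on a manifold with a single orbit type along a transversal circle induces such a twisted-product decomposition. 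Verifying the hypotheses of that principle — freeness of $S^1$ on orbits and finiteness of $R_C$ — is the technical heart, but it is routine given the explicit matrices $\pm\id,\pm M(a)$ and the explicit vertical translations.
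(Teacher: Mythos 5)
Your vertical circle $C=\pi(\{0\}\times\R)$ is exactly the paper's choice in cases (1) and (2) of Lemma \ref{lemma: one-dimensional subgroups}, and there your argument is sound. The genuine gap is in case (3) with $a\neq 0$. For $R\times_{R_C}C\to\R^2/\Lambda$ to be injective you need $gC\cap C=\varnothing$ for every $g\in R-R_C$ (this is precisely the criterion the paper states at the start of its proof); it is not enough that $C$ be a transversal to the horizontal $S^1$-orbits. Take $g=[M(a)\bar{x}+(0,t_2)]$. Its linear part sends the vertical direction $(0,1)$ to $(a,-1)$, so $gC$ is a circle in the homology class $(a,-1)$ while $C$ has class $(0,1)$; their intersection number is $-a\neq 0$, so $gC\cap C\neq\varnothing$ even though $gC\neq C$, i.e.\ $g\notin R_C$. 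Any point of $gC\cap C$ then has the two distinct preimages $[g,x]$ and $[e,gx]$, so injectivity fails. Your remark that the shearing elements should ``land outside $R_C$ so that they contribute genuinely distinct cosets'' has the logic backwards: an element outside $R_C$ must move $C$ entirely off itself, and a shear does not do that to a vertical circle. The appeal to a general slice-type principle cannot rescue this, because the hypothesis it would need is exactly the disjointness that fails.

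The paper's fix is to choose $C$ along the $(-1)$-eigendirection of $M(a)$: it takes $C=\pi(L)$ for $L$ the line $y=-\tfrac{2}{a}x$ (direction $(a,-2)\in\Lambda^\sq$, so $\pi(L)$ is a circle, and $M(a)(a,-2)=-(a,-2)$), reverting to $x=0$ only when $a=0$. With this choice every element of $R$ carries $C$ to a parallel translate of $C$; parallel translates on the torus are equal or disjoint, so both $R\cdot C=\R^2/\Lambda$ and $gC\cap C=\varnothing$ for $g\in R-R_C$ hold. The remainder of your argument (surjectivity via the $S^1$-action, equivariance) goes through once $C$ is chosen this way, so the repair is local to the third case but it is essential.
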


\begin{proof}
It suffices to find $C$ such that $R \cdot C = \R^2 / \Lambda$ and
$g C \cap C = \varnothing$ for any $g \in R - R_C.$ As in the
previous lemma, we may assume that $R$ is contained in $\Aff
(\R^2/\Lambda^\sq),$ and that its identity component is $S^1.$ When
the $R$-action is orient-preserving, we may assume that $R$ is equal
to (1) or (2) in Lemma \ref{lemma: one-dimensional subgroups}. Let
$C$ be the circle which is the image of the $y$-axis by $\pi.$ Then,
it is checked that $R$-orbits of $C$ are all translations of $C,$
and from this it is easily shown that $C$ becomes a wanted circle.
Therefore, we obtain a proof.

When the $R$-action is not orient-preserving, we prove it only for
the case of $R = \langle K, [ M(a) \bar{x} + (0, t_2)] \rangle$ when
$K$ is equal to (2) of Lemma \ref{lemma: one-dimensional subgroups}.
Proof for the other case is similar. Let $C$ be the image of $y = -
\frac 2 a x$ (or $x=0$ when $a=0$) by $\pi.$ Then, it is checked
that $R$-orbits of $C$ are all translations of $C,$ and from this it
is easily shown that $C$ becomes a wanted circle. Therefore, we
obtain a proof.
\end{proof}

By using this proposition, we can prove Theorem \ref{main:
nonzero-dimensional}.

\begin{proof}[Proof of Theorem \ref{main: nonzero-dimensional}]
We may assume that $\Lambda = \Lambda^\sq$ and $\rho(G_\chi)$ is one
of groups in Lemma \ref{lemma: one-dimensional subgroups}. The torus
$\R^2/\Lambda$ is equivariantly isomorphic to $G_\chi
\times_{(G_\chi)_C} C$ for the circle $C$ of Proposition
\ref{proposition: affine circle}. So, the map
\begin{equation*}
\Vect_{(G_\chi)_C} (C, \chi) \rightarrow \Vect_{G_\chi}
(\R^2/\Lambda, \chi), \quad [F] \mapsto [ G_\chi \times_{(G_\chi)_C}
F]
\end{equation*}
is the inverse of our map.
\end{proof}

\section{$\R^2/\Lambda$ as the quotient of an equivariant simplicial complex
for a finite subgroup of $\Aff (\R^2/\Lambda)$}
 \label{section: equivariant simplicial complex}

In this section, we consider $\R^2/\Lambda$ as the quotient of the
underlying space of an equivariant simplicial complex which will be
denoted by $\lineK_\rho.$ Given a finite subgroup $R$ of
$\Aff(\R^2/\Lambda)$ in Table \ref{table: finite group of aff} and
its natural action on $\R^2/\Lambda,$ we investigate equivariance of
$\R^2/\Lambda$ by calculating isotropy groups at some points of it.

Let $\F_R$ be a set of $\bar{g} \cdot |P_R|$'s for $\bar{g} \in
\pi_\aff^{-1} (R)$ such that
\begin{enumerate}
  \item $|P_R| \in \F_R,$
  \item $\bigcup_{\bar{g} \cdot |P_R| \in \F_R} ~ \pi(\bar{g} \cdot |P_R|) =
\R^2/\Lambda,$
  \item $\pi(\bar{g} \cdot |P_R|) ~ \cap ~ \pi(\bar{g}^\prime \cdot |P_R|)$
has no interior point for any different $\bar{g} \cdot |P_R|$ and
$\bar{g}^\prime \cdot |P_R|$ in $\F_R.$
\end{enumerate}
Such an $\F_R$ exists by Corollary \ref{corollary: finite group not
D_1} and Corollary \ref{corollary: finite group D_1}. We pick an
$\F_R$ for each $R.$ We would consider the disjoint union
$\amalg_{\bar{g} \cdot |P_R| \in \F_R} \bar{g} \cdot |P_R|$ as the
underlying space of a two-dimensional simplicial complex. As in
Introduction, we allow that a face of a two-dimensional simplicial
complex need not be a triangle, and we consider an $n$-gon as the
simplicial complex with one face, $n$ edges, $n$ vertices. Then,
denote by $\lineK_R$ the natural simplicial complex structure of
$\amalg_{\bar{g} \cdot |P_R| \in \F_R} ~ \bar{g} \cdot |P_R|.$ We
denote simply by $\pi$ the quotient map from $|\lineK_R|$ to
$\R^2/\Lambda$ sending each point $\bar{x}$ in some $\bar{g} \cdot
|P_R| \subset |\lineK_R|$ to $\pi(\bar{x})$ in $\R^2/\Lambda$ where
$\bar{x}$ is regarded as a point in $\R^2.$ By definition,
$\pi|_{(\bar{g} \cdot |P_R|)^\circ}$ is bijective for each $\bar{g}
\cdot |P_R| \in \F_R$ so that the $R$-action on $\R^2/\Lambda$
induces the $R$-action on $\lineK_R$ and $|\lineK_R|$ such that
$\pi$ is equivariant. In general, $\pi$ does not induce the
equivariant simplicial complex structure on $\R^2/\Lambda$ from
$\lineK_\rho$ as the example illustrated in Figure \ref{figure: i
and p}.

Next, we define some notations on $\lineK_R.$ We use notations
$\bar{v}, \bar{e}, \bar{f}$ to denote a vertex, an edge, a face of
$\lineK_R,$ respectively. We use the notation $\bar{x}$ to denote an
arbitrary point of $|\lineK_R|.$ And, we use notations $v,$ $x,$
$|e|,$ $b(e),$ $|f|,$ $b(f)$ to denote images $\pi (\bar{v}), \pi
(\bar{x}), \pi(|\bar{e}|), \pi (b(\bar{e})),$ $\pi(|\bar{f}|),
\pi(b(\bar{f})),$ respectively. Denote by $\bar{f}^{-1}$ the face of
$\lineK_R$ such that $|\bar{f}^{-1}|=|P_R|.$ In Introduction, we
have already defined vertices $\bar{v}^i$'s and edges $\bar{e}^i$'s
of $\bar{f}^{-1}.$ For simplicity, denote $\pi(\bar{v}^i),$
$\pi(|\bar{e}^i|),$ $\pi(b(\bar{e}^i)),$ $\pi(b(\bar{f}^{-1}))$ by
$v^i,$ $|e^i|,$ $b(e^i),$ $b(f^{-1}),$ respectively. Define the
integer $j_R$ as the cardinality of $\pi^{-1}(v^i)$ for $i \in
\Z_{i_R}.$ Let $B$ be the set of barycenters of faces in
$\lineK_\rho$ on which $R$ acts transitively by definition of
$\F_R,$ and $B$ is confused with $\pi(B).$

We would calculate isotropy subgroups at some points of
$\R^2/\Lambda.$ For this, we define some notations on isotropy
subgroups. For each $x \in \R^2/\Lambda,$ $R_x$ is considered as a
subgroup of $\Iso (T_x ~ \R^2/\Lambda).$ Since the tangent space
$T_x ~ \R^2/\Lambda$ at each $x \in \R^2/\Lambda$ inherits the
vector space structure and the usual coordinate system from $\R^2,$
$\Iso(\R^2)$ might be identified with $\Iso(T_x ~ \R^2/\Lambda).$
With this identification, $R/R_t$ is also regarded as a subgroup of
$\Iso (T_x ~ \R^2/\Lambda)$ according to context. Here, we can
observe that $R_x \subset R/R_t$ in $\Iso (T_x ~ \R^2/\Lambda)$ for
each $x \in \R^2/\Lambda.$ Similarly, we can define subgroups $\Z_n,
\D_n, \D_{n,l}$ in $\Iso (T_x ~ \R^2/\Lambda).$ For the calculation,
we give the natural simplicial complex structure on $\R^2$ by
considering $\R^2$ as the union $\bigcup_{\bar{g} \in
\pi_\aff^{-1}(R)} ~ \bar{g} |P_R|,$ and we denote it by
$\complexK_{\R^2}.$

First, we calculate $R_{b(f^{-1})}.$ For a lattice $\Lambda^\prime$
in $\R^2,$ denote by $\Area (\Lambda^\prime)$ the area of the
parallelogram spanned by two basis vectors in $\Lambda^\prime.$
Then, the area of $\R^2/\Lambda$ is equal to $\Area(\Lambda_t) \cdot
|R_t|$ so that we have
\begin{equation}
\label{equation: R_t and B} |R_t| = \frac {\Area(|P_R|)}
{\Area(\Lambda_t)} \cdot |B|
\end{equation}
because the area of $\R^2/\Lambda$ is equal to $\Area(|P_R|) \cdot
|B|$ where $\Area(|P_R|)$ is the area of $|P_R|.$

\begin{proposition} \label{proposition: isotropy of face}
For each $R$ in Table \ref{table: finite group of aff},
$R_{b(f^{-1})}$ is listed in Table \ref{table: isotropy of face}.
\end{proposition}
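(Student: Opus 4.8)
The plan is to compute the isotropy subgroup $R_{b(f^{-1})}$ for each finite group $R$ listed in Table \ref{table: finite group of aff} by combining two facts already established in the excerpt: first, that $R_x \subset R/R_t$ inside $\Iso(T_x\,\R^2/\Lambda)$ for every $x$, so the isotropy subgroup at the barycenter is a subgroup of the point group $R/R_t$; and second, the counting identity \eqref{equation: R_t and B}, namely $|R_t| = \frac{\Area(|P_R|)}{\Area(\Lambda_t)} \cdot |B|$, which controls how many faces appear in an orbit. Since $R$ acts transitively on the set $B$ of barycenters (by the definition of $\F_R$), the orbit-stabilizer theorem gives $|R|/|R_{b(f^{-1})}| = |B|$, hence $|R_{b(f^{-1})}| = |R|/|B| = |R_t|\cdot|R/R_t|/|B|$. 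Feeding in \eqref{equation: R_t and B}, this collapses to
\begin{equation*}
|R_{b(f^{-1})}| = |R/R_t| \cdot \frac{\Area(\Lambda_t)}{\Area(|P_R|)}.
\end{equation*}
So the order of the isotropy subgroup is determined purely by the ratio of the lattice covolume to the area of the chosen fundamental domain $|P_R|$, together with $|R/R_t|$.

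First I would, for each row of Table \ref{table: finite group of aff}, read off $\Area(|P_R|)$ directly from the explicit description of $|P_R|$ in Table \ref{table: two-dimensional fundamental domain} (using the vertex coordinates given for $P_4^\sq, P_4^{\prime\sq}, P_3^\eq, P_6^\eq, P_4^\eq, P_4^{\prime\eq}$ in Figure \ref{figure: polygonal area} and the surrounding text), and compare it with $\Area(\Lambda_t)$, which is $1$ for $\Lambda^\sq$ and $\sqrt{3}/2$ for $\Lambda^\eq$. This yields the order $|R_{b(f^{-1})}|$ immediately. Then, knowing $R_{b(f^{-1})} \subset R/R_t$ and knowing its order, I would identify the subgroup up to conjugacy: in the rotation cases $R/R_t = \Z_n$ the only subgroups are the cyclic $\Z_d$ for $d\mid n$, so the order pins down the group; in the dihedral cases one must additionally decide whether the isotropy subgroup is a rotation subgroup or contains a reflection, which is settled by checking whether the barycenter $b(\bar f^{-1}) = b(P_R)$ lies on one of the reflection axes of $R/R_t$. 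For a regular polygon $|P_R|$ the barycenter is the center of the polygon, and one checks directly which elements of $R/R_t$ fix it.

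The main obstacle, and where the real case-analysis lives, will be the dihedral and glide cases, particularly $R/R_t = \D_1, \D_{1,4}$ in the lower rows. There the elements of $R - R_t$ are glide reflections (per Proposition \ref{proposition: finite group D_1}), not genuine reflections, so whether such an element fixes the barycenter $b(P_R)$ is a genuinely geometric question: a glide fixes no point unless its translational part is trivial, so I expect $R_{b(f^{-1})}$ to be trivial (or just $\langle\id\rangle$) in the glide cases with $A\bar c + \bar c = \bar l_0$, while it may contain the reflection in the cases with $A\bar c + \bar c = 0$ where the barycenter can be arranged to sit on the fixed line $\frac12\bar c + L$. I would verify each such incidence by explicit coordinates of $b(P_R)$ against the glide axis described in Table \ref{table: two-dimensional fundamental domain}. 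The order count above serves as a consistency check throughout: after identifying the subgroup geometrically, I confirm its order agrees with $|R/R_t|\cdot\Area(\Lambda_t)/\Area(|P_R|)$, which catches any arithmetic slip in the area computations.
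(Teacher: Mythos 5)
Your overall strategy is the one the paper uses: both arguments rest on the area identity (\ref{equation: R_t and B}) together with the containment $R_{b(f^{-1})} \subset R/R_t$ inside $\Iso(T_{b(f^{-1})}\,\R^2/\Lambda)$, followed by a case-by-case geometric check of which elements of $R/R_t$ actually fix the barycenter. Your packaging via orbit--stabilizer applied to the transitive $R$-action on $B$ is in fact slightly cleaner than the paper's device, which instead exhibits, case by case, an auxiliary subgroup $R^\prime$ (e.g.\ $R_t$, or $R_t \rtimes \Z_2$, or $\langle b, R_t\rangle$) acting freely and transitively on $B$ and then decomposes $R = R_{b(f^{-1})}\cdot R^\prime$.

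There is, however, a concrete error in the one formula your whole computation runs on: the area ratio is inverted. From (\ref{equation: R_t and B}) one has $|R_t|/|B| = \Area(|P_R|)/\Area(\Lambda_t)$, so orbit--stabilizer gives
\begin{equation*}
|R_{b(f^{-1})}| = \frac{|R|}{|B|} = |R/R_t|\cdot\frac{|R_t|}{|B|} = |R/R_t|\cdot\frac{\Area(|P_R|)}{\Area(\Lambda_t)},
\end{equation*}
not $|R/R_t|\cdot\Area(\Lambda_t)/\Area(|P_R|)$ as you wrote. Since $\Area(|P_R|)/\Area(\Lambda_t)$ equals $1$, $1/2$ or $1/4$ in every row of Table \ref{table: two-dimensional fundamental domain}, your version would force $|R_{b(f^{-1})}| \ge |R/R_t|$, hence $R_{b(f^{-1})} = R/R_t$ in every case, contradicting Table \ref{table: isotropy of face}; for instance $R/R_t = \D_4$ gives the correct count $8\cdot\tfrac14 = 2$, whereas your formula yields $32$. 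With the ratio the right way up the rest of your plan goes through: the order pins down the cyclic cases, and in the dihedral and glide cases membership of $[A\bar{x}+\bar{c}]$ in $R_{b(f^{-1})}$ amounts to $A\,b(\bar{f}^{-1})+\bar{c}-b(\bar{f}^{-1}) \in \Lambda_t$ (fixing modulo the whole lattice of translated axes, not literal fixing in $\R^2$), which is the same criterion the paper applies to edge barycenters. Your hedge about the $\D_1,\D_{1,4}$ rows with $A\bar{c}+\bar{c}=0$ possibly containing a reflection is resolved immediately by the corrected count, which gives order $2\cdot\tfrac12=1$ there, in agreement with the table.
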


\begin{table}[!ht]
{\footnotesize
\begin{tabular}{c|c||c|c}
 $R/R_t$                & $\Lambda_t$    & $\Area(|P_R|)/\Area(\Lambda_t)$ & $R_{b(f^{-1})}$      \\
\hhline{=|=#=|=}
 $\Z_2$                 & $\Lambda^\sq$  & 1/2                           & $\langle \id \rangle$  \\
 $\D_{2,2}$             &                & 1/4                           & $\langle \id \rangle$  \\
 $\D_2$                 &                & 1/4                           & $\langle \id \rangle$  \\
 $\Z_4$                 &                & 1/4                           & $\langle \id \rangle$  \\
 $\D_4$                 &                & 1/4                           & $\D_{1,4}$ \\

\hline

 $\D_2$                 & $\Lambda^\eq$  & 1/4                           & $\langle \id \rangle$  \\
 $\D_{2,3}$             &                & 1/4                           & $\langle \id \rangle$  \\
 $\D_3$                 &                & 1/2                           & $\Z_3$  \\
 $\Z_6$                 &                & 1/2                           & $\Z_3$  \\
 $\D_6$                 &                & 1/2                           & $\D_{3,2}$  \\

\hline

 $\Z_3, \D_{3,2}$       & $\Lambda^\eq$  & 1                             & $R/R_t$  \\

\hline

 $\langle \id \rangle$  & $\Lambda^\sq$  & 1                             & $\langle \id \rangle$  \\

\hline
 $\D_1, \D_{1,4},$      & $\Lambda^\sq$  & 1/2                           & $\langle \id \rangle$ \\
 $A\bar{c}+\bar{c}=0$   &                &                               &     \\

\hline

 $\D_1, \D_{1,4},$      & $\Lambda^\sq$  & 1/2                           & $\langle \id \rangle$ \\
 $A\bar{c}+\bar{c}=\bar{l}_0$             &                &                               &     \\

\end{tabular}}
\caption{\label{table: isotropy of face} $R_{b(f^{-1})}$}
\end{table}

\begin{proof}
As in proofs of Corollary \ref{corollary: finite group not D_1},
\ref{corollary: finite group D_1}, we can calculate $|B/R_t| =
|\pi^{-1}(B) / \Lambda_t|$ case by case because $\pi^{-1}(B)$ is the
set of barycenters of faces of $\complexK_{\R^2}.$ By using this, we
can calculate $R_{b(f^{-1})}$ in the below.

By definition of $|P_R|$ in Table \ref{table: two-dimensional
fundamental domain}, $\Area(|P_R|)/\Area(\Lambda_t)=$ 1, 1/2, or
1/4. In the case when $\Area(|P_R|)/\Area(\Lambda_t)=1,$ it is
observed that $R_t$ acts transitively on $B,$ and the formula
(\ref{equation: R_t and B}) gives $|R_t|=|B|.$ From these, $R_t$
acts freely and transitively on $B.$ So, we can obtain that $R =
R_{b(f^{-1})} \cdot R_t$ and $R_{b(f^{-1})} \cap R_t = \langle \id
\rangle.$ That is, $R_{b(f^{-1})} \cong R/R_t$ because $R_t$ is
normal in $R.$ Also, since $R_{b(f^{-1})}$ is contained in $R/R_t$
as subgroups of $\Iso (T_{b(f^{-1})} ~ \R^2/\Lambda),$ $R/R_t$ and
$R_{b(f^{-1})}$ are equal as subgroups of $\Iso (T_{b(f^{-1})} ~
\R^2/\Lambda).$

When $\Area(|P_R|)/\Area(\Lambda_t)=1/2,$ the formula
(\ref{equation: R_t and B}) gives $|R_t|= \frac 1 2 |B|$ and $R_t$
acts freely and non-transitively on $B.$ In the below, we will use
the notation $R^\prime$ to denote a subgroup of $R$ such that
$R^\prime$ acts freely and transitively on $B.$ If such a subgroup
$R^\prime$ exists, then $R=R_{b(f^{-1})} \cdot R^\prime$ and
$R_{b(f^{-1})} \cap R^\prime = \langle \id \rangle$ so that
$|R_{b(f^{-1})}|=|R|/|R^\prime|.$ Moreover, $R_{b(f^{-1})} \cong
R/R^\prime$ if $R^\prime$ is normal in $R.$ In the case when $R/R_t
= \Z_2, \Z_6, \D_6,$ put $R^\prime = R_t \rtimes \Z_2$ which is
normal in $R.$ It is easily observed that $R^\prime$ acts freely and
transitively on $B.$ From this, $R_{b(f^{-1})} \cong$ $R/R^\prime
\cong$ $(R/R_t)/\Z_2.$ In fact, it is checked that the
$R_{b(f^{-1})}$-action on $T_{b(f^{-1})} \R^2/\Lambda$ is $\langle
\id \rangle, \Z_3, \D_{3,2}$ according to $R/R_t = \Z_2, \Z_6,
\D_6,$ respectively. In the case when $R/R_t=\D_3,$ put $R^\prime =
\langle b, R_t \rangle$ so that $R_{b(f^{-1})}$ has order 3 and
$R_{b(f^{-1})}=\Z_3.$ In the case when $R/R_t = \D_1, \D_{1,4},$ we
obtain $|R| = |B|$ from $|\D_1| = |\D_{1,4}| = 2$ and $|R_t|= \frac
1 2 |B|$ so that $R$ acts freely and transitively on $B,$ i.e.
$R_{b(f^{-1})}$ is trivial.

When $\Area(|P_R|)/\Area(\Lambda_t)=1/4,$ the formula
(\ref{equation: R_t and B}) gives $|R_t|= \frac 1 4 |B|$ and $R_t$
acts freely and non-transitively on $B.$ In cases when $R/R_t =
\D_{2,2}, \D_2, \Z_4$ with $\Lambda_t=\Lambda^\sq$ and $R/R_t =
\D_2, \D_{2,3}$ with $\Lambda_t=\Lambda^\eq,$ we have $|R/R_t|=4$ so
that $|R| = |B|.$ Since $R$ acts freely and transitively on $B,$
$R_{b(f^{-1})}$ is trivial. In the case when $R/R_t = \D_4,$ we
similarly have $|R| = 2|B|.$ Since $R$ acts transitively on $B,$ we
have $|R_{b(f^{-1})}|=2$ and it is checked that
$R_{b(f^{-1})}=\D_{1,2}.$
\end{proof}

Now, we calculate $R_{v^i}$ and $R_{b(e^i)}.$ Denote by
$\bar{\mathcal{V}}$ the set of vertices of $\lineK_R,$ and by
$\mathcal{V}$ the set $\pi(\bar{\mathcal{V}}).$ For two points $x$
and $x^\prime$ in $\R^2/\Lambda,$ $x \sim x^\prime$ means that $x$
and $x^\prime$ are in the same $R$-orbit.

\begin{proposition} \label{proposition: isotropy of vertex}
For each $R$ in Table \ref{table: finite group of aff}, $R_{v^i}$ is
listed in Table \ref{table: isotropy of vertex}. In $\sim$ entry, we
list all $v^i$'s in the same $R$-orbit. By $v^i \sim v^{i^\prime},$
we mean that all $v^i$'s are in an orbit.
\end{proposition}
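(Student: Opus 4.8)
The plan is to reduce each $R_{v^i}$ to a membership test in the lattice $\Lambda_t$ and then to run that test over the finitely many rows of Table \ref{table: finite group of aff}. The starting observation is that $R_t$ acts freely on $\R^2/\Lambda$, since a nontrivial translation $[\bar{x}+\bar{\lambda}]$ with $\bar{\lambda} \notin \Lambda$ has no fixed point. Hence $R_{v^i} \cap R_t = \langle \id \rangle$, and the homomorphism $R \to \Iso(\R^2,\Lambda)$, $[A\bar{x}+\bar{c}] \mapsto A$, restricts to an injection of $R_{v^i}$ into $R/R_t$. Because an affine map acts on tangent vectors through its linear part, this injection is exactly the identification of $R_{v^i}$ with a subgroup of $\Iso(T_{v^i}~\R^2/\Lambda)$ used in the statement, so it suffices to pin down its image.

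Next I would make that image explicit. An element $[A\bar{x}+\bar{c}] \in R$ fixes $v^i=\pi(\bar{v}^i)$ precisely when $(A-\id)\bar{v}^i + \bar{c} \in \Lambda$. For a fixed linear part $A$, the admissible $\bar{c}$ with $[A\bar{x}+\bar{c}] \in R$ form one coset of $\Lambda$ inside $\Lambda_t$: when $R=R_t \rtimes R/R_t$ one may take $\bar{c}=0$, while in the glide cases $R=\langle R_t, [A\bar{x}+\bar{c}]\rangle$ one takes $\bar{c}$ to be the glide vector. As $\Lambda \subset \Lambda_t$, the fixing condition then depends only on $A$ and reads $(A-\id)\bar{v}^i + \bar{c}_A \in \Lambda_t$, with $\bar{c}_A=0$ except for the nontrivial element in the glide cases. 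Using that $R/R_t$ preserves $\Lambda_t$ (established earlier in this section), one checks that
\begin{equation*}
R_{v^i} = \{ A \in R/R_t ~|~ (A-\id)\bar{v}^i + \bar{c}_A \in \Lambda_t \}
\end{equation*}
is closed under multiplication and inverses, so it is the desired subgroup of $\Iso(T_{v^i}~\R^2/\Lambda)$.

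With this criterion the proof becomes a finite verification. Each $\bar{v}^i$ has explicit coordinates (fixed in the Introduction), each $\Lambda_t$ is $\Lambda^\sq$ or $\Lambda^\eq$, and each $R/R_t$ is one of the listed cyclic or dihedral groups; so for every row I would compute $(A-\id)\bar{v}^i$ for the finitely many $A$, test membership in $\Lambda_t$, and read off the resulting subgroup as $\Z_n$, $\D_n$, or $\D_{n,l}$ in the tangent frame. For instance, when $R/R_t \neq \D_1,\D_{1,4}$ one has $\bar{v}^0=O$, so $(A-\id)\bar{v}^0=0\in\Lambda_t$ for every $A$ and the criterion gives $R_{v^0}=R/R_t$ at once, consistent with $v^0=\pi(O)$ being the most singular point. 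The $\sim$ entries are settled by the companion test: $v^i$ and $v^{i'}$ share an $R$-orbit iff $\bar{v}^{i'}-A\bar{v}^i-\bar{c}_A\in\Lambda_t$ for some $A\in R/R_t$. As a numerical check, the orbit--stabilizer relation $|R\cdot v^i|=|R|/|R_{v^i}|$ must be compatible with the data already computed, namely $|R_t|$, $j_R$, and $R_{b(f^{-1})}$.

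I expect the glide rows $R/R_t=\D_1,\D_{1,4}$ to be the main obstacle. There $R$ is not a semidirect product, the vertices sit at the shifted points $\tfrac12\bar{c}$ or $\tfrac12\bar{c}-\tfrac12\bar{l}_0^\perp$ rather than at lattice points, and one must carry the glide vector $\bar{c}$ through every test while distinguishing the sub-cases $A\bar{c}+\bar{c}=0$ and $A\bar{c}+\bar{c}=\bar{l}_0$. The other delicate point, already present in the orientation-preserving rows, is to record the reflection-type subgroups with the correct label: whether a fixed reflection is $\D_1$ or $\D_{1,4}$ depends on the direction of its axis relative to $\Lambda_t$ in the tangent space $T_{v^i}$, a datum the explicit vertex coordinates determine but which must be tracked with care.
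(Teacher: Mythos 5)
Your proposal is correct, but it takes a genuinely different route from the paper. You reduce everything to the explicit lattice test $A \in R_{v^i} \Leftrightarrow (A-\id)\bar{v}^i + \bar{c}_A \in \Lambda_t$ (with the companion test $\bar{v}^{i^\prime} - A\bar{v}^i - \bar{c}_A \in \Lambda_t$ for the $\sim$ entries) and then run a finite verification over the rows of Table \ref{table: finite group of aff}. The paper instead argues by counting: from the formula (\ref{equation: R_t and V}) relating $|R_t|$ to $|\mathcal{V}|$ and the case-by-case counts $|\mathcal{V}/R|$ and $|\mathcal{V}/R_t|$ (computed upstairs in $\complexK_{\R^2}$), it deduces that $R_t$ acts freely and transitively on each $R_t$-orbit of vertices, decides which of those orbits are $R$-invariant, and obtains $R = R_{v^i}\cdot R_t$ with $R_{v^i}\cap R_t = \langle \id \rangle$, hence $R_{v^i} \cong R/R_t$ or a subgroup of index determined by the orbit count, identified at the very end by a direct check. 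Both proofs share the embedding $R_{v^i}\hookrightarrow R/R_t$ coming from freeness of the translation action, and both terminate in finitely many explicit computations; yours is more mechanical and self-contained and handles stabilizers, orbit identifications, and the glide rows by one uniform test, while the paper's leverages structure it has already built (the fundamental-domain corollaries and the area formula) to get the orders $|R_{v^i}|$ with less coordinate work, at the price of a longer case division. Your orbit--stabilizer consistency check against $|R_t|$, $j_R$, and $R_{b(f^{-1})}$ is a sensible safeguard the paper does not state.

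One slip to repair in the write-up: for a fixed linear part $A$, the admissible $\bar{c}$ with $[A\bar{x}+\bar{c}]\in R$ sweep out the coset $\bar{c}_A+\Lambda_t$ of $\Lambda_t$ in $\R^2$ (equivalently, $[\Lambda_t:\Lambda]$ cosets of $\Lambda$), not ``one coset of $\Lambda$ inside $\Lambda_t$.'' It is precisely because they range over all of $\bar{c}_A+\Lambda_t$ that the fixing condition relaxes from membership in $\Lambda$ to membership in $\Lambda_t$; that relaxed condition is the one you actually state and use, so the criterion and everything downstream are unaffected.
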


\begin{table}[!ht]
{\footnotesize
\begin{tabular}{c|c||c|c|c}
 $R/R_t$               & $\Lambda_t$    & $|\mathcal{V}/R|$ & $\sim$                                         & $R_{v^i}$ \\
\hhline{=|=#=|=|=}
 $\Z_2$                & $\Lambda^\sq$  & 2                 & $v^0 \sim v^3, ~ v^1 \sim v^2$                 & $R/R_t$  \\
 $\D_{2,2}$            &                & 3                 & $v^1 \sim v^2$                                 & $R_{v^0}=R_{v^3}=R/R_t,$ $R_{v^1}=R_{v^2}=\D_{1,-4}$ \\
 $\D_2$                &                & 4                 &                                                & $R/R_t$  \\
 $\Z_4$                &                & 3                 & $v^1 \sim v^3$                                 & $R_{v^0}=R_{v^2}=R/R_t,$ $R_{v^1}=R_{v^3}=\Z_2$ \\
 $\D_4$                &                & 3                 & $v^1 \sim v^3$                                 & $R_{v^0}=R_{v^2}=R/R_t,$ $R_{v^1}=R_{v^3}=\D_2$ \\

\hline
 $\D_2$                & $\Lambda^\eq$  & 3                 & $v^2 \sim v^3$                                 & $R_{v^0}=R_{v^1}=R/R_t,$ $R_{v^2}=R_{v^3}=\D_1$  \\
 $\D_{2,3}$            &                & 3                 & $v^1 \sim v^2$                                 & $R_{v^0}=R_{v^3}=R/R_t,$ $R_{v^1}=R_{v^2}=\D_{1,-3}$  \\
 $\D_3$                &                & 1                 & $v^i \sim v^{i^\prime}$                        & $R/R_t$  \\
 $\Z_6$                &                & 1                 & $v^i \sim v^{i^\prime}$                        & $R/R_t$  \\
 $\D_6$                &                & 1                 & $v^i \sim v^{i^\prime}$                        & $R/R_t$  \\

\hline
 $\Z_3, \D_{3,2}$      & $\Lambda^\eq$  & 2                 & $v^i \sim v^{i+2}$                             & $R/R_t$  \\

\hline
 $\langle \id \rangle$ & $\Lambda^\sq$  & 1                 & $v^i \sim v^{i^\prime}$                        & $\langle \id \rangle$  \\

\hline
 $\D_1,$               & $\Lambda^\sq$  & 2                 & $v^0 \sim v^3, ~ v^1 \sim v^2$                 & $R/R_t$ \\
 $A\bar{c}+\bar{c}=0$  &                &                   &                                                &     \\

\hline
 $\D_{1,4},$           & $\Lambda^\sq$  & 2                 & $v^0 \sim v^2, ~ v^1 \sim v^3$                 & $R/R_t$ \\
 $A\bar{c}+\bar{c}=0$  &                &                   &                                                &     \\

\hline
 $\D_1,$               & $\Lambda^\sq$  & 1                 & $v^i \sim v^{i^\prime}$                        & $\langle \id \rangle$ \\
 $A\bar{c}+\bar{c}=\bar{l}_0$ &         &                   &                                                &     \\

\hline
 $\D_{1,4},$           & $\Lambda^\sq$  & 2                 & $v^0 \sim v^2, ~ v^1 \sim v^3$                 & $R/R_t$ \\
 $A\bar{c}+\bar{c}=\bar{l}_0$ &         &                   &                                                &     \\

\end{tabular}}
\caption{\label{table: isotropy of vertex} $R_{v^i}$}
\end{table}

\begin{proof}
Since $|\bar{\mathcal{V}}|=i_R |B|$ and $|\bar{\mathcal{V}}| = j_R
|\mathcal{V}|,$ we obtain
\begin{equation}
\label{equation: R_t and V} |R_t| = \frac {\Area(|P_R|)}
{\Area(\Lambda_t)} \cdot \frac {j_R} {i_R} \cdot |\mathcal{V}|
\end{equation}
by (\ref{equation: R_t and B}). As in proofs of Corollary
\ref{corollary: finite group not D_1}, \ref{corollary: finite group
D_1}, we can calculate
\begin{equation*}
|\mathcal{V}/R| = |\pi^{-1}(\mathcal{V}) / \pi_\aff^{-1}(R)| \quad
\text{ and } \quad |\mathcal{V}/R_t| = |\pi^{-1}(\mathcal{V}) /
\Lambda_t|
\end{equation*}
case by case because $\pi^{-1}(\mathcal{V})$ is the set of vertices
of $\complexK_{\R^2}.$ By using this, we can calculate $R_{v^i}$ in
the below.

In the case of $R/R_t = \Z_2$ with $\Lambda_t = \Lambda^\sq,$ the
formula (\ref{equation: R_t and V}) gives $|R_t|= \frac 1 2
|\mathcal{V}|.$ From this, we conclude that $R_t$ acts freely on
each $R_t$-orbit in $\mathcal{V}$ because $|\mathcal{V}/R_t|=2.$ Of
course, $R_t$ acts transitively on each $R_t$-orbit in
$\mathcal{V}.$ Since $|\mathcal{V}/R|=2,$ each $R_t$-orbit is also
$R$-invariant. From this, we have $R= R_{v^i} \cdot R_t$ and
$R_{v^i} \cap R_t = \langle \id \rangle$ so that $R_{v^i}=R/R_t.$
Here, $v^0, v^3$ are in the same orbit, and $v^1, v^2$ are in the
same orbit.

In the case of $R/R_t =\D_{2,2}$ with $\Lambda_t = \Lambda^\sq,$ the
formula (\ref{equation: R_t and V}) gives $|R_t|= \frac 1 4
|\mathcal{V}|.$ From this, we conclude that $R_t$ acts freely on
each $R_t$-orbit in $\mathcal{V}$ because $|\mathcal{V}/R_t|=4$ and
$R_t$ acts transitively on each $R_t$-orbit in $\mathcal{V}.$ Since
$|\mathcal{V}/R|=3$ and $v^1, v^2$ are in the same orbit, $R_t$ acts
freely and transitively on both $R_t$-orbits of $v^0, v^3$ in
$\mathcal{V}$ which are $R$-invariant. So, $R_{v^i}=R/R_t$ for $i=0,
3.$ But, each $R_t$-orbit of $v^i$ for $i=1, 2$ is not
$R$-invariant. If we take the index 2 subgroup $S$ of $R$ preserving
each $R_t$-orbit of $v^i$ for $i=1, 2,$ then $S= R_{v^i} \cdot R_t$
and $R_{v^i} \cap R_t = \langle \id \rangle$ so that $|R_{v^i}|$
should be 2 for $i=1, 2,$ and it is checked that $R_{v^i}=\D_{1,-4}$
for $i=1, 2.$

In cases when $R/R_t= \D_2, \Z_4, \D_4$ with $\Lambda_t =
\Lambda^\sq,$ the formula (\ref{equation: R_t and V}) gives $|R_t|=
\frac 1 4 |\mathcal{V}|.$ From this, $R_t$ acts freely and
transitively on each $R_t$-orbit in $\mathcal{V}$ because
$|\mathcal{V}/R_t|=4.$ Since $|\mathcal{V}/R|=4$ for $R/R_t=\D_2,$
$R_{v^i}=R/R_t.$ Since $|\mathcal{V}/R|=3$ and $v^1, v^3$ are in an
$R$-orbit for $R/R_t=\Z_4, \D_4,$ we have $R_{v^i}=R/R_t$ for
$i=0,2.$ Also, it is checked that $R_{v^i}= \Z_2$ for $R/R_t=\Z_4,
i=1,3.$ And, $R_{v^i}= \D_2$ for $R/R_t=\D_4, i=1,3.$

In the case of $R/R_t = \D_2, \D_{2,3}$ with
$\Lambda_t=\Lambda^\eq,$ the formula (\ref{equation: R_t and V})
gives $|R_t|= \frac 1 4 |\mathcal{V}|.$ From this, $R_t$ acts freely
and transitively on each $R_t$-orbit in $\mathcal{V}$ because
$|\mathcal{V}/R_t|=4.$ Since $|\mathcal{V}/R|=3$ and $v^2, v^3$ are
in a $R$-orbit for $R/R_t=\D_2,$ we have $R_{v^i}=R/R_t$ for
$i=0,1,$ and it is checked that $R_{v^i}= \D_1$ for $i=2,3.$ Since
$|\mathcal{V}/R|=3$ and $v^1, v^2$ are in an $R$-orbit for
$R/R_t=\D_{2,3},$ we have $R_{v^i}=R/R_t$ for $i=0,3,$ and it is
checked that $R_{v^i}= \D_{1,-3}$ for $i=1,2.$

In the case of $|P_R|=P_3^\eq,$ i.e. $R/R_t = \D_3, \Z_6, \D_6$ with
$\Lambda=\Lambda^\eq,$ the formula (\ref{equation: R_t and V}) gives
$|R_t|=|\mathcal{V}|.$ From this, we conclude that $R_t$ acts freely
on $\mathcal{V}$ because $R_t$ acts transitively on $\mathcal{V}.$
So, $R_{v^i}=R/R_t.$

In the case of $|P_R|=P_6^\eq,$ i.e. $R/R_t = \Z_3, \D_{3,2}$ with
$\Lambda=\Lambda^\eq,$ the formula (\ref{equation: R_t and V}) gives
$|R_t|= \frac 1 2 |\mathcal{V}|.$ Since $|\mathcal{V}/R_t|=2,$ $R_t$
acts freely and transitively on each $R_t$-orbit in $\mathcal{V}.$
Also, since $|\mathcal{V}/R|=2,$ $R_{v^i}=R/R_t.$

In the case of $R/R_t= \langle \id \rangle,$ $R_{v^i}=\langle \id
\rangle.$

In the case of $R/R_t=\D_1, \D_{1,4}$ with $A\bar{c}+\bar{c}=0,$ the
formula (\ref{equation: R_t and V}) gives $|R_t|=\frac 1 2
|\mathcal{V}|.$ From this, we conclude that $R_{v^i}=R/R_t$ because
$|\mathcal{V}/R|=|\mathcal{V}/R_t|=2.$

In the case of $R/R_t=\D_1, \D_{1,4}$ with
$A\bar{c}+\bar{c}=\bar{l}_0,$ the formula (\ref{equation: R_t and
V}) gives $|R_t|=\frac 1 2 |\mathcal{V}|.$ From this, we conclude
that $R_{v^i}$ is trivial because $R$ acts transitively on
$\mathcal{V}.$
\end{proof}

\begin{table}[!ht]
{\footnotesize
\begin{tabular}{c|c||c|l}
$R/R_t$                  & $\Lambda_t$     & $|\mathcal{E}/R|$   & $R_{b(e^i)}$      \\
\hhline{=|=#=|=}
 $\Z_2$                  & $\Lambda^\sq$   & 3                   & $R_{b(e^0)}=R_{b(e^2)}=\langle \id \rangle, R_{b(e^1)}=R_{b(e^3)}=R/R_t$         \\
 $\D_{2,2}$              &                 & 4                   & $R_{b(e^0)}=R_{b(e^2)}=\D_{1,-4}, R_{b(e^1)}=R_{b(e^3)}=\D_{1,4}$         \\
 $\D_2$                  &                 & 4                   & $R_{b(e^0)}=R_{b(e^2)}=\D_{1,2}, R_{b(e^1)}=R_{b(e^3)}=\D_1$     \\
 $\Z_4$                  &                 & 2                   & $\langle \id \rangle$         \\
 $\D_4$                  &                 & 2                   & $R_{b(e^0)}=R_{b(e^2)}=\D_{1,2}, R_{b(e^1)}=R_{b(e^3)}=\D_1$   \\
\hline
 $\D_2$                  & $\Lambda^\eq$   & 4                   & $R_{b(e^1)}=R_{b(e^3)}=\D_1,$ $R_{b(e^0)}=\D_{1,2},$ $R_{b(e^2)}=\Z_2$   \\
 $\D_{2,3}$              &                 & 4                   & $R_{b(e^0)}=R_{b(e^2)}=\D_{1,-3},$ $R_{b(e^1)}=\Z_2,$ $R_{b(e^3)}=\D_{1,6}$  \\
 $\D_3$                  &                 & 1                   & $R_{b(e^0)}=\D_{1,3},$ $R_{b(e^1)}=\D_{1,-3},$ $R_{b(e^2)}=\D_1$  \\
 $\Z_6$                  &                 & 1                   & $\Z_2$  \\
 $\D_6$                  &                 & 1                   & $R_{b(e^0)}=\D_{2,3/2},$ $R_{b(e^1)}=\D_{2,-3/2},$ $R_{b(e^2)}=\D_2$  \\
\hline
 $\Z_3$                  & $\Lambda^\eq$   & 1                   & $\langle \id \rangle$  \\
 $\D_{3,2}$              &                 & 1                   & $R_{b(e^0)}=\D_{1,2},$ $R_{b(e^1)}=\D_{1,6},$ $R_{b(e^2)}=\D_{1,-6},$  \\
                         &                 &                     & $R_{b(e^3)}=\D_{1,2},$ $R_{b(e^4)}=\D_{1,6},$ $R_{b(e^5)}=\D_{1,-6}$  \\
\hline
 $\langle \id \rangle$   & $\Lambda^\sq$   & 2                   & $\langle \id \rangle$  \\
\hline
 $\D_1,$                 & $\Lambda^\sq$   & 3                   & $R_{b(e^0)}=R_{b(e^2)}=\langle \id \rangle, R_{b(e^1)}=R_{b(e^3)}=R/R_t$  \\
 $A\bar{c}+\bar{c}=0$                &                 &                     &    \\
\hline
 $\D_{1,4},$             & $\Lambda^\sq$   & 3                   & $R_{b(e^0)}=R_{b(e^2)}=\langle \id \rangle, R_{b(e^1)}=R_{b(e^3)}=R/R_t$ \\
 $A\bar{c}+\bar{c}=0$                &                 &                     &    \\
\hline
 $\D_1,$                 & $\Lambda^\sq$   & 2                   & $\langle \id \rangle$  \\
 $A\bar{c}+\bar{c}=\bar{l}_0$              &                 &                     &    \\
\hline
 $\D_{1,4},$             & $\Lambda^\sq$   & 3                   & $R_{b(e^0)}=R_{b(e^2)}=\langle \id \rangle, R_{b(e^1)}=R_{b(e^3)}=R/R_t$ \\
 $A\bar{c}+\bar{c}=\bar{l}_0$              &                 &                     &    \\

\end{tabular}}
\caption{\label{table: isotropy of edge} $R_{b(e^i)}$}
\end{table}

Denote by $\bar{\mathcal{E}}$ the set of barycenters of edges in
$\lineK_R,$ and by $\mathcal{E}$ the set $\pi(\bar{\mathcal{E}}).$

\begin{proposition} \label{proposition: isotropy of edge}
For each $R$ in Table \ref{table: finite group of aff}, $R_{b(e^i)}$
is listed in Table \ref{table: isotropy of edge}. And, each
$R_{b(e^i)}$ satisfies $R_{b(e^i)} \cap R_{b(f^{-1})} = \langle \id
\rangle$ except $R/R_t = \D_6.$
\end{proposition}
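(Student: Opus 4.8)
The plan is to compute each $R_{b(e^i)}$ case by case, exactly mirroring the bookkeeping already set up in the proofs of Propositions \ref{proposition: isotropy of face} and \ref{proposition: isotropy of vertex}, and then verify the transversality statement $R_{b(e^i)} \cap R_{b(f^{-1})} = \langle \id \rangle$ as a separate check. First I would establish the counting formula analogous to \eqref{equation: R_t and B} and \eqref{equation: R_t and V}. Since $|\bar{\mathcal{E}}|$ equals the number of edges of $\lineK_R$, and each face of $\lineK_R$ is an $i_R$-gon contributing $i_R$ edges while each edge image $|e^i|$ is covered by a fixed number of edges of $\lineK_R$, I would derive the cardinality relation expressing $|R_t|$ in terms of $\Area(|P_R|)/\Area(\Lambda_t)$ and $|\mathcal{E}|$. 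Combined with the case-by-case determination of $|\mathcal{E}/R| = |\pi^{-1}(\mathcal{E})/\pi_\aff^{-1}(R)|$ and $|\mathcal{E}/R_t| = |\pi^{-1}(\mathcal{E})/\Lambda_t|$, which is routine since $\pi^{-1}(\mathcal{E})$ is the set of edge-barycenters of $\complexK_{\R^2}$, this pins down the order $|R_{b(e^i)}|$ for each $R$.

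Once the orders are known, the identification of each $R_{b(e^i)}$ as a specific subgroup ($\langle \id \rangle$, $\Z_2$, $\D_1$, $\D_{1,4}$, $\D_{1,2}$, $\D_{1,-3}$, etc.) proceeds geometrically. For an edge $\bar{e}^i$ of $P_R$, the stabilizer of its barycenter $b(e^i)$ inside $R/R_t \subset \Iso(T_{b(e^i)} \R^2/\Lambda)$ must fix the barycenter and preserve the edge direction, so it is generated by the reflection(s) across the lines of symmetry of $P_R$ passing through $b(\bar{e}^i)$, together with any translation in $R_t$ that fixes the barycenter modulo $\Lambda$. I would read off the precise reflection type from the position of $\bar{e}^i$ relative to the lattice $\Lambda_t$ and the generating reflections $b$, $a_{ln}b$ of $R/R_t$; the subscript labels $\D_{1,l}$ record the angle of the reflection axis, which is exactly the data encoding which edge is fixed. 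The orders computed in the first step then serve as a consistency check against these geometric identifications.

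The transversality claim $R_{b(e^i)} \cap R_{b(f^{-1})} = \langle \id \rangle$ (for $R/R_t \ne \D_6$) I would handle by inspecting the two tables simultaneously. In every row where $R_{b(f^{-1})} = \langle \id \rangle$ (which by Table \ref{table: isotropy of face} is the generic situation, covering $\Z_2$, $\D_{2,2}$, $\D_2$, $\Z_4$ with $\Lambda^\sq$, $\D_2$, $\D_{2,3}$ with $\Lambda^\eq$, all $\D_1,\D_{1,4}$ cases, and $\langle\id\rangle$) the intersection is automatically trivial, so nothing is required. The remaining rows are $R/R_t = \D_4$, $\D_3$, $\Z_6$, $\D_6$, and $\Z_3,\D_{3,2}$ with $\Lambda^\eq$, where $R_{b(f^{-1})}$ is $\D_{1,4}$, $\Z_3$, $\Z_3$, $\D_{3,2}$, or $R/R_t$ respectively; here I would check directly that the reflection axis (or rotation) fixing the face-barycenter $b(f^{-1})$ and the reflection axis fixing the edge-barycenter $b(e^i)$ share only the identity, which holds because the two barycenters lie at distinct points and the corresponding axes are not parallel.

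I expect the main obstacle to be the $\D_6$ exception and, more broadly, the correct reading of the $\D_{1,l}$ subscripts. For $\D_6$ the face stabilizer $R_{b(f^{-1})} = \D_{3,2}$ is two-dimensional as a reflection group and its axes can coincide with an edge axis, which is precisely why transversality fails and the case must be excluded; confirming this requires carefully locating $b(f^{-1})$ and $b(e^i)$ inside $P_3^\eq$. The bookkeeping for the $\Lambda^\eq$ dihedral cases, where different edges of the same polygon carry genuinely different reflection types (e.g. $\D_{1,2}$ versus $\Z_2$ versus $\D_1$ along the three edges for $\D_2$), is where sign and angle errors are most likely, so I would cross-check each entry against the orders forced by the cardinality formula and against the vertex stabilizers in Table \ref{table: isotropy of vertex}, since $R_{b(e^i)}$ must be contained in both adjacent vertex stabilizers.
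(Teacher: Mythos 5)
Your overall strategy is the same as the paper's: compute $|\mathcal{E}/R|$ and $|\mathcal{E}/R_t|$ case by case from the edge barycenters of $\complexK_{\R^2}$, identify each $R_{b(e^i)}$ inside $\D_{2,l}\subset\Iso(T_{b(e^i)}\,\R^2/\Lambda)$, and check the intersection claim row by row against Table \ref{table: isotropy of face}. But the test you propose for the identification step is not the right one. You would take ``the reflections across the lines of symmetry of $P_R$ passing through $b(\bar e^i)$, together with any translation in $R_t$ that fixes the barycenter.'' Nontrivial translations never fix a point of the torus, and the elements that actually occur are composites $[A\bar x+\bar c]$ with $\bar c\in\Lambda_t$, which need not preserve $|P_R|$ at all. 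The correct criterion, which is what the paper uses, is that $A\in R/R_t$ contributes to $R_{b(e^i)}$ if and only if $A\,b(\bar e^i)\in b(\bar e^i)+\Lambda_t$ (this uses $R=R_t\rtimes R/R_t$). Concretely, in the row $R/R_t=\Z_2$, $\Lambda_t=\Lambda^\sq$, the entry $R_{b(e^1)}=R/R_t$ comes from $-b(\bar e^1)=b(\bar e^1)-(1,1)$ with $(1,1)\in\Lambda^\sq$, even though the $180^\circ$ rotation about $b(\bar e^1)$ is not a symmetry of $|P_R|$; your heuristic would return $\langle\id\rangle$ there.

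The transversality check has a similar flaw. ``The two barycenters are distinct and the axes are not parallel'' is not a valid argument on a torus: the fixed-point set of a single orientation-reversing element of $\Aff(\R^2/\Lambda)$ is a union of parallel circles, so one element can fix both $b(e^i)$ and $b(f^{-1})$ without any single line in $\R^2$ passing through both chosen lifts. One must compare actual elements of $R$ --- linear part \emph{and} translation part modulo $\Lambda$ --- not their images in the two tangent spaces. This matters precisely in the rows where those images overlap as subgroups of $\orthogonal(2)$: for $R/R_t=\D_6$ the reflection in $R_{b(e^2)}=\D_2$ with vertical axis literally coincides with a reflection in $R_{b(f^{-1})}=\D_{3,2}$ (both are the reflection in $x=1/2$), which is the stated exception; and for $R/R_t=\D_{3,2}$ one has $R_{b(e^0)}=\D_{1,2}$ with $\D_{1,2}\subset\D_{3,2}=R_{b(f^{-1})}$ as linear groups, and the two candidate reflections differ by the translation $(1,0)\in\Lambda_t$, so whether they coincide in $R$ depends on whether $(1,0)\in\Lambda$. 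Your ``non-parallel axes'' criterion cannot see either phenomenon, so as written the plan would neither derive the $\D_6$ exception nor detect the delicacy in the $\D_{3,2}$ row.
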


\begin{proof}
As in proofs of Corollary \ref{corollary: finite group not D_1},
\ref{corollary: finite group D_1}, we can calculate
\begin{equation*}
|\mathcal{E}/R| = |\pi^{-1}(\mathcal{E}) / \pi_\aff^{-1}(R)| \quad
\text{ and } \quad |\mathcal{E}/R_t| = |\pi^{-1}(\mathcal{E}) /
\Lambda_t|
\end{equation*}
case by case because $\pi^{-1}(\mathcal{E})$ is the set of
barycenters of edges in $\complexK_{\R^2}.$ By this, we obtain
$|\mathcal{E}/R|$-entry of Table \ref{table: isotropy of edge}.

Since $R_{b(e^i)}$ preserves $|e^i|,$ $R_{b(e^i)} \subset \D_{2,l}$
in $\Iso (T_{b(e^i)} ~ \R^2/\Lambda)$ for some $l \in \Q^*.$ In
cases when $R/R_t \ne \D_1, \D_{1,4},$ we can show that a matrix $A
\in \D_{2,l}$ is in $R_{b(e^i)} \subset \Iso (T_{b(e^i)} ~
\R^2/\Lambda)$ if and only if $A b(\bar{e}^i) \in b(\bar{e}^i) +
\Lambda_t$ in $\R^2$ because $R = R_t \rtimes R/R_t,$ i.e. each
element in $R$ is of the form $[A\bar{x}+\bar{c}]$ with $\bar{c} \in
\Lambda_t.$ By using this, we can calculate $R_{b(e^i)}$ in Table
\ref{table: isotropy of edge}. Similarly, we obtain $R_{b(e^i)}$ in
Table \ref{table: isotropy of edge} in cases when $R/R_t = \D_1,
\D_{1,4}.$


The second statement is checked case by case by Table \ref{table:
isotropy of face}.
\end{proof}

Now, we explain for one-dimensional fundamental domain. A closed
subset $\bar{D}_R$ of $|\lineK_R^{(1)}|$ is called a
\textit{one-dimensional fundamental domain} if the orbit of
$\pi(\bar{D}_R)$ covers $\pi(|\lineK_R^{(1)}|)$ and $\bar{D}_R$ is a
minimal set satisfying the property.


\begin{proposition} \label{proposition: D_R}
Let $R$ be a group in Table \ref{table: finite group of aff}. For
each $i \in \Z_{i_R},$ $R_{b(e^i)}$ fixes $|e^i|$ if and only if
$\Z_2$ is not contained in $R_{b(e^i)}.$ And, $\bar{D}_R$ listed in
Table \ref{table: one-dimensional fundamental domain} is a
one-dimensional fundamental domain.
\end{proposition}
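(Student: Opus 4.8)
The plan is to prove the two assertions separately: the first by a purely local analysis at each edge-barycenter, and the second by reducing the whole $1$-skeleton to the boundary of the single face $|P_R|$ and then using the isotropy tables already established. For the first assertion, note that $R_{b(e^i)}$ fixes $b(e^i)$ and, since $b(\bar e^i)$ lies in the interior of the unique edge $\bar e^i$, preserves $|e^i|$ as a set. Viewing $R_{b(e^i)}$ inside $\Iso(T_{b(e^i)}\,\R^2/\Lambda)$, it therefore lies in the order-four symmetry group $\D_{2,l}$ of the segment $|e^i|$, a Klein four-group generated by the half-turn $r$ about $b(e^i)$, the reflection $s_\parallel$ in the line of $|e^i|$, and the reflection $s_\perp$ in the perpendicular line. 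Exactly $\id$ and $s_\parallel$ fix $|e^i|$ pointwise, while $r$ and $s_\perp$ interchange the endpoints $v^i,v^{i+1}$. Hence $R_{b(e^i)}$ fixes $|e^i|$ pointwise iff $R_{b(e^i)}\subset\{\id,s_\parallel\}$, i.e. iff it contains neither $r$ nor $s_\perp$; since $r=s_\parallel s_\perp$, this is equivalent to $\Z_2=\langle r\rangle\not\subset R_{b(e^i)}$ \emph{provided} one rules out the lone perpendicular reflection $R_{b(e^i)}=\{\id,s_\perp\}$. That exclusion is the one nontrivial point, and I would settle it by reading off Table \ref{table: isotropy of edge}: every $R_{b(e^i)}$ there that is a single reflection $\D_{1,l}$ has, by the determination in Proposition \ref{proposition: isotropy of edge}, its axis along the line of $|e^i|$ (so it is $s_\parallel$ and fixes the edge), whereas every edge-reversing $R_{b(e^i)}$ is $\Z_2$, $\D_2$, or a $\D_{2,l}$ and hence contains $r$; thus $\{\id,s_\perp\}$ never occurs and the equivalence holds in all rows.

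For the second assertion I would first reduce to $\partial|P_R|$. By the choice of $\F_R$ the group $R$ acts transitively on the set $B$ of face-barycenters, so every face of $\lineK_R$ is $R$-equivalent to $|P_R|$, every edge of $\lineK_R$ is $R$-equivalent to some $\bar e^i$, and by equivariance of $\pi$
\[
\pi(|\lineK_R^{(1)}|)=R\cdot\pi(\partial|P_R|)=R\cdot\bigcup_{i\in\Z_{i_R}}\pi(|\bar e^i|).
\]
It then suffices to check that $R\cdot\pi(\bar D_R)$ already contains each $\pi(|\bar e^i|)$. Covering is read against the orbit data: the $\sim$-columns of Tables \ref{table: isotropy of vertex} and \ref{table: isotropy of edge} record which edges share an $R$-orbit, so only one representative per edge-orbit is needed; and by the first assertion an edge with $\Z_2\subset R_{b(e^i)}$ is folded by its half-turn, so the half-edge $[\bar v,\,b(\bar e^i)]$ occurring in $\bar D_R$ covers all of $\pi(|\bar e^i|)$ modulo $R$. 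Running through Table \ref{table: one-dimensional fundamental domain} row by row, each listed $\bar D_R$ is exactly such a connected union of full edges (for unfolded orbits) and vertex-to-barycenter half-edges (for folded ones), whence $R\cdot\pi(\bar D_R)=\pi(|\lineK_R^{(1)}|)$.

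For minimality I would show that distinct relative-interior points of $\bar D_R$ lie in distinct $R$-orbits. If $x,x'$ are interior, non-barycentric points of edges meeting $\bar D_R$ and $x'=gx$ with $g\in R$, then $g$ carries one edge to the other, so they share an orbit; since $\bar D_R$ holds a single representative per edge-orbit these edges coincide, and $g$ restricts to a symmetry of that segment fixing the interior point $x$, forcing $g\in\{\id,s_\parallel\}$, which fixes every point of $|e^i|$; on a folded half-edge the symmetries $r,s_\perp$ send the chosen half onto the other half, so again $g$ fixes $x$ and $x'=x$. Thus the orbit map is injective on the interior of $\bar D_R$, no proper closed subset can cover $\pi(|\lineK_R^{(1)}|)$, and $\bar D_R$ is minimal. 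The main labor—and the place where care is needed—is the per-row verification in Table \ref{table: one-dimensional fundamental domain}, in particular the glide cases $R/R_t=\D_1,\D_{1,4}$, where the identifications $v^0\sim v^3$, $v^1\sim v^2$ (and the analogues) arise from glide reflections rather than from the linear part, and the heavily folded case $R/R_t=\D_6$, for which $R_{b(e^i)}\cap R_{b(f^{-1})}\ne\langle\id\rangle$ and $\bar D_R$ degenerates to a single half-edge $[\bar v^0,b(\bar e^0)]$.
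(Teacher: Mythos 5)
Your treatment of the first assertion is correct and correctly isolates the one case that must be excluded, namely $R_{b(e^i)}=\{\id,s_\perp\}$, but you settle it differently from the paper. You propose a row-by-row inspection of Table \ref{table: isotropy of edge}, which requires re-deriving, for every reflection entry $\D_{1,l}$, the position of its axis relative to the direction of $\bar{e}^i$ (information the table as printed does not record). The paper instead argues structurally: a lone perpendicular reflection is orientation-reversing and reverses $|e^i|$, hence fixes $\pi([b(\bar{f}^{-1}),b(\bar{e}^i)])$ pointwise and so lies in $R_{b(e^i)}\cap R_{b(f^{-1})}$, which is trivial for $R/R_t\ne\D_6$ by Proposition \ref{proposition: isotropy of edge} (the $\D_6$ case, where $\Z_2\subset R_{b(e^i)}$ anyway, is checked directly). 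Both routes work; the paper's avoids the extra computation.

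The genuine gap is in your verification of the second assertion. Your claim that ``each listed $\bar D_R$ is exactly such a connected union of full edges (for unfolded orbits) and vertex-to-barycenter half-edges (for folded ones)'' is false for the row $R/R_t=\D_3$: there $|\mathcal{E}/R|=1$ and each $R_{b(e^i)}$ is a single reflection fixing $|e^i|$, so your recipe would produce one full edge, whereas Table \ref{table: one-dimensional fundamental domain} lists $\bar{D}_R=[b(\bar{e}^2),\bar{v}^0,b(\bar{e}^0)]$ --- two half-edges drawn from two \emph{distinct} edges of the single edge-orbit. The paper explicitly flags this row as not chosen by the recipe and checks it separately. Your minimality argument collapses at exactly the same point: the step ``since $\bar D_R$ holds a single representative per edge-orbit these edges coincide'' is unavailable when two pieces of $\bar D_R$ lie in $R$-equivalent edges. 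For $\D_3$ one must instead verify directly that the two half-edges have disjoint orbits away from their endpoints; this holds because, measured from lattice points of $\Lambda^\eq$, points of $[b(\bar{e}^2),\bar{v}^0]$ lie on rays in directions $\{0,2\pi/3,4\pi/3\}$ while points of $[\bar{v}^0,b(\bar{e}^0)]$ lie on rays in directions $\{\pi/3,\pi,5\pi/3\}$, and these are distinct $\D_3$-orbits of directions, which elements $[A\bar{x}+\bar{c}]$ with $\bar{c}\in\Lambda_t$ cannot mix; the same computation shows the two orbits together still cover every edge. With that one row patched, the rest of your argument goes through and agrees with the paper's.
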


\begin{proof}
First, we prove the first statement. We can check that it is true
for the case of $R/R_t = \D_6.$ So, we may assume that $R/R_t
\ne\D_6.$ Sufficiency is easy. For necessity, assume that
$R_{b(e^i)}$ does not fix $|e^i|.$ So, $R_{b(e^i)}$ is nontrivial.
Since $R_{b(e^i)}$ preserves $|e^i|,$ $R_{b(e^i)} \subset \D_{2, l}$
for some $l \in \Q^*$ where we assume that $a_{2l} b$ fixes $|e^i|.$
So, $R_{b(e^i)} = \langle a_{2l} b \rangle$ or $\langle -a_{2l} b
\rangle$ because $\Z_2$ is not contained in $R_{b(e^i)}.$ Since
$R_{b(e^i)}$ does not fix $|e^i|,$ $R_{b(e^i)} = \langle -a_{2l} b
\rangle.$ But, $-a_{2l} b$ is orient reversing, and preserves
$|e^i|.$ So, it fixes $\pi([b(\bar{f}^{-1}), b(\bar{e}^i)]),$ i.e.
$-a_{2l} b \in R_{b(e^i)} \cap R_{b(f^{-1})}$ which is trivial by
Proposition \ref{proposition: isotropy of edge}. This is a
contradiction.

Since $R$ acts transitively on $B,$ each edge of $\lineK_R$ is in
the $R$-orbit of some $\bar{e}^i.$ So, we may assume that any
one-dimensional fundamental domain is contained in $|\mathcal{E}/R|$
edges of $|P_R|,$ say $\bar{e}^{\prime j}$ for $j=1, \cdots,
|\mathcal{E}/R|.$ We would select suitable $|\mathcal{E}/R|$ lines
in $|\lineK_R^{(1)}|$ whose union is a candidate for a
one-dimensional fundamental domain. For each $j,$ if $R_{b(e^{\prime
j})}$ fixes $|e^{\prime j}|,$ then pick the whole line
$|\bar{e}^{\prime j}|,$ otherwise pick a half of $|\bar{e}^{\prime
j}|$ because a half of $|e^{\prime j}|$ should be sent to the other
half by $R_{b(e^{\prime j})}.$ If we define $\bar{D}_R$ as the union
of picked lines, then $\bar{D}_R$ satisfies the definition of
one-dimensional fundamental domain. In Table \ref{table:
one-dimensional fundamental domain}, we have assigned such a
$\bar{D}_R$ to each $R.$

In the case of $R/R_t = \D_3,$ $\bar{D}_R$ is not chosen in this way
by a technical reason. But, we can check that $\bar{D}_R$ for the
case in Table \ref{table: one-dimensional fundamental domain}
becomes a one-dimensional fundamental domain.
\end{proof}

By using the above calculations, we can calculate $R_{|e^i|}$ and
$R_{C(\bar{x})}$ as promised in Introduction.

\begin{lemma} \label{lemma: isotropy at a point in an edge}
For each $\bar{x}$ in $|\bar{e}^i|$ such that $\bar{x} \ne
\bar{v}^i, \bar{v}^{i+1}, b(\bar{e}^i),$ we have $R_{\bar{x}} =
\langle \id \rangle.$ For $x=\pi(\bar{x}),$ $R_x = R_{|e^i|}.$ More
precisely,
\begin{enumerate}
  \item if $R_{b(e^i)} \subset \Z_2,$ then $R_x$ is trivial,
  \item if $R_{b(e^i)} = \D_{1,l}$ for some $l \in \Q^*,$
  then $R_x=\D_{1,l},$
  \item if $R/R_t = \D_6,$ then $R_x = \D_{1, 3}, \D_{1, -3}, \D_1$
  for $i=0, 1, 2,$ respectively.
\end{enumerate}
\end{lemma}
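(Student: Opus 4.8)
The plan is to first pin down the isotropy $R_{\bar{x}}$ on $|\lineK_R|$, and then to obtain $R_x$ on $\R^2/\Lambda$ by comparing it with the already computed barycenter isotropy $R_{b(e^i)}$; the point is that the only extra symmetry available at the barycenter is the one reversing the edge, which a generic interior point does not see.

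For $R_{\bar{x}} = \langle \id \rangle$, I would use that $\bar{f}^{-1}$ is the unique face of $\lineK_R$ with $|\bar{f}^{-1}| = |P_R|$ and that the pieces of $\amalg_{\bar{g}} \bar{g} \cdot |P_R|$ are disjoint, so $\bar{x} \in |\bar{e}^i|$ lies in no face other than $\bar{f}^{-1}$. Hence any $h \in R$ with $h\bar{x} = \bar{x}$ must stabilise $\bar{f}^{-1}$ and therefore induces a symmetry of the polygon $|P_R|$ fixing its centre $b(\bar{f}^{-1})$. A nontrivial such symmetry is either a rotation, which fixes no further point, or a reflection, whose fixed set is an axis meeting $\partial|P_R|$ only at a vertex or an edge-barycenter. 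Since $\bar{x} \ne \bar{v}^i, \bar{v}^{i+1}, b(\bar{e}^i)$, no nontrivial $h$ fixes $\bar{x}$, so $R_{\bar{x}} = \langle \id \rangle$.

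For $R_x = R_{|e^i|}$, I would first show $R_x \subseteq R_{b(e^i)}$. Any $g \in R_x$ preserves the $1$-skeleton $\pi(|\lineK_R^{(1)}|)$ because $R$ acts cellularly, and near the generic point $x$ this skeleton is the single arc $|e^i|$; thus $g$ preserves the line through $x$ carrying $|e^i|$ and hence maps the edge $|e^i|$ onto itself. An affine map preserving a segment setwise fixes its midpoint, so $g \in R_{b(e^i)}$. As $R_{|e^i|} \subseteq R_x$ is immediate, we obtain $R_x = \{ g \in R_{b(e^i)} \mid g x = x \}$, which for generic $x$ is precisely the subgroup of $R_{b(e^i)}$ fixing $|e^i|$ pointwise, namely $R_{|e^i|}$.

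It then remains to read off the three cases using Proposition \ref{proposition: D_R}, by which $R_{b(e^i)}$ fixes $|e^i|$ pointwise exactly when $\Z_2 \not\subseteq R_{b(e^i)}$. If $R_{b(e^i)} \subseteq \Z_2$, its only possible nontrivial element is the rotation by $\pi$, which reverses $|e^i|$ and so cannot fix $x$; hence $R_x$ is trivial, giving (1). If $R_{b(e^i)} = \D_{1,l}$, then $\Z_2 \not\subseteq R_{b(e^i)}$, so it already fixes $|e^i|$ pointwise and $R_x = \D_{1,l}$, giving (2). The remaining possibility, by Table \ref{table: isotropy of edge}, is $R/R_t = \D_6$ with $R_{b(e^i)} = \D_{2,3/2}, \D_{2,-3/2}, \D_2$; here $R_{|e^i|}$ is the order-two reflection subgroup whose axis is the line through $x$ carrying $|e^i|$. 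I expect this $\D_6$ computation to be the main obstacle: for each $i = 0, 1, 2$ one must identify, among the two reflections of $\D_{2,\pm 3/2}$ and of $\D_2$, the one whose axis runs along the corresponding edge of $P_3^\eq$, and verify by direct calculation in $\Iso(T_x \R^2/\Lambda)$ that it produces $\D_{1,3}, \D_{1,-3}, \D_1$ respectively, which is (3).
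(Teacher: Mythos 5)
Your proposal is correct and follows essentially the same route as the paper: the paper likewise deduces $R_{\bar{x}}=\langle\id\rangle$ from the fact that a stabilizing element would have to be a symmetry of $|P_R|$ fixing both $b(\bar{f}^{-1})$ and a generic edge point, then observes that $R_x$ must fix $|e^i|$ pointwise (hence $R_x=R_{|e^i|}\subset R_{b(e^i)}$), and obtains (1) directly, (2) from Proposition \ref{proposition: D_R}, and (3) by direct calculation for $\D_6$. Your write-up merely supplies the details that the paper's three-sentence proof leaves implicit.
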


\begin{proof}
First, $R_{\bar{x}}$ fixes the whole $|\bar{f}^{-1}|$ so that
$R_{\bar{x}} = \langle \id \rangle.$

Next, $R_x$ fixes the whole $|e^i|,$ especially $R_x \subset
R_{b(e^i)}.$ From this, (1) is obtained. By Proposition
\ref{proposition: D_R}, we obtain (2). (3) is checked by direct
calculation.
\end{proof}

Next, we calculate $R_{C(\bar{x})}.$ If we put $\bar{C}(\bar{x}) =
[\bar{x}, b(P_\rho)]$ for $\bar{x} \in
\partial |P_\rho|,$ then $R_{C(\bar{x})}$ is equal to
\begin{equation*}
R_{\bar{C}(\bar{x})} = R_{b(\bar{f}^{-1})} \cap R_{\bar{x}} =
R_{\bar{x}} \subset R_{b(\bar{f}^{-1})}
\end{equation*}
for the $R$-action on $|\lineK_R|.$ Also, since $R_{b(\bar{f}^{-1})}
= R_{b(f^{-1})},$ we can easily obtain $R_{C(\bar{x})}$ by Table
\ref{table: isotropy of face}.

We give a remark on injectivity of $\pi|_{\bar{D}_R}.$

\begin{remark} \label{remark: in orbit}
\begin{enumerate}
  \item If $R/R_t \ne \langle \id \rangle, \D_1, \D_{1,4},$ then
it can be checked case by case that any two vertices $\bar{v}^i \ne
\bar{v}^{i^\prime}$ in $\bar{D}_R$ satisfy $v^i \ne v^{i^\prime}.$
In this check, it is helpful to note that $v^i = v^{i^\prime}$
implies $v^i \sim v^{i^\prime}.$ In the remaining cases, $\bar{v}^i
\ne \bar{v}^{i^\prime}$ need not imply $v^i \ne v^{i^\prime}$
according to $\Lambda.$
  \item For any two points $\bar{x} \ne \bar{x}^\prime$ in $\bar{D}_R,$
  if they are not vertices, then $\pi(\bar{x}) \ne
  \pi(\bar{x}^\prime)$ by the method by which we choose $\bar{D}_R.$
\end{enumerate}
\end{remark}

\section{equivariant clutching construction}
 \label{section: preclutching map}

Let a compact Lie group $G_\chi$ act affinely and not necessarily
effectively on $\R^2/\Lambda$ through a homomorphism $\rho: G_\chi
\rightarrow \Aff (\R^2/\Lambda).$ Assume that $\rho(G_\chi)=R$ for
some $R$ in Table \ref{table: finite group of aff}. From now on, we
simply denote $\lineK_{\rho(G_\chi)}, P_{\rho(G_\chi)},
i_{\rho(G_\chi)}, j_{\rho(G_\chi)}, \cdots$ by $\lineK_\rho, P_\rho,
i_\rho, j_\rho, \cdots,$ respectively. In this section, we would
consider an equivariant vector bundle over $\R^2/\Lambda$ as an
equivariant clutching construction of an equivariant vector bundle
over $|\lineK_\rho|.$ For this, we would define equivariant
clutching map and its generalization preclutching map. And, we state
an equivalent condition under which a preclutching map be an
equivariant clutching map. Before these, we need introduce notations
on some relevant simplicial complices. Since we should deal with
various cases at the same time, these notations are necessary.

Denote by $\lineL_\rho$ and $\hatL_\rho$ the 1-skeleton
$\lineK_\rho^{(1)}$ of $\lineK_\rho$ and the disjoint union
$\amalg_{\bar{e} \in \lineL_\rho} ~ \bar{e},$ respectively. Then,
$\lineL_\rho$ is a subcomplex of $\lineK_\rho,$ and can be regarded
as a quotient of $\hatL_\rho.$ These are expressed as two simplicial
maps
\begin{equation*}
\imath_\lineL : \lineL_\rho \rightarrow \lineK_\rho, \quad p_\lineL
: \hatL_\rho \rightarrow \lineL_\rho
\end{equation*}
where $\imath_\lineL$ is the inclusion, and $p_\lineL$ is the
quotient map whose preimage of each vertex and edge of $\lineL_\rho$
consists of two vertices and one edge of $\hatL_\rho,$ respectively.
Two maps on underlying spaces are denoted by
\begin{equation*}
\imath_{|\lineL|} : |\lineL_\rho| \rightarrow |\lineK_\rho|, \quad
p_{|\lineL|} : |\hatL_\rho| \rightarrow |\lineL_\rho|.
\end{equation*}
Here, we give an example of $\imath_{|\lineL|}$ and $p_{|\lineL|}$
when $\rho(G_\chi) = R$ with $R/R_t = \langle \id \rangle$ and $R_t
= \langle \id \rangle,$ i.e. $\Lambda_t = \Lambda = \Lambda^\sq.$ In
this case, $\lineK_\rho = P_\rho$ with $|P_\rho| = P_4^\sq$ by Table
\ref{table: two-dimensional fundamental domain}. Also, $\lineL_\rho$
is the 1-skeleton of $P_4^\sq,$ and $\hatL_\rho$ is disjoint union
of four edges of $\lineL_\rho.$ These are illustrated in Figure
\ref{figure: i and p}. In Figure \ref{figure: simple form}, one more
example of $\lineL_\rho$ and $\hatL_\rho$ is illustrated where its
notations are introduced in the below. The $G_\chi$-action on
$\R^2/\Lambda$ naturally induces actions on these relevant
simplicial complices $\lineK_\rho, \lineL_\rho, \hatL_\rho.$

\begin{figure}[ht]
\begin{center}
\begin{pspicture}(-4,-1)(7,1.5)\footnotesize

\psline[linearc=1]{->}(4.5, 0.5)(5.5, 0.5)

\uput[u](5, 0.5){$\pi$}

\uput[r](5.75, 0.5){$\R^2/\Lambda^\sq$}

\pspolygon[fillstyle=solid,fillcolor=lightgray](3,0)(3,1)(4,1)(4,0)
\psline(0,0)(0,1)(1,1)(1,0)(0,0)

\psline(-2,0)(-2,1) \psline(-3.5,0)(-3.5,1)
\psline(-2.25,1.25)(-3.25,1.25) \psline(-2.25,-0.25)(-3.25,-0.25)

\psline[linearc=1]{->}(1.5, 0.5)(2.5, 0.5)

\uput[u](2, 0.5){$\imath_{|\lineL|}$}

\psline[linearc=1]{->}(-1.5, 0.5)(-0.5, 0.5)

\uput[u](-1, 0.5){$p_{|\lineL|}$}

\uput[d](-2.75,-0.5){$|\hatL_\rho|$}

\uput[d](0.5,-0.5){$|\lineL_\rho|$}

\uput[d](3.5,-0.5){$|\lineK_\rho|$}
\end{pspicture}
\end{center}
\caption{\label{figure: i and p} An example of $\imath_{|\lineL|}$
and $p_{|\lineL|}$ when $R/R_t = \langle \id \rangle$ and $\Lambda_t
= \Lambda = \Lambda^\sq.$}
\end{figure}

Now, we introduce notations on simplices of relevant simplicial
complices. We use notations $\hat{v}$ and $\hat{e}$ to denote a
vertex and an edge of $\hatL_\rho,$ respectively. And, we use the
notation $\hat{x}$ to denote an arbitrary point in $|\hatL_\rho|.$
For simplicity, we often denote by $\bar{v}, \bar{e}, \bar{x}$
images $p_\lineL (\hat{v}), p_\lineL (\hat{e}), p_{|\lineL|}
(\hat{x}),$ respectively. Two edges $\hat{e}, \hat{e}^\prime$ of
$\hatL_\rho$ (and their images $\bar{e}, \bar{e}^\prime$ of
$\lineL_\rho$) are called \textit{adjacent} if $\hat{e} \ne
\hat{e}^\prime$ and $\pi( |p_\lineL (\hat{e})| ) = \pi( |p_\lineL
(\hat{e}^\prime)| ).$ In the example of Figure \ref{figure: i and
p}, two parallel edges are adjacent. And, two faces (not necessarily
different) $\bar{f}, \bar{f}^\prime$ of $\lineK_\rho$ are called
\textit{adjacent} if there are two adjacent edges $\bar{e} \in
\bar{f},$ $\bar{e}^\prime \in \bar{f}^\prime.$ In Introduction, we
have already defined $\bar{v}^i$'s and $\bar{e}^i$'s. For $i \in
\Z_{i_\rho},$ denote by $\bar{f}^i$ the unique face of $\lineK_\rho$
which contains the edge $\bar{e}^\prime$ adjacent to $\bar{e}^i.$
Faces $\bar{f}^i$'s need not be different. So far, we have finished
defining $\bar{f}^i, \bar{e}^i, \bar{v}^i$ for superscript $i \in
\Z_{i_\rho}.$

\begin{figure}[ht]
\begin{center}
\mbox{ \subfigure[$|\lineK_\rho|$]{
\begin{pspicture}(-5.5,-3.5)(5.5,3.5)\footnotesize

\pspolygon[fillstyle=solid,fillcolor=lightgray](-1,-1)(-1,1)(1,1)(1,-1)(-1,-1)

\pspolygon[fillstyle=solid,fillcolor=lightgray,linestyle=none](3,-1)(2,-1)(2,1)(3,1)
\rput(-5,0){\pspolygon[fillstyle=solid,fillcolor=lightgray,linestyle=none](3,-1)(2,-1)(2,1)(3,1)
}
\pspolygon[fillstyle=solid,fillcolor=lightgray,linestyle=none](-1,3)(-1,2)(1,2)(1,3)
\rput(0,-5){\pspolygon[fillstyle=solid,fillcolor=lightgray,linestyle=none](-1,3)(-1,2)(1,2)(1,3)
}
\pspolygon[fillstyle=solid,fillcolor=lightgray,linestyle=none](2,3)(2,2)(3,2)(3,3)
\rput(0,-5){\pspolygon[fillstyle=solid,fillcolor=lightgray,linestyle=none](2,3)(2,2)(3,2)(3,3)
}
\rput(-5,0){\pspolygon[fillstyle=solid,fillcolor=lightgray,linestyle=none](2,3)(2,2)(3,2)(3,3)
}
\rput(-5,-5){\pspolygon[fillstyle=solid,fillcolor=lightgray,linestyle=none](2,3)(2,2)(3,2)(3,3)
}

\psline(3,-1)(2,-1)(2,1)(3,1) \psline(-3,-1)(-2,-1)(-2,1)(-3,1)
\psline(-1,3)(-1,2)(1,2)(1,3) \psline(-1,-3)(-1,-2)(1,-2)(1,-3)

\psline(2,3)(2,2)(3,2) \psline(-2,3)(-2,2)(-3,2)
\psline(2,-3)(2,-2)(3,-2) \psline(-2,-3)(-2,-2)(-3,-2)

\psdots[dotsize=2pt](0,0) \psdots[dotsize=2pt](3,0)
\psdots[dotsize=2pt](-3,0) \psdots[dotsize=2pt](0,3)
\psdots[dotsize=2pt](0,-3) \psdots[dotsize=2pt](3,3)
\psdots[dotsize=2pt](-3,3) \psdots[dotsize=2pt](3,-3)
\psdots[dotsize=2pt](-3,-3)

\uput[d](0,0){$b(\bar{f}^{-1})$} \uput[dr](3,0){$b(\bar{f}^2)$}
\uput[dl](-3,0){$b(\bar{f}^0)$} \uput[u](0,3){$b(\bar{f}^1)$}
\uput[d](0,-3){$b(\bar{f}^3)$}

\uput[l](-0.9,0){$\bar{e}^0$} \uput[r](0.9,0){$\bar{e}^2$}
\uput[u](0,0.9){$\bar{e}^1$} \uput[d](0,-0.9){$\bar{e}^3$}

\uput[l](-1.9,0){$c(\bar{e}^0)$} \uput[r](1.9,0){$c(\bar{e}^2)$}
\uput[u](0,1.9){$c(\bar{e}^1)$} \uput[d](0,-1.9){$c(\bar{e}^3)$}

\uput[d](-1,-0.9){$\bar{v}^0$} \uput[u](-1,0.9){$\bar{v}^1$}
\uput[u](1,0.9){$\bar{v}^2$} \uput[d](1,-0.9){$\bar{v}^3$}

\uput[r](-2.1,-1){$\bar{v}_1^0$} \uput[u](-2,-2.1){$\bar{v}_2^0$}
\uput[l](-0.9,-2){$\bar{v}_3^0$} \uput[l](-0.9,-2){$\bar{v}_3^0$}

\uput[l](2.1,1){$\bar{v}_1^2$} \uput[d](2,2.1){$\bar{v}_2^2$}
\uput[r](0.9,2){$\bar{v}_3^2$}

\uput[r](-2.1,1){$\bar{v}_3^1$} \uput[d](-2,2.1){$\bar{v}_2^1$}
\uput[l](-0.9,2){$\bar{v}_1^1$}

\uput[l](2.1,-1){$\bar{v}_3^3$} \uput[u](2,-2.1){$\bar{v}_2^3$}
\uput[r](0.9,-2){$\bar{v}_1^3$}
\end{pspicture}
} }

\mbox{ \subfigure[$|\hatL_\rho|$]{
\begin{pspicture}(-5,-4)(5,4)\footnotesize

\psline(-1.5,-0.5)(-1.5,0.5) \psline(1.5,-0.5)(1.5,0.5)
\psline(-0.5,1)(0.5,1) \psline(-0.5,-1)(0.5,-1)

\psline(5,-1)(4,-1) \psline(3,-0.5)(3,0.5) \psline(4,1)(5,1)

\psline(-1.5,4)(-1.5,3) \psline(-0.5,2.5)(0.5,2.5)
\psline(1.5,3)(1.5,4)

\psline(-5,-1)(-4,-1) \psline(-3,-0.5)(-3,0.5) \psline(-4,1)(-5,1)

\psline(-1.5,4)(-1.5,3) \psline(-0.5,2.5)(0.5,2.5)
\psline(1.5,3)(1.5,4)

\psline(-1.5,-4)(-1.5,-3) \psline(-0.5,-2.5)(0.5,-2.5)
\psline(1.5,-3)(1.5,-4)

\psline(5,2.5)(4,2.5) \psline(3,3)(3,4)

\psline(-5,2.5)(-4,2.5) \psline(-3,3)(-3,4)

\psline(5,-2.5)(4,-2.5) \psline(3,-3)(3,-4)

\psline(-5,-2.5)(-4,-2.5) \psline(-3,-3)(-3,-4)

\psdots[dotsize=3pt](-1.5,-0.5)(-1.5,0.5)(1.5,-0.5)(1.5,0.5)
\psdots[dotsize=3pt](-0.5,1)(0.5,1)(-0.5,-1)(0.5,-1)
\psdots[dotsize=3pt](4,-1)(3,-0.5)(3,0.5)(4,1)
\psdots[dotsize=3pt](-1.5,3)(-0.5,2.5)(0.5,2.5) (1.5,3)
\psdots[dotsize=3pt](-4,-1)(-3,-0.5)(-3,0.5)(-4,1)
\psdots[dotsize=3pt](-1.5,3)(-0.5,2.5)(0.5,2.5) (1.5,3)
\psdots[dotsize=3pt](-1.5,-3)(-0.5,-2.5)(0.5,-2.5)(1.5,-3)
\psdots[dotsize=3pt](4,2.5)(3,3)(-4,2.5)(-3,3)
\psdots[dotsize=3pt](4,-2.5)(3,-3)(-4,-2.5)(-3,-3)

\uput[r](-1.6,0){$\hat{e}^0$} \uput[l](1.6,0){$\hat{e}^2$}
\uput[d](0,1.1){$\hat{e}^1$} \uput[u](0,-1.1){$\hat{e}^3$}

\uput[l](-2.9,0){$c(\hat{e}^0)$} \uput[r](2.9,0){$c(\hat{e}^2)$}
\uput[u](0,2.4){$c(\hat{e}^1)$} \uput[d](0,-2.4){$c(\hat{e}^3)$}

\uput[d](-0.5,-0.9){$\hat{v}_{0,-}^0$}
\uput[d](-1.5,-0.4){$\hat{v}_{0,+}^0$}
\uput[d](-3,-0.4){$\hat{v}_{1,-}^0$}
\uput[d](-4,-0.9){$\hat{v}_{1,+}^0$}
\uput[u](-4,-2.6){$\hat{v}_{2,-}^0$}
\uput[u](-3,-3.1){$\hat{v}_{2,+}^0$}
\uput[u](-1.5,-3.1){$\hat{v}_{3,-}^0$}
\uput[u](-0.5,-2.6){$\hat{v}_{3,+}^0$}

\uput[u](-0.5,0.9){$\hat{v}_{0,+}^1$}
\uput[u](-1.5,0.4){$\hat{v}_{0,-}^1$}
\uput[u](-3,0.4){$\hat{v}_{3,+}^1$}
\uput[u](-4,0.9){$\hat{v}_{3,-}^1$}
\uput[d](-4,2.6){$\hat{v}_{2,+}^1$}
\uput[d](-3,3.1){$\hat{v}_{2,-}^1$}
\uput[d](-1.5,3.1){$\hat{v}_{1,+}^1$}
\uput[d](-0.5,2.6){$\hat{v}_{1,-}^1$}

\uput[u](0.5,0.9){$\hat{v}_{0,-}^2$}
\uput[u](1.5,0.4){$\hat{v}_{0,+}^2$}
\uput[u](3,0.4){$\hat{v}_{1,-}^2$}
\uput[u](4,0.9){$\hat{v}_{1,+}^2$}
\uput[d](4,2.6){$\hat{v}_{2,-}^2$}
\uput[d](3,3.1){$\hat{v}_{2,+}^2$}
\uput[d](1.5,3.1){$\hat{v}_{3,-}^2$}
\uput[d](0.5,2.6){$\hat{v}_{3,+}^2$}

\uput[d](0.5,-0.9){$\hat{v}_{0,+}^3$}
\uput[d](1.5,-0.4){$\hat{v}_{0,-}^3$}
\uput[d](3,-0.4){$\hat{v}_{3,+}^3$}
\uput[d](4,-0.9){$\hat{v}_{3,-}^3$}
\uput[u](4,-2.6){$\hat{v}_{2,+}^3$}
\uput[u](3,-3.1){$\hat{v}_{2,-}^3$}
\uput[u](1.5,-3.1){$\hat{v}_{1,+}^3$}
\uput[u](0.5,-2.6){$\hat{v}_{1,-}^3$}
\end{pspicture}
}

}
\end{center}
\caption{\label{figure: simple form} Relation between $\lineK_\rho$
and $\hatL_\rho$ near $\bar{f}^{-1}$ when $R/R_t = \D_2, \Z_4, \D_4$
with $\Lambda_t = \Lambda^\sq$}
\end{figure}

Next, we define $\bar{x}_j$ with subscript $j$ for any point
$\bar{x}$ of $|\lineL_\rho|.$
\begin{notation}
~
\begin{enumerate}
  \item For a vertex $\bar{v}$ in $\lineL_\rho$ and its image $v =
\pi(\bar{v}),$ we label vertices in $\pi^{-1} (v)$ with $\bar{v}_j$
to satisfy
\begin{enumerate}
  \item[i)] $\pi^{-1} (v) = \{ \bar{v}_j ~|~ j \in \Z_{j_\rho} \},$
  \item[ii)] $\bar{v}_0 = \bar{v},$
  \item[iii)] in each face $|\bar{f}_j|$ containing $\bar{v}_j$
  for $j \in \Z_{j_\rho},$ we can take a small neighborhood
  $U_j$ of $\bar{v}_j$ so that
  $\pi(U_j)$'s are arranged in the counterclockwise way around
  $v.$
\end{enumerate}
  \item For a non-vertex $\bar{x}$ in $|\lineL_\rho|$ and its image $x =
\pi(\bar{x}),$ we label two points in $\pi^{-1} (x)$ with $\{
\bar{x}_j | j \in \Z_2 \}$ to satisfy $\bar{x}_0 = \bar{x}.$
\end{enumerate}
For simplicity, we denote $(\bar{v}^i)_j,$ $(\bar{d}^i)_j$ by
$\bar{v}_j^i,$ $\bar{d}_j^i,$ respectively.
\end{notation}

Now, we introduce superscript $i$ and subscripts $+, -$ for vertices
and edges in $\hatL_\rho.$ Before it, we introduce a simplicial map.
Let $c: \hatL_\rho \rightarrow \hatL_\rho$ be the simplicial map
whose underlying space map $|c| : |\hatL_\rho| \rightarrow
|\hatL_\rho|$ is defined as
\begin{enumerate}
  \item[] for each adjacent $\hat{e}, \hat{e}^\prime \in \hatL_\rho,$
        each point $\hat{x}$ in $|\hat{e}|$ is sent to the point $|c|(\hat{x})$
        in $|\hat{e}^\prime|$ to satisfy $\pi(p_{|\lineL|}(\hat{x}))
        = \pi(p_{|\lineL|}(c(\hat{x}))).$
\end{enumerate}
For example, $\hat{e}$ and $c(\hat{e})$ are adjacent for any edge
$\hat{e}$ in $\hatL_\rho.$ Easily, $|c|$ is equivariant. For
notational simplicity, we define $c$ also on edges of $\lineL_\rho$
to satisfy $c( p_{\lineL}(\hat{e}) ) = p_{\lineL}( c(\hat{e}) )$ for
each edge $\hat{e}.$

\begin{notation}
~
\begin{enumerate}
  \item For a vertex $\bar{v}$ in $\lineL_\rho,$ we label two vertices in
$p_{\lineL}^{-1} (\bar{v})$ with $\hat{v}_\pm$ to satisfy
\begin{equation*}
p_{\lineL} (c(\hat{v}_+)) = \bar{v}_1 ~ \text{ and } ~ p_{\lineL}
(c(\hat{v}_-)) = \bar{v}_{-1}.
\end{equation*}
  \item For a non-vertex $\bar{x}$ in $|\lineL_\rho|,$
  we denote the point in $p_{|\lineL|}^{-1} (\bar{x})$ as
  $\hat{x}_+$ or $\hat{x}_-,$ i.e. $\hat{x}_+ = \hat{x}_-.$
\end{enumerate}
\end{notation}
For simplicity, denote $\hat{x}_\pm$ for $\bar{x}= \bar{v}^i,
\bar{v}_j^i, \bar{d}^i, \bar{d}_j^i$ by $\hat{v}_\pm^i,
\hat{v}_{j,\pm}^i, \hat{d}_\pm^i, \hat{d}_{j,\pm}^i,$ respectively.
So, if $\bar{d}^i$ is a barycenter of an edge, then $\hat{d}_+^i =
\hat{d}_-^i.$ And, denote by $\hat{e}^i$ the edge in $\hatL_\rho$
whose image by $p_\lineL$ is $\bar{e}^i$ for $i \in \Z_{i_\rho}.$
Then, we have finished introducing notations in Figure \ref{figure:
simple form}. By using these notations, we introduce one-dimensional
fundamental domain of $|\hatL_\rho|.$ For any two points $\hat{a},
\hat{b}$ in an edge of $|\hatL_\rho|,$ denote by $[\hat{a},
\hat{b}]$ the shortest line connecting $\hat{a}$ and $\hat{b}.$ For
$\bar{D}_\rho= \bigcup_{i \in I_\rho^-} [\bar{d}^i, \bar{d}^{i+1}]
\subset |\lineK_\rho|,$ we define $\hat{D}_\rho$ in $|\hatL_\rho|$
as the disjoint union $\bigcup_{i \in I_\rho^-} [\hat{d}_+^i,
\hat{d}_-^{i+1}]$ so that $p_{|\lineL|}( \hat{D}_\rho
)=\bar{D}_\rho.$ And, denote by $\hat{\mathbf{D}}_\rho$ the set $(
\pi \circ p_{|\lineL|})^{-1} ( D_\rho )$ in $|\hatL_\rho|$ which is
equal to
\begin{equation*}
\Big( \bigcup_{i \in I_\rho^-} [\hat{d}_+^i, \hat{d}_-^{i+1}] \Big)
~ \bigcup ~ \Big( \bigcup_{i \in I_\rho^-} |c|([\hat{d}_+^i,
\hat{d}_-^{i+1}]) \Big) ~ \bigcup ~ \Big( \bigcup_{\bar{v} \in
\bar{D}_\rho} ( \pi \circ p_{\lineL} )^{-1} (v) \Big)
\end{equation*}
where $v=\pi(\bar{v}).$

For convenience in calculation, we parameterize each edge
$\hat{e}^i$ linearly by $s \in [0,1]$ so that $\hat{v}_+^i = 0,$
$b(\hat{e}^i) = 1/2,$ $\hat{v}_-^{i+1} = 1,$ and parameterize each
edge $c(\hat{e}^i)$ linearly by $s \in [0,1]$ so that $|c|(s) = 1-s$
for $s \in |\hat{e}^i|.$ We repeatedly use this parametrization.

%


Now, we describe an equivariant vector bundle over $\R^2/\Lambda$ as
an equivariant clutching construction of an equivariant vector
bundle over $|\lineK_\rho|.$ Let $V_B$ be a $G_\chi$ vector bundle
over $B$ such that $(\res_H^{G_\chi} V_B)|_{b(\bar{f})}$ is
$\chi$-isotypical at each $b(\bar{f})$ in $B.$ Especially,
$(\res_H^{G_\chi} V_B)|_{b(\bar{f})}$'s are all isomorphic. Also, if
we denote by $V_{\bar{f}}$ the isotropy representation of $V_B$ at
each $b(\bar{f})$ in $B,$ then $V_B \cong G_\chi
\times_{(G_\chi)_{b(\bar{f})}} V_{\bar{f}}$ because $G_\chi$ acts
transitively on $B.$ And, we define $\Vect_{G_\chi} (\R^2/\Lambda,
\chi)_{V_B}$ as the set
\begin{equation*}
\{ E \in \Vect_{G_\chi} (\R^2/\Lambda, \chi) ~ \big| ~ E|_{B} \cong
V_B \}.
\end{equation*}
Similarly, $\Vect_{G_\chi} (|\lineK_\rho|, \chi)_{V_B}$ is defined.
Observe that $\Vect_{G_\chi} (|\lineK_\rho|, \chi)_{V_B}$ has the
unique element $[F_{V_B}]$ for the bundle $F_{V_B} = G_\chi
\times_{(G_\chi)_{b(\bar{f}^{-1})}} ( |\bar{f}^{-1}| \times
V_{\bar{f}^{-1}} )$ because $|\lineK_\rho| \cong G_\chi
\times_{(G_\chi)_{b(\bar{f}^{-1})}} |\bar{f}^{-1}|$ is equivariant
homotopically equivalent to $B.$ Henceforward, we use
trivializations
\begin{equation}
\label{equation: trivialization}
\begin{array}{ll} |\bar{f}| \times
V_{\bar{f}} & \quad \text{ for } \big(
\res_{(G_\chi)_{b(\bar{f})}}^{G_\chi} F_{V_B} \big)
\big|_{|\bar{f}|}, \\
|\bar{e}| \times V_{\bar{f}} & \quad \text{ for } \big(
\res_{(G_\chi)_{b(\bar{e})}}^{G_\chi} F_{V_B} \big)
\big|_{|\bar{e}|}
\end{array}
\end{equation}
for each face $\bar{f}$ and edge $\bar{e} \in \bar{f}.$ Then, each
$E \in \Vect_{G_\chi} (\R^2/\Lambda, \chi)_{V_B}$ can be constructed
by gluing $F_{V_B} \cong \pi^* E$ along edges through
\begin{equation*}
|\bar{e}| \times V_{\bar{f}} \longrightarrow |c(\bar{e})| \times
V_{\bar{f}^\prime}, \quad ( ~ p_{|\lineL|}(\hat{x}), u ~ ) \mapsto
\big( ~ p_{|\lineL|}(|c|(\hat{x})), \varphi_{|\hat{e}|}(\hat{x}) ~ u
~ \big)
\end{equation*}
via some continuous maps
\begin{equation*}
\varphi_{|\hat{e}|} : |\hat{e}| \rightarrow \Iso ( V_{\bar{f}},
V_{\bar{f}^\prime} )
\end{equation*}
for each edge $\hat{e},$ $\hat{x} \in |\hat{e}|,$ $u \in
V_{\bar{f}}$ where $\bar{e} = p_{\lineL}(\hat{e})$ and $\bar{e} \in
\bar{f},$ $c(\bar{e}) \in \bar{f}^\prime.$ The union $\Phi = \bigcup
\varphi_{|\hat{e}|}$ is called an \textit{equivariant clutching map}
of $E$ with respect to $V_B.$ When we use the phrase `with respect
to $V_B$', it is assumed that we use the bundle $F_{V_B}$ and its
trivialization (\ref{equation: trivialization}) in gluing. This
construction of the bundle is called an equivariant clutching
construction, and $E$ is denoted by $E^\Phi$ to stress that it is
constructed through $\Phi.$ Here, note that $\Phi$ is defined on
$p_{|\lineL|}^* F_{V_B}$ over $|\hatL_\rho|.$ That is why we define
$\hatL_\rho.$ Sometimes, we regard $\Phi$ as a map
\begin{equation*}
p_{|\lineL|}^* F_{V_B} \rightarrow p_{|\lineL|}^* F_{V_B},
\quad(\hat{x}, u) \mapsto \big( |c|(\hat{x}), \Phi (\hat{x}) u \big)
\end{equation*}
by using trivialization (\ref{equation: trivialization}) for each
$(\hat{x}, u) \in |\hat{e}| \times V_{\bar{f}}$ where $\bar{e} \in
\bar{f}.$ Also, note that $\Phi$ should be equivariant, i.e.
\begin{equation*}
g \Phi(g^{-1} \hat{x}) g^{-1} = \Phi(\hat{x})
\end{equation*}
for all $g \in G_\chi, \hat{x} \in |\hatL_\rho|$ because $\Phi$
gives an equivariant vector bundle. An equivariant clutching map of
some bundle in $\Vect_{G_\chi} (\R^2/\Lambda, \chi)_{V_B}$ with
respect to $V_B$ is called simply an \textit{equivariant clutching
map} with respect to $V_B,$ and let $\Omega_{V_B}$ be the set of all
equivariant clutching maps with respect to $V_B.$ We also need to
restrict an equivariant clutching map in $\Omega_{V_B}$ to
$\hat{D}_\rho.$ We explain for this. Let $\Omega_{\hat{D}_\rho,
V_B}$ be the set
\begin{equation*}
\{ \Phi |_{\hat{D}_\rho} ~ | ~ \Phi \in \Omega_{V_B} \}.
\end{equation*}
If two equivariant clutching maps coincide on $\hat{D}_\rho,$ then
they are identical by equivariance and definition of one-dimensional
fundamental domain. So, the restriction map $\Omega_{V_B}
\rightarrow \Omega_{\hat{D}_\rho, V_B}, ~ \Phi \mapsto
\Phi|_{\hat{D}_\rho}$ is bijective, and we obtain isomorphism $\pi_0
(\Omega_{V_B}) \cong \pi_0 (\Omega_{\hat{D}_\rho, V_B})$ between two
homotopies. This is why we restrict an equivariant clutching map to
$\hat{D}_\rho.$ We call a map $\Phi$ in $\Omega_{V_B}$ the
\textit{extension} of $\Phi|_{\hat{D}_\rho}$ in
$\Omega_{\hat{D}_\rho, V_B}.$ And, denote by
$E^{\Phi_{\hat{D}_\rho}}$ the bundle $E^\Phi$ if $\Phi$ is the
extension of $\Phi_{\hat{D}_\rho}.$

Next, we define preclutching map, a generalization of equivariant
clutching map. Let $C^0 (|\hatL_\rho|, V_B)$ be the set of functions
$\Phi$ on $|\hatL_\rho|$ satisfying $\Phi|_{|\hat{e}|}(\hat{x}) \in
\Iso(V_{\bar{f}}, V_{\bar{f}^\prime})$ for each $\hat{e}$ and
$\hat{x} \in |\hat{e}|$ where $\hat{e}, \hat{e}^\prime$ are adjacent
and $\bar{e} \in \bar{f}, \bar{e}^\prime \in \bar{f}^\prime.$ And,
let $C^0 (\hat{D}_\rho, V_B)$ be the set of functions
$\Phi_{\hat{D}_\rho}$ on $\hat{D}_\rho$ defined by
\begin{equation*}
\{ \Phi |_{\hat{D}_\rho} ~ | ~ \Phi \in C^0 ( |\hatL_\rho|, V_B )
\}.
\end{equation*}
A function $\Phi$ in $C^0 (|\hatL_\rho|, V_B)$ or a function
$\Phi_{\hat{D}_\rho}$ in $C^0 (\hat{D}_\rho, V_B)$ is called a
\textit{preclutching map} with respect to $V_B.$ Denote by
$\varphi^i$ the restriction of $\Phi_{\hat{D}_\rho}$ in $C^0
(\hat{D}_\rho, V_B)$ to $[ \hat{d}_+^i, \hat{d}_-^{i+1} ]$ for $i
\in I_\rho^-,$ i.e. $\Phi_{\hat{D}_\rho}$ is the disjoint union of
$\varphi^i$'s. Then, it is a natural question under which conditions
a preclutching map becomes an equivariant clutching map. We can
answer this question for a preclutching map in $C^0 (|\hatL_\rho|,
V_B).$ A preclutching map $\Phi$ in $C^0 (|\hatL_\rho|, V_B)$ is an
equivariant clutching map with respect to $V_B$ if and only if
\begin{itemize}
  \item[N1.] $\Phi(|c|(\hat{x})) = \Phi(\hat{x})^{-1}$ for each $\hat{x} \in |\hatL_\rho|,$
  \item[N2.] For each vertex $\bar{v} \in \lineK_\rho,$
  \begin{equation*}
  \Phi(\hat{v}_{j_\rho-1,+}) \cdots \Phi(\hat{v}_{j,+}) \cdots
  \Phi(\hat{v}_{0,+}) = \id
  \end{equation*}
  for $j \in \Z_{j_\rho},$
  \item[E1.] $\Phi(g\hat{x}) = g \Phi(\hat{x}) g^{-1}$ for each $\hat{x} \in |\hatL_\rho|, g \in G_\chi.$
\end{itemize}
We explain for this more precisely. As we have seen in \cite{Ki}, if
$\Phi$ satisfies Condition N1., N2., then the bundle $E^\Phi$ is
well-defined and becomes an inequivariant vector bundle though it
need not be an equivariant vector bundle. And, if $\Phi$ also
satisfies Condition E1., then $E^\Phi$ becomes an equivariant vector
bundle so that $\Phi$ is an equivariant clutching map. We will
answer the same question for a preclutching map in $C^0
(\hat{D}_\rho, V_B)$ in Section \ref{section: equivariant clutching
maps}.

\section{Relations between $\pi_0 ( \Omega_{ V_B } ),$ $\Vect_{G_\chi} (\R^2/\Lambda,
\chi),$ $A_{G_\chi} (\R^2/\Lambda, \chi).$}
 \label{section: relation}

Now, we investigate relations between
\begin{equation*}
\pi_0 ( \Omega_{ V_B } ), \quad \Vect_{G_\chi} (\R^2/\Lambda,
\chi)_{V_B}, \quad A_{G_\chi} (\R^2/\Lambda, \chi).
\end{equation*}
Our classification of the paper is based on these relations. Before
it, we state basic facts on equivariant vector bundles from the
previous paper.

\begin{proposition}
 \label{proposition: homotopy gives isomorphism}
For two maps $\Phi$ and $\Phi^\prime$ in $\Omega_{V_B},$ if $\Phi$
and $\Phi^\prime$ are equivariantly homotopic, then $E^\Phi$ and
$E^{\Phi^\prime}$ are isomorphic equivariant vector bundles.
\end{proposition}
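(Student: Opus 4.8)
The plan is to realize the equivariant homotopy as a single equivariant vector bundle over the cylinder $(\R^2/\Lambda) \times [0,1]$ and then appeal to equivariant homotopy invariance of equivariant vector bundles. Let $G_\chi$ act trivially on $[0,1],$ so that $(\R^2/\Lambda) \times [0,1]$ is a compact $G_\chi$-space, and let $t \mapsto \Phi_t$ be the given equivariant homotopy in $\Omega_{V_B}$ with $\Phi_0 = \Phi$ and $\Phi_1 = \Phi^\prime.$

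First I would carry out the clutching construction of Section \ref{section: preclutching map} over the cylinder. Using the base bundle $F_{V_B} \times [0,1]$ over $|\lineK_\rho| \times [0,1],$ with the trivializations (\ref{equation: trivialization}) extended by the identity in the $[0,1]$-direction, define $\tilde{\Phi}$ on $|\hatL_\rho| \times [0,1]$ by $\tilde{\Phi}(\hat{x}, t) = \Phi_t(\hat{x}).$ Since each $\Phi_t$ lies in $\Omega_{V_B}$ and the homotopy is continuous, $\tilde{\Phi}$ is a continuous preclutching map satisfying Conditions N1.\ and N2.\ for every $t$ simultaneously (these conditions only involve the values $\Phi_t(\hat{x}),$ not the parameter $t,$ so they pass to the cylinder unchanged). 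Condition E1.\ also holds because each $\Phi_t$ is equivariant and $G_\chi$ fixes the $[0,1]$-coordinate. Hence $\tilde{\Phi}$ is an equivariant clutching map over the cylinder and produces an equivariant vector bundle $\tilde{E} = E^{\tilde{\Phi}}$ over $(\R^2/\Lambda) \times [0,1]$ whose restriction to each slice $(\R^2/\Lambda) \times \{t\}$ is $E^{\Phi_t};$ in particular its restrictions to the two ends are $E^\Phi$ and $E^{\Phi^\prime}.$

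Finally I would invoke equivariant homotopy invariance: a $G_\chi$-vector bundle over $X \times [0,1],$ with $X$ a compact $G_\chi$-space (here $X = \R^2/\Lambda$), restricts to isomorphic equivariant bundles over $X \times \{0\}$ and $X \times \{1\}.$ Applying this to $\tilde{E}$ yields $E^\Phi \cong E^{\Phi^\prime}$ as equivariant vector bundles, which is exactly the assertion.

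The hard part will be checking that $\tilde{\Phi}$ is a \emph{genuine} equivariant clutching map for the cylinder, that is, that Conditions N1., N2., E1.\ together with the continuity demanded of a clutching map survive the introduction of the parameter $t,$ so that the slicewise bundles $E^{\Phi_t}$ assemble into one honest equivariant bundle $\tilde{E}$ rather than merely a continuous family of bundles. This amounts to the naturality and continuity of the clutching construction in its clutching data, and is the cylinder analogue of the construction already established in Section \ref{section: preclutching map}. Once it is in place, the homotopy-invariance step is a standard fact for actions of compact groups and presents no further difficulty.
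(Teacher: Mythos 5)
Your argument is correct. The paper itself states this proposition without proof, importing it as a ``basic fact'' from the previous paper \cite{Ki}, and the route you take --- assembling the homotopy $t \mapsto \Phi_t$ into a single equivariant clutching map $\tilde{\Phi}$ over $|\hatL_\rho| \times [0,1]$, checking that Conditions N1., N2., E1.\ hold slicewise and hence on the cylinder, and then applying equivariant homotopy invariance of $G$-vector bundles over $X \times [0,1]$ for a compact Lie group acting on a compact base --- is the standard proof of exactly this kind of statement and is the one implicit in the cited source. The one point you flag as ``the hard part,'' namely that the slicewise quotients glue into an honest locally trivial equivariant bundle $\tilde{E}$ over the cylinder, does go through: local triviality near $\pi(|\lineL_\rho|) \times [0,1]$ follows from N1.\ and N2.\ holding for each $t$ together with continuity of $\tilde{\Phi}$ in $t$, so no genuine gap remains.
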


When we consider a map in $\Omega_{V_B}$ as a map defined on
$p_{|\lineL|}^* F_{V_B},$ we have the following result:

\begin{proposition}
 \label{proposition: equivalent condition for isomorphism}
For two maps $\Phi$ and $\Phi^\prime$ in $\Omega_{V_B},$ $[E^\Phi] =
[E^{\Phi^\prime}]$ in $\Vect_{G_\chi} (\R^2/\Lambda, \chi)_{V_B}$ if
and only if there is a $G_\chi$-isomorphism $\Theta : F_{V_B}
\rightarrow F_{V_B}$ such that $( p_{|\lineL|}^* \Theta ) \Phi =
\Phi^\prime ( p_{|\lineL|}^* \Theta )$ where $p_{|\lineL|}^* \Theta
: p_{|\lineL|}^* F_{V_B} \rightarrow p_{|\lineL|}^* F_{V_B} $ is the
pull-back of $\Theta.$
\end{proposition}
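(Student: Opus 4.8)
The plan is to reduce both implications to the tautological description of the clutching construction. Recall that for $\Phi \in \Omega_{V_B}$ the construction supplies a canonical $G_\chi$-isomorphism $q_\Phi : F_{V_B} \to \pi^* E^\Phi$ covering $\id_{|\lineK_\rho|}$, under which $\Phi$ is exactly the transition datum realizing the quotient $\pi : |\lineK_\rho| \to \R^2/\Lambda$ along adjacent edges. Writing $Q_\Phi = p_{|\lineL|}^* q_\Phi : p_{|\lineL|}^* F_{V_B} \to (\pi \circ p_{|\lineL|})^* E^\Phi$, let $\kappa$ be the tautological map of $(\pi \circ p_{|\lineL|})^* E^\Phi$ covering $|c|$ whose fiber over $\hat{x}$ is the identity under the canonical equality of the fibers at $\hat{x}$ and $|c|(\hat{x})$ (both equal $(E^\Phi)_{\pi p_{|\lineL|}(\hat{x})}$). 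The first step is to record the identity $\Phi = Q_\Phi^{-1}\,\kappa\,Q_\Phi$, which is merely a restatement of how $E^\Phi$ is glued from $F_{V_B}$; the analogous $\Phi' = Q_{\Phi'}^{-1}\,\kappa'\,Q_{\Phi'}$ holds for $\Phi'$.

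For the implication $(\Leftarrow)$, suppose $\Theta : F_{V_B} \to F_{V_B}$ is a $G_\chi$-isomorphism with $(p_{|\lineL|}^* \Theta)\,\Phi = \Phi'\,(p_{|\lineL|}^* \Theta)$. Unwinding the trivializations (\ref{equation: trivialization}), this says that for adjacent edges $\bar{e} \in \bar{f}$, $c(\bar{e}) \in \bar{f}^\prime$ and $\hat{x} \in |\hat{e}|$ one has $\Theta(p_{|\lineL|}(|c|(\hat{x})))\,\Phi(\hat{x}) = \Phi'(\hat{x})\,\Theta(p_{|\lineL|}(\hat{x}))$ as maps $V_{\bar{f}} \to V_{\bar{f}^\prime}$. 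Hence $\Theta$ carries each gluing identification defining $E^\Phi$ to the corresponding one defining $E^{\Phi'}$, so it descends to a fiberwise-linear map $\bar\Theta : E^\Phi \to E^{\Phi'}$; it is an isomorphism because $\Theta$ is, and $G_\chi$-equivariant because $\Theta$ is. Thus $[E^\Phi] = [E^{\Phi'}]$.

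For $(\Rightarrow)$, let $\Psi : E^\Phi \to E^{\Phi'}$ be a $G_\chi$-isomorphism and set $\Theta := q_{\Phi'}^{-1}\,(\pi^*\Psi)\,q_\Phi$, a $G_\chi$-automorphism of $F_{V_B}$ (equivariance is inherited from $\Psi$, $q_\Phi$, $q_{\Phi'}$). Pulling back along $p_{|\lineL|}$ gives $p_{|\lineL|}^* \Theta = Q_{\Phi'}^{-1}\,\tilde\Psi\,Q_\Phi$ with $\tilde\Psi = (\pi \circ p_{|\lineL|})^* \Psi$. The crucial point is that $\tilde\Psi$ intertwines the tautological gluings, $\tilde\Psi\,\kappa = \kappa'\,\tilde\Psi$: over $\hat{x}$ and over $|c|(\hat{x})$ the map $\tilde\Psi$ is the single fiber map $\Psi_{\pi p_{|\lineL|}(\hat{x})}$, so it commutes with the fiberwise-identity maps $\kappa, \kappa'$. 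Combining this with the two identities of the first step yields
\begin{equation*}
(p_{|\lineL|}^* \Theta)\,\Phi = Q_{\Phi'}^{-1}\tilde\Psi\,\kappa\,Q_\Phi = Q_{\Phi'}^{-1}\kappa'\,\tilde\Psi\,Q_\Phi = \Phi'\,(p_{|\lineL|}^* \Theta),
\end{equation*}
which is the desired relation.

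The step I expect to require the most care is the pair of identities $\Phi = Q_\Phi^{-1}\kappa\,Q_\Phi$ and $\tilde\Psi\,\kappa = \kappa'\,\tilde\Psi$: both hinge on pinning down, in the trivializations (\ref{equation: trivialization}), the canonical equality of the fibers of $(\pi \circ p_{|\lineL|})^* E^\Phi$ at $\hat{x}$ and $|c|(\hat{x})$ and checking that $q_\Phi$ transports it to $\Phi$. Once these bookkeeping identifications are fixed, both implications are formal, and no separate verification of equivariance is needed beyond noting that $q_\Phi$, $q_{\Phi'}$, $\pi^*\Psi$, and the pullbacks all preserve the $G_\chi$-structure.
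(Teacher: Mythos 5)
Your argument is correct. Note that the paper does not actually prove this proposition: it is listed among the "basic facts on equivariant vector bundles from the previous paper" and imported from [Ki] without proof, so there is no in-text argument to compare against. What you give is the standard descent argument for clutching constructions, and it holds up: the identity $\Phi = Q_\Phi^{-1}\kappa\,Q_\Phi$ is exactly the statement that, in the trivializations (\ref{equation: trivialization}), the quotient map $F_{V_B}\to E^\Phi$ sends $(p_{|\lineL|}(\hat{x}),u)$ and $(p_{|\lineL|}(|c|(\hat{x})),\Phi(\hat{x})u)$ to the same point; the $(\Leftarrow)$ direction is descent of $\Theta$ through the quotient (well-definedness on the generated equivalence relation, with continuity from the quotient topology and equivariance inherited from $\Theta$); and the $(\Rightarrow)$ direction is the conjugation computation you display, which is forced once $\tilde\Psi$ is seen to commute with the fiberwise-identity maps $\kappa,\kappa'$. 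The only point worth making explicit, which you flag yourself, is that $q_\Phi$ is a genuine $G_\chi$-isomorphism $F_{V_B}\to\pi^*E^\Phi$ covering the identity — fiberwise bijectivity is clear, and continuity of the inverse uses local triviality of $E^\Phi$, which is part of the assertion that Conditions N1, N2, E1 yield a well-defined equivariant bundle. With that granted, your proof is complete and self-contained, arguably more so than the paper's treatment.
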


\[
\begin{CD}
p_{|\lineL|}^* F_{V_B}   @>{p_{|\lineL|}^* \Theta}>> p_{|\lineL|}^* F_{V_B} \\
@VV{\Phi}V    @VV{\Phi^\prime}V \\
p_{|\lineL|}^* F_{V_B}   @>{p_{|\lineL|}^* \Theta}>> p_{|\lineL|}^* F_{V_B} \\
\end{CD}
\]

Consider the map $\imath_\Omega : \pi_0 ( \Omega_{V_B} ) \rightarrow
\Vect_{G_\chi} (\R^2/\Lambda, \chi)_{V_B}$ mapping $[\Phi]$ to
$[E^\Phi].$ This is well defined by Proposition \ref{proposition:
homotopy gives isomorphism}. And, the map $p_\Omega : \pi_0 (
\Omega_{V_B} ) \rightarrow A_{G_\chi} (\R^2/\Lambda, \chi)$ defined
as $p_\Omega = p_\vect \circ \imath_\Omega$ satisfies the following
diagram:
\begin{equation*}
\SelectTips{cm}{} \xymatrix{ \pi_0 ( \Omega_{V_B} )
\ar[r]^-{\imath_\Omega}
\ar[dr]_-{p_\Omega} & \Vect_{G_\chi} (\R^2/\Lambda, \chi)_{V_B} \\
& A_{G_\chi} (\R^2/\Lambda, \chi) \ar@{<-}[u]_-{p_\vect} }.
\end{equation*}
We might consider the map $p_\Omega$ as defined also on $\pi_0 (
\Omega_{ \hat{D}_\rho, V_B} )$ because $\pi_0 (\Omega_{V_B}) \cong
\pi_0 (\Omega_{\hat{D}_\rho, V_B}).$ By using $p_\Omega,$ we would
decompose $\Omega_{V_B}.$ Let $p_{\pi_0} : \Omega_{\hat{D}_\rho,
V_B} \rightarrow \pi_0 ( \Omega_{\hat{D}_\rho, V_B} )$ be the
natural quotient map. For different elements in $A_{G_\chi}
(\R^2/\Lambda, \chi),$ their preimages through $(p_\Omega \circ
p_{\pi_0})^{-1}$ do not intersect so that we obtain a decomposition
of $\Omega_{ \hat{D}_\rho, V_B}.$ We describe this decomposition
more precisely. For each $(W_{d^i})_{i \in I_\rho^+} \in A_{G_\chi}
(\R^2/\Lambda, \chi),$ put
\begin{equation*}
V_B = G_\chi \times_{(G_\chi)_{d^{-1}}} W_{d^{-1}}, \quad F_{V_B} =
G_\chi \times_{(G_\chi)_{d^{-1}}} ( |\bar{f}^{-1}| \times W_{d^{-1}}
),
\end{equation*}
and by using these define $\Omega_{\hat{D}_\rho, (W_{d^i})_{i \in
I_\rho^+}}$ as $(p_\Omega \circ p_{\pi_0})^{-1} \big( (W_{d^i})_{i
\in I_\rho^+} \big ).$ Henceforward, we will use these $V_B$ and
$F_{V_B}$ when we deal with $\Omega_{\hat{D}_\rho, (W_{d^i})_{i \in
I_\rho^+}}.$ Then, given a bundle $V_B,$ the set
$\Omega_{\hat{D}_\rho, V_B}$ is equal to the disjoint union
\begin{equation*}
\bigcup_{(W_{d^i})_{i \in I_\rho^+} \in A_{G_\chi} (\R^2/\Lambda,
\chi) \text{ with } W_{d^{-1}} = V_{\bar{f}^{-1}} } \quad
\Omega_{\hat{D}_\rho, (W_{d^i})_{i \in I_\rho^+}}.
\end{equation*}
From this decomposition, it suffices to focus on
$\Omega_{\hat{D}_\rho, (W_{d^i})_{i \in I_\rho^+}}$ to understand
$\Omega_{\hat{D}_\rho, V_B}.$

In general, calculation of $\pi_0 ( \Omega_{ \hat{D}_\rho, V_B} )$
do not give classification of equivariant vector bundles even for
$S^2.$ However, in many cases it gives classification of equivariant
vector bundles as we have seen in \cite{Ki}. Let $c_1 : \pi_0 (
\Omega_{V_B} ) \rightarrow H^2 (\R^2/\Lambda), ~ \Phi \mapsto c_1
(E^\Phi)$ be the Chern class.

\begin{lemma}
 \label{lemma: lemma for isomorphism}
\begin{enumerate}
  \item Assume that $\pi_0 ( \Omega_{\hat{D}_\rho,
(W_{d^i})_{i \in I_\rho^+}} )$ is nonempty for each $(W_{d^i})_{i
\in I_\rho^+}$ in $A_{G_\chi} ( \R^2/\Lambda, \chi ),$ and that $c_1
: \pi_0 ( \Omega_{\hat{D}_\rho, (W_{d^i})_{i \in I_\rho^+}} )
\rightarrow H^2 (\R^2/\Lambda)$ is injective for each $(W_{d^i})_{i
\in I_\rho^+}$ in $A_{G_\chi} ( \R^2/\Lambda ).$ Then, $p_\vect$ is
surjective, and
\begin{equation*}
p_{\vect} \times c_1 : \Vect_{G_\chi} (\R^2/\Lambda, \chi)
\rightarrow A_{G_\chi} ( \R^2/\Lambda, \chi ) \times H^2
(\R^2/\Lambda)
\end{equation*}
is injective.

  \item Assume that $\pi_0 ( \Omega_{\hat{D}_\rho,
(W_{d^i})_{i \in I_\rho^+}} )$ consists of exactly one element for
each $(W_{d^i})_{i \in I_\rho^+}$ in $A_{G_\chi} ( \R^2/\Lambda,
\chi ).$ Then
\begin{equation*}
p_{\vect} : \Vect_{G_\chi} (\R^2/\Lambda, \chi) \rightarrow
A_{G_\chi} ( \R^2/\Lambda, \chi )
\end{equation*}
is isomorphic.
\end{enumerate}
\end{lemma}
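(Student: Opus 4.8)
The plan is to deduce both parts by a formal diagram chase through
$$\pi_0(\Omega_{V_B}) \xrightarrow{\imath_\Omega} \Vect_{G_\chi}(\R^2/\Lambda, \chi)_{V_B} \xrightarrow{p_\vect} A_{G_\chi}(\R^2/\Lambda, \chi),$$
using that the entire situation decomposes over the restriction datum $V_B.$ Write $\alpha = (W_{d^i})_{i \in I_\rho^+}$ for a typical element of $A_{G_\chi}(\R^2/\Lambda, \chi)$; recall that $\alpha$ determines $V_B = G_\chi \times_{(G_\chi)_{d^{-1}}} W_{d^{-1}},$ so that whenever two bundles have equal $p_\vect$-image they automatically lie in the same $\Vect_{G_\chi}(\R^2/\Lambda, \chi)_{V_B}.$ The single fact I would isolate first is that $\imath_\Omega$ is \emph{surjective}: by the equivariant clutching construction of Section \ref{section: preclutching map}, every $E \in \Vect_{G_\chi}(\R^2/\Lambda, \chi)_{V_B}$ equals $E^\Phi$ for some $\Phi \in \Omega_{V_B},$ while $\imath_\Omega$ is well defined on $\pi_0$ by Proposition \ref{proposition: homotopy gives isomorphism}. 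I would then record the compatibilities $p_\Omega = p_\vect \circ \imath_\Omega$ (by definition of $p_\Omega$) and $c_1 = c_1 \circ \imath_\Omega$ on $\pi_0,$ together with the identification $\pi_0(\Omega_{\hat{D}_\rho, \alpha}) = p_\Omega^{-1}(\alpha)$ inside $\pi_0(\Omega_{V_B}) \cong \pi_0(\Omega_{\hat{D}_\rho, V_B}),$ which holds because $p_\Omega \circ p_{\pi_0}$ is constant on connected components, so $\Omega_{\hat{D}_\rho, \alpha} = (p_\Omega \circ p_{\pi_0})^{-1}(\alpha)$ is a union of components.

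For part (2), surjectivity of $p_\vect$ is immediate: each $\alpha$ has nonempty fiber, so any $[\Phi]$ in it yields $E^\Phi$ with $p_\vect([E^\Phi]) = p_\Omega([\Phi]) = \alpha.$ For injectivity, suppose $p_\vect([E_1]) = p_\vect([E_2]) = \alpha.$ Both bundles then lie in the same $\Vect_{G_\chi}(\R^2/\Lambda, \chi)_{V_B},$ so by surjectivity of $\imath_\Omega$ I may write $E_k = E^{\Phi_k}$ with $[\Phi_k] \in p_\Omega^{-1}(\alpha) = \pi_0(\Omega_{\hat{D}_\rho, \alpha}).$ Since this set is a singleton, $[\Phi_1] = [\Phi_2],$ whence $E^{\Phi_1} \cong E^{\Phi_2}$ by Proposition \ref{proposition: homotopy gives isomorphism} and $[E_1] = [E_2].$

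For part (1), surjectivity of $p_\vect$ is identical, needing only nonemptiness of the fibers. For injectivity of $p_\vect \times c_1,$ take $[E_1], [E_2]$ with $p_\vect([E_1]) = p_\vect([E_2]) = \alpha$ and $c_1(E_1) = c_1(E_2),$ write $E_k = E^{\Phi_k}$ with $[\Phi_k] \in \pi_0(\Omega_{\hat{D}_\rho, \alpha})$ exactly as above, and note $c_1([\Phi_1]) = c_1(E_1) = c_1(E_2) = c_1([\Phi_2]);$ injectivity of $c_1$ on this fiber gives $[\Phi_1] = [\Phi_2]$ and hence $[E_1] = [E_2].$ The argument is entirely formal, since the genuine content has already been absorbed into the exhaustiveness of the clutching construction (surjectivity of $\imath_\Omega$) and into Proposition \ref{proposition: homotopy gives isomorphism}. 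The only point I would verify with care is the bookkeeping over $V_B$ --- namely that $\pi_0(\Omega_{\hat{D}_\rho, \alpha})$ coincides with the $p_\Omega$-fiber and that $\imath_\Omega$ both lands in and surjects onto the correct $V_B$-summand; once this is in place, both statements fall out immediately.
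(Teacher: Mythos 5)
Your proof is correct, and it is exactly the formal diagram chase the paper intends but omits (the lemma is stated without proof after the machinery $\imath_\Omega,$ $p_\Omega = p_\vect \circ \imath_\Omega,$ and the decomposition of $\Omega_{\hat{D}_\rho, V_B}$ into the $\Omega_{\hat{D}_\rho, (W_{d^i})}$ has been set up). You correctly isolate the two non-formal inputs --- surjectivity of $\imath_\Omega$ from the exhaustiveness of the equivariant clutching construction, and the fact that $p_\vect([E])$ determines $V_B$ via $E|_B \cong G_\chi \times_{(G_\chi)_{d^{-1}}} E_{d^{-1}}$ by transitivity of $G_\chi$ on $B$ --- and the bookkeeping identifying $\pi_0(\Omega_{\hat{D}_\rho,(W_{d^i})})$ with the $p_\Omega$-fiber is sound since that set is by definition a union of path components.
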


By this lemma, we only have to calculate $\pi_0 (
\Omega_{\hat{D}_\rho, (W_{d^i})_{i \in I_\rho^+}} )$ to classify
equivariant vector bundles in many cases. When we can not apply this
lemma, we should apply Proposition \ref{proposition: equivalent
condition for isomorphism} directly. In fact, we can apply this
lemma except the case when $\rho(G_\chi)$ is equal to $\D_1$ with
$A\bar{c}+\bar{c}=\bar{l}_0$ as we shall see in the below.

\section{equivariant pointwise clutching map}
 \label{section: pointwise clutching map}

Let $\Phi$ be an equivariant clutching map of an equivariant vector
bundle $E$ in $\Vect_{G_\chi} ( \R^2/\Lambda ),$ i.e. the map $\Phi$
glues $F_{V_B}$ along $| \lineL_\rho |$ to give $E.$ Let us
investigate this gluing process pointwisely. For each $\bar{x} \in |
\lineL_\rho |$ and $x = \pi(\bar{x}),$ let $\bar{\mathbf{x}} =
\pi^{-1} ( x ) = \{ \bar{x}_j | j \in \Z_m \}$ for some $m.$ Then,
the map $\Phi$ glues the $(G_\chi)_x$-bundle
$(\res_{(G_\chi)_x}^{G_\chi} F_{V_B})|_{\bar{\mathbf{x}}}$ to give
the $(G_\chi)_x$-representation $E_x,$ and call this process
\textit{equivariant pointwise gluing}. Here, we can observe that
$(G_\chi)_{\bar{x}_j} < (G_\chi)_x$ for each $j \in \Z_m$ and
\begin{equation}
\label{equation: extension} \res_{(G_\chi)_{\bar{x}_j}}^{(G_\chi)_x}
E_x \cong (F_{V_B})_{\bar{x}_j}
\end{equation}
by equivariance of $\Phi.$ In dealing with equivariant clutching
maps, technical difficulties occur in equivariant pointwise gluings
because $\Phi$ is just a continuous collection of equivariant
pointwise gluings at points in $| \lineL_\rho |.$ In this section,
we review the concept and results of equivariant pointwise clutching
map from the previous paper \cite{Ki}, and supplement these with two
more cases. To deal with equivariant pointwise gluing, we need the
concept of representation extension. For compact Lie groups $N_1 <
N_2,$ let $V$ be a $N_2$-representation and $W$ be an
$N_1$-representation. Then, $V$ is called an \textit{representation
extension} or $N_2$-\textit{extension} of $W$ if $\res^{N_2}_{N_1} V
\cong W.$ For example, $E_x$ is an $(G_\chi)_x$-extension of
$(F_{V_B})_{\bar{x}_j}$ for each $j \in \Z_m$ by (\ref{equation:
extension}). And, let $\ext_{N_1}^{N_2} W$ be the set
\begin{equation*}
\{ V \in \Rep(N_2) ~ | ~ \res_{N_1}^{N_2} V \cong W \}.
\end{equation*}

Let a compact Lie group $N_2$ act on a finite set $\bar{\mathbf{x}}
= \{ \bar{x}_j | ~ j \in \Z_m \}$ for $m \ge 2,$ and let $N_0$ and
$N_1$ be the kernel of the action and the isotropy subgroup
$(N_2)_{\bar{x}_0},$ respectively. Let $F$ be an $N_2$-vector bundle
over $\bar{\mathbf{x}}.$ Assume that $(\res_{N_0}^{N_2} F)
|_{\bar{x}_j}$'s are all isomorphic (not necessarily
$N_0$-isotypical) for $j \in \Z_m.$ Consider an arbitrary map
\begin{equation*}
\psi : \bar{\mathbf{x}} \rightarrow \amalg_{j \in \Z_m} \Iso(
F_{\bar{x}_j}, F_{\bar{x}_{j+1}} )
\end{equation*}
such that $\psi( \bar{x}_j ) \in \Iso( F_{\bar{x}_j},
F_{\bar{x}_{j+1}} ).$ Call such a map \textit{pointwise preclutching
map} with respect to $F.$ By using $\psi,$ we glue
$F_{\bar{x}_j}$'s, i.e. a vector $u$ in $F_{\bar{x}_j}$ is
identified with $\psi(\bar{x}_j) u$ in $F_{\bar{x}_{j+1}}.$ Let
$F/\psi$ be the quotient of $F$ through this identification, and let
$p_\psi : F \rightarrow F/\psi$ be the quotient map. Let
$\imath_\psi : F_{\bar{x}_0} \rightarrow F/\psi$ be the composition
of the natural injection $\imath_{\bar{x}_0} : F_{\bar{x}_0}
\rightarrow F$ and the quotient map $p_\psi.$
\begin{equation}
\label{figure: diagram of equivariant pointwise gluing}
\SelectTips{cm}{} \xymatrix{ F_{\bar{x}_0}
\ar[r]^-{\imath_{\bar{x}_0}}
\ar[d]_-{\imath_\psi} & F \\
F/\psi \ar@{<-}[ur]_-{p_\psi} }
\end{equation}
We would find conditions on $\psi$ under which the quotient $F/\psi$
inherits an $N_2$-representation structure from $F$ and the map
$\imath_\psi$ becomes an $N_1$-isomorphism from $F_{\bar{x}_0}$ to
$\res_{N_1}^{N_2} (F/\psi).$ For notational simplicity, denote
\begin{equation*}
\psi(\bar{x}_{j^\prime}) \cdots \psi(\bar{x}_{j+1}) \psi(\bar{x}_j)
u
\end{equation*}
by $\psi^{j^\prime-j+1} u$ for $u \in F_{\bar{x}_j}$ and $j \le
j^\prime$ in $\Z.$

\begin{lemma}
 \label{lemma: equivalent condition for psi}
For a pointwise preclutching map $\psi$ with respect to $F,$ the
quotient $F/\psi$ carries an $N_2$-representation structure so that
$p_\psi$ is $N_2$-equivariant and the map $\imath_\psi$ is an
$N_1$-isomorphism if and only if the following conditions hold :
\begin{enumerate}
  \item $\psi^m = \id$ in $\Iso(F_{\bar{x}_j})$ for each $j \in
  \Z_m.$ So, $\psi^j$ is well defined for all $j \in \Z_m.$
  \item $\psi^{j_3 - j_1} = g \psi( \bar{x}_{j_2} ) g^{-1}$ in $F_{\bar{x}_{j_1}}$ for
  each $j_1 \in \Z_m,$ $g \in N_2$ when $g^{-1} \bar{x}_{j_1} = \bar{x}_{j_2}$
        and $g \bar{x}_{j_2 +1} = \bar{x}_{j_3}$ for some $j_2, j_3 \in \Z_m.$
\end{enumerate}
\end{lemma}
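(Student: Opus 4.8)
The plan is to translate the three abstract requirements---that $F/\psi$ inherit an $N_2$-representation structure, that $p_\psi$ be $N_2$-equivariant, and that $\imath_\psi$ be an $N_1$-isomorphism---into the two concrete identities (1) and (2), proving each implication separately. The starting observation is that each $\psi(\bar{x}_j)$ is a linear isomorphism, so all fibers $F_{\bar{x}_j}$ have the same dimension, and the equivalence relation defining $F/\psi$ identifies $u$ with $\psi(\bar{x}_j)u$; hence for $u \in F_{\bar{x}_j}$ the class $[u]$ meets $F_{\bar{x}_{j+k}}$ in the single vector $\psi^k u$, and going once around contributes the relation $[u]=[\psi^m u]$. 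I first record that condition (1), namely $\psi^m=\id$ on each fiber, is exactly what guarantees that $F/\psi$ is a vector space of dimension $\dim F_{\bar{x}_0}$ and that each restriction $p_\psi|_{F_{\bar{x}_j}}$ is a linear isomorphism onto $F/\psi$; in particular $\imath_\psi=p_\psi\circ\imath_{\bar{x}_0}$ is then a linear isomorphism.

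For sufficiency, assume (1) and (2). By the previous paragraph $F/\psi$ is already a vector space with $p_\psi|_{F_{\bar{x}_j}}$ isomorphisms. I would define the candidate action by $g\cdot[u]=[gu]$ and check it is well defined: since the relation is generated by $[u]=[\psi(\bar{x}_j)u]$ for $u\in F_{\bar{x}_j}$, it suffices to show $[gu]=[g\psi(\bar{x}_j)u]$. Writing $\bar{x}_k=g\bar{x}_j$ and $\bar{x}_l=g\bar{x}_{j+1}$, condition (2) (applied with $j_2=j$, $j_1=k$, $j_3=l$) reads $g\psi(\bar{x}_j)g^{-1}=\psi^{l-k}$ as maps $F_{\bar{x}_k}\to F_{\bar{x}_l}$, i.e. $g\psi(\bar{x}_j)u=\psi^{l-k}(gu)$; since $\psi^{l-k}(gu)$ and $gu$ lie in one class, this gives $[g\psi(\bar{x}_j)u]=[gu]$, as needed. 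The descended map is linear on each fiber and depends homomorphically on $g$, so it is an $N_2$-action with $p_\psi$ equivariant by construction; continuity of $g\mapsto \imath_\psi^{-1}(g\cdot\imath_\psi(-))$ follows from continuity of the $N_2$-action on $F$ together with local constancy of $g\mapsto g\bar{x}_0$ on the finite set $\bar{\mathbf{x}}$. Finally $\imath_{\bar{x}_0}$ is $N_1$-equivariant because $N_1=(N_2)_{\bar{x}_0}$ fixes $\bar{x}_0$, so $\imath_\psi=p_\psi\circ\imath_{\bar{x}_0}$ is $N_1$-equivariant, and being a linear isomorphism it is an $N_1$-isomorphism.

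For necessity, assume $F/\psi$ is an $N_2$-representation with $p_\psi$ equivariant and $\imath_\psi$ an $N_1$-isomorphism. Since $\imath_\psi$ is an isomorphism, $\dim F/\psi=\dim F_{\bar{x}_0}$; as all fibers share this dimension and each $p_\psi|_{F_{\bar{x}_j}}$ is a linear surjection onto $F/\psi$, each is in fact a linear isomorphism. Applying injectivity to the identity $[u]=[\psi^m u]$ with $u,\psi^m u\in F_{\bar{x}_j}$ yields $\psi^m=\id$ on every fiber, which is (1). For (2), equivariance of $p_\psi$ gives $p_\psi(gu)=g\cdot p_\psi(u)=g\cdot p_\psi(\psi(\bar{x}_j)u)=p_\psi(g\psi(\bar{x}_j)u)$ for $u\in F_{\bar{x}_j}$; with $\bar{x}_k=g\bar{x}_j$ and $\bar{x}_l=g\bar{x}_{j+1}$, both $\psi^{l-k}(gu)$ and $g\psi(\bar{x}_j)u$ lie in $F_{\bar{x}_l}$ and have the same image under the injective $p_\psi|_{F_{\bar{x}_l}}$, hence are equal. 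This is $g\psi(\bar{x}_j)g^{-1}=\psi^{l-k}$, which is precisely (2) after renaming $k,j,l$ as $j_1,j_2,j_3$.

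The routine part is the bookkeeping of cyclic indices in $\Z_m$: one must check that $\psi^{l-k}$ is well defined (immediate once $\psi^m=\id$, so it does not matter which representative of $l-k$ modulo $m$ is used) and that the index substitution between the descent identity derived above and the stated form of (2) matches. The only points requiring a little care beyond bookkeeping are the observation that equal fiber dimensions upgrade the surjections $p_\psi|_{F_{\bar{x}_j}}$ to isomorphisms---this is what lets injectivity force $\psi^m=\id$ on every fiber rather than only on $F_{\bar{x}_0}$---and the continuity of the descended representation, which I expect to be the main (though mild) obstacle and which follows from local constancy of the permutation action on $\bar{\mathbf{x}}$.
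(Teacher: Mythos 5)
Your proof is correct. Note that the paper itself offers no proof of this lemma---Section \ref{section: pointwise clutching map} only recalls it from \cite{Ki}---so there is no in-paper argument to compare against; what you give is the natural direct verification, with condition (1) being exactly what makes each restriction $p_\psi|_{F_{\bar{x}_j}}$ a bijection onto $F/\psi$ (hence able to transport a linear structure), and condition (2) being exactly the descent condition for the rule $g\cdot[u]=[gu]$ on the generating relations $u\sim\psi(\bar{x}_j)u$. The two places that genuinely need care---upgrading the surjections $p_\psi|_{F_{\bar{x}_j}}$ to isomorphisms in the necessity direction so that injectivity can be invoked on every fiber (not just $F_{\bar{x}_0}$), and the index substitution $j_1=k$, $j_2=j$, $j_3=l$ matching $g\psi(\bar{x}_{j_2})g^{-1}=\psi^{j_3-j_1}$ with the identity extracted from equivariance of $p_\psi$---are both handled correctly, as is the continuity of the descended action via local constancy of $g\mapsto g\bar{x}_0$.
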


A pointwise preclutching map satisfying conditions (1), (2) of Lemma
\ref{lemma: equivalent condition for psi} is called an
\textit{equivariant pointwise clutching map} with respect to $F.$
Let $\mathcal{A}$ be the set of all equivariant pointwise clutching
maps with respect to $F,$ and an $N_2$-representation $W$ is called
\textit{determined} by $\psi \in \mathcal{A}$ with respect to $F$ if
$W \cong F/\psi.$

\begin{remark}
Here, we give an example of equivariant pointwise clutching map by
using the bundle $F_{V_B}$ over $|\lineK_\rho|$ of Section
\ref{section: preclutching map}. For each $\bar{x} \in | \lineL_\rho
|$ and $x = \pi(\bar{x}),$ let $\bar{\mathbf{x}} = \pi^{-1} ( x ) =
\{ \bar{x}_j | j \in \Z_m \}$ for some $m.$ Put $F =
(\res_{(G_\chi)_x}^{G_\chi} F_{V_B})|_{\bar{\mathbf{x}}}.$ Since
$(\res_H^{G_\chi} V)|_{b(\bar{f})}$'s are all isomorphic,
$(\res_{N_0}^{N_2} F) |_{\bar{x}_j}$'s are all isomorphic. Given a
$\Phi$ in $\Omega_{V_B},$ we can define a map $\psi$ in
$\mathcal{A}$ by using $\Phi$ as follows:
\begin{enumerate}
\item if $\bar{x}$ is not a vertex and
$\bar{x} = p_{|\lineL|}(\hat{x}),$ then
\begin{equation*}
\psi (\bar{x}_0) = \Phi (\hat{x}) \quad \text{and} \quad \psi
(\bar{x}_1) = \Phi (|c|(\hat{x})),
\end{equation*}
\item if $\bar{x}$ is a vertex, then
\begin{equation*}
\psi_{\bar{x}} (\bar{x}_j) = \Phi (\hat{x}_{j,+}) \quad \text{and}
\quad \psi_{\bar{x}}^{-1} (\bar{x}_j) = \Phi (\hat{x}_{j,-})
\end{equation*}
for each $j \in \Z_{j_\rho}.$
\end{enumerate}
Then, $(E^\Phi)_x$ is determined by $\psi.$
\end{remark}

To calculate $\pi_0 ( \Omega_{ V_B } )$ later, we need to understand
topology of $\mathcal{A}$ because an equivariant clutching map is a
continuous collection of equivariant pointwise gluings. First, the
zeroth homotopy of $\mathcal{A}$ is related to $\ext_{N_1}^{N_2}
F_{\bar{x}_0}$ as follows:

\begin{proposition}
 \label{proposition: bijectivity with extensions}
Assume that $N_2$ acts transitively $\bar{\mathbf{x}}.$ Then, the
map
\begin{equation*}
\pi_0 (\mathcal{A}) \longrightarrow \ext_{N_1}^{N_2} F_{\bar{x}_0},
\quad [\psi] \mapsto F/\psi
\end{equation*}
is bijective.
\end{proposition}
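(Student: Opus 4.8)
The plan is to reduce everything to one explicit construction that manufactures clutching maps out of extensions, and to show that it hits every homotopy class with path-connected fibers. First I would check that the assignment descends to $\pi_0$. By Lemma \ref{lemma: equivalent condition for psi}, each $\psi \in \mathcal{A}$ makes $F/\psi$ an $N_2$-representation with $\imath_\psi$ an $N_1$-isomorphism $F_{\bar{x}_0} \cong \res_{N_1}^{N_2}(F/\psi)$, so $F/\psi \in \ext_{N_1}^{N_2} F_{\bar{x}_0}$ and the map is defined on objects. Given a path $\psi_t$ in $\mathcal{A}$, transporting the $N_2$-action across $\imath_{\psi_t}$ presents $\{F/\psi_t\}$ as a continuous family of $N_2$-representations on the fixed space $F_{\bar{x}_0}$; since the multiplicity of every irreducible constituent is an integer varying continuously in $t$ (through the inner product of characters), it is constant, whence $F/\psi_0 \cong F/\psi_1$. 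So $[\psi]\mapsto F/\psi$ is well defined on $\pi_0(\mathcal{A})$.

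Next I would set up the construction, which is where transitivity enters. Choosing $g_j \in N_2$ with $g_j \bar{x}_0 = \bar{x}_j$ (possible exactly by transitivity, with $g_0 = \id$) identifies $F$ with $N_2 \times_{N_1} F_{\bar{x}_0}$. For $V \in \ext_{N_1}^{N_2} F_{\bar{x}_0}$ I pick an $N_1$-isomorphism $\alpha : F_{\bar{x}_0} \to \res_{N_1}^{N_2} V$ and set $\Theta_j = g_j\, \alpha\, g_j^{-1} : F_{\bar{x}_j} \to V$, where the outer $g_j$ is the action on $V$ and the inner $g_j^{-1}$ the bundle map $F_{\bar{x}_j}\to F_{\bar{x}_0}$. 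The $N_1$-equivariance of $\alpha$ makes $\Theta_j$ independent of the choice of $g_j$, and from this one reads off that $\Theta = \{\Theta_j\}$ is an $N_2$-equivariant map $F \to V$. Putting $\psi(\bar{x}_j) = \Theta_{j+1}^{-1} \Theta_j$, condition (1) of Lemma \ref{lemma: equivalent condition for psi} holds by telescoping ($\psi^m$ collapses to $\Theta_{j}^{-1}\Theta_{j} = \id$, indices read mod $m$) and condition (2) holds because $\Theta$ is equivariant; hence $\psi \in \mathcal{A}$. Since the $\Theta_j$ descend to an $N_2$-isomorphism $F/\psi \cong V$, this gives surjectivity.

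For injectivity I would show each fiber is path-connected. Suppose $F/\psi \cong V$ via an $N_2$-isomorphism $\Xi$, and set $\alpha = \Xi \circ \imath_\psi$, an $N_1$-isomorphism $F_{\bar{x}_0} \to \res_{N_1}^{N_2} V$. Because $p_\psi$ is $N_2$-equivariant (Lemma \ref{lemma: equivalent condition for psi}), the maps $\Theta_j := \Xi \circ p_\psi|_{F_{\bar{x}_j}}$ are automatically $N_2$-equivariant, satisfy $\Theta_j = g_j\,\alpha\,g_j^{-1}$, and yield $\psi(\bar{x}_j) = \Theta_{j+1}^{-1}\Theta_j$; that is, $\psi$ is precisely the output of the construction above applied to this $\alpha$. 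Therefore the fiber over $[V]$ is exactly the image of the continuous map $\alpha \mapsto \psi$ defined on the space of $N_1$-isomorphisms $F_{\bar{x}_0}\to\res_{N_1}^{N_2}V$. That space is a torsor over the group of $N_1$-automorphisms of $F_{\bar{x}_0}$, which by Schur's lemma is a finite product of complex general linear groups and hence path-connected; so the fiber is path-connected, which is injectivity on $\pi_0$.

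The hardest part will be the two consistency verifications: that $\psi = \{\Theta_{j+1}^{-1}\Theta_j\}$ really satisfies condition (2) of Lemma \ref{lemma: equivalent condition for psi}, and that conversely every $\psi$ in a fiber is reproduced by the construction. Both rest on the $N_2$-equivariance of $p_\psi$ furnished by that lemma and on the choice-independence of $\Theta_j$, which uses only $N_1$-equivariance of $\alpha$. I would also flag that path-connectedness of the automorphism group is essential and genuinely uses that the representations are complex; the real analogue of this step would fail.
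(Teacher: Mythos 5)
Your argument is correct. Note that this paper does not actually prove Proposition \ref{proposition: bijectivity with extensions}: Section \ref{section: pointwise clutching map} explicitly recalls it from the earlier paper \cite{Ki}, so there is no in-text proof to compare against line by line. That said, your route is the natural one and is consistent with the machinery the paper does develop. Your surjectivity construction ($\alpha \mapsto \Theta_j = g_j \alpha g_j^{-1} \mapsto \psi(\bar{x}_j) = \Theta_{j+1}^{-1}\Theta_j$) uses transitivity exactly where it must, the choice-independence of $\Theta_j$ under $g_j \mapsto g_j n$, $n \in N_1$, is precisely $N_1$-equivariance of $\alpha$, and condition (2) of Lemma \ref{lemma: equivalent condition for psi} does follow by the telescoping identity $\psi^{j_3-j_1}|_{F_{\bar{x}_{j_1}}} = \Theta_{j_3}^{-1}\Theta_{j_1}$ together with $g\Theta_j g^{-1} = \Theta_{g\cdot j}$ (just be careful that the exponent in $\psi^{j_3-j_1}$ is read modulo $m$, which is licensed by condition (1)). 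Your injectivity argument, identifying the full preimage of an isomorphism class with the continuous image of the torsor $\Iso_{N_1}(F_{\bar{x}_0}, \res_{N_1}^{N_2}V)$ over the path-connected group $\Iso_{N_1}(F_{\bar{x}_0})$ (Schur, complex coefficients), is in effect a direct proof of the connectedness statement that the paper packages as Lemma \ref{lemma: topology of mathcal A}, where $\mathcal{A}_\psi \cong \Iso_{N_1}(F/\psi)/\Iso_{N_2}(F/\psi)$; so the two approaches buy the same thing, yours being self-contained and the paper's factoring the connectedness through that quotient description. Your closing remark that complexity of the representations is essential (and that the real analogue fails) is also well taken, as is your observation that transitivity cannot be dropped, which is exactly why the paper supplements the proposition with the nontransitive cases in Lemma \ref{lemma: pointwise clutching for m=2 nontransitive} and Propositions \ref{proposition: psi for Z_2} and \ref{proposition: psi for trivial group}.
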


\begin{remark}
In Lemma \ref{lemma: pointwise clutching for m=2 nontransitive} and
Proposition \ref{proposition: psi for Z_2}, \ref{proposition: psi
for trivial group}, we show that Proposition \ref{proposition:
bijectivity with extensions} also holds for the following cases:
\begin{enumerate}
  \item $N_2 = N_1 = N_0$ with $m=2, 4,$
  \item $N_2/N_0 \cong \Z_2$ and $N_2/N_0$ acts freely on
$\bar{\mathbf{x}}$ with $m=4.$
\end{enumerate}
\end{remark}

Let $\mathcal{A}^j$ be the set
\begin{equation*}
\{ \psi(\bar{x}_j) ~|~ \psi \in \mathcal{A} \},
\end{equation*}
and let $\mathcal{A}^{j,j^\prime}$ be the set
\begin{equation*}
\{ ( \psi(\bar{x}_j), \psi(\bar{x}_{j^\prime}) ) ~|~ \psi \in
\mathcal{A} \}.
\end{equation*}
In the below, it will be witnessed that $\mathcal{A}$ is
homeomorphic to $\mathcal{A}^j$ or $\mathcal{A}^{j,j^\prime}$ in
many cases in the below. For example, the evaluation map
\begin{equation*}
\mathcal{A} \rightarrow \mathcal{A}^j, ~ \psi \mapsto
\psi(\bar{x}_j)
\end{equation*}
is homeomorphic by Lemma \ref{lemma: equivalent condition for
psi}.(1) when $m=2.$ Meanwhile, we also need to restrict our
arguments on $\mathcal{A}$ to $\{ \bar{x}_j, \bar{x}_{j^\prime} \}$
as follows: let $\mathcal{A}_{j,j^\prime}$ with $j \ne j^\prime$ be
the set of equivariant pointwise clutching maps with respect to the
$(N_2)_{\{ \bar{x}_j, \bar{x}_{j^\prime} \}}$-bundle $\big(
\res_{(N_2)_{\{ \bar{x}_j, \bar{x}_{j^\prime} \}}}^{N_2} F \big) |_{
\{ \bar{x}_j, \bar{x}_{j^\prime} \} }$ where $(N_2)_{\{ \bar{x}_j,
\bar{x}_{j^\prime} \}}$ is the subgroup preserving $\{ \bar{x}_j,
\bar{x}_{j^\prime} \}.$ And, let $\mathcal{A}_{j,j^\prime}^k$ for
$k=j,j^\prime$ be the set
\begin{equation*}
\{ \psi (\bar{x}_k) | \psi \in \mathcal{A}_{j,j^\prime} \}.
\end{equation*}
Then, we obtain a useful lemma.

\begin{lemma}
 \label{lemma: restricted pointwise clutching}
The map $\res_{j,j^\prime}: \mathcal{A} \rightarrow
\mathcal{A}_{j,j^\prime}, ~ \psi \mapsto \res_{j,j^\prime}(\psi)$
with
\begin{equation*}
\res_{j,j^\prime}(\psi)(\bar{x}_j) =
 \psi^{j^\prime-j}(\bar{x}_j), \quad \res_{j,j^\prime}(\psi)(\bar{x}_{j^\prime}) =
\psi^{j-j^\prime}(\bar{x}_{j^\prime})
\end{equation*}
is well defined. And,
\begin{equation*}
\res_{(N_2)_{\{ \bar{x}_j, \bar{x}_{j^\prime} \}}}^{N_2} (F/\psi)
\cong \big( \res_{(N_2)_{\{ \bar{x}_j, \bar{x}_{j^\prime} \}}}^{N_2}
F \big) |_{ \{ \bar{x}_j, \bar{x}_{j^\prime} \} } /
\res_{j,j^\prime}(\psi)
\end{equation*}
for each $\psi \in \mathcal{A}.$
\end{lemma}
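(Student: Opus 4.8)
The plan is to establish both claims at once by producing, for the two-point data, the representation structure whose existence is, by Lemma~\ref{lemma: equivalent condition for psi}, equivalent to $\chi := \res_{j,j^\prime}(\psi)$ lying in $\mathcal{A}_{j,j^\prime}$. Write $N_2^\prime = (N_2)_{\{\bar{x}_j,\bar{x}_{j^\prime}\}}$, $N_1^\prime = (N_2^\prime)_{\bar{x}_j}$, and $F^\prime = (\res_{N_2^\prime}^{N_2} F)|_{\{\bar{x}_j,\bar{x}_{j^\prime}\}}$. First I would note that $\chi$ is at least a pointwise preclutching map with respect to $F^\prime$: the maps $\chi(\bar{x}_j) = \psi^{j^\prime-j}(\bar{x}_j)$ and $\chi(\bar{x}_{j^\prime}) = \psi^{j-j^\prime}(\bar{x}_{j^\prime})$ are composites of the isomorphisms $\psi(\bar{x}_k)$, so they lie in $\Iso(F_{\bar{x}_j}, F_{\bar{x}_{j^\prime}})$ and $\Iso(F_{\bar{x}_{j^\prime}}, F_{\bar{x}_j})$ respectively. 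Since $N_2^\prime$ stabilizes $\{\bar{x}_j,\bar{x}_{j^\prime}\}$ and $p_\psi$ already identifies each of $F_{\bar{x}_j}, F_{\bar{x}_{j^\prime}}$ equivariantly with the $N_2$-representation $F/\psi$ over the kernel of the $N_2^\prime$-action, the running hypothesis that the kernel-fibres of $F$ are mutually isomorphic is inherited by $F^\prime$, so Lemma~\ref{lemma: equivalent condition for psi} is applicable to the restricted data.

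Next I would construct the comparison map. Restricting the $N_2$-equivariant quotient $p_\psi : F \to F/\psi$ to the fibres over $\bar{x}_j, \bar{x}_{j^\prime}$ gives an $N_2^\prime$-equivariant map $q : F^\prime \to \res_{N_2^\prime}^{N_2}(F/\psi)$, equivariance being immediate from the stability of $\{\bar{x}_j,\bar{x}_{j^\prime}\}$ under $N_2^\prime$. For $u \in F_{\bar{x}_j}$ one has $q(\chi(\bar{x}_j)u) = p_\psi(\psi^{j^\prime-j}(\bar{x}_j)u) = p_\psi(u) = q(u)$ since $p_\psi$ collapses each $\psi$-orbit, and the analogous identity holds at $\bar{x}_{j^\prime}$; hence $q$ is constant on $\chi$-classes and descends to $\bar{q} : F^\prime/\chi \to \res_{N_2^\prime}^{N_2}(F/\psi)$ with $\bar{q} \circ p_\chi = q$. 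The restriction $q|_{F_{\bar{x}_j}}$ is the based map $F_{\bar{x}_j} \to F/\psi$, a linear isomorphism because every class of $F/\psi$ has a unique representative in the single fibre $F_{\bar{x}_j}$; as $\dim(F^\prime/\chi) = \dim F_{\bar{x}_j} = \dim(F/\psi)$, surjectivity of $q$ forces $\bar{q}$ to be a linear isomorphism.

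Finally I would transport the $N_2^\prime$-representation structure of $\res_{N_2^\prime}^{N_2}(F/\psi)$ to $F^\prime/\chi$ along $\bar{q}$. Then $\bar{q}$ is $N_2^\prime$-equivariant by construction, and $\bar{q} \circ p_\chi = q$ together with the equivariance of $q$ shows $p_\chi$ is $N_2^\prime$-equivariant, while $\bar{q} \circ \imath_\chi$ equals the based isomorphism $F_{\bar{x}_j} \to F/\psi$, which is $(N_2)_{\bar{x}_j}$-equivariant, so $\imath_\chi$ is an $N_1^\prime$-isomorphism. Thus $F^\prime/\chi$ carries a structure of the kind in Lemma~\ref{lemma: equivalent condition for psi}, and the forward direction of that lemma gives that $\chi$ satisfies its conditions (1) and (2), i.e.\ $\chi \in \mathcal{A}_{j,j^\prime}$; this is the well-definedness of $\res_{j,j^\prime}$, and $\bar{q}$ is the asserted isomorphism. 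The main obstacle is the bookkeeping around the iff of Lemma~\ref{lemma: equivalent condition for psi}: one must check that the \emph{transported} structure is genuinely the one making $p_\chi$ equivariant, and that $\imath_\chi$ is an isomorphism for the correct isotropy group $N_1^\prime = (N_2^\prime)_{\bar{x}_j}$, so that the lemma really applies. A more computational alternative verifies conditions (1), (2) for $\chi$ directly---condition (1) reduces to $\psi^m = \id$, but condition (2) demands a case analysis according to whether $g \in N_2^\prime$ fixes or swaps $\bar{x}_j, \bar{x}_{j^\prime}$ and how it permutes the intervening points---and that case analysis is exactly the labor the quotient argument above circumvents.
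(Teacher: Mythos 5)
Your argument is correct and is essentially the paper's own proof written out in full: the paper's one-sentence proof says exactly that the inclusion of $\big( \res_{(N_2)_{\{ \bar{x}_j, \bar{x}_{j^\prime} \}}}^{N_2} F \big) |_{ \{ \bar{x}_j, \bar{x}_{j^\prime} \} }$ into $F$ induces the isomorphism through $p_{\res_{j,j^\prime}(\psi)}$ and $p_\psi$, which is your comparison map $\bar{q}$. The only detail worth making explicit is that $\chi(\bar{x}_{j^\prime})\chi(\bar{x}_j) = \psi^{m}(\bar{x}_j) = \id$, which justifies your dimension count for $F^\prime/\chi$; as you note, this is immediate from condition (1) for $\psi$.
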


\begin{proof}
The injection from $\big( \res_{(N_2)_{\{ \bar{x}_j,
\bar{x}_{j^\prime} \}}}^{N_2} F \big) |_{ \{ \bar{x}_j,
\bar{x}_{j^\prime} \} }$ to $F$ induces the isomorphism through
$p_{\res_{j,{j^\prime}}(\psi)}$ and $p_\psi.$
\end{proof}

For $\psi \in \mathcal{A},$ denote by $\mathcal{A}_\psi$ and
$\mathcal{A}_\psi^0$ the path component of $\mathcal{A}$ containing
$\psi$ and the set $\{ \psi^\prime (\bar{x}_0) | \psi^\prime \in
\mathcal{A}_\psi \},$ respectively. Then, each path component is
expressed by isomorphism groups.

\begin{lemma}
 \label{lemma: topology of mathcal A}
For each $\psi \in \mathcal{A},$ $\mathcal{A}_\psi$ is homeomorphic
to
\begin{equation*}
\Iso_{N_1} ( F/\psi ) / \Iso_{N_2} ( F/\psi ).
\end{equation*}
\end{lemma}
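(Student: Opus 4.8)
The plan is to realize the path component $\mathcal{A}_\psi$ as a single orbit of the group $\Iso_{N_1}(F/\psi)$ and then read off the stabilizer. Write $W = F/\psi$ for the $N_2$-representation determined by $\psi,$ and recall from Lemma~\ref{lemma: equivalent condition for psi} that $p_\psi \colon F \to W$ is $N_2$-equivariant and restricts to a linear isomorphism $F_{\bar{x}_j} \to W$ on every fiber, while $\imath_\psi \colon F_{\bar{x}_0} \to \res_{N_1}^{N_2} W$ is an $N_1$-isomorphism. The key observation is that $\psi$ is completely encoded by $p_\psi$: setting $q_j = p_\psi|_{F_{\bar{x}_j}},$ one has $\psi(\bar{x}_j) = q_{j+1}^{-1} q_j.$ I therefore introduce the space $\mathcal{Q}_W$ of all $N_2$-equivariant maps $q \colon F \to W$ that are fiberwise isomorphisms, topologized as a subspace of $\prod_{j \in \Z_m} \Iso(F_{\bar{x}_j}, W),$ together with the continuous map $\kappa \colon \mathcal{Q}_W \to \mathcal{A},$ $q \mapsto \psi_q,$ where $\psi_q(\bar{x}_j) = q_{j+1}^{-1} q_j.$ A direct check against the two conditions of Lemma~\ref{lemma: equivalent condition for psi} shows $\psi_q \in \mathcal{A}$ with $F/\psi_q \cong W,$ so $\kappa$ lands in $\{ \psi' \in \mathcal{A} \mid F/\psi' \cong W \}.$

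Next I would establish the underlying bijection. For surjectivity of $\kappa$ onto that set, given $\psi'$ with an $N_2$-isomorphism $\Theta' \colon F/\psi' \to W,$ the composite $q = \Theta' \circ p_{\psi'}$ lies in $\mathcal{Q}_W$ and satisfies $\kappa(q) = \psi'.$ Two maps $q, q'$ have $\kappa(q) = \kappa(q')$ exactly when $\Xi := q_j (q'_j)^{-1}$ is independent of $j,$ and a short equivariance computation shows such a $\Xi$ is an $N_2$-automorphism of $W$; thus the fibers of $\kappa$ are precisely the free orbits of the post-composition action $q \mapsto \Xi q$ of $\Iso_{N_2}(W).$ Assuming, as in Proposition~\ref{proposition: bijectivity with extensions}, that $N_2$ acts transitively on $\bar{\mathbf{x}}$ (the non-transitive cases of the following Remark being identical), equivariance forces $q$ to be determined by $q_0,$ and conversely every $N_1$-isomorphism $q_0 \colon F_{\bar{x}_0} \to \res_{N_1}^{N_2} W$ extends uniquely; hence evaluation $q \mapsto q_0$ gives a homeomorphism $\mathcal{Q}_W \cong \Iso_{N_1}(F_{\bar{x}_0}, \res_{N_1}^{N_2} W),$ and composition with $\imath_\psi^{-1}$ identifies the latter with $\Iso_{N_1}(W).$ Under this identification the $\Iso_{N_2}(W)$-action becomes left translation, so $\{ \psi' \mid F/\psi' \cong W \} \cong \Iso_{N_2}(W) \backslash \Iso_{N_1}(W),$ which is $\Iso_{N_1}(W)/\Iso_{N_2}(W)$ up to the inversion homeomorphism.

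It then remains to see that the image $\{ \psi' \mid F/\psi' \cong W \}$ is exactly the path component $\mathcal{A}_\psi.$ The inclusion $\mathcal{A}_\psi \subseteq \{ \psi' \mid F/\psi' \cong W \}$ is immediate, since along any path in $\mathcal{A}$ the isomorphism class of the determined representation $F/\psi_t$ is locally constant. For the reverse inclusion I would use that, as we work with complex representations, $\Iso_{N_1}(W) = \End_{N_1}(W)^{\times}$ is a product of complex general linear groups and hence connected; therefore $\Iso_{N_1}(W)/\Iso_{N_2}(W),$ and with it $\{ \psi' \mid F/\psi' \cong W \},$ is connected and must coincide with the path component containing $\psi.$ (Alternatively this is the injectivity half of Proposition~\ref{proposition: bijectivity with extensions}.)

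Finally, the main obstacle is upgrading the continuous bijection $\Iso_{N_1}(W)/\Iso_{N_2}(W) \to \mathcal{A}_\psi$ to a homeomorphism; note that the naive reduction to maximal compact (unitary) subgroups fails, since $\Iso_{N_1}(W)/\Iso_{N_2}(W)$ is genuinely noncompact in general. Instead I would argue via the orbit-map theorem for topological transformation groups: the conditions of Lemma~\ref{lemma: equivalent condition for psi} are polynomial, so $\mathcal{A}$ is a closed, hence locally compact Hausdorff, subset of the finite product $\prod_j \Iso(F_{\bar{x}_j}, F_{\bar{x}_{j+1}}),$ and $\mathcal{A}_\psi,$ being a path component of the locally path-connected $\mathcal{A},$ is open in it and therefore again locally compact Hausdorff. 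Since $\Iso_{N_1}(W)$ is a second-countable (in particular $\sigma$-compact) Lie group acting continuously and transitively on $\mathcal{A}_\psi$ with stabilizer $\Iso_{N_2}(W)$ at $\psi,$ the standard theorem that such an orbit map descends to a homeomorphism of the homogeneous space onto the orbit yields the claim.
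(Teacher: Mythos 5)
The paper does not actually prove this lemma --- it is imported verbatim from \cite{Ki} in a section explicitly devoted to recalling results from that paper --- so there is no in-paper argument to compare yours against. Taken on its own terms, your torsor argument is sound in the transitive case: encoding $\psi'$ by the collection of quotient maps $q=\Theta'\circ p_{\psi'}$, identifying $\mathcal{Q}_W$ with $\Iso_{N_1}(F_{\bar{x}_0},\res_{N_1}^{N_2}W)\cong\Iso_{N_1}(W)$ by evaluation at $\bar{x}_0$ (this is exactly where transitivity enters), recognizing the fibers of $\kappa$ as free $\Iso_{N_2}(W)$-orbits, and then upgrading the continuous bijection via the open-mapping theorem for transitive actions of $\sigma$-compact groups. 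Your refusal to wave at compactness in the last step is the right instinct.

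There is, however, one genuinely false claim: the parenthetical that ``the non-transitive cases of the following Remark are identical.'' They are not --- the lemma itself fails there, so no proof can cover them. Take $N_2=N_1=N_0$ and $m=2$: by Lemma \ref{lemma: pointwise clutching for m=2 nontransitive}, $\mathcal{A}\cong\Iso_{N_0}(F_{\bar{x}_0},F_{\bar{x}_1})$, which for isotypical $F_{\bar{x}_0}$ is a copy of $\GL_n(\C)$ and hence connected, so $\mathcal{A}_\psi$ is all of $\GL_n(\C)$; but $\Iso_{N_1}(F/\psi)=\Iso_{N_2}(F/\psi)$, so the right-hand side of the lemma is a single point. (A dimension count shows the same failure in the $N_2/N_0\cong\Z_2$, $m=4$ case.) So transitivity must be stated as a hypothesis, not dismissed; the lemma is only ever invoked in that setting. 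Two smaller points. First, the local constancy of the isomorphism class of $F/\psi_t$ is rigidity of representations of the compact group $N_2$ and deserves at least a sentence or citation; note that the same fact shows directly that $\{\psi'\mid F/\psi'\cong W\}$ is open in $\mathcal{A}$ (its complement is the finite union of the other strata, each likewise open), which gives local compactness of $\mathcal{A}_\psi$ more cleanly than your appeal to local path-connectedness of the real algebraic set $\mathcal{A}$. Second, for the final orbit-map step you should say explicitly what the $\Iso_{N_1}(W)$-action on $\mathcal{A}_\psi$ is; transporting right translation through your identification gives $\psi'(\bar{x}_j)\mapsto a_{j+1}^{-1}\psi'(\bar{x}_j)a_j$ with $a_j=g_ja g_j^{-1}$, which is visibly jointly continuous, and then the theorem applies as you say.
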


In some special cases, we can understand $\mathcal{A}$ and
$\mathcal{A}_\psi$ more precisely.

\begin{lemma}
 \label{lemma: pointwise clutching for m=2 nontransitive}
If $m=2,$ then $\mathcal{A}$ is homeomorphic to $\mathcal{A}^0.$
Moreover, if $N_2 = N_1 = N_0,$ then $\mathcal{A}$ is homeomorphic
to $\mathcal{A}^0 = \Iso_{N_2} ( F_{\bar{x}_0}, F_{\bar{x}_1} ).$
\end{lemma}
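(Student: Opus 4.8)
The plan is to exploit the fact that when $m=2$ condition (1) of Lemma \ref{lemma: equivalent condition for psi} rigidly ties the two values of a pointwise clutching map together, and then to read off condition (2) under the triviality hypothesis $N_2=N_1=N_0$.

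First I would treat the general $m=2$ case. For a pointwise preclutching map $\psi$ with $m=2$, condition (1) of Lemma \ref{lemma: equivalent condition for psi} applied at $j=0$ reads $\psi(\bar{x}_1)\psi(\bar{x}_0)=\id$ on $F_{\bar{x}_0}$, so that $\psi(\bar{x}_1)=\psi(\bar{x}_0)^{-1}$ and $\psi$ is completely determined by $\psi(\bar{x}_0)$. Hence the evaluation map $\mathcal{A}\to\mathcal{A}^0$, $\psi\mapsto\psi(\bar{x}_0)$, is a continuous bijection. Its inverse sends $\phi\in\mathcal{A}^0$ to the map with values $\phi$ and $\phi^{-1}$, and since inversion in $\Iso(F_{\bar{x}_0},F_{\bar{x}_1})$ is continuous, this inverse is continuous. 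Thus the evaluation map is a homeomorphism $\mathcal{A}\cong\mathcal{A}^0$.

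Next I would identify $\mathcal{A}^0$ explicitly under the hypothesis $N_2=N_1=N_0$, which simply says that $N_2$ acts trivially on $\bar{\mathbf{x}}$. Here the only task is to unwind condition (2). Because the action is trivial, for every $g\in N_2$ and every $j_1$ we have $g^{-1}\bar{x}_{j_1}=\bar{x}_{j_1}$ and $g\bar{x}_{j_1+1}=\bar{x}_{j_1+1}$, forcing $j_2=j_1$ and $j_3=j_1+1$; so the iterated composite $\psi^{j_3-j_1}$ reduces to the single factor $\psi(\bar{x}_{j_1})$. Condition (2) therefore collapses to $\psi(\bar{x}_{j_1})=g\,\psi(\bar{x}_{j_1})\,g^{-1}$ for all $g\in N_2$, which is exactly the statement that $\psi(\bar{x}_{j_1})$ is $N_2$-equivariant. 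Applying this at $j_1=0$ shows $\psi(\bar{x}_0)\in\Iso_{N_2}(F_{\bar{x}_0},F_{\bar{x}_1})$, and the case $j_1=1$ is automatic since $\psi(\bar{x}_1)=\psi(\bar{x}_0)^{-1}$ inherits equivariance from its inverse. Conversely, any $\phi\in\Iso_{N_2}(F_{\bar{x}_0},F_{\bar{x}_1})$ together with $\phi^{-1}$ satisfies both (1) and (2) and so defines an element of $\mathcal{A}$. This yields $\mathcal{A}^0=\Iso_{N_2}(F_{\bar{x}_0},F_{\bar{x}_1})$, and combining with the first part gives $\mathcal{A}\cong\Iso_{N_2}(F_{\bar{x}_0},F_{\bar{x}_1})$.

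The argument is essentially formal; the one point requiring care is the bookkeeping of the indices $j_1,j_2,j_3$ in condition (2), where I must verify that triviality of the action pins them down to $j_2=j_1$ and $j_3=j_1+1$, so that the composite $\psi^{j_3-j_1}$ genuinely reduces to a single factor and equivariance is what remains. The verification that the evaluation map is a homeomorphism rather than merely a bijection also uses the continuity of inversion, but this is immediate.
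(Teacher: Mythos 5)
Your argument is correct and is exactly the route the paper intends: the paper recalls this lemma from \cite{Ki} without proof, but its in-text remark that the evaluation map $\psi \mapsto \psi(\bar{x}_j)$ is a homeomorphism by Lemma \ref{lemma: equivalent condition for psi}.(1) when $m=2$ is precisely your first step, and your unwinding of condition (2) under $N_2=N_1=N_0$ to plain $N_2$-equivariance of $\psi(\bar{x}_0)$ is the standard completion. No gaps.
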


\begin{proposition}
 \label{proposition: psi for cyclic}
Assume that $N_2 = \langle N_0, a_0 \rangle$ with some $a_0 \in N_2$
such that $a_0 \bar{x}_j = \bar{x}_{j+1}$ for each $j \in \Z_m$ so
that $N_2 / N_0 \cong \Z_m$ and $N_1 = N_0.$ Then,
\begin{enumerate}
  \item A pointwise preclutching map $\psi$ with respect to $F$ is
  in $\mathcal{A}$ if and only if $\psi^m = \id,$ $\psi(
\bar{x}_0 ) \in \Iso_{N_0} (F_{\bar{x}_0}, F_{\bar{x}_1}),$ and
$\psi( \bar{x}_j ) = a_0^j \psi(\bar{x}_0) a_0^{-j}$ for each $j \in
\Z_m.$
  \item $\mathcal{A}, \mathcal{A}_\psi$ are homeomorphic to
  $\mathcal{A}^0, \mathcal{A}_\psi^0,$ respectively.
  \item If $F_{\bar{x}_0}$ is $N_0$-isotypical, then
  $\mathcal{A}_\psi$ is simply connected
  for each $\psi$ in $\mathcal{A}.$
\end{enumerate}
\end{proposition}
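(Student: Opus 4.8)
The plan is to handle the three assertions in turn: (1) is a direct specialization of Lemma~\ref{lemma: equivalent condition for psi}, (2) is a formal consequence of (1), and (3) is the substantive part, resting on Lemma~\ref{lemma: topology of mathcal A} together with Clifford theory for the cyclic quotient $N_2/N_0.$ For (1), I would feed the present group-theoretic data into conditions (1),(2) of Lemma~\ref{lemma: equivalent condition for psi}. Condition (1) there is literally $\psi^m=\id.$ For the rest, apply condition (2) with $g=a_0$ and $j_1=j+1$: since $a_0^{-1}\bar{x}_{j+1}=\bar{x}_j$ and $a_0\bar{x}_{j+1}=\bar{x}_{j+2},$ it reads $\psi(\bar{x}_{j+1})=a_0\,\psi(\bar{x}_j)\,a_0^{-1},$ and induction gives $\psi(\bar{x}_j)=a_0^{j}\psi(\bar{x}_0)a_0^{-j}.$ Applying condition (2) instead with $g\in N_0$ (which fixes every $\bar{x}_j$) and $j_1=0$ yields $\psi(\bar{x}_0)=g\,\psi(\bar{x}_0)\,g^{-1},$ i.e.\ $\psi(\bar{x}_0)\in\Iso_{N_0}(F_{\bar{x}_0},F_{\bar{x}_1}).$ Conversely, assuming $\psi^m=\id,$ the conjugation formula, and $N_0$-equivariance of $\psi(\bar{x}_0),$ I would verify conditions (1),(2) for every $g=a_0^{k}n$ with $n\in N_0$; the only point worth checking is that $\psi^m=\id$ on $F_{\bar{x}_0}$ forces the same on each $F_{\bar{x}_j},$ which holds because these maps are conjugate by $a_0^{j}$ once one uses $a_0^m\in N_0$ and the $N_0$-equivariance of $\psi(\bar{x}_0).$

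For (2), part (1) shows that a map in $\mathcal{A}$ is completely determined by the single value $\psi(\bar{x}_0).$ Hence the evaluation $\mathcal{A}\to\mathcal{A}^0,$ $\psi\mapsto\psi(\bar{x}_0),$ is a continuous bijection whose inverse $\phi\mapsto(a_0^{j}\phi a_0^{-j})_{j\in\Z_m}$ is also continuous, so it is a homeomorphism; restricting it to the path component $\mathcal{A}_\psi$ gives $\mathcal{A}_\psi\cong\mathcal{A}_\psi^0.$

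For (3), set $W=F/\psi$ and recall $N_1=N_0.$ By Lemma~\ref{lemma: topology of mathcal A}, $\mathcal{A}_\psi$ is homeomorphic to $\Iso_{N_0}(W)/\Iso_{N_2}(W),$ so it suffices to prove this homogeneous space is simply connected. Since the $N_1$-isomorphism $\imath_\psi$ identifies $\res_{N_0}W$ with $F_{\bar{x}_0},$ the restriction $\res_{N_0}W$ is isotypical, say $\res_{N_0}W\cong d\cdot U$ with $U$ irreducible; as $W$ is complex, Schur's lemma gives $\Iso_{N_0}(W)\cong\GL_d(\C).$ Decomposing $W$ as an $N_2$-representation into isotypical blocks $W\cong\bigoplus_s d_s V_s$ yields $\Iso_{N_2}(W)\cong\prod_s\GL_{d_s}(\C),$ embedded block-diagonally in $\GL_d(\C).$ The key input is Clifford theory: the component $\res_{N_0}W$ being $U$-isotypical forces $U$ to be $N_2$-invariant, and because $N_2/N_0\cong\Z_m$ is cyclic its Schur multiplier vanishes, so each $V_s$ lying over $U$ restricts to $U$ with multiplicity one; consequently $\sum_s d_s=d.$ Therefore $\Iso_{N_0}(W)/\Iso_{N_2}(W)\cong\GL_d(\C)/\prod_s\GL_{d_s}(\C),$ which deformation retracts onto the complex flag manifold $\mathrm{U}(d)/\prod_s\mathrm{U}(d_s),$ and this is simply connected, giving (3).

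The main obstacle is step (3): one must correctly identify $\Iso_{N_2}(W)$ as a block-diagonal subgroup of $\GL_d(\C)$ whose block sizes sum \emph{exactly} to $d.$ This sum condition --- equivalently the multiplicity-one statement for the restrictions $\res_{N_0}V_s$ --- is precisely what makes the quotient a genuine flag manifold rather than a homogeneous space with nontrivial fundamental group, and it is exactly here that the cyclicity of $N_2/N_0$ is essential.
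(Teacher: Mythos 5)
Your proposal is correct, but there is nothing in this paper to compare it against line by line: Proposition \ref{proposition: psi for cyclic} is one of the results the paper explicitly imports without proof from the earlier two-sphere paper \cite{Ki} (only Propositions \ref{proposition: psi for Z_2} and \ref{proposition: psi for trivial group} are proved here as the ``two more cases''). Judged on its own, your argument is sound. Part (1) is the right specialization of Lemma \ref{lemma: equivalent condition for psi}: the choices $g=a_0$, $j_1=j+1$ and $g=n\in N_0$, $j_1=0$ extract exactly the conjugation formula and the $N_0$-equivariance of $\psi(\bar{x}_0)$, and your converse check (including the consistency point that $a_0^m\in N_0$ together with $N_0$-equivariance makes $\psi(\bar{x}_{j+m})=\psi(\bar{x}_j)$ and propagates $\psi^m=\id$ from one fiber to all) is complete; part (2) is then immediate. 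Part (3) is where you add genuine content: combining Lemma \ref{lemma: topology of mathcal A} with the Clifford-theoretic fact that an $N_2$-invariant irreducible $N_0$-type $U$ extends across a cyclic quotient (vanishing of $H^2(\Z_m,\C^*)$), so every irreducible $N_2$-summand of $W=F/\psi$ restricts to $U$ with multiplicity one, identifies $\mathcal{A}_\psi$ with $\GL_d(\C)/\prod_s\GL_{d_s}(\C)$ with $\sum_s d_s=d$, i.e.\ a genuine flag manifold, which is simply connected. You are also right to isolate the equality $\sum_s d_s=d$ as the crux --- if the multiplicities were $e>1$ the block embedding would multiply $\pi_1$ by $e$ and simple connectivity would fail --- and this is precisely why the analogous statement is \emph{not} asserted in the dihedral case (Proposition \ref{proposition: psi for dihedral} deliberately omits item (3)). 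The one cosmetic caveat: the extension argument for $U$ should be run for compact $N_0$ (choose an intertwiner $T$ for $^{a_0}U\cong U$, use Schur to normalize $T^m=U(a_0^m)$), not just for finite groups, but this is standard and does not affect the conclusion.
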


\begin{proposition}
 \label{proposition: psi for dihedral}
Assume that $N_2 = \langle {N_0}, a_0 , b_0 \rangle$ with some $a_0,
b_0 \in N_2$ such that $a_0 \bar{x}_j = \bar{x}_{j+1}$ and $b_0
\bar{x}_j = \bar{x}_{-j+1}$ for each $j \in \Z_m$ so that $N_2 /
{N_0} \cong \D_m$ and $N_1 = \langle N_0, b_0 a_0 \rangle.$ Then,
\begin{enumerate}
  \item A pointwise preclutching map $\psi$ with respect to $F$ is
  in $\mathcal{A}$ if and only if $\psi^m = \id,$ $\psi(
\bar{x}_0 ) \in \Iso_{{N_0}} (F_{\bar{x}_0}, F_{\bar{x}_1}),$
$\psi^{-1} (\bar{x}_1) = b_0 \psi(\bar{x}_0) b_0^{-1},$ and $\psi(
\bar{x}_j ) = a_0^j \psi(\bar{x}_0) a_0^{-j}$ for each $j \in \Z_m.$
  \item $\mathcal{A}, \mathcal{A}_\psi$ are homeomorphic to
  $\mathcal{A}^0, \mathcal{A}_\psi^0,$ respectively.
\end{enumerate}
\end{proposition}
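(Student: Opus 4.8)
The plan is to match the four displayed conditions to the two clauses of Lemma~\ref{lemma: equivalent condition for psi} evaluated on the generators $N_0,a_0,b_0$ of $N_2$, and then to upgrade a generator-wise check to the full equivariance clause by a closure argument. Write $\sigma_g$ for the permutation of $\Z_m$ with $g\bar{x}_j=\bar{x}_{\sigma_g(j)}$. Since clause (1), $\psi^m=\id$, makes $\psi^k$ a well-defined isomorphism $F_{\bar{x}_j}\to F_{\bar{x}_{j+k}}$ for every $k\in\Z$, I would first rewrite clause (2) in the covariant form
\begin{equation*}
\psi^{\sigma_g(j+1)-\sigma_g(j)} = g\,\psi(\bar{x}_j)\,g^{-1} \quad\text{as maps } F_{\bar{x}_{\sigma_g(j)}}\to F_{\bar{x}_{\sigma_g(j+1)}} \quad (j\in\Z_m), \tag{$\star$}
\end{equation*}
which is clause (2) with $j_1=\sigma_g(j),\ j_2=j,\ j_3=\sigma_g(j+1)$. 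The key structural point is that the set of $g\in N_2$ satisfying $(\star)$ is a subgroup: the maps $\psi^k$ obey the cocycle identity $\psi^{k'}\circ\psi^{k}=\psi^{k+k'}$, each $\sigma_g$ sends adjacent pairs to adjacent pairs (preserving or reversing order), and conjugation by $g$ is multiplicative; combining these first promotes $(\star)$ to the all-pairs form $\psi^{\sigma_g(j')-\sigma_g(j)}=g\,\psi^{j'-j}g^{-1}$ and then shows $(\star)$ for $g,g'$ forces it for $gg'$ and for $g^{-1}$. Hence it suffices to verify $(\star)$ on $\{N_0,a_0,b_0\}$.

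For the implication $\psi\in\mathcal{A}\Rightarrow$ the four conditions, I would simply specialize. Clause (1) is $\psi^m=\id$. Taking $g\in N_0$ in $(\star)$ (so $\sigma_g=\id$) gives $\psi(\bar{x}_j)\in\Iso_{N_0}$, in particular $\psi(\bar{x}_0)\in\Iso_{N_0}(F_{\bar{x}_0},F_{\bar{x}_1})$. Taking $g=a_0$ (so $\sigma_{a_0}(j)=j+1$) gives $\psi(\bar{x}_{j+1})=a_0\psi(\bar{x}_j)a_0^{-1}$, whence inductively $\psi(\bar{x}_j)=a_0^{j}\psi(\bar{x}_0)a_0^{-j}$. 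Taking $g=b_0$ (so $\sigma_{b_0}(j)=-j+1$, order-reversing) at $j=0$ gives $\psi^{-1}(\bar{x}_1)=b_0\psi(\bar{x}_0)b_0^{-1}$. This recovers exactly the stated list.

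For the converse I would exploit that $\langle N_0,a_0\rangle$ is precisely the cyclic situation of Proposition~\ref{proposition: psi for cyclic}: the conditions $\psi^m=\id$, $\psi(\bar{x}_0)\in\Iso_{N_0}(F_{\bar{x}_0},F_{\bar{x}_1})$, $\psi(\bar{x}_j)=a_0^{j}\psi(\bar{x}_0)a_0^{-j}$ are its part~(1) hypotheses for that subgroup, so $(\star)$ holds for all $g\in\langle N_0,a_0\rangle$. It then remains to verify $(\star)$ for $g=b_0$ at every $j$, i.e. the family $\psi^{-1}(\bar{x}_{-j+1})=b_0\psi(\bar{x}_j)b_0^{-1}$; once this holds, the subgroup property yields $(\star)$ on all of $N_2=\langle N_0,a_0,b_0\rangle$ and Lemma~\ref{lemma: equivalent condition for psi} gives $\psi\in\mathcal{A}$. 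I expect this step to be the main obstacle. Using $\psi(\bar{x}_j)=a_0^{j}\psi(\bar{x}_0)a_0^{-j}$ and the single relation at $j=0$, the computation reduces to commuting $b_0 a_0^{j} b_0^{-1}$ past $\psi(\bar{x}_0)^{\pm1}$; but in the possibly non-split extension $N_2$ the dihedral relation holds only modulo the kernel, $b_0 a_0^{j} b_0^{-1}=z_j\,a_0^{-j}$ with $z_j\in N_0$, so the argument must absorb each $z_j$. This is exactly where the $N_0$-equivariance of $\psi(\bar{x}_0)$ (hence of $\psi(\bar{x}_0)^{-1}$, and of every $\psi(\bar{x}_j)$) is used: conjugation by $z_j\in N_0$ fixes these maps, making the correction terms disappear.

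Finally, part~(2) follows formally from part~(1). The evaluation $\mathcal{A}\to\mathcal{A}^0$, $\psi\mapsto\psi(\bar{x}_0)$, is continuous, and part~(1) shows it is a bijection whose inverse $\phi\mapsto(a_0^{j}\phi\,a_0^{-j})_{j\in\Z_m}$ is also continuous, so it is a homeomorphism. Restricting this homeomorphism to the path component $\mathcal{A}_\psi$ of a given $\psi$ identifies it with the path component of $\psi(\bar{x}_0)$ in $\mathcal{A}^0$, namely $\mathcal{A}_\psi^0$, giving $\mathcal{A}_\psi\cong\mathcal{A}_\psi^0$ and completing the proof.
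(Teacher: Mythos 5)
The paper states this proposition without proof --- it is one of the results recalled verbatim from the earlier two-sphere paper \cite{Ki} (only Propositions \ref{proposition: psi for Z_2} and \ref{proposition: psi for trivial group} are proved here as the ``two more cases''), so there is no in-paper argument to compare against. Judged on its own, your proof is correct and is in the same spirit as the proofs the paper does supply for the supplementary cases: everything is routed through Lemma \ref{lemma: equivalent condition for psi}, reduced to generators, and checked by explicit conjugation. The two points that actually need care are both handled properly. First, your reformulation $(\star)$ is exactly clause (2) of Lemma \ref{lemma: equivalent condition for psi} with $j_1=\sigma_g(j_2)$, $j_3=\sigma_g(j_2+1)$, and the promotion to the all-pairs form $g\psi^{j'-j}g^{-1}=\psi^{\sigma_g(j')-\sigma_g(j)}$ telescopes correctly in both the order-preserving and order-reversing cases (using $\psi^m=\id$ to make $\psi^k$ well defined for $k\in\Z_m$), so the set of $g$ satisfying $(\star)$ is indeed a subgroup and the generator reduction is legitimate. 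Second, the genuinely delicate step is the verification of $(\star)$ for $b_0$ at general $j$: writing $b_0a_0^jb_0^{-1}=z_ja_0^{-j}$ with $z_j\in N_0$ (normal, being the kernel of the action), one gets $b_0\psi(\bar{x}_j)b_0^{-1}=z_j\,\psi(\bar{x}_{-j})^{-1}\,z_j^{-1}=\psi(\bar{x}_{-j})^{-1}$, where the last equality uses that every $\psi(\bar{x}_k)=a_0^k\psi(\bar{x}_0)a_0^{-k}$ is $N_0$-equivariant; this is precisely where you said the $N_0$-equivariance hypothesis is consumed, and the computation closes. Part (2) then follows formally as you describe, since the evaluation $\psi\mapsto\psi(\bar{x}_0)$ has the explicit continuous inverse $\phi\mapsto(a_0^j\phi a_0^{-j})_{j\in\Z_m}$ and homeomorphisms preserve path components.
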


\begin{proposition}
 \label{proposition: psi for Z_2xZ_2}
Assume that $N_2 = \langle N_0, \alpha_1, \alpha_2, \alpha_3
\rangle$ with some $\alpha_j$'s in $N_2$ such that $\alpha_1
\bar{x}_j = \bar{x}_{-j+1},$ $\alpha_2 \bar{x}_j = \bar{x}_{j+2},$
$\alpha_3 \bar{x}_j = \bar{x}_{-j+3}$ for each $j \in \Z_m$ with
$m=4$ so that $N_2 / N_0 \cong \Z_2 \times \Z_2$ and $N_1 = N_0.$
Then, $\mathcal{A}$ is homeomorphic to $\mathcal{A}^{0,3}.$
\end{proposition}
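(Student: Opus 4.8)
The plan is to show that the evaluation map
\[
p : \mathcal{A} \longrightarrow \mathcal{A}^{0,3}, \quad \psi \mapsto ( \psi(\bar{x}_0), \psi(\bar{x}_3) )
\]
is a homeomorphism by exhibiting $\mathcal{A}$ as the graph of a continuous map over $\mathcal{A}^{0,3}$. Since $\bar{\mathbf{x}}$ is finite, a map $\psi \in \mathcal{A}$ is nothing but the quadruple $( \psi(\bar{x}_0), \psi(\bar{x}_1), \psi(\bar{x}_2), \psi(\bar{x}_3) )$ inside $\prod_{j \in \Z_4} \Iso(F_{\bar{x}_j}, F_{\bar{x}_{j+1}})$, and $p$ is the restriction of the projection onto the $0$th and $3$rd factors, hence continuous. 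So it suffices to recover $\psi(\bar{x}_1)$ and $\psi(\bar{x}_2)$ continuously from $\psi(\bar{x}_0)$ and $\psi(\bar{x}_3)$.

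The key observation is that $\alpha_2$, which acts on $\bar{\mathbf{x}}$ as the double transposition $\bar{x}_j \mapsto \bar{x}_{j+2}$, relates the fibers over $\bar{x}_0, \bar{x}_2$ and over $\bar{x}_1, \bar{x}_3$. I would apply condition (2) of Lemma \ref{lemma: equivalent condition for psi} with $g = \alpha_2$ twice. First, with $j_1 = 0$ one has $\alpha_2^{-1}\bar{x}_0 = \bar{x}_2$ and $\alpha_2 \bar{x}_3 = \bar{x}_1$, so $(j_2, j_3) = (2,1)$ and the condition reads $\psi(\bar{x}_0) = \alpha_2 \psi(\bar{x}_2) \alpha_2^{-1}$, that is,
\[
\psi(\bar{x}_2) = \alpha_2^{-1} \psi(\bar{x}_0) \alpha_2 .
\]
Second, with $j_1 = 3$ one has $\alpha_2^{-1}\bar{x}_3 = \bar{x}_1$ and $\alpha_2 \bar{x}_2 = \bar{x}_0$, so $(j_2, j_3) = (1,0)$ and, since $j_3 - j_1 \equiv 1 \pmod 4$, the condition reads $\psi(\bar{x}_3) = \alpha_2 \psi(\bar{x}_1) \alpha_2^{-1}$, that is,
\[
\psi(\bar{x}_1) = \alpha_2^{-1} \psi(\bar{x}_3) \alpha_2 .
\]
A quick check of source and target fibers confirms that both expressions are of the correct type.

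These two identities show that $\psi(\bar{x}_1)$ and $\psi(\bar{x}_2)$ are determined by $( \psi(\bar{x}_3), \psi(\bar{x}_0) )$ through conjugation by the fixed element $\alpha_2$, which is a continuous operation. Hence the map
\[
q : \mathcal{A}^{0,3} \longrightarrow \mathcal{A}, \quad (\psi_0, \psi_3) \mapsto ( \psi_0, \ \alpha_2^{-1}\psi_3\alpha_2, \ \alpha_2^{-1}\psi_0\alpha_2, \ \psi_3 )
\]
is continuous, is well defined (every element of $\mathcal{A}^{0,3}$ equals $p(\psi)$ for some $\psi \in \mathcal{A}$, which must then coincide with $q(\psi_0,\psi_3)$ by the displayed formulas), and satisfies $q = p^{-1}$. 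Therefore $p$ is a continuous bijection with continuous inverse, i.e. a homeomorphism $\mathcal{A} \cong \mathcal{A}^{0,3}$.

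I expect the only delicate point to be the careful bookkeeping of indices and of source/target fibers when invoking condition (2) --- in particular reading off the correct $(j_2, j_3)$ for each choice of $(g, j_1)$ and checking the reduction $j_3 - j_1 \equiv 1 \pmod 4$ in the second application. Unlike the cyclic and dihedral cases (Propositions \ref{proposition: psi for cyclic}, \ref{proposition: psi for dihedral}), no single element of $N_2/N_0$ cyclically permutes all of $\bar{\mathbf{x}}$: the involutions $\alpha_1, \alpha_3$ exchange the two $\alpha_2$-orbits $\{\bar{x}_0, \bar{x}_2\}$ and $\{\bar{x}_1, \bar{x}_3\}$ but yield only triple-product relations rather than direct conjugation formulas. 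This is exactly why one value does not suffice and why the target is $\mathcal{A}^{0,3}$ rather than $\mathcal{A}^0$; I would remark on this to motivate the choice of the two evaluation points $\bar{x}_0, \bar{x}_3$ from distinct $\alpha_2$-orbits.
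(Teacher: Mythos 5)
Your argument is correct: the two instances of Lemma~\ref{lemma: equivalent condition for psi}.(2) with $g=\alpha_2$ do yield $\psi(\bar{x}_2)=\alpha_2^{-1}\psi(\bar{x}_0)\alpha_2$ and $\psi(\bar{x}_1)=\alpha_2^{-1}\psi(\bar{x}_3)\alpha_2$ (the index bookkeeping and fiber types all check out), and since $\mathcal{A}^{0,3}$ is by definition the image of the evaluation map, surjectivity is automatic and conjugation by the fixed element $\alpha_2$ gives the continuous inverse. The paper itself states this proposition without proof (it is recalled from \cite{Ki}), but your argument is exactly parallel to the paper's proof of injectivity in Proposition~\ref{proposition: psi for Z_2}.(1), where the $a_0$-conjugation relations likewise recover $\psi(\bar{x}_1)$ and $\psi(\bar{x}_2)$ from the values at $\bar{x}_0$ and $\bar{x}_3$; your closing remark correctly identifies why two evaluation points from distinct $\alpha_2$-orbits are needed here.
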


For two points $\bar{x}_j, \bar{x}_{j^\prime}$ in
$\bar{\mathbf{x}},$ $\bar{x}_j \sim \bar{x}_{j^\prime}$ means that
$\bar{x}_j$ and $\bar{x}_{j^\prime}$ are in an $N_2$-orbit.

\begin{proposition} \label{proposition: psi for Z_2}
Assume that $N_2/N_0 \cong \Z_2$ and $N_2/N_0$ acts freely on
$\bar{\mathbf{x}}$ with $m=4.$
\begin{enumerate}
  \item If $\bar{x}_j \sim \bar{x}_{j^\prime}$ and $\bar{x}_{j^\prime} \nsim
  \bar{x}_{j^{\prime \prime}}$ for some
  $j \ne j^\prime \ne j^{\prime \prime},$ then
  \begin{equation*}
  \res_{j,j^\prime} \times \res_{j^\prime,j^{\prime \prime}}
  : \mathcal{A} \rightarrow
  \mathcal{A}_{j,j^\prime} \times \mathcal{A}_{j^\prime,j^{\prime \prime}}
  \end{equation*}
  is homeomorphic.
  \item $\mathcal{A}_{j^\prime, j^{\prime \prime}}^{j^\prime}$
  $= \Iso_{N_0} (F_{\bar{x}_{j^\prime}}, F_{\bar{x}_{j^{\prime \prime}}}).$
  If $F_{\bar{x}_0}$ is $N_0$-isotypical,
  then path components of $\mathcal{A}_{j,j^\prime}$ are simply
  connected and $\pi(\mathcal{A}_\psi) \cong \Z$ for each
  $\psi \in \mathcal{A}.$
  \item $F/\psi \cong \big( \res_{(N_2)_{\{
\bar{x}_j, \bar{x}_{j^\prime} \}}}^{N_2} F \big) |_{ \{ \bar{x}_j,
\bar{x}_{j^\prime} \} } /
  \res_{j,j^\prime}(\psi)$ for any $\psi \in \mathcal{A}.$
\end{enumerate}
\end{proposition}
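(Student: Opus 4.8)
The plan is to reduce the whole statement to the two restricted two-point problems and the structural lemmas already available. First I would fix the combinatorial picture: since $N_2/N_0 \cong \Z_2$ acts freely on the four-point set $\bar{\mathbf{x}}$ it has exactly two two-point orbits, and because $N_0$ is the kernel these coincide with the two $N_2$-orbits. Choose $g_0 \in N_2$ lifting the generator of $N_2/N_0$; then $g_0$ induces a fixed-point-free involution $\sigma$ of $\Z_4$ interchanging the two points of each orbit. For the pair $\{ \bar{x}_j, \bar{x}_{j'} \}$ (a whole orbit) every element of $N_2$ preserves the set, so $(N_2)_{\{ \bar{x}_j, \bar{x}_{j'} \}} = N_2$ and $N_1 = (N_2)_{\bar{x}_{j'}} = N_0$; for $\{ \bar{x}_{j'}, \bar{x}_{j''} \}$ no group element can interchange two points of different orbits, so $(N_2)_{\{ \bar{x}_{j'}, \bar{x}_{j''} \}} = N_0$, acting trivially. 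Hence $\mathcal{A}_{j,j'}$ is a transitive two-point problem realized through $\Z_2$ while $\mathcal{A}_{j',j''}$ is a two-point problem with $N_2 = N_1 = N_0$ acting trivially. Statement (3) is then immediate from Lemma \ref{lemma: restricted pointwise clutching}: the isomorphism there reads $\res_{(N_2)_{\{ \bar{x}_j, \bar{x}_{j'} \}}}^{N_2}(F/\psi) \cong (\res_{(N_2)_{\{ \bar{x}_j, \bar{x}_{j'} \}}}^{N_2} F)|_{\{ \bar{x}_j, \bar{x}_{j'} \}}/\res_{j,j'}(\psi)$, and since $(N_2)_{\{ \bar{x}_j, \bar{x}_{j'} \}} = N_2$ the left side is $F/\psi$. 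The first clause of (2), $\mathcal{A}_{j',j''}^{j'} = \Iso_{N_0}(F_{\bar{x}_{j'}}, F_{\bar{x}_{j''}})$, follows from Lemma \ref{lemma: pointwise clutching for m=2 nontransitive} applied to the trivial-action pair $\{ \bar{x}_{j'}, \bar{x}_{j''} \}$.

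For the homeomorphism in (1) I would first rewrite membership in $\mathcal{A}$ using Lemma \ref{lemma: equivalent condition for psi}. Its condition (2) with elements of $N_0$ forces $\psi(\bar{x}_k) \in \Iso_{N_0}(F_{\bar{x}_k}, F_{\bar{x}_{k+1}})$ for all $k$, and with $g = g_0$ it equates the consecutive product $\psi^{j_3 - j_1}(\bar{x}_{j_1})$ with $g_0 \psi(\bar{x}_{\sigma(j_1)}) g_0^{-1}$, where $j_3 = \sigma(\sigma(j_1) + 1)$; together with condition (1), $\psi^4 = \id$, these relations leave only two of the four values $\psi(\bar{x}_0), \dots, \psi(\bar{x}_3)$ independent. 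Choosing $j''$ to be the cyclic neighbour of $j'$ lying in the other orbit, I would check that $\res_{j,j'}(\psi)$ records the product of the $\psi$-factors running from $\bar{x}_j$ to $\bar{x}_{j'}$ along the transitive orbit, with condition (1) appearing as the defining transitivity relation $g_0(\,\cdot\,)g_0^{-1} = (\,\cdot\,)^{-1}$ of that two-point problem, while $\res_{j',j''}(\psi)$ records the single remaining factor $\psi(\bar{x}_{j'})$ joining the two orbits. The inverse map reads these two data off, sets the two independent $\psi$-values equal to them, and recovers the other two by conjugating with $g_0$ and by condition (1). As all these operations are compositions, inversions and conjugations, they are continuous in both directions, so once bijectivity is verified the map is a homeomorphism.

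For the remaining claims of (2) I would feed (1) into the structural lemmas. By (1), $\mathcal{A}_\psi$ is homeomorphic to the product of a path component of $\mathcal{A}_{j,j'}$ and a path component of $\mathcal{A}_{j',j''}$. When $F_{\bar{x}_0}$ is $N_0$-isotypical, $\mathcal{A}_{j',j''} \cong \Iso_{N_0}(F_{\bar{x}_{j'}}, F_{\bar{x}_{j''}})$ is, by Schur's lemma, a general linear group $\GL_n(\C)$, which is connected with $\pi_1 \cong \Z$; meanwhile Lemma \ref{lemma: topology of mathcal A} identifies each path component of $\mathcal{A}_{j,j'}$ with a homogeneous space $\Iso_{N_0}(F/\psi)/\Iso_{N_2}(F/\psi)$, which in the isotypical case is of the form $\GL_n(\C)/\bigl(\GL_{n_1}(\C) \times \GL_{n_2}(\C)\bigr)$, a complex Grassmannian and hence simply connected. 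Multiplying fundamental groups yields $\pi_1(\mathcal{A}_\psi) \cong \Z$.

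The hard part will be the bookkeeping in (1): verifying that the $\psi$ reconstructed from an arbitrary pair $(\phi, \eta) \in \mathcal{A}_{j,j'} \times \mathcal{A}_{j',j''}$ really satisfies both conditions of Lemma \ref{lemma: equivalent condition for psi}. The delicate point is that the \emph{automatic} consistency relations --- for example a self-conjugacy constraint $\psi(\bar{x}_k) = g_0 \psi(\bar{x}_k)^{-1} g_0^{-1}$ at a point whose orbit partner sits across the cyclic order --- must be derived from condition (1) together with the membership conditions on $\phi$ and $\eta$. This derivation is sensitive to the length and wrap-around of the products $\psi^{j'-j}$ and $\psi^{j''-j'}$, so the three possible free involutions $\sigma$ (the rotation $j \mapsto j+2$ and the two reflections $j \mapsto -j+1$, $j \mapsto -j+3$) have to be treated case by case.
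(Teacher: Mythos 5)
Your proposal is correct and follows essentially the same route as the paper's own proof: decompose $\mathcal{A}$ through the two restricted two-point problems (the transitive orbit pair, where $(N_2)_{\{ \bar{x}_j, \bar{x}_{j^\prime} \}} = N_2$, and the cross-orbit pair, where the stabilizer is $N_0$ acting trivially), prove bijectivity of $\res_{j,j^\prime} \times \res_{j^\prime,j^{\prime\prime}}$ by explicit reconstruction of $\psi$ from the two restricted data using Lemma~\ref{lemma: equivalent condition for psi}, obtain (3) from Lemma~\ref{lemma: restricted pointwise clutching}, and combine Schur's Lemma on the cross-orbit factor with simple connectedness of the components of $\mathcal{A}_{j,j^\prime}$ to get $\pi_1(\mathcal{A}_\psi) \cong \Z.$ The only cosmetic differences are that you derive the simple connectedness directly from Lemma~\ref{lemma: topology of mathcal A} via a Grassmannian-type homogeneous space where the paper simply cites Proposition~\ref{proposition: psi for cyclic}.(3), and that you propose to treat all three free involutions case by case where the paper handles only the rotation $\bar{x}_j \mapsto \bar{x}_{j+2}$ with one fixed choice of indices ``for simplicity.''
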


\begin{proof}
For simplicity, assume that $N_2=\langle N_0, a_0 \rangle$ with some
$a_0 \in N_2$ such that $a_0 \bar{x}_j = \bar{x}_{j+2}$ for each $j
\in \Z_4.$ And, we prove this proposition only for $j=3, j^\prime=1,
j^{\prime \prime}=0.$ Then, it suffices to show that
\begin{equation*}
p: \mathcal{A} \rightarrow \mathcal{A}_{3,1}^3 \times
\mathcal{A}_{1,0}^0 , \psi \mapsto \Big( \psi(\bar{x}_0)
\psi(\bar{x}_3), \psi(\bar{x}_0) \Big)
\end{equation*}
is homeomorphic to show (1) by Lemma \ref{lemma: pointwise clutching
for m=2 nontransitive}. First, we show injectivity. Given
$\psi(\bar{x}_0) \psi(\bar{x}_3)$ and $\psi(\bar{x}_3),$ i.e.
$\psi(\bar{x}_0)$ and $\psi(\bar{x}_3),$ equivariance of $\psi$
gives us
\begin{equation*}
\psi(\bar{x}_1) = \psi(a_0 \bar{x}_3) = a_0 \psi(\bar{x}_3)
a_0^{-1}.
\end{equation*}
By $\psi^4 = \id,$ $\psi(\bar{x}_2)$ is also obtained. So, we obtain
injectivity. Next, we show surjectivity. Given two arbitrary
elements $A \in \mathcal{A}_{0,1}^0$ and $A^\prime \in
\mathcal{A}_{1,3}^3,$ $A^{-1} A^\prime$ is $N_0$-equivariant. We
would construct $\psi \in \mathcal{A}$ s.t. $\psi(\bar{x}_0)=A$ and
$\psi(\bar{x}_3)=A^{-1} A^\prime.$ Define pointwise preclutching map
$\psi$ as
\begin{align*}
\psi(\bar{x}_0) &= A, \\
\psi(\bar{x}_1) &= a_0 \psi(\bar{x}_3) a_0^{-1}, \\
\psi(\bar{x}_2) &= a_0 \psi(\bar{x}_0) a_0^{-1}, \\
\psi(\bar{x}_3) &=A^{-1} A^\prime.
\end{align*}
To show $\psi \in \mathcal{A},$ we only have to show that
\begin{equation*}
\psi(\bar{x}_3) \psi(\bar{x}_2) \psi(\bar{x}_1) \psi(\bar{x}_0) =
\id
\end{equation*}
by Lemma \ref{lemma: equivalent condition for psi}. This is
equivalent to
\begin{equation}
\tag{*} \psi(\bar{x}_0) \psi(\bar{x}_3) \psi(\bar{x}_2)
\psi(\bar{x}_1) = \id.
\end{equation}
Here, $\psi(\bar{x}_0) \psi(\bar{x}_3) = A^\prime$ and
\begin{align*}
\psi(\bar{x}_2) \psi(\bar{x}_1) &= a_0 \psi(\bar{x}_0) a_0^{-1}
\cdot a_0 \psi(\bar{x}_3) a_0^{-1} \\
&= a_0 A^\prime a_0^{-1}.
\end{align*}
Since $A^\prime \in \mathcal{A}_{1,3}^3,$ $a_0 A^\prime a_0^{-1} =
A^{\prime -1}$ by Proposition \ref{proposition: psi for cyclic}.(1)
so that we obtain (*). Therefore, we obtain a proof of (1).

Next, we prove (2). Since $(N_2)_{\{ \bar{x}_1, \bar{x}_0 \}} =
N_0,$ we obtain $\mathcal{A}_{1, 0}^0 = \Iso_{N_2} ( F_{\bar{x}_0},
F_{\bar{x}_1} )$ by Lemma \ref{lemma: pointwise clutching for m=2
nontransitive}. By Schur's Lemma, $\mathcal{A}_{1, 0}$ is path
connected and $\pi_1 (\mathcal{A}_{1, 0}) \cong \Z.$ And, since
$(N_2)_{\{ \bar{x}_3, \bar{x}_1 \}} = N_2,$ simply connectedness of
each path component of $\mathcal{A}_{3, 1}$ is obtained by
Proposition \ref{proposition: psi for cyclic}. Therefore, we obtain
a proof of $\pi(\mathcal{A}_\psi) \cong \Z$ by (1).

Last, (3) follows from Lemma \ref{lemma: restricted pointwise
clutching}.
\end{proof}

\begin{proposition} \label{proposition: psi for trivial group}
Assume that $g \bar{x}_j = \bar{x}_j$ for each $g \in N_2, j \in
\Z_4,$ i.e. $N_2= N_0.$ Then,
\begin{enumerate}
  \item $\mathcal{A}$ is equal to
\begin{equation*}
\{ \psi ~ | ~ \psi(\bar{x}_j) \in \Iso_{N_0} (F_{\bar{x}_j},
F_{\bar{x}_{j+1}}) \text{ for each } j \in \Z_4, \text{ and } \psi^4
= \id \}.
\end{equation*}
        And, $p: \mathcal{A} \rightarrow \mathcal{A}_{j,j+1}^j \times
\mathcal{A}_{j^\prime,j^\prime+1}^{j^\prime} \times
\mathcal{A}_{j^{\prime \prime},j^{\prime \prime}+1}^{j^{\prime
\prime}}, \psi \mapsto \Big( \psi(\bar{x}_j),
\psi(\bar{x}_{j^\prime}), \psi(\bar{x}_{j^{\prime \prime}}) \Big)$
is bijective for $j \ne j^\prime \ne j^{\prime \prime}.$ Here,
$\mathcal{A}_{j, j+1}^j = \Iso_{N_0} (F_{\bar{x}_j},
F_{\bar{x}_{j+1}}).$
  \item If $F$ is $N_0$-isotypical, then $\pi_1 (\mathcal{A}) \cong
  \Z^3.$
\end{enumerate}
\end{proposition}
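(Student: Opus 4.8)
The plan is to apply Lemma \ref{lemma: equivalent condition for psi} and simply read off what its two conditions say once the $N_2$-action on $\bar{\mathbf{x}}$ is trivial. First I would observe that for any $g \in N_2 = N_0$ we have $g^{-1}\bar{x}_{j_1} = \bar{x}_{j_1}$ and $g\bar{x}_{j_1+1} = \bar{x}_{j_1+1}$, so in condition (2) of that lemma one is forced to take $j_2 = j_1$ and $j_3 = j_1 + 1$. Thus the exponent $j_3 - j_1$ equals $1$, so $\psi^{j_3-j_1}$ is just $\psi(\bar{x}_{j_1})$ on $F_{\bar{x}_{j_1}}$, and condition (2) collapses to
\begin{equation*}
\psi(\bar{x}_{j_1}) = g\,\psi(\bar{x}_{j_1})\,g^{-1} \quad \text{for all } g \in N_0,
\end{equation*}
which is exactly the requirement $\psi(\bar{x}_{j_1}) \in \Iso_{N_0}(F_{\bar{x}_{j_1}}, F_{\bar{x}_{j_1+1}})$. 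Together with condition (1), namely $\psi^4 = \id$, this yields the stated description of $\mathcal{A}$.

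For the bijectivity of $p$, I would first note that the four relations $\psi^4 = \id$ in $\Iso(F_{\bar{x}_j})$, $j \in \Z_4$, are all equivalent to the single cyclic relation $\psi(\bar{x}_3)\psi(\bar{x}_2)\psi(\bar{x}_1)\psi(\bar{x}_0) = \id$ on $F_{\bar{x}_0}$ (once one of them holds, the others follow by cyclically moving a single factor from one end of the word to the other). Hence an element of $\mathcal{A}$ is precisely a $4$-tuple of $N_0$-isomorphisms $\psi(\bar{x}_j) \in \Iso_{N_0}(F_{\bar{x}_j}, F_{\bar{x}_{j+1}})$ constrained by this one relation. Since each $\psi(\bar{x}_j)$ occurs exactly once in the cyclic word, I can solve the relation uniquely for the omitted factor $\psi(\bar{x}_k)$ in terms of the other three; being a composite of inverses of $N_0$-equivariant maps, the solution again lies in $\Iso_{N_0}(F_{\bar{x}_k}, F_{\bar{x}_{k+1}})$. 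This shows $p$ is a bijection. The identification $\mathcal{A}_{j,j+1}^j = \Iso_{N_0}(F_{\bar{x}_j}, F_{\bar{x}_{j+1}})$ follows from Lemma \ref{lemma: pointwise clutching for m=2 nontransitive} applied to the pair $\{\bar{x}_j, \bar{x}_{j+1}\}$, where $(N_2)_{\{\bar{x}_j,\bar{x}_{j+1}\}} = N_0 = N_1$.

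For part (2), I would upgrade the bijection $p$ to a homeomorphism: $p$ is continuous as a restriction of coordinate projections, and its inverse is continuous because reconstructing the omitted factor involves only composition and inversion of the three given ones. Thus $\mathcal{A}$ is homeomorphic to the product of the three spaces $\Iso_{N_0}(F_{\bar{x}_j}, F_{\bar{x}_{j+1}})$. When $F$ is $N_0$-isotypical, each fiber is of the form $W \otimes \C^n$ for a fixed irreducible $N_0$-representation $W$, so by Schur's Lemma $\Iso_{N_0}(F_{\bar{x}_j}, F_{\bar{x}_{j+1}}) \cong \GL(n, \C)$, which is path connected with $\pi_1 \cong \Z$. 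Therefore $\pi_1(\mathcal{A}) \cong \pi_1\big(\GL(n,\C)\big)^3 \cong \Z^3$.

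The steps are all fairly routine; the one point demanding genuine care is the reduction of condition (2) of Lemma \ref{lemma: equivalent condition for psi} to plain $N_0$-equivariance, i.e. checking that the trivial action forces $j_2 = j_1$ and $j_3 = j_1+1$ and hence exponent $1$. The only other point to verify carefully is that the inverse of $p$ is continuous and lands in the $N_0$-equivariant isomorphisms, since it is this that promotes the bijection in part (1) to the homeomorphism needed for the $\pi_1$ computation in part (2).
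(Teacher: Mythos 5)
Your proposal is correct and follows essentially the same route as the paper: the paper also identifies $\mathcal{A}_{j,j+1}^j = \Iso_{N_0}(F_{\bar{x}_j}, F_{\bar{x}_{j+1}})$ via Lemma \ref{lemma: pointwise clutching for m=2 nontransitive}, reads off the description of $\mathcal{A}$ from Lemma \ref{lemma: equivalent condition for psi}, observes that three of the four values are free with the fourth determined by $\psi^4=\id$, and then applies Schur's Lemma to get $\pi_1(\mathcal{A})\cong\Z^3$. You have merely spelled out the details (the collapse of condition (2) to $N_0$-equivariance, the cyclic-word argument, and the continuity of $p^{-1}$) that the paper leaves implicit.
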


\begin{proof}
Since $N_2$ fixes all $\bar{x}_j$'s, we have $N_2=N_0$ so that Lemma
\ref{lemma: pointwise clutching for m=2 nontransitive} gives
$\mathcal{A}_{j, j+1}^j = \Iso_{N_0} (F_{\bar{x}_j},
F_{\bar{x}_{j+1}}).$ This and Lemma \ref{lemma: equivalent condition
for psi} proves (1). That is, $\psi(\bar{x}_j)$ in
$\mathcal{A}_{j,j+1}^j = \Iso_{N_2} (F_{\bar{x}_j},
F_{\bar{x}_{j+1}})$ for three $j$'s are free and the remaining one
is restricted by $\psi^4 = \id.$

Schur's Lemma and (1) give a proof of (2).
\end{proof}

Since any $\psi$ in $\mathcal{A}$ glues all fibers of $F$ to obtain
a single vector space $F/\psi,$ $\psi$ might be considered to glue
each pair of fibers of $F.$ That is, $\psi$ determines the function
$\bar{\psi}$ defined on $\bar{\mathbf{x}} \times \bar{\mathbf{x}} -
\Delta$ sending a pair $(\bar{x}, \bar{x}^\prime)$ to the element
$\bar{\psi}(\bar{x}, \bar{x}^\prime)$ in $\Iso(F_{\bar{x}},
F_{\bar{x}^\prime})$ such that each $u$ in $F_{\bar{x}}$ is
identified with $\bar{\psi}(\bar{x}, \bar{x}^\prime) u$ in
$F_{\bar{x}^\prime}$ by $\psi,$ i.e. $\bar{\psi}(\bar{x},
\bar{x}^\prime)$ satisfies $p_\psi (u) = p_\psi (
\bar{\psi}(\bar{x}, \bar{x}^\prime) u )$ where $\Delta$ is the
diagonal. Call $\bar{\psi}$ the \textit{saturation} of $\psi.$ Since
the index $j$ is not used in defining $\bar{\psi},$ it is often
convenient to use $\bar{\psi}$ instead of $\psi.$ Denote by
$\bar{\mathcal{A}}$ the set $\{ \bar{\psi} |~ \psi \in \mathcal{A}
\},$ and call it the \textit{saturation} of $\mathcal{A}.$ And,
denote $F / \psi, p_\psi$ also by $F / \bar{\psi}, p_{\bar{\psi}},$
respectively.

\section{a lemma on fundamental groups}
\label{section: lemmas on fundamental group}

In this section, we recall two lemmas needed to calculate homotopy
of equivariant clutching maps from \cite{Ki}. One is just a
rewriting of Schur's Lemma. Another is on relative homotopy.

\begin{lemma} \label{lemma: reduce to smaller matrix}
For $\chi \in \Irr(H),$ let $W$ be a $\chi$-isotypical
$H$-representation. For the natural inclusion $\imath: \Iso_H (W)
\rightarrow \Iso (W),$ the map
\begin{equation*}
\imath_* : \pi_1 ( \Iso_H (W) ) \rightarrow \pi_1 ( \Iso (W) )
\end{equation*}
is equal to the multiplication by $\chi (id)$ up to sign between two
$\Z$'s.
\end{lemma}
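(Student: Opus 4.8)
The plan is to reduce the whole statement to the classical computation of $\pi_1(\GL_n(\C))$ via the determinant, after using $\chi$-isotypicality and Schur's Lemma to put both isometry groups in an explicit matrix form. First I would let $V_\chi$ denote the irreducible $H$-representation with character $\chi$, so that $\chi(\id) = \dim_\C V_\chi$; write $d = \chi(\id)$. Since $W$ is $\chi$-isotypical, I would fix an $H$-isomorphism $W \cong \C^m \otimes V_\chi$, where $H$ acts trivially on $\C^m$ and $m$ is the multiplicity, so that $\dim_\C W = md$. This decomposition is the only place where the hypothesis on $W$ enters.

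Next I would invoke Schur's Lemma in the form $\End_H(V_\chi) = \C \cdot \id_{V_\chi}$. This identifies $\End_H(W)$ with $M_m(\C) \otimes \id_{V_\chi}$, and hence gives a group isomorphism $\Iso_H(W) \cong \GL_m(\C)$ under which $A \in \GL_m(\C)$ corresponds to $A \otimes \id_{V_\chi}$. Simultaneously $\Iso(W) \cong \GL_{md}(\C)$ by the chosen basis of $W$. Under these two identifications the inclusion $\imath$ becomes precisely the homomorphism $\GL_m(\C) \to \GL_{md}(\C)$, $A \mapsto A \otimes \id_{V_\chi}$. Thus both source and target of $\imath_*$ are $\pi_1(\GL_n(\C)) \cong \Z$, where the isomorphism is realized by $\det : \GL_n(\C) \to \C^*$ (after the standard deformation retraction of $\GL_n(\C)$ onto $U(n)$ and the identification $\pi_1(U(n)) \cong \pi_1(U(1)) = \Z$).

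The computation of $\imath_*$ is then a one-line winding-number count. I would represent a generator of $\pi_1(\GL_m(\C))$ by the loop $t \mapsto \mathrm{diag}(e^{2\pi i t}, 1, \dots, 1)$, $t \in [0,1]$, whose determinant has winding number one. Applying $\imath$, the image loop is $t \mapsto \mathrm{diag}(e^{2\pi i t}, 1, \dots, 1) \otimes \id_{V_\chi}$, and tensoring with $\id_{V_\chi}$ repeats each eigenvalue $d$ times, so its determinant is $e^{2\pi i d t}$, of winding number $d = \chi(\id)$. Hence $\imath_*$ is multiplication by $\chi(\id)$, the sign ambiguity in the statement coming only from the freedom in choosing generators of the two copies of $\Z$. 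I do not expect a genuine obstacle here: the sole point requiring care is the bookkeeping of the tensor factor, namely that $\imath$ really is $A \mapsto A \otimes \id_{V_\chi}$ and that this multiplies eigenvalue multiplicities (and therefore the degree of $\det$) by exactly $d$.
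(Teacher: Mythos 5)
Your proof is correct and is essentially the intended argument: the paper states this lemma without proof (recalling it from \cite{Ki} and describing it as ``just a rewriting of Schur's Lemma''), and your route --- identifying $\Iso_H(W)\cong\GL_m(\C)$ via $W\cong\C^m\otimes V_\chi$ and Schur's Lemma, so that $\imath$ becomes $A\mapsto A\otimes\id_{V_\chi}$, and then computing the degree of $\det$ to see that winding numbers are multiplied by $d=\chi(\id)$ --- is exactly that rewriting. The one bookkeeping point you flag, $\det(A\otimes\id_{V_\chi})=(\det A)^{d}$, is right, so there is no gap.
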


Here, we recall a notation.
\begin{notation}
Let $X$ be a topological space. For two points $y_0$ and $y_1$ in
$X$ and a path $\gamma_0 : [0,1] \rightarrow X$ such that $\gamma_0
(0) = y_0$ and $\gamma_0 (1) = y_1,$ denote by $\overline{\gamma}$
the function defined as
\begin{equation*}
\overline{\gamma}_0 : \pi_1 (X, y_0) \longrightarrow \pi_1 (X, y_1),
\quad \sigma \mapsto \gamma_0^{-1}.\sigma.\gamma_0
\end{equation*}
for $\sigma \in \pi_1 (X, y_0).$
\end{notation}

%

\begin{lemma} \label{lemma: relative homotopy}
Let $X$ be a path connected topological space with an abelian $\pi_1
(X).$ Let $A$ and $B$ be path connected subspaces of $X.$ Also, let
$\imath_0$ and $\imath_1$ denote inclusions from $A$ and $B$ to $X,$
respectively. Pick two points $y_0 \in A$ and $y_1 \in B,$ and a
path $\gamma_0 : [0,1] \rightarrow X$ such that $\gamma_0 (0) = y_0$
and $\gamma_0 (1) = y_1.$ Then, we have an isomorphism
\begin{align*}
\Pi : \pi_1 (X, y_1) / \left\{ \overline{\gamma}_0
\left({\imath_0}_* \pi_1 (A, y_0) \right) +{\imath_1}_* \pi_1 (B,
y_1) \right\} & \longrightarrow [[0,1], 0, 1 ;
X, A, B], \\
[\sigma] & \longmapsto [\gamma_0.\sigma].
\end{align*}
\end{lemma}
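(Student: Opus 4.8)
The plan is to show that $\Pi$ is a well-defined bijection (the target carries no a priori group structure, so ``isomorphism'' I read as bijection) by the standard device of reading homotopies of triples off the boundary of a square, combined with the fact that prepending a loop lying in $A$ or appending a loop lying in $B$ does not change the class of a path in $[[0,1],0,1;X,A,B]$. Throughout I would work in the fundamental groupoid of $X$, writing concatenation multiplicatively as in the statement, and I would use the hypothesis that $\pi_1(X)$ is abelian to treat $N := \overline{\gamma}_0({\imath_0}_*\pi_1(A,y_0)) + {\imath_1}_*\pi_1(B,y_1)$ as a genuine subgroup of $\pi_1(X,y_1)$.

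First I would record the elementary \emph{sliding lemma}: if $a$ is a loop in $A$ based at $y_0$ and $b$ is a loop in $B$ based at $y_1$, then $\gamma_0.\sigma$, $\;a.(\gamma_0.\sigma)$ and $(\gamma_0.\sigma).b$ all represent the same element of $[[0,1],0,1;X,A,B]$, since the homotopy that gradually absorbs the prepended (resp. appended) loop keeps its moving endpoint inside $A$ (resp. $B$). Because $\gamma_0.\big(\overline{\gamma}_0(a).\sigma\big) = a.(\gamma_0.\sigma)$ in the groupoid, this shows that replacing $\sigma$ by $\overline{\gamma}_0(a).\sigma.b$ leaves $[\gamma_0.\sigma]$ unchanged; abelianness makes every element of $N$ of this form, so $\Pi$ descends to the quotient, and invariance under homotopy of $\sigma$ rel endpoints is immediate. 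For surjectivity I would take a map of triples $f$, view it as a path from $f(0)=:a\in A$ to $f(1)=:b\in B$, and use path-connectedness of $A$ and of $B$ to slide the two endpoints to $y_0$ and $y_1$ respectively (each sliding being a homotopy of triples), obtaining $[f]=[f']$ with $f'(0)=y_0$, $f'(1)=y_1$; then $\sigma:=\gamma_0^{-1}.f'$ is a loop at $y_1$ with $\gamma_0.\sigma\simeq f'$, so $\Pi([\sigma])=[f]$.

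The heart of the argument is injectivity. Suppose $\gamma_0.\sigma\simeq\gamma_0.\sigma'$ through a homotopy of triples $H:[0,1]\times[0,1]\to X$, so that $s\mapsto H(s,0)$ and $s\mapsto H(s,1)$ are the two paths, $\alpha:=\big(t\mapsto H(0,t)\big)$ is a loop in $A$ at $y_0$, and $\beta:=\big(t\mapsto H(1,t)\big)$ is a loop in $B$ at $y_1$. The filled square exhibits the boundary loop $(\gamma_0.\sigma).\beta.(\gamma_0.\sigma')^{-1}.\alpha^{-1}$ at $y_0$ as null-homotopic, i.e. $\alpha\simeq(\gamma_0.\sigma).\beta.(\gamma_0.\sigma')^{-1}$. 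Applying $\overline{\gamma}_0$ and cancelling $\gamma_0^{-1}.\gamma_0$ gives $\overline{\gamma}_0(\alpha)=\sigma.\beta.(\sigma')^{-1}$ in $\pi_1(X,y_1)$. Since $\overline{\gamma}_0(\alpha)\in\overline{\gamma}_0({\imath_0}_*\pi_1(A,y_0))\subseteq N$ and $\beta\in{\imath_1}_*\pi_1(B,y_1)\subseteq N$, commutativity yields $\sigma.(\sigma')^{-1}=\overline{\gamma}_0(\alpha).\beta^{-1}\in N$, whence $[\sigma]=[\sigma']$.

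I expect the only genuinely delicate step to be this last one: one must read the relation among $\sigma,\sigma',\alpha,\beta$ correctly off the four sides of the homotopy square, tracking orientations and basepoints, and then invoke abelianness to peel off $\beta$ and land inside $N$. The well-definedness and surjectivity steps are routine endpoint-sliding homotopies, and the abelian hypothesis is precisely what is needed both to make $N$ a subgroup and to perform the final cancellation.
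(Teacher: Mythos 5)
Your argument is correct. Note that the paper itself gives no proof of this lemma --- Section \ref{section: lemmas on fundamental group} merely recalls it from the earlier paper \cite{Ki} --- so there is nothing here to compare against; but your proof is the standard one and each step checks out: the endpoint-sliding homotopies give well-definedness and surjectivity, the boundary-of-the-square relation $(\gamma_0.\sigma).\beta \simeq \alpha.(\gamma_0.\sigma^\prime)$ rel endpoints gives $\overline{\gamma}_0(\alpha) = \sigma.\beta.\sigma^{\prime -1}$ in $\pi_1(X,y_1)$, and abelianness is used exactly where it must be, both to make $N$ a subgroup and to conclude $\sigma.\sigma^{\prime -1} = \overline{\gamma}_0(\alpha).\beta^{-1} \in N$. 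Reading ``isomorphism'' as ``bijection'' is also the right call, since the target set carries no a priori group structure and the paper only ever uses the statement to count path components (e.g.\ to identify $\pi_0(\Omega_2)$ with $\pi_1(\Iso_H(V_{\bar{f}^{-1}}, V_{\bar{f}^2}))$ in the proof of Theorem \ref{theorem: by isotropy and chern}).
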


\section{Equivariant clutching maps on one-dimensional
fundamental domain}
 \label{section: equivariant clutching maps}

In this section, we find conditions on a preclutching map
$\Phi_{\hat{D}_\rho}$ in $C^0 (\hat{D}_\rho, V_B)$ to guarantee that
$\Phi_{\hat{D}_\rho}$ be the restriction of an equivariant clutching
map. By using this, we show that $\Omega_{\hat{D}_\rho, (W_{d^i})_{i
\in I_\rho^+}}$ is nonempty for each $(W_{d^i})_{i \in I_\rho^+} \in
A_{G_\chi} (\R^2/\Lambda, \chi).$ For these, we define notations on
equivariant pointwise clutching maps with respect to the
$(G_\chi)_x$-bundle $\big( \res_{(G_\chi)_x}^{G_\chi} F_{V_B}
\big)|_{\pi^{-1}(x)}$ for each $\bar{x} \in | \lineL_\rho |$ and $x
= \pi(\bar{x}),$ and prove some lemmas on them. Then, we can apply
results of Section \ref{section: pointwise clutching map} in dealing
with $\Omega_{\hat{D}_\rho, (W_{d^i})_{i \in I_\rho^+}}.$ Concrete
calculation of homotopy of equivariant clutching maps for each case
is done from the next section.

Henceforth, assume that $R = \rho(G_\chi)$ for some $R$ in Table
\ref{table: finite group of aff}. First, we define the set
$\mathcal{A}_{\bar{x}}$ of equivariant pointwise clutching maps for
each $\bar{x}$ in $|\lineL_\rho|.$ For each $\bar{x}$ in
$\lineL_\rho$ and $x = \pi(\bar{x}),$ put $\bar{\mathbf{x}} =
\pi^{-1} (x) = \{ \bar{x}_j | j \in \Z_m \}$ for $m=j_\rho$ or 2
according to whether $\bar{x}$ is a vertex or not. Then, let
$\mathcal{A}_{\bar{x}}$ be the set of equivariant pointwise
clutching maps with respect to the $(G_\chi)_x$-bundle $\big(
\res_{(G_\chi)_x}^{G_\chi} F_{V_B} \big) |_{\bar{\mathbf{x}}}.$
Here, we need to explain for codomain of maps in
$\mathcal{A}_{\bar{x}}.$ For each $\bar{x} \in |\lineL_\rho|$ and
$\psi_{\bar{x}} \in \mathcal{A}_{\bar{x}},$
$\psi_{\bar{x}}(\bar{x}_j)$ is in
\begin{equation*}
\Iso \Big( (F_{V_B})_{\bar{x}_j}, (F_{V_B})_{\bar{x}_{j+1}} \Big)
\end{equation*}
for $j \in \Z_{j_\rho}$ or $\Z_2.$ If $\bar{x}_j \in |\bar{f}|$ and
$\bar{x}_{j+1} \in |\bar{f}^\prime|$ for some $\bar{f}$ and
$\bar{f}^\prime,$ then $\psi_{\bar{x}}(\bar{x}_j)$ is henceforth
regarded as in $\Iso \big( V_{\bar{f}}, V_{\bar{f}^\prime} \big).$
This is justified because we have fixed the trivialization $\big(
\res_{(G_\chi)_{b(\bar{f})}}^{G_\chi} F_{V_B} \big)
\big|_{|\bar{f}|} = |\bar{f}| \times V_{\bar{f}}$ for each face
$\bar{f} \in \lineK_\rho.$ We define one more set
$\mathcal{A}_{\bar{e}}$ of equivariant pointwise clutching maps for
each edge $\hat{e}$ and its images $\bar{e} = p_{\lineL}(\hat{e}),$
$|e|=\pi(|\bar{e}|).$ For each $\hat{x}$ in $|\hat{e}|$ and $\bar{x}
= p_{|\lineL|} (\hat{x}),$ put
\begin{equation*}
\bar{x}_0^\prime = \bar{x}, \quad \bar{x}_1^\prime = p_{|\lineL|}
(|c|(\hat{x})), \quad \text{and} \quad \bar{\mathbf{x}}^\prime=\{
\bar{x}_j^\prime | j \in \Z_2 \}.
\end{equation*}
Consider the set $\mathcal{A}_{\hat{x}}$ of equivariant pointwise
clutching maps with respect to the $(G_\chi)_{|e|}$-bundle $\big(
\res_{(G_\chi)_{|e|}}^{G_\chi} F_{V_B} \big)
|_{\bar{\mathbf{x}}^\prime}$ where $(G_\chi)_{|e|}$ is the subgroup
of $G_\chi$ fixing $|e|.$ As for $\mathcal{A}_{\bar{x}},$ each map
$\psi_{\hat{x}}$ in $\mathcal{A}_{\hat{x}}$ is considered to satisfy
\begin{equation*}
\psi_{\hat{x}} (\bar{x}_0^\prime) \in \Iso(V_{\bar{f}},
V_{\bar{f}^\prime}) \quad \text{and} \quad \psi_{\hat{x}}
(\bar{x}_1^\prime) \in \Iso(V_{\bar{f}^\prime}, V_{\bar{f}})
\end{equation*}
when $\bar{x}_0^\prime \in |\bar{f}|$ and $\bar{x}_1^\prime \in
|\bar{f}^\prime|$ for some $\bar{f}$ and $\bar{f}^\prime.$ Here,
observe that $\mathcal{A}_{\hat{x}}$ is in one-to-one correspondence
with $\mathcal{A}_{\hat{y}}$ for any two $\hat{x}, \hat{y}$ in
$|\hat{e}|,$ i.e. an element $\psi_{\hat{x}}$ in
$\mathcal{A}_{\hat{x}}$ and an element $\psi_{\hat{y}}$ in
$\mathcal{A}_{\hat{y}}$ are corresponded to each other when
$\psi_{\hat{x}} (\bar{x}_j^\prime) = \psi_{\hat{y}}
(\bar{y}_j^\prime)$ for $j \in \Z_2.$ This is because the
$(G_\chi)_{|e|}$-bundle $\big( \res_{(G_\chi)_{|e|}}^{G_\chi}
F_{V_B} \big) |_{\bar{\mathbf{x}}^\prime}$ is all isomorphic
regardless of $\hat{x} \in |\hat{e}|.$ It is very useful to identify
all $\mathcal{A}_{\hat{x}}$'s for $\hat{x} \in |\hat{e}|$ in this
way, so we denote the identified set by $\mathcal{A}_{\bar{e}}.$
That is, each element $\psi_{\bar{e}}$ in $\mathcal{A}_{\bar{e}}$ is
considered as contained to $\mathcal{A}_{\hat{x}}$ for any $\hat{x}
\in \hat{e}$ according to the context.

Next, we would define a $G_\chi$-action on saturations. First, we
define notations on saturations. For each $x=\pi(\bar{x})$ and
$\psi_{\bar{x}} \in \mathcal{A}_{\bar{x}},$ denote saturations of
$\mathcal{A}_{\bar{x}}$ and $\psi_{\bar{x}}$ by
$\bar{\mathcal{A}}_x$ and $\bar{\psi}_x,$ respectively. Since index
set is irrelevant in defining $\bar{\mathcal{A}}_x,$ the saturation
depends not on $\bar{x}$ but on $x.$ This is why we use the
subscript $x$ instead of $\bar{x}.$ For any $g \in G_\chi,$ the
function $g \cdot \bar{\psi}_x$ is contained in
$\bar{\mathcal{A}}_{g x}$ where $g \cdot \bar{\psi}_x$ is defined as
\begin{equation*}
(g \cdot \bar{\psi}_x) ( \bar{y}, \bar{y}^\prime ) = g \bar{\psi}_x
(g^{-1} \bar{y}, g^{-1} \bar{y}^\prime) g^{-1}
\end{equation*}
for any $\bar{y} \ne \bar{y}^\prime$ in $\pi^{-1}(g x).$ That is, we
obtain $g \bar{\mathcal{A}}_x = \bar{\mathcal{A}}_{gx}.$ Especially,
it is easily shown that $g \cdot \bar{\psi}_x = \bar{\psi}_x$ for
each $g \in (G_\chi)_x$ by equivariance of $\psi_{\bar{x}}$ so that
$\bar{\mathcal{A}}_{gx} = \bar{\mathcal{A}}_x.$ From this, it is
noted that if $g^\prime x = gx$ for some $g^\prime, g \in G_\chi,$
then $g^\prime \cdot \bar{\psi}_x = g \cdot \bar{\psi}_x$ because
$g^\prime = g (g^{-1} g^\prime)$ with $g^{-1} g^\prime \in
(G_\chi)_x,$ i.e. $g \cdot \bar{\psi}_x$ is determined by not $g$
but $gx.$ We have defined a $G_\chi$-action on saturations. Since
$\mathcal{A}_{\bar{x}}$ and $\bar{\mathcal{A}}_x$ are in one-to-one
correspondence, $\mathcal{A}_{\bar{x}}$'s also deliver the
$G_\chi$-action induced from $G_\chi$-actions on
$\bar{\mathcal{A}}_x$'s, i.e. $g \cdot \psi_{\bar{x}} \in
\mathcal{A}_{g \bar{x}}$ for each $\psi_{\bar{x}} \in \mathcal{A}_{
\bar{x}}$ is defined and $\mathcal{A}_{g \bar{x}} = g
\mathcal{A}_{\bar{x}}.$ Here, we prove a useful lemma on this
action. Before it, we state an elementary fact.

\begin{lemma}
 \label{lemma: elementary lemma on isotropy representation}
Let $G$ be a compact Lie group acting on a topological space $X$.
And, let $E$ be an equivariant vector bundle over $X.$ Then, $^g E_x
\cong E_{g x}$ for each $g \in G$ and $x \in X.$ Also,
\begin{equation*}
\res_{G_x \cap G_{x^\prime}}^{G_x} E_x \cong \res_{G_x \cap
G_{x^\prime}}^{G_{x^\prime}} E_{x^\prime}
\end{equation*}
for any two points $x, x^\prime$ in $X$ which are in the same
component of the fixed set $X^{G_x \cap G_{x^\prime}}.$
\end{lemma}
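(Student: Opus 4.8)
The plan is to handle the two assertions separately: the first is essentially a restatement of equivariance of $E$, while the second rests on the local constancy of the isotropy representation along a fixed set. For the first statement, I would use the bundle map induced by $g$. The equivariant structure on $E$ gives, for each $g \in G$, a linear isomorphism $g : E_x \to E_{gx}$ covering $x \mapsto gx$, and these satisfy the intertwining relation $h \cdot (g u) = (hg) \cdot u$ for $u \in E_x$. Noting that $G_{gx} = g\, G_x\, g^{-1}$, for $h \in G_{gx}$ we have $g^{-1} h g \in G_x$, so $h (g u) = g\bigl( (g^{-1} h g) u \bigr)$. This says exactly that $g : E_x \to E_{gx}$ is an isomorphism of $G_{gx}$-representations from $^g E_x$ (the space $E_x$ on which $h \in G_{gx}$ acts via $g^{-1} h g$) onto $E_{gx}$, which gives $^g E_x \cong E_{gx}$.

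For the second statement, I would set $K = G_x \cap G_{x^\prime}$ and restrict attention to the fixed subspace $X^K$. Every $y \in X^K$ satisfies $K \subset G_y$, so the fiber $E_y$ carries the $K$-representation $\res_K^{G_y} E_y$; equivalently, $E|_{X^K}$ is a $K$-equivariant vector bundle over $X^K$ with $K$ acting trivially on the base. The crucial observation is that the isomorphism class of this fiber representation is locally constant on $X^K$: near any $y \in X^K$, the equivariant local triviality of $E$ provides a $K$-invariant neighbourhood $U$ together with a $K$-equivariant trivialization $E|_U \cong U \times E_y$, under which every nearby fiber $E_{y^\prime}$ is isomorphic to $E_y$ as a $K$-representation. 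Hence the subset of $X^K$ on which the fiber is $K$-isomorphic to $\res_K^{G_x} E_x$ is open with open complement, so it is a union of connected components. Since $x$ and $x^\prime$ lie in the same component of $X^K$ by hypothesis, I conclude $\res_K^{G_x} E_x \cong \res_K^{G_{x^\prime}} E_{x^\prime}$.

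The hard part will be the local constancy step, which is where the full strength of the equivariant vector bundle hypothesis is needed: one must produce the $K$-equivariant local trivialization of $E$ near a fixed point $y$, that is, a $K$-invariant slice neighbourhood over which $E$ is equivariantly a product $U \times E_y$. Once this is granted, the clopen argument is entirely routine, and for the spaces and compact Lie group actions treated in this paper such equivariant trivializations are available. The only further care required is to confirm that \emph{component} in the hypothesis is taken in the sense (connected, hence the relevant clopen argument applies) matching the well-behaved fixed sets $X^K$ that occur in our setting.
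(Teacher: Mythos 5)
The paper offers no proof of this lemma at all---it is introduced with the phrase ``we state an elementary fact'' and used immediately afterwards in the proof of Lemma \ref{lemma: conjugate pointwise clutching}---so there is no argument of the author's to compare yours against. Your proof is correct and fills that gap. The first half is exactly the unwinding of the definition of $^g W$ given in the paper: $h(gu)=g\bigl((g^{-1}hg)u\bigr)$ shows the bundle map $g:E_x\rightarrow E_{gx}$ intertwines the twisted $G_{gx}$-action on $E_x$ with the genuine one on $E_{gx}$. The second half is the standard clopen argument, and you correctly identify the one nontrivial input: local constancy of the $K$-isomorphism type of the fibers over $X^K$. One remark: you can get that local constancy slightly more cheaply than by producing a full $K$-equivariant trivialization. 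Since $K$ is compact and acts trivially on $X^K$, the bundle $E|_{X^K}$ splits into its $\chi$-isotypical subbundles for $\chi\in\Irr(K)$ (averaging the projections over $K$ shows these are genuine subbundles), and the rank of a vector bundle is locally constant; the multiset of multiplicities then determines $\res_K^{G_y}E_y$ up to isomorphism. This avoids having to invoke equivariant local triviality as a separate black box, though for the affine actions on $\R^2/\Lambda$ treated in this paper either route is available.
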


Here, we explain for the subscript $^g.$

\begin{definition}
Let $K$ be a closed subgroup of a compact Lie group $G.$  For a
given element $g \in G$ and $W \in \Rep(K),$ the $g K
g^{-1}$-representation $^g W$ is defined to be $V$ with the new $g K
g^{-1}$-action
\begin{equation*}
g K g^{-1} \times W \rightarrow W, \quad (k, u) \mapsto g^{-1} k g u
\end{equation*}
for $k \in g K g^{-1},$ $u \in W.$
\end{definition}

\begin{lemma} \label{lemma: conjugate pointwise clutching}
For each $\bar{x} \in |\lineL_\rho|,$ $x=\pi(\bar{x}),$ $g \in
G_\chi,$ $\psi_{\bar{x}} \in \mathcal{A}_{\bar{x}},$ we have an
$(G_\chi)_{gx}$-isomorphism
\begin{equation*}
\big( \res_{(G_\chi)_{gx}}^{G_\chi} F_{V_B} \big) |_{g
\bar{\mathbf{x}}}
 \Big/ g \cdot \bar{\psi}_x \cong ~ ^g \Big( ~
\big(\res_{(G_\chi)_x}^{G_\chi} F_{V_B} \big) |_{\bar{\mathbf{x}}}
\big/ \bar{\psi}_x ~ \Big).
\end{equation*}
\end{lemma}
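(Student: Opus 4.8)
The plan is to show that the fiberwise action of $g$ on the equivariant bundle $F_{V_B}$ descends to the asserted isomorphism of quotients. Write $F = \big(\res_{(G_\chi)_x}^{G_\chi} F_{V_B}\big)|_{\bar{\mathbf{x}}}$ and let $g$ also denote the linear isomorphisms $(F_{V_B})_{\bar{x}_j} \to (F_{V_B})_{g\bar{x}_j}$ coming from the $G_\chi$-action; these assemble into a linear isomorphism of $F$ onto $\big(\res_{(G_\chi)_{gx}}^{G_\chi} F_{V_B}\big)|_{g\bar{\mathbf{x}}}$. First I would check that this map carries the identifications defining $F/\bar\psi_x$ to those defining the quotient by $g\cdot\bar\psi_x$, so that it descends to a linear isomorphism $\bar g$ of the two quotient vector spaces. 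Recall that $g\cdot\bar\psi_x$ lies in $\bar{\mathcal{A}}_{gx}$ by the discussion preceding the statement, so the target quotient does carry a $(G_\chi)_{gx}$-representation.

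The key computation is this. In $F/\bar\psi_x$ a vector $u \in (F_{V_B})_{\bar{x}_j}$ is identified with $\bar\psi_x(\bar{x}_j, \bar{x}_{j'})\,u$. Applying $g$ and invoking the defining formula $(g\cdot\bar\psi_x)(g\bar{y}, g\bar{y}') = g\,\bar\psi_x(\bar{y}, \bar{y}')\,g^{-1}$, one finds $(g\cdot\bar\psi_x)(g\bar{x}_j, g\bar{x}_{j'})(gu) = g\,\bar\psi_x(\bar{x}_j, \bar{x}_{j'})\,u$, which is precisely the $g$-image of the vector identified with $u$. Hence $g$ respects all the gluings and $\bar g$ is well defined; since $g$ is a fiberwise isomorphism with inverse inducing $\overline{g^{-1}}$, the map $\bar g$ is a linear isomorphism.

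It then remains to verify that $\bar g$ intertwines the conjugate action ${}^g(-)$ on $F/\bar\psi_x$ with the natural $(G_\chi)_{gx}$-action on the target. Fix $k \in (G_\chi)_{gx} = g(G_\chi)_x g^{-1}$ and set $h = g^{-1}kg \in (G_\chi)_x$; by the definition of ${}^g(-)$, the element $k$ acts on $F/\bar\psi_x$ through $h$ via the original $(G_\chi)_x$-structure. Both quotient maps $p_{\bar\psi_x}$ and $p_{g\cdot\bar\psi_x}$ are equivariant by Lemma \ref{lemma: equivalent condition for psi}, so I may compute actions on representatives. For $w = p_{\bar\psi_x}(u)$ with $u \in (F_{V_B})_{\bar{x}_j}$ this gives $\bar g(k\cdot w) = \bar g\big(p_{\bar\psi_x}(hu)\big) = p_{g\cdot\bar\psi_x}(ghu)$, whereas $k\cdot\bar g(w) = k\cdot p_{g\cdot\bar\psi_x}(gu) = p_{g\cdot\bar\psi_x}(kgu)$, and the identity $ghu = g(g^{-1}kg)u = kgu$ equates the two.

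The argument is essentially bookkeeping, and I expect the only delicate point to be keeping the three actions in play straight: the conjugation twist built into $g\cdot\bar\psi_x$, the conjugate representation ${}^g(-)$ on the source, and the genuine $(G_\chi)_{gx}$-action on the target. Making sure the factors of $g$ and $g^{-1}$ cancel consistently across these is the whole content, and I anticipate no essential obstacle beyond this compatibility check.
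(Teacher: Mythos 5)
Your proof is correct, and it takes a genuinely more direct route than the paper's. You descend the fiberwise action of $g$ on $F_{V_B}$ to the two quotients: the computation $(g\cdot\bar{\psi}_x)(g\bar{x}_j, g\bar{x}_{j'})(gu) = g\,\bar{\psi}_x(\bar{x}_j,\bar{x}_{j'})\,u$ shows that the identifications are respected, so $\bar{g}$ is a well-defined linear isomorphism, and the cancellation $g(g^{-1}kg)u = kgu$ gives the intertwining of the conjugate action ${}^{g}(-)$ with the genuine $(G_\chi)_{gx}$-action. The paper instead forms the group $L = \langle g, (G_\chi)_x\rangle$, builds an $L$-equivariant vector bundle $\mathcal{F}$ over the finite orbit $Lx$ whose fiber at $g^k x$ is the quotient by $g^k\cdot\bar{\psi}_x$, spends most of the proof verifying that the $L$-action defined through the quotient maps is well defined, and then invokes the elementary fact of Lemma \ref{lemma: elementary lemma on isotropy representation} that ${}^{g}E_x \cong E_{gx}$ for an equivariant bundle. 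Unwinding that well-definedness check, the crucial cancellation is the same as yours; the bundle-over-the-orbit formulation merely packages all the conjugates $g^k\cdot\bar{\psi}_x$ at once and offloads the final identification to a standard lemma, whereas your version avoids that machinery at the cost of checking the equivariance of $\bar{g}$ by hand, which you do correctly. The two prerequisites you lean on --- that $g\cdot\bar{\psi}_x$ lies in $\bar{\mathcal{A}}_{gx}$, so the target quotient carries a $(G_\chi)_{gx}$-structure with equivariant quotient map by Lemma \ref{lemma: equivalent condition for psi} --- are exactly what the paper establishes in the paragraph preceding the statement, so nothing is missing.
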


\begin{proof}
Put $L= \langle g, (G_\chi)_x \rangle,$ and let $k_0 \in \N$ be the
smallest natural number satisfying $g^{k_0} \in (G_\chi)_x$ where
such a number exists because $R = \rho(G_\chi)$ is finite. To prove
this lemma by Lemma \ref{lemma: elementary lemma on isotropy
representation}, we would construct an $L$-bundle $\mathcal{F}$ over
the orbit $L x = \{ x, gx, \cdots, g^{k_0-1}x \}.$ Put
$\bar{\mathcal{F}} = \big( \res_L^{G_\chi} F_{V_B} \big) |_{L
\bar{\mathbf{x}}}$ over $L \bar{\mathbf{x}},$ and consider the
inequivariant $\mathcal{F}$ over the orbit $L x$ whose fiber
$\mathcal{F}_{g^k x}$ at $g^k x$ is equal to $\bar{\mathcal{F}}
|_{g^k \bar{\mathbf{x}}} \big/ ( g^k \cdot \bar{\psi}_x )$ for $k =
0, \cdots, k_0-1.$ And, let $P : \bar{\mathcal{F}} \rightarrow
\mathcal{F}$ be the map such that the restriction of $P$ to
$\bar{\mathcal{F}} |_{g^k \bar{\mathbf{x}}}$ is equal to $p_{g^k
\cdot \bar{\psi}_x}$ where $p_{g^k \cdot \bar{\psi}_x} :
\bar{\mathcal{F}} |_{g^k \bar{\mathbf{x}}} \rightarrow
\bar{\mathcal{F}} |_{g^k \bar{\mathbf{x}}} \big/ ( g^k \cdot
\bar{\psi}_x )$ is the quotient map of (\ref{figure: diagram of
equivariant pointwise gluing}). Then, we would define an $L$-action
on $\mathcal{F}$ as $l u = P( l \bar{u} )$ for each $l \in L, u \in
\mathcal{F},$ and any $\bar{u} \in P^{-1}(u)$ so that $P$ becomes
$L$-equivariant. As long as this is well defined, it is easily shown
that it becomes an action because it is defined by the $L$-action on
$\bar{\mathcal{F}}$ through $P.$ So, we would prove well definedness
of the action. For this, it suffices to show $P(l \bar{u}) = P(l
\bar{u}^\prime)$ for each $l \in L$ and each $\bar{u},
\bar{u}^\prime$ in $\bar{\mathcal{F}}$ satisfying $P(\bar{u}) =
P(\bar{u}^\prime).$ If $\bar{u} \in (F_{V_B})_{g^k \bar{x}_j}$ and
$\bar{u}^\prime \in (F_{V_B})_{g^k \bar{x}_{j^\prime}}$ for some $j,
j^\prime,$ then $P(\bar{u}) = P(\bar{u}^\prime)$ is written as
\begin{equation}
\tag{*} (g^k \cdot \bar{\psi}_x) ( g^k \bar{x}_j, g^k
\bar{x}_{j^\prime} ) \bar{u} = \bar{u}^\prime.
\end{equation}
Note that $l \bar{u} \in (F_{V_B})_{l g^k \bar{x}_j}$ and $l
\bar{u}^\prime \in (F_{V_B})_{l g^k \bar{x}_{j^\prime}}.$ And, put
$l=g^{k^\prime} l^\prime$ with $l^\prime \in L_{g^k x}$ and some
integer $k^\prime$ so that $l g^k \bar{\mathbf{x}} = g^{k+k^\prime}
\bar{\mathbf{x}}$ because $l^\prime$ fixes $g^k \bar{\mathbf{x}}.$
Remembering that the restriction of $P$ to $\bar{\mathcal{F}} |_{l
g^k \bar{\mathbf{x}}} = \bar{\mathcal{F}} |_{g^{k+k^\prime}
\bar{\mathbf{x}}}$ is equal to $p_{g^{k+k^\prime} \cdot
\bar{\psi}_x},$ $P(l \bar{u}) = P(l \bar{u}^\prime)$ is shown as
\begin{align*}
& (g^{k+k^\prime} \cdot \bar{\psi}_x)
( lg^k \bar{x}_j, lg^k \bar{x}_{j^\prime} ) l \bar{u}   \\
=& g^{k^\prime} (g^k \cdot \bar{\psi}_x)
( g^{-k^\prime} lg^k \bar{x}_j, g^{-k^\prime} lg^k \bar{x}_{j^\prime} )
g^{-k^\prime} l \bar{u}   \\
=& g^{k^\prime} l^\prime l^{\prime -1} (g^k \cdot \bar{\psi}_x) (
l^\prime g^k \bar{x}_j, l^\prime g^k \bar{x}_{j^\prime} )
l^\prime \bar{u}   \\
=& g^{k^\prime} l^\prime (l^{\prime -1} g^k \cdot \bar{\psi}_x) (
g^k \bar{x}_j, g^k \bar{x}_{j^\prime} )
\bar{u}   \\
=& l (g^k \cdot \bar{\psi}_x) ( g^k \bar{x}_j, g^k
\bar{x}_{j^\prime} )
\bar{u}   \\
=& l \bar{u}^\prime
\end{align*}
where we use (*) in the last line. So, we obtain well definedness of
$L$-action on $\mathcal{F}.$ By definition, isotropy representations
$\mathcal{F}_x$ and $\mathcal{F}_{gx}$ are equal to representations
\begin{equation*}
\big(\res_{(G_\chi)_x}^{G_\chi} F_{V_B} \big) |_{\bar{\mathbf{x}}}
\big/ \bar{\psi}_x \quad \text{and} \quad \big(
\res_{(G_\chi)_{gx}}^{G_\chi} F_{V_B} \big) |_{g \bar{\mathbf{x}}}
 \Big/ g \cdot \bar{\psi}_x,
\end{equation*}
respectively. Then, the lemma follows from Lemma \ref{lemma:
elementary lemma on isotropy representation}.
\end{proof}

To investigate $\Omega_{\hat{D}_\rho, (W_{d^i})_{i \in I_\rho^+}},$
we need to understand $\mathcal{A}_{\bar{x}}$ precisely for each
$\bar{x} \in |\lineL_\rho|.$ For this, we need prove a basic lemma
on relations between $\mathcal{A}_{\bar{e}^i}$ and
$\mathcal{A}_{\bar{x}}$ with $\bar{x} = \bar{v}^i$ or
$\bar{v}^{i+1}.$ Also, we prove lemmas on evaluation of equivariant
pointwise clutching maps.

\begin{lemma}
 \label{lemma: inclusions of A_x's}
Put $\bar{\mathbf{v}}^k = \pi^{-1}(v^k) = \{ \bar{x}_j ~ | ~
\bar{x}_j = \bar{v}_j^k \text{ for } j \in \Z_{j_\rho} \}$ for $k=i,
i+1.$ And, put $\bar{\mathbf{v}}^{\prime i} = \{ \bar{v}_0^i,
\bar{v}_1^i \}$ and $\bar{\mathbf{v}}^{\prime i+1} = \{
\bar{v}_0^{i+1}, \bar{v}_{-1}^{i+1} \}.$
\begin{enumerate}
  \item $\mathcal{A}_{\bar{x}} \subset \mathcal{A}_{\bar{e}^i}$
  for each interior $\bar{x}$ in $|\bar{e}^i|.$
  \item $\mathcal{A}_{\bar{e}^i} = \mathcal{A}_{\bar{x}}$
  for each interior $\bar{x} \ne b(\bar{e}^i)$ in $|\bar{e}^i|.$
  Moreover, $\mathcal{A}_{\bar{e}^i} = \mathcal{A}_{b(\bar{e}^i)}$
  if $(G_\chi)_{b(e^i)} = (G_\chi)_x$ for $x=\pi(\bar{x}).$
  \item $\mathcal{A}_{\bar{v}^i}^0 \subset
  \mathcal{A}_{\bar{e}^i}^0$ and $\mathcal{A}_{\bar{v}^{i+1}}^{j_\rho-1} \subset
  \mathcal{A}_{\bar{e}^i}^1.$
  \item For each $\psi_{\bar{v}^i}$ in $\mathcal{A}_{\bar{v}^i},$
  we have $\psi_{\bar{v}^i} (\bar{v}_0^i) = \psi_{\bar{e}^i} (\bar{v}_0^i)$
  for the unique $\psi_{\bar{e}^i} \in \mathcal{A}_{\bar{e}^i},$ and
  \begin{equation*}
  \res_{(G_\chi)_{|e^i|}}^{(G_\chi)_{v^i}}
  \big\{ \big( \res_{(G_\chi)_{v^i}}^{G_\chi}
  F_{V_B} \big) |_{\bar{\mathbf{v}}^i}
  / \psi_{\bar{v}^i} \big\}
  \cong \big( \res_{(G_\chi)_{|e^i|}}^{G_\chi} F_{V_B} \big) |_{\bar{\mathbf{v}}^{\prime i}} /
  \psi_{\bar{e}^i}.
  \end{equation*}
  \item For each $\psi_{\bar{v}^{i+1}}$ in $\mathcal{A}_{\bar{v}^{i+1}},$
  we have $\psi_{\bar{v}^{i+1}} (\bar{v}_{-1}^{i+1}) =
  \psi_{\bar{e}^i} (\bar{v}_{-1}^{i+1})$
  for the unique $\psi_{\bar{e}^i} \in \mathcal{A}_{\bar{e}^i},$ and
  \begin{equation*}
  \res_{(G_\chi)_{|e^i|}}^{(G_\chi)_{v^{i+1}}}
  \big\{ \big( \res_{(G_\chi)_{v^{i+1}}}^{G_\chi} F_{V_B} \big) |_{\bar{\mathbf{v}}^{i+1}}
  / \psi_{\bar{v}^{i+1}} \big\}
  \cong \big( \res_{(G_\chi)_{|e^i|}}^{G_\chi} F_{V_B} \big) |_{\bar{\mathbf{v}}^{\prime i+1}} /
  \psi_{\bar{e}^i}.
  \end{equation*}
\end{enumerate}
\end{lemma}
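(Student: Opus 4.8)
The plan is to derive all five statements from the isotropy computations of Section~\ref{section: equivariant simplicial complex} together with the restriction lemma for pointwise clutching maps (Lemma~\ref{lemma: restricted pointwise clutching}) and the $m=2$ case of Lemma~\ref{lemma: pointwise clutching for m=2 nontransitive}. The organizing observation is that every set $\mathcal{A}_{\bar{x}}$ (for $\bar{x}$ an interior point of an edge) and every edge set $\mathcal{A}_{\bar{e}^i}$ occurring here is a set of equivariant pointwise clutching maps on a \emph{two-point} fiber, and that shrinking the acting group only relaxes condition (2) of Lemma~\ref{lemma: equivalent condition for psi}, hence enlarges the set. Thus each statement reduces to a subgroup inclusion plus, for (4)--(5), a single application of the representation part of Lemma~\ref{lemma: restricted pointwise clutching}.

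First I would settle (1) and (2). For an interior point $\bar{x}\ne b(\bar{e}^i)$ of $|\bar{e}^i|$, Lemma~\ref{lemma: isotropy at a point in an edge} gives $R_x=R_{|e^i|}$, hence $(G_\chi)_x=(G_\chi)_{|e^i|}$; since $\pi^{-1}(x)=\{\bar{x}_0,\bar{x}_1\}$ is exactly the two-point set and bundle used to define $\mathcal{A}_{\bar{e}^i}$, the two sets of clutching maps are defined from identical data and coincide, which is the equality in (2). At the barycenter one has only $(G_\chi)_{|e^i|}\subset(G_\chi)_{b(e^i)}$, so an equivariant pointwise clutching map with respect to the larger group $(G_\chi)_{b(e^i)}$ is a fortiori one with respect to $(G_\chi)_{|e^i|}$; this gives $\mathcal{A}_{b(\bar{e}^i)}\subset\mathcal{A}_{\bar{e}^i}$, with equality exactly when the two groups agree, the ``moreover'' clause. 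Together these two cases prove (1).

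The substance is in (3)--(5), and the hard part will be the subgroup inclusion $(G_\chi)_{|e^i|}\subset(G_\chi)_{\{\bar{v}_0^i,\bar{v}_1^i\}}$ and its analogue at $\bar{v}^{i+1}$. I would prove it geometrically: an element $g\in(G_\chi)_{|e^i|}$ fixes $|e^i|$ pointwise, hence fixes $b(e^i)$ and permutes the two-point set $\pi^{-1}(b(e^i))=\{b(\bar{e}^i),b(c(\bar{e}^i))\}$; being simplicial it therefore preserves the pair of edges $\{\bar{e}^i,c(\bar{e}^i)\}$, and since it also fixes $v^i$ it preserves the pair of their endpoints over $v^i$, namely $\{\bar{v}_0^i,\bar{v}_1^i\}$. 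Granting this, (3) follows by feeding $\psi_{\bar{v}^i}\in\mathcal{A}_{\bar{v}^i}$ through the restriction map $\res_{0,1}$ of Lemma~\ref{lemma: restricted pointwise clutching}: the image is an equivariant pointwise clutching map on $\{\bar{v}_0^i,\bar{v}_1^i\}$ with respect to $(G_\chi)_{\{\bar{v}_0^i,\bar{v}_1^i\}}$, which by the inclusion just proved is in particular an element of $\mathcal{A}_{\bar{e}^i}$; and since $\res_{0,1}(\psi_{\bar{v}^i})(\bar{v}_0^i)=\psi_{\bar{v}^i}(\bar{v}_0^i)$, the value at index $0$ is unchanged, giving $\mathcal{A}_{\bar{v}^i}^0\subset\mathcal{A}_{\bar{e}^i}^0$. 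The inclusion $\mathcal{A}_{\bar{v}^{i+1}}^{j_\rho-1}\subset\mathcal{A}_{\bar{e}^i}^1$ is identical, using $\res_{j_\rho-1,0}$ on the pair $\{\bar{v}_{-1}^{i+1},\bar{v}_0^{i+1}\}$ and noting that $\bar{v}_{-1}^{i+1}$ is the index-$1$ point of the edge datum at this endpoint.

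Finally, for (4) and (5) I would apply the representation statement of Lemma~\ref{lemma: restricted pointwise clutching} with $N_2=(G_\chi)_{v^i}$ and indices $0,1$, obtaining
\begin{equation*}
\res_{(G_\chi)_{\{\bar{v}_0^i,\bar{v}_1^i\}}}^{(G_\chi)_{v^i}}\!\big(\,(\res_{(G_\chi)_{v^i}}^{G_\chi}F_{V_B})|_{\bar{\mathbf{v}}^i}/\psi_{\bar{v}^i}\,\big)\cong(\res_{(G_\chi)_{\{\bar{v}_0^i,\bar{v}_1^i\}}}^{G_\chi}F_{V_B})|_{\bar{\mathbf{v}}^{\prime i}}/\res_{0,1}(\psi_{\bar{v}^i}),
\end{equation*}
and then restricting both sides further to $(G_\chi)_{|e^i|}$ produces the left-hand side of (4). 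It remains to identify $\res_{0,1}(\psi_{\bar{v}^i})$, read as a clutching map for $(G_\chi)_{|e^i|}$, with $\psi_{\bar{e}^i}$: both lie in $\mathcal{A}_{\bar{e}^i}$ and agree at index $0$ (the common value being $\psi_{\bar{v}^i}(\bar{v}_0^i)$), so by the $m=2$ homeomorphism $\mathcal{A}_{\bar{e}^i}\cong\mathcal{A}_{\bar{e}^i}^0$ of Lemma~\ref{lemma: pointwise clutching for m=2 nontransitive} they coincide; this yields both the uniqueness of $\psi_{\bar{e}^i}$ and the asserted isomorphism. Part (5) is the mirror argument at $\bar{v}^{i+1}$ with indices $j_\rho-1,0$. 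I expect the only remaining care to be bookkeeping of the cyclic indices and verifying that $\res$ preserves the tracked value, the geometric inclusion above being the single genuine input.
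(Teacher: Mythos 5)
Your proposal is correct and follows essentially the same route as the paper's own (very terse) proof: parts (1)--(2) from the inclusion/equality of $(G_\chi)_{|e^i|}$ in $(G_\chi)_x$, part (3) from the subgroup inclusion $(G_\chi)_{|e^i|}\subset (G_\chi)_{\bar{\mathbf{v}}^{\prime i}}$ together with Lemma \ref{lemma: restricted pointwise clutching}, and parts (4)--(5) from Lemma \ref{lemma: pointwise clutching for m=2 nontransitive} plus the representation statement of Lemma \ref{lemma: restricted pointwise clutching}. You merely supply details the paper leaves implicit, notably the geometric verification that $(G_\chi)_{|e^i|}$ preserves $\{\bar{v}_0^i,\bar{v}_1^i\}$.
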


\begin{proof}
(1) follows from $(G_\chi)_{|e^i|} \subset (G_\chi)_x$ when
$x=\pi(\bar{x}).$ (2) follows from $(G_\chi)_{|e^i|} = (G_\chi)_x.$
Similarly, (3) holds by Lemma \ref{lemma: restricted pointwise
clutching} and $(G_\chi)_{|e^i|} \subset
(G_\chi)_{\bar{\mathbf{v}}^{\prime i}},$ $(G_\chi)_{|e^i|} \subset
(G_\chi)_{\bar{\mathbf{v}}^{\prime i+1}}$ where
$(G_\chi)_{\bar{\mathbf{v}}^{\prime i}}$ and
$(G_\chi)_{\bar{\mathbf{v}}^{\prime i+1}}$ are subgroups of $G_\chi$
preserving $\bar{\mathbf{v}}^{\prime i}$ and
$\bar{\mathbf{v}}^{\prime i+1},$ respectively. The first statement
of (4) follows by (3) and Lemma \ref{lemma: pointwise clutching for
m=2 nontransitive}. By Lemma \ref{lemma: restricted pointwise
clutching}, we have the second statement of (4). Similarly, (5) is
also obtained.
\end{proof}

\begin{lemma} \label{lemma: injective for A_x 1}
For a vertex $\bar{v}$ in $\lineK_\rho$ and $v=\pi(\bar{v}),$ if
$R_v \cong \Z_m$ or $\D_m$ with $m=|\pi^{-1}(v)|,$ then
$\mathcal{A}_{\bar{v}} \rightarrow \mathcal{A}_{\bar{v}}^0$ is
injective.
\end{lemma}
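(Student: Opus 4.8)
The plan is to recognize $\mathcal{A}_{\bar v}$ as an instance of the equivariant pointwise clutching situation of Section \ref{section: pointwise clutching map}, with $N_2 = (G_\chi)_v$ acting on the finite set $\bar{\mathbf{v}} = \pi^{-1}(v)$, and then to read off injectivity of the evaluation map directly from Proposition \ref{proposition: psi for cyclic} or Proposition \ref{proposition: psi for dihedral}, according to whether $R_v \cong \Z_m$ or $\D_m$.

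First I would set $N_2 = (G_\chi)_v$ and $F = (\res_{(G_\chi)_v}^{G_\chi} F_{V_B})|_{\bar{\mathbf{v}}}$, so that by definition $\mathcal{A}_{\bar v}$ is exactly the set $\mathcal{A}$ of equivariant pointwise clutching maps with respect to $F$, and $\mathcal{A}_{\bar v}^0 = \mathcal{A}^0$. Since $H = \ker \rho$ acts trivially on $\R^2/\Lambda$, it fixes every point of $\bar{\mathbf{v}}$, so $H$ lies in the kernel $N_0$ of the $N_2$-action on $\bar{\mathbf{v}}$. The $N_2$-action on $\bar{\mathbf{v}}$ factors through $R_v = (G_\chi)_v / H$, which, by hypothesis together with $m = |\pi^{-1}(v)|$, realizes the standard (faithful and transitive) permutation action of $\Z_m$ or $\D_m$ on the $m$ points $\bar v_0, \dots, \bar v_{m-1}$ arranged counterclockwise as in the Notation of Section \ref{section: preclutching map}. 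Consequently $N_0 = H$ and $N_2/N_0 \cong R_v$.

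Next I would produce the generators demanded by the propositions. In the cyclic case I pick $a_0 \in N_2$ projecting to a generator of $R_v \cong \Z_m$; after the counterclockwise relabeling this gives $a_0 \bar v_j = \bar v_{j+1}$, and freeness of the $\Z_m$-action yields $N_1 := (N_2)_{\bar v_0} = N_0$, so that $(N_2, N_0, a_0)$ meets the hypotheses of Proposition \ref{proposition: psi for cyclic}. In the dihedral case I pick $a_0, b_0 \in N_2$ projecting to standard generators of $R_v \cong \D_m$ with $a_0 \bar v_j = \bar v_{j+1}$ and $b_0 \bar v_j = \bar v_{-j+1}$; then $N_1 = \langle N_0, b_0 a_0 \rangle$ and $(N_2, N_0, a_0, b_0)$ meets the hypotheses of Proposition \ref{proposition: psi for dihedral}. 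In either case part (1) of the relevant proposition shows that every $\psi \in \mathcal{A}_{\bar v}$ satisfies $\psi(\bar v_j) = a_0^j\, \psi(\bar v_0)\, a_0^{-j}$ for all $j \in \Z_m$, so $\psi$ is completely determined by $\psi(\bar v_0)$; hence $\mathcal{A}_{\bar v} \to \mathcal{A}_{\bar v}^0$, $\psi \mapsto \psi(\bar v_0)$, is injective (and is in fact the homeomorphism onto $\mathcal{A}^0$ of part (2)).

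The only step requiring genuine care is this identification: confirming that $R_v \cong \Z_m$ or $\D_m$ with $m = |\pi^{-1}(v)|$ really forces the geometric $N_2$-action on $\bar{\mathbf{v}}$ to be the model cyclic or dihedral action used in the propositions, namely that $N_0 = H$ and that the chosen $a_0$ (and $b_0$) permute the indices exactly as $j \mapsto j+1$ (and $j \mapsto -j+1$). This is precisely where the equality $m = |\pi^{-1}(v)|$ and the counterclockwise labeling of the $\bar v_j$ are essential, since together they pin down both the faithfulness and the transitivity of the action; once these are in place the lemma reduces to a direct citation of the two propositions.
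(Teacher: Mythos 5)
Your proof is correct and follows essentially the same route as the paper, which disposes of the lemma by citing Proposition \ref{proposition: psi for cyclic}.(2) and Proposition \ref{proposition: psi for dihedral}.(2); your appeal to part (1) of each proposition (with part (2) noted) is the same mechanism, namely that $\psi$ is recovered from $\psi(\bar v_0)$ by equivariance. The extra work you do — checking $N_0=H$ and that the counterclockwise labeling together with $m=|\pi^{-1}(v)|$ forces the model cyclic or dihedral action so that the propositions apply — is exactly the verification the paper leaves implicit, so there is nothing further to add.
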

\begin{proof}
Proof is done by Proposition \ref{proposition: psi for cyclic}.(2)
and \ref{proposition: psi for dihedral}.(2).
\end{proof}

\begin{lemma} \label{lemma: injective for A_x 2}
If $R/R_t \ne \langle \id \rangle, \D_1$ with
$A\bar{c}+\bar{c}=\bar{l}_0,$ then the map
\begin{equation*}
\mathcal{A}_{\bar{v}^i} \rightarrow \mathcal{A}_{\bar{v}^i}^0 \times
\mathcal{A}_{\bar{v}^i}^{-1}, ~ \psi_{\bar{v}^i} \mapsto \big( ~
\psi_{\bar{v}^i}(\bar{v}_0^i), ~ \psi_{\bar{v}^i}(\bar{v}_{-1}^i) ~
\big)
\end{equation*}
is injective.
\end{lemma}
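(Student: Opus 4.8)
The plan is to fix a vertex $\bar v^i$, write $N_2 = (G_\chi)_{v^i}$ for its isotropy group, and let $N_2$ act on $\bar{\mathbf{v}}^i = \pi^{-1}(v^i) = \{ \bar v_j^i \mid j \in \Z_{j_\rho} \}$, the effective quotient of this action being $R_{v^i}$ as recorded in Table \ref{table: isotropy of vertex}. Injectivity of the stated map means precisely that the two isomorphisms $\psi_{\bar v^i}(\bar v_0^i)$ and $\psi_{\bar v^i}(\bar v_{-1}^i)$ determine all the remaining $\psi_{\bar v^i}(\bar v_j^i)$, $j \in \Z_{j_\rho}$. I would therefore argue case by case on $R_{v^i}$, the role of the standing hypothesis being that it excludes exactly the two rows of Table \ref{table: isotropy of vertex} in which $R_{v^i}$ is trivial for every $i$; in all remaining rows $R_{v^i}$ is nontrivial at every vertex.

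First I would dispose of the vertices whose isotropy contains the full rotation of the $j_\rho$ sectors around $v^i$, that is those with $R_{v^i} \cong \Z_{j_\rho}$ or $\D_{j_\rho}$ (for instance $v^0, v^2$ for $\Z_4, \D_4$, and every vertex for $\Z_6, \D_6$). For these Lemma \ref{lemma: injective for A_x 1} already shows that $\psi_{\bar v^i}(\bar v_0^i)$ alone determines $\psi_{\bar v^i}$, so the map remembering both values is a fortiori injective. For the remaining vertices whose isotropy still contains a nontrivial rotation — such as $\Z_2$, $\D_2$, $\D_{2,2}$, $\D_{2,3}$, and $\D_3$ — the rotation subgroup $C = \langle a \rangle \le R_{v^i}$ has index two in the full rotation group of the sectors, so it has exactly two orbits on $\bar{\mathbf{v}}^i$, the even- and the odd-indexed sectors. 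The equivariance condition Lemma \ref{lemma: equivalent condition for psi}.(2) applied to $g = a$ gives $\psi_{\bar v^i}(\bar v_{j+2}^i) = a\, \psi_{\bar v^i}(\bar v_j^i)\, a^{-1}$; since $j_\rho$ is then even and $\bar v_0^i$, $\bar v_{-1}^i = \bar v_{j_\rho - 1}^i$ lie in opposite-parity, hence distinct, $C$-orbits, iterating this relation from the two prescribed values recovers every $\psi_{\bar v^i}(\bar v_j^i)$.

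The genuinely separate case is that of a \emph{pure reflection} isotropy, $R_{v^i} \cong \D_{1,l}$ (e.g. $v^1, v^2$ for $\D_{2,2}$ and $\D_{2,3}$, and $v^2, v^3$ for $\D_2$ over $\Lambda^\eq$), where $C$ is trivial. Here I would invoke Proposition \ref{proposition: psi for Z_2}: the reflection acts as a free involution on the four sectors, and provided $\bar v_0^i$ and $\bar v_{-1}^i = \bar v_3^i$ lie in different orbits of it, equivariance propagates each prescribed value across its orbit and again determines $\psi_{\bar v^i}$. The cases in which $R_{v^i} \cong \Z_2 \times \Z_2$ (for instance $\D_2, \D_4$ over $\Lambda^\sq$) can be handled by the rotation argument above, or more directly by Proposition \ref{proposition: psi for Z_2xZ_2}, whose homeomorphism $\mathcal{A} \cong \mathcal{A}^{0,3}$ is exactly the assertion that the pair $(\psi_{\bar v^i}(\bar v_0^i), \psi_{\bar v^i}(\bar v_3^i)) = (\psi_{\bar v^i}(\bar v_0^i), \psi_{\bar v^i}(\bar v_{-1}^i))$ determines $\psi_{\bar v^i}$.

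The main obstacle is not any single argument but the finite geometric bookkeeping that makes them apply: for each reflection vertex one must verify, against Table \ref{table: isotropy of vertex} and the counterclockwise labeling of the $\bar v_j^i$, that the involution is indeed free and that $\bar v_0^i$ and $\bar v_{-1}^i$ never share an orbit — equivalently that the reflection is never the swap $\bar v_0^i \leftrightarrow \bar v_{-1}^i$, which would destroy injectivity. This is precisely the feature separating the admissible vertices from the excluded cases $R/R_t = \langle \id \rangle$ and $R/R_t = \D_1$ with $A\bar c + \bar c = \bar l_0$: there $R_{v^i}$ is trivial, $j_\rho = 4$, and Proposition \ref{proposition: psi for trivial group} shows that three evaluations of $\psi_{\bar v^i}$ are needed, so that the two values cannot suffice; this confirms that the hypothesis of the lemma is sharp.
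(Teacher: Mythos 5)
Your overall plan --- reduce to a case check on $R_{v^i}$ via Table \ref{table: isotropy of vertex}, note that the excluded cases are exactly those with trivial $R_{v^i}$, dispatch the fully symmetric vertices by Lemma \ref{lemma: injective for A_x 1}, and propagate the two prescribed values by equivariance at the remaining vertices --- is the same as the paper's (whose proof is only a terse ``checked case by case''), and your treatment of the vertices whose isotropy contains a nontrivial rotation is correct. The gap is in the pure-reflection case: your propagation rule, and hence the criterion you propose to verify, is wrong. The value $\psi_{\bar{v}^i}(\bar{v}_j^i)$ is attached to the \emph{edge} between the sectors $j$ and $j+1$, and Lemma \ref{lemma: equivalent condition for psi}.(2) for an orientation-reversing $g$ says that the value at an edge $E$ determines (up to inverse and conjugation) the value at the image edge $g(E)$, not at the image sector. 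If $r$ acts on the four sectors by $j \mapsto -j+c$, the induced pairing of $\psi$-values is $j \mapsto -j+(c-1)$. Because of this shift, your condition (``the involution is free and $\bar{v}_0^i$, $\bar{v}_{-1}^i$ lie in different sector-orbits; the swap $\bar{v}_0^i \leftrightarrow \bar{v}_{-1}^i$ would destroy injectivity'') is the wrong one, and it actually fails at real vertices of the tables where the lemma nevertheless holds: for $R/R_t = \D_{2,2}$ at $\bar{v}^1$ the reflection $\D_{1,-4}$ has its axis along the edge $[\bar{v}^1, \bar{v}^0]$ and permutes the sectors as $0 \leftrightarrow 3$, $1 \leftrightarrow 2$; the same happens at $\bar{v}^0$ for $R/R_t = \D_1$ or $\D_{1,4}$ with $A\bar{c}+\bar{c}=0$. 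At such a vertex your bookkeeping would report failure, whereas in fact the edge relation reads $\psi(\bar{v}_0^i)^{-1} = r\,\psi(\bar{v}_2^i)\,r^{-1}$, so $\psi(\bar{v}_0^i)$ determines $\psi(\bar{v}_2^i)$ and then $\psi^4 = \id$ determines $\psi(\bar{v}_1^i)$.

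The condition one actually has to verify at a pure-reflection vertex is that the induced involution on edges does not exchange the two edges of $P_\rho$ meeting at $\bar{v}^i$ --- equivalently, that the reflection axis does not bisect the corner of $P_\rho$ there. In that excluded situation one would learn only the product $\psi(\bar{v}_2^i)\psi(\bar{v}_1^i)$ together with a single relation, and injectivity would genuinely fail. This never occurs for the groups in the tables: a reflection in $R_{v^i}$ fixing the sector occupied by $\bar{f}^{-1}$ would preserve $\pi(|\bar{f}^{-1}|)$ and hence lie in $R_{b(f^{-1})}$, which by Table \ref{table: isotropy of face} is trivial in every row where a pure-reflection vertex appears. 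With the criterion corrected in this way (and the free-rotation and full-symmetry cases as you have them), the case analysis does close up; as written, however, your verification step cannot be completed.
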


\begin{proof}
In these cases, $R_{v^i}$ is not trivial by Table \ref{table:
isotropy of vertex}, and it is checked case by case that $\big( ~
\psi_{\bar{v}^i}(\bar{v}_0^i), ~ \psi_{\bar{v}^i}(\bar{v}_{-1}^i) ~
\big)$ determines $\psi_{\bar{v}^i}$ by using equivariance of
equivariant pointwise clutching maps. In this check, Lemma
\ref{lemma: injective for A_x 1} is helpful.
\end{proof}

Now, we state conditions on a preclutching map $\Phi_{\hat{D}_\rho}$
in $C^0 (\hat{D}_\rho, V_B)$ to guarantee that $\Phi_{\hat{D}_\rho}$
be the restriction of an equivariant clutching map. When $\Lambda_t
= \Lambda^\sq,$ pick two elements $g_0, g_1 \in G_\chi$ such that
\begin{equation*}
\rho(g_0)[\bar{x}] = [ \bar{x}+(1,0)], ~ \rho(g_1)[\bar{x}] = [
\bar{x}+(0,1)].
\end{equation*}
When $R/R_t = \D_1$ with $A\bar{c}+\bar{c}=\bar{l}_0,$ pick an
element $g_2 \in G_\chi$ such that $\rho(g_2)$ is $[
A\bar{x}+\bar{c} ]$ where $A\bar{x}+\bar{c}$ is the glide through
$\frac 1 2 \bar{c} + L$ and $\frac 1 2 (A\bar{c}+\bar{c}).$ Also, we
define a terminology.
\begin{definition}
For $\bar{x} \in |\lineL_\rho|,$ $x=\pi(\bar{x}),$ $\psi_{\bar{x}}
\in \mathcal{A}_{\bar{x}},$ $\Phi \in C^0(|\hatL_\rho|, V_B),$ the
map $\psi_{\bar{x}}$ or its saturation $\bar{\psi}_x$ is called
\textit{determined} by $\Phi$ if $\Phi$ satisfies the following
condition:
\begin{equation*}
\bar{\psi}_x \Big( p_{|\lineL|}(\hat{x}), p_{|\lineL|}(|c|(\hat{x}))
\Big) = \Phi(\hat{x})
\end{equation*}
for each $\hat{x} \in (\pi \circ p_{|\lineL|})^{-1}(x).$ The
condition is concretely written as
\begin{enumerate}
\item if $\bar{x}$ is not a vertex, then
\begin{equation*}
\psi_{\bar{x}} (\bar{x}_0) = \Phi (\hat{x}_{0, +}) \quad \text{and}
\quad \psi_{\bar{x}} (\bar{x}_1) = \Phi (|c|(\hat{x}_{0, +})),
\end{equation*}
\item if $\bar{x}$ is a vertex, then
\begin{equation*}
\psi_{\bar{x}} (\bar{x}_j) = \Phi (\hat{x}_{j,+}) \quad \text{and}
\quad \psi_{\bar{x}}^{-1} (\bar{x}_j) = \Phi (\hat{x}_{j,-})
\end{equation*}
for each $j \in \Z_{j_\rho}.$
\end{enumerate}

\end{definition}

\begin{theorem} \label{theorem: clutching condition}
When $R=\rho(G_\chi)$ for some $R$ in Table \ref{table: finite group
of aff}, a preclutching map $\Phi_{\hat{D}_\rho}$ in $C^0
(\hat{D}_\rho, V_B)$ is in $\Omega_{\hat{D}_\rho, V_B}$ if and only
if there exists the unique $\psi_{\bar{x}} \in
\mathcal{A}_{\bar{x}}$ for each $\bar{x} \in \bar{D}_\rho$ and $x =
\pi(\bar{x})$ satisfying
\begin{enumerate}
  \item[E2.] $\bar{\psi}_x \Big( p_{|\lineL|}(\hat{x}),
  p_{|\lineL|}(|c|(\hat{x})) \Big) = \Phi_{\hat{D}_\rho} (\hat{x})$
  for each $\hat{x} \in \hat{D}_\rho,$
  \item[E3.] for each $\bar{x}, \bar{x}^\prime \in \bar{D}_\rho$
  and their images $x=\pi(\bar{x}), x^\prime=\pi(\bar{x}^\prime),$
  if $x^\prime = gx$ for some $g \in G_\chi,$ then
  $\bar{\psi}_{x^\prime} = g \cdot \bar{\psi}_x.$
\end{enumerate}
\end{theorem}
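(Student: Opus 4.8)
The plan is to prove the equivalence by routing everything through the full extension $\Phi \in \Omega_{V_B}$ of $\Phi_{\hat{D}_\rho}$, and then translating the three clutching conditions N1, N2, E1 of Section \ref{section: preclutching map} into the two conditions E2, E3 using the dictionary between equivariant clutching maps and families of equivariant pointwise clutching maps built in Section \ref{section: pointwise clutching map}. Since restriction gives a bijection $\Omega_{V_B} \to \Omega_{\hat{D}_\rho, V_B}$, it suffices to relate the existence of a family $(\psi_{\bar{x}})_{\bar{x} \in \bar{D}_\rho}$ satisfying E2, E3 to membership of the extension in $\Omega_{V_B}$.

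For necessity, I would start from $\Phi_{\hat{D}_\rho} \in \Omega_{\hat{D}_\rho, V_B}$ and take its extension $\Phi \in \Omega_{V_B}$. For each $\bar{x} \in \bar{D}_\rho$ with $x = \pi(\bar{x})$, I define $\psi_{\bar{x}}$ from $\Phi$ exactly as in the Remark following Lemma \ref{lemma: equivalent condition for psi}; that Remark shows $\psi_{\bar{x}} \in \mathcal{A}_{\bar{x}}$, because conditions N1, N2 of $\Phi$ are precisely what Lemma \ref{lemma: equivalent condition for psi}(1),(2) demand. Condition E2 then holds by construction, as it merely restates that $\psi_{\bar{x}}$ is determined by $\Phi$. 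Condition E3 will follow from the equivariance E1 of $\Phi$ together with the $G_\chi$-action on saturations: when $x' = gx$, the pointwise clutching map determined by $\Phi$ at $x'$ is $g \cdot \bar{\psi}_x$, which equals $\bar{\psi}_{x'}$, cf. Lemma \ref{lemma: conjugate pointwise clutching}. Uniqueness of the family is where the injectivity results enter: for a non-vertex $\bar{x}$ one has $m=2$, so E2 together with $\psi_{\bar{x}}^2 = \id$ already pins $\psi_{\bar{x}}$ down, while for a vertex I invoke Lemmas \ref{lemma: injective for A_x 1} and \ref{lemma: injective for A_x 2}, recovering $\psi_{\bar{v}}$ from the evaluations fixed by E2, supplemented by E3 in the few low-symmetry cases where $\mathcal{A}_{\bar{v}} \to \mathcal{A}_{\bar{v}}^0$ fails to be injective.

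For sufficiency, I am given a unique family $(\psi_{\bar{x}})_{\bar{x} \in \bar{D}_\rho}$ satisfying E2, E3, and I must produce $\Phi \in \Omega_{V_B}$ restricting to $\Phi_{\hat{D}_\rho}$. First I extend the family to one indexed by all of $|\lineL_\rho|$: since the $R$-orbit of $\pi(\bar{D}_\rho)$ covers $\pi(|\lineK_\rho^{(1)}|)$ by Proposition \ref{proposition: D_R}, every $x \in \pi(|\lineL_\rho|)$ has the form $gx'$ with $x' = \pi(\bar{x}')$, $\bar{x}' \in \bar{D}_\rho$, and I set $\bar{\psi}_x = g \cdot \bar{\psi}_{x'}$; this is well defined because $g \cdot \bar{\psi}_{x'}$ depends only on $gx'$, and because E3 makes the assignment consistent on points of $\pi(\bar{D}_\rho)$ lying in a single orbit. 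I then define $\Phi$ on $|\hatL_\rho|$ by $\Phi(\hat{x}) = \bar{\psi}_x\big(p_{|\lineL|}(\hat{x}), p_{|\lineL|}(|c|(\hat{x}))\big)$. By E2 this restricts to $\Phi_{\hat{D}_\rho}$; its equivariance E1 is immediate from $g \cdot \bar{\psi}_x = \bar{\psi}_{gx}$; condition N1 holds because $\bar{\psi}_x(\bar{y}, \bar{y}')$ and $\bar{\psi}_x(\bar{y}', \bar{y})$ are mutually inverse by the definition of a saturation; and condition N2 at each vertex $\bar{v}$ reduces to $\psi_{\bar{v}}^{\,j_\rho} = \id$, which is Lemma \ref{lemma: equivalent condition for psi}(1) for $\psi_{\bar{v}} \in \mathcal{A}_{\bar{v}}$. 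Hence $\Phi \in \Omega_{V_B}$, so $\Phi_{\hat{D}_\rho} \in \Omega_{\hat{D}_\rho, V_B}$.

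The hard part will be the sufficiency direction, specifically continuity and well-definedness of the extended $\Phi$. I will need to check that the $\psi_{\bar{x}}$'s vary continuously as $\bar{x}$ moves along an edge and match at vertices: along the interior of $|\bar{e}^i|$ the sets $\mathcal{A}_{\bar{x}}$ are canonically identified with $\mathcal{A}_{\bar{e}^i}$ by Lemma \ref{lemma: inclusions of A_x's}(2), transporting continuity of $\Phi_{\hat{D}_\rho}$ to continuity of the family, while at endpoints the compatibility of $\psi_{\bar{v}^i}$ with $\psi_{\bar{e}^i}$ from Lemma \ref{lemma: inclusions of A_x's}(4),(5) is what glues edge-values to vertex-values. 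The other delicate point is confirming N2 at vertices of $\lineK_\rho$ not lying in $\bar{D}_\rho$, which forces me to propagate the vertex relation from a representative in $\bar{D}_\rho$ through E3; here the identifications among the lifts $\hat{v}_{j,\pm}$ and the minimality of $\bar{D}_\rho$ must be tracked carefully so that the cyclically ordered product in N2 genuinely matches $\psi_{\bar{v}}^{\,j_\rho} = \id$.
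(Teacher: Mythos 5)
Your proposal is correct and follows essentially the same route as the paper: one direction builds the full equivariant clutching map by defining it on the orbit of $\hat{D}_\rho$ and extending equivariantly (with well-definedness resting on E3), while the other extracts the family of pointwise clutching maps from the extension and proves uniqueness via the injectivity statements of Lemmas \ref{lemma: injective for A_x 1} and \ref{lemma: injective for A_x 2} supplemented by E3. Note only that the paper's real effort sits exactly where your sketch is thinnest, namely the case-by-case uniqueness argument: E3 is needed not merely when $\mathcal{A}_{\bar{v}} \rightarrow \mathcal{A}_{\bar{v}}^0$ fails to be injective, but at every vertex of $\bar{D}_\rho$ where $\hat{D}_\rho$ supplies only part of the data those lemmas require --- its endpoints in the $\D_2$, $\D_{2,3}$, $\D_1$, $\D_{1,4}$ cases, and all of $\bar{v}^2, \bar{v}^3, \bar{v}^0$ when $R/R_t = \langle \id \rangle$ or $\D_1$ with $A\bar{c}+\bar{c}=\bar{l}_0$, where even the two evaluations at the middle vertex leave free parameters (cf.\ Proposition \ref{proposition: psi for trivial group}) that must be pinned down by transporting E2-data from the other vertices through E3.
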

A set $( \psi_{\bar{x}} )_{\bar{x} \in \bar{D}_\rho}$ with
$\psi_{\bar{x}} \in \mathcal{A}_{\bar{x}}$ is called
\textit{determined} by $\Phi_{\hat{D}_\rho} \in C^0 (\hat{D}_\rho,
V_B)$ if $\Phi_{\hat{D}_\rho}$ is in $\Omega_{\hat{D}_\rho, V_B}$
and each $\psi_{\bar{x}}$ is the unique element of this theorem.


\begin{proof}
For necessity, it suffices to construct a map $\Phi$ in
$\Omega_{V_B}$ such that $\Phi|_{\hat{D}_\rho} =
\Phi_{\hat{D}_\rho}.$ For this, we would define $\Phi$ on
$\hat{\mathbf{D}}_\rho,$ and then extend the domain of definition of
$\Phi$ to the whole $|\hatL_\rho|.$ First, we define $\Phi$ on
$\hat{\mathbf{D}}_\rho$ so that each $\psi_{\bar{x}}$ is determined
by $\Phi.$ Then, Condition E2. says that $\Phi|_{\hat{D}_\rho} =
\Phi_{\hat{D}_\rho}.$ Next, we define $\Phi(\hat{x}) = g^{-1} \Phi(g
\hat{x}) g$ for each $\hat{x} \in |\hatL_\rho|$ and some $g \in
G_\chi$ such that $g \hat{x}$ is in $\hat{\mathbf{D}}_\rho.$ We need
prove well definedness of this. Assume that $\hat{y} = g \hat{x}$
and $\hat{y}^\prime = g^\prime \hat{x}$ are in
$\hat{\mathbf{D}}_\rho$ for two elements $g, g^\prime$ in $G_\chi$
so that $\hat{y}^\prime = g^\prime g^{-1} \hat{y}.$ And, let $y$ and
$y^\prime$ be images of $\hat{y}$ and $\hat{y}^\prime$ through $\pi
\circ p_{|\lineL|},$ respectively. Then, $y^\prime = g^\prime g^{-1}
y.$ These give us $\bar{\psi}_{y^\prime} = (g^\prime g^{-1}) \cdot
\bar{\psi}_y$ by Condition E3. From this, we obtain
\begin{align*}
g^{\prime -1} \Phi(\hat{y}^\prime) g^\prime &= g^{\prime -1}
\bar{\psi}_{y^\prime} \Big(
p_{|\lineL|}(\hat{y}^\prime), p_{|\lineL|}(|c|(\hat{y}^\prime)) \Big) g^\prime \\
&= g^{-1} \bar{\psi}_y \Big( p_{|\lineL|}(\hat{y}),
p_{|\lineL|}(|c|(\hat{y})) \Big) g \\
&= g^{-1} \Phi(\hat{y}) g
\end{align*}
where we use equivariance of $|c|.$ So, well definedness is proved.
It is easily checked that $\Phi$ satisfies Condition N1., N2., E1.
Therefore, $\Phi$ is the wanted equivariant clutching map.

For sufficiency, assume that $\Phi_{\hat{D}_\rho} =
\Phi|_{\hat{D}_\rho}$ for some $\Phi \in \Omega_{V_B}.$ Then, we
should choose the unique $\psi_{\bar{x}} \in \mathcal{A}_{\bar{x}}$
for each $\bar{x} \in \bar{D}_\rho$ satisfying Condition E2. and E3.
When we show that $\psi_{\bar{x}}$'s satisfy Condition E3., it
suffices to consider only vertices $\bar{x}$'s by definition of
$\bar{D}_\rho.$ In $\sim$ entry of Table \ref{table: one-dimensional
fundamental domain}, vertices in the same orbit are listed. Proof
for sufficiency is different according to $R,$ especially choices of
$\psi_{\bar{x}}$'s.

First, assume that $R/R_t$ is not equal to one of following groups:
\begin{equation*}
\D_2 \text{ with } \Lambda_t = \Lambda^\eq, ~ \D_{2,3} \text{ with }
\Lambda_t = \Lambda^\eq, ~ \langle \id \rangle, ~ \D_1, ~ \D_{1,4}.
\end{equation*}
At each $\bar{x} \in \bar{D}_\rho,$ the unique $\psi_{\bar{x}}$ in
$\mathcal{A}_{\bar{x}}$ is determined by $\Phi$ because $\Phi$
satisfies Condition N1., N2., E1. Moreover, it can be checked that
$\psi_{\bar{x}}$ is the unique element satisfying Condition E2. for
each $\bar{x}$ by Lemma \ref{lemma: injective for A_x 2}. So, it
suffices to show that $\psi_{\bar{x}}$'s satisfy Condition E3.
However, since $\Phi$ is equivariant, $\psi_{\bar{x}}$'s satisfy
Condition E3.

Second, assume that $R/R_t = \D_2$ or $\D_{2,3}$ with $\Lambda_t =
\Lambda^\eq.$ Observe that there exists the unique $\psi_{\bar{x}}$
satisfying Condition E2. for each $\bar{x} \in \bar{D}_\rho-\{
\bar{v}^2 \}$ or $\bar{D}_\rho-\{ \bar{v}^1 \}$ according to $R/R_t
= \D_2$ or $\D_{2,3}$ by Lemma \ref{lemma: injective for A_x 2},
respectively. For the remaining one point, pick the unique element
as
\begin{equation*}
\bar{\psi}_{v^2} = g \cdot \bar{\psi}_{v^3}
\end{equation*}
in $\bar{\mathcal{A}}_{v^2}$ by Condition E3. when $R/R_t = \D_2$
and $g v_3 = v_2$ for some $g \in G_\chi,$ or
\begin{equation*}
\bar{\psi}_{v^1} = g^\prime \cdot \bar{\psi}_{v^2}
\end{equation*}
in $\bar{\mathcal{A}}_{v^1}$ by Condition E3. when $R/R_t =
\D_{2,3}$ and $g^\prime v_2 = v_1$ for some $g^\prime \in G_\chi.$
By definition, these two are determined by $\Phi$ because $\Phi$ is
equivariant. So, these two satisfy Condition E2. From this, all
$\psi_{\bar{x}}$'s satisfy Condition E2. By definition, these two
also satisfy Condition E3., and this implies that all
$\psi_{\bar{x}}$'s satisfy Condition E3. by $\sim$ entry of Table
\ref{table: one-dimensional fundamental domain}. Uniqueness of
$\bar{\psi}_{v^2}$ and $\bar{\psi}_{v^1}$ is guaranteed by
definition because $\bar{\psi}_{v^3}$ and $\bar{\psi}_{v^2}$ should
satisfy Condition E3.

Third, assume that $R/R_t = \D_1$ with $A\bar{c}+\bar{c}=0$ or
$\D_{1,4}.$ Observe that there exists the unique $\psi_{\bar{x}}$'
satisfying Condition E2. for each $\bar{x} \in \bar{D}_\rho-\{
\bar{v}^0, \bar{v}^1 \}$ by Lemma \ref{lemma: injective for A_x 2}.
For the remaining two points, pick unique elements
\begin{equation*}
\bar{\psi}_{v^0} = g_0^{-1} \cdot \bar{\psi}_{v^3}, ~
\bar{\psi}_{v^1} = g_0^{-1} \cdot \bar{\psi}_{v^2} \text{ in }
\bar{\mathcal{A}}_{v^0}, ~ \bar{\mathcal{A}}_{v^1}
\end{equation*}
when $R/R_t = \D_1$ with $A\bar{c}+\bar{c}=0,$ and
\begin{equation*}
\bar{\psi}_{v^0} = g_1^{-1} \cdot \bar{\psi}_{v^2}, ~
\bar{\psi}_{v^1} = g_0^{-1} \cdot \bar{\psi}_{v^3} \text{ in }
\bar{\mathcal{A}}_{v^0}, ~ \bar{\mathcal{A}}_{v^1}
\end{equation*}
when $R/R_t = \D_{1,4},$ respectively. Then, the remaining proof is
the same with the second case.

Last, assume that $R/R_t = \langle \id \rangle$ or $\D_1$ with
$A\bar{c}+\bar{c}=\bar{l}_0.$ Observe that there exists the unique
$\psi_{\bar{x}}$ satisfying Condition E2. for each $\bar{x} \in
\bar{D}_\rho-\{ \bar{v}^2, \bar{v}^3, \bar{v}^0 \}$ by Lemma
\ref{lemma: injective for A_x 2}. Since $\bar{D}_\rho-\{ \bar{v}^2,
\bar{v}^3, \bar{v}^0 \}$ has no vertex, these $\psi_{\bar{x}}$'s
satisfy Condition E3. Then, we should show unique existence of
$\psi_{\bar{x}}$ for $\bar{x} \in \{ \bar{v}^2, \bar{v}^3, \bar{v}^0
\}$ to satisfy Condition E2., E3. To show uniqueness, first assume
existence so that all $\psi_{\bar{x}}$'s for $\bar{x} \in \{
\bar{v}^2, \bar{v}^3, \bar{v}^0 \}$ satisfy Condition E2., E3. Then,
Condition E2. says that
\begin{align}
\tag{*} \psi_{\bar{v}^2} (\bar{v}^2) = \Phi_{\hat{D}_\rho}
(\hat{v}_+^2), \quad & \psi_{\bar{v}^3}^{-1} (\bar{v}^3) =
\Phi_{\hat{D}_\rho}
(\hat{v}_-^3), \\
\notag \psi_{\bar{v}^3} (\bar{v}^3) = \Phi_{\hat{D}_\rho}
(\hat{v}_+^3), \quad & \psi_{\bar{v}^0} (\bar{v}^0) =
\Phi_{\hat{D}_\rho} (\hat{v}_-^0).
\end{align}
And, Condition E3. says that that
\begin{align}
\tag{**} \bar{\psi}_{v^2} &= g_1 \cdot \bar{\psi}_{v^3}, \\
\notag \bar{\psi}_{v^0} &= \left\{
  \begin{array}{ll}
    g_0^{-1} \bar{\psi}_{v^3}
    & \text{when } R/R_t = \langle \id \rangle, \\
    (g_2 g_1)^{-1} \bar{\psi}_{v^3}
    & \text{when } R/R_t = \D_1 \text{ with }
    A\bar{c}+\bar{c}=\bar{l}_0.
  \end{array}
\right.
\end{align}
By $\psi_{\bar{v}^2} (\bar{v}^2)$ of (*) and $\bar{\psi}_{v^2}$ of
(**), we have
\begin{equation}
\tag{***} \psi_{\bar{v}^3} (\bar{v}_1^3) = g_1^{-1}
\Phi_{\hat{D}_\rho}(\hat{v}_+^2) g_1
\end{equation}
because $g_1 \bar{v}_1^3 = \bar{v}^2.$ Similarly, by
$\psi_{\bar{v}^0} (\bar{v}^0)$ of (*) and $\bar{\psi}_{v^0}$ of
(**), $\psi_{\bar{v}_3} (\bar{v}_2^3)$ is equal to
\begin{equation}
\tag{****} \left\{
  \begin{array}{ll}
    g_0 \Phi_{\hat{D}_\rho}(\hat{v}_-^0)^{-1} g_0^{-1}
    & \text{when } R/R_t = \langle \id \rangle, \\
    g_2 g_1 \Phi_{\hat{D}_\rho}(\hat{v}_-^0) (g_2 g_1)^{-1}
    & \text{when } R/R_t = \D_1, A\bar{c}+\bar{c}=\bar{l}_0.
  \end{array}
\right.
\end{equation}
By (*), (***), (****), $\psi_{\bar{v}_3}$ is uniquely determined by
values of $\Phi_{\hat{D}_\rho}$ so that $\psi_{\bar{v}_2},
\psi_{\bar{v}_0}$ are also uniquely determined by Condition E3. So,
we obtain uniqueness. Proof of existence is easy. Define
$\psi_{\bar{v}_3}$ as in the proof of uniqueness. By equivariance of
$\Phi,$ we have $\psi_{\bar{v}_3} \in \mathcal{A}_{\bar{v}^3}.$ And,
define $\psi_{\bar{v}_2}$ and $\psi_{\bar{v}_0}$ as in (**) so that
$\psi_{\bar{x}}$'s satisfy Condition E3. Since $\Phi$ is
equivariant, $\psi_{\bar{v}_3}$ is determined by $\Phi$ by
definition of $\psi_{\bar{v}_3}.$ So, $\psi_{\bar{v}_3}$ satisfies
Condition E2. Again by equivariance of $\Phi,$ $\psi_{\bar{v}_2}$
and $\psi_{\bar{v}_0}$ satisfy Condition E2. Therefore, we obtain a
proof.
\end{proof}

\begin{remark}
This theorem holds even though we might omit the word `unique' in
the statement of the theorem because uniqueness is not used in the
proof of necessity.
\end{remark}

By using Theorem \ref{theorem: clutching condition}, we would
describe $\Omega_{\hat{D}_\rho, V_B}$ through
$\mathcal{A}_{\bar{x}}$'s. Define the following set of equivariant
pointwise clutching maps on $\bar{d}^i$'s.

\begin{definition} \label{definition: barA_G}
Denote by $\bar{A}_{G_\chi} (\R^2/\Lambda, V_B)$ the set
\begin{equation*}
\{ (\psi_{\bar{d}^i})_{i \in I_\rho} ~ | ~ \psi_{\bar{d}^i} \in
\mathcal{A}_{\bar{d}^i} \text{ and } \bar{\psi}_{d^{i^\prime}} = g
\cdot \bar{\psi}_{d^i} \text{ if } d^{i^\prime} = g d^i \text{ for
some } g \in G_\chi \}.
\end{equation*}
For $(\psi_{\bar{d}^i})_{i \in I_\rho} \in \bar{A}_{G_\chi}
(\R^2/\Lambda, V_B)$ and $\Phi_{\hat{D}_\rho} \in
\Omega_{\hat{D}_\rho, V_B},$ the element $(\psi_{\bar{d}^i})_{i \in
I_\rho}$ is \textit{determined} by $\Phi_{\hat{D}_\rho}$ if
$\psi_{\bar{d}^i}$'s are unique elements determined by
$\Phi_{\hat{D}_\rho}$ in Theorem \ref{theorem: clutching condition}.
Also, for $(W_{d^i})_{i \in I_\rho^+} \in A_{G_\chi} (\R^2/\Lambda,
\chi)$ and $(\psi_{\bar{d}^i})_{i \in I_\rho} \in \bar{A}_{G_\chi}
(\R^2/\Lambda, V_B),$ the element $(W_{d^i})_{i \in I_\rho^+}$ is
\textit{determined} by $(\psi_{\bar{d}^i})_{i \in I_\rho}$ if
$W_{d^i}$ is determined by $\psi_{\bar{d}^i}$ for each $i \in
I_\rho.$
\end{definition}

\begin{corollary} \label{corollary: Omega}
The set $\Omega_{\hat{D}_\rho, V_B}$ is equal to the set
\begin{align*}
\Big\{ ~ \Phi_{\hat{D}_\rho} & = \bigcup_{i \in I_\rho^-} \varphi^i
\in
C^0 (\hat{D}_\rho, V_B) ~ \Big| \\
& \varphi^i(\hat{d}_+^i) = \psi_{\bar{d}^i} (\bar{d}^i), \quad
\varphi^i(\hat{d}_-^{i+1}) =
\psi_{\bar{d}^{i+1}}^{-1} (\bar{d}^{i+1}), \quad \varphi^i (\hat{x}) \in \mathcal{A}_{\bar{e}^i}^0  \\
& \text{for each } i \in I_\rho^-, ~ \hat{x} \in [\hat{d}_+^i,
\hat{d}_-^{i+1}], \text{ and some } (\psi_{\bar{d}^i})_{i \in
I_\rho} \in \bar{A}_{G_\chi} (\R^2/\Lambda, V_B) ~ \Big\}.
\end{align*}
\end{corollary}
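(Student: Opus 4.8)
The plan is to read off Corollary \ref{corollary: Omega} directly from Theorem \ref{theorem: clutching condition}, which already characterizes membership in $\Omega_{\hat{D}_\rho, V_B}$ by the existence of a family $(\psi_{\bar{x}})_{\bar{x} \in \bar{D}_\rho}$ with $\psi_{\bar{x}} \in \mathcal{A}_{\bar{x}}$ satisfying Conditions E2 and E3. The content of the corollary is essentially a repackaging of this family. Since Condition E3 only constrains the subfamily indexed by vertices (as noted in the proof of Theorem \ref{theorem: clutching condition}), it is equivalent to requiring $(\psi_{\bar{d}^i})_{i\in I_\rho} \in \bar{A}_{G_\chi}(\R^2/\Lambda, V_B)$ in the sense of Definition \ref{definition: barA_G}; and Condition E2, being imposed only on $\hat{D}_\rho = \bigcup_{i \in I_\rho^-}[\hat{d}_+^i, \hat{d}_-^{i+1}]$, splits into endpoint equalities at the $\hat{d}_\pm^i$ and an interior condition along each segment. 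I would prove the two set inclusions separately.

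For the inclusion $\subseteq$, given $\Phi_{\hat{D}_\rho} \in \Omega_{\hat{D}_\rho, V_B}$, I take the family $(\psi_{\bar{x}})_{\bar{x}}$ furnished by Theorem \ref{theorem: clutching condition} and keep the vertex part $(\psi_{\bar{d}^i})_{i \in I_\rho}$; restricting E3 to vertices places it in $\bar{A}_{G_\chi}$. Writing $\varphi^i = \Phi_{\hat{D}_\rho}|_{[\hat{d}_+^i, \hat{d}_-^{i+1}]}$, I evaluate E2 at $\hat{x} = \hat{d}_+^i$ and $\hat{x} = \hat{d}_-^{i+1}$ (recalling that at a vertex E2 reads $\psi_{\bar{x}}(\bar{x}_0) = \Phi(\hat{x}_{0,+})$ and $\psi_{\bar{x}}^{-1}(\bar{x}_0) = \Phi(\hat{x}_{0,-})$) to obtain exactly $\varphi^i(\hat{d}_+^i) = \psi_{\bar{d}^i}(\bar{d}^i)$ and $\varphi^i(\hat{d}_-^{i+1}) = \psi_{\bar{d}^{i+1}}^{-1}(\bar{d}^{i+1})$. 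For an interior point $\hat{x}$ of $\hat{e}^i$, E2 gives $\varphi^i(\hat{x}) = \psi_{\bar{x}}(\bar{x}_0)$ with $\psi_{\bar{x}} \in \mathcal{A}_{\bar{x}}$, and Lemma \ref{lemma: inclusions of A_x's}(1) yields $\mathcal{A}_{\bar{x}}^0 \subseteq \mathcal{A}_{\bar{e}^i}^0$, so $\varphi^i(\hat{x}) \in \mathcal{A}_{\bar{e}^i}^0$, which is the interior condition.

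For the inclusion $\supseteq$, I start from $\Phi_{\hat{D}_\rho} = \bigcup_i \varphi^i$ together with a family $(\psi_{\bar{d}^i})_{i \in I_\rho} \in \bar{A}_{G_\chi}$ meeting the three displayed conditions, and I build a full family $(\psi_{\bar{x}})_{\bar{x} \in \bar{D}_\rho}$ to feed into Theorem \ref{theorem: clutching condition}. At vertices I use the given $\psi_{\bar{d}^i}$; at an interior point $\bar{x}$ of $\bar{e}^i$ I identify $\mathcal{A}_{\bar{x}} = \mathcal{A}_{\bar{e}^i}$ by Lemma \ref{lemma: inclusions of A_x's}(2) and take the unique $\psi_{\bar{x}} \in \mathcal{A}_{\bar{e}^i}$ with $\psi_{\bar{x}}(\bar{x}_0) = \varphi^i(\hat{x})$, which exists because the evaluation $\mathcal{A}_{\bar{e}^i} \to \mathcal{A}_{\bar{e}^i}^0$ is bijective for $m=2$ by Lemma \ref{lemma: pointwise clutching for m=2 nontransitive}. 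Condition E2 then holds by construction, the endpoint equalities matching the vertex values through the displayed boundary conditions, while E3 holds on vertices by membership in $\bar{A}_{G_\chi}$ and is vacuous on non-vertices since, by Remark \ref{remark: in orbit}(2), distinct non-vertex points of $\bar{D}_\rho$ have distinct images. Theorem \ref{theorem: clutching condition} then gives $\Phi_{\hat{D}_\rho} \in \Omega_{\hat{D}_\rho, V_B}$.

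\emph{The main obstacle} is the bookkeeping at the junction of an edge and its endpoint vertex: one must ensure that the prescribed endpoint values $\psi_{\bar{d}^i}(\bar{d}^i)$ and $\psi_{\bar{d}^{i+1}}^{-1}(\bar{d}^{i+1})$ are genuinely attainable as values of edge clutching maps, so that a continuous $\varphi^i$ valued in $\mathcal{A}_{\bar{e}^i}^0$ can interpolate them. This is exactly guaranteed by the compatibility $\mathcal{A}_{\bar{v}^i}^0 \subseteq \mathcal{A}_{\bar{e}^i}^0$ and the restriction identities of Lemma \ref{lemma: inclusions of A_x's}(3)--(5), which reconcile the vertex pointwise clutching maps (with $m = j_\rho$) and the edge pointwise clutching maps (with $m = 2$). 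I would also handle the barycenter points $b(\bar{e}^i)$ separately, where $\mathcal{A}_{\bar{x}} = \mathcal{A}_{\bar{e}^i}$ requires the extra isotropy hypothesis in Lemma \ref{lemma: inclusions of A_x's}(2); in the forward inclusion only $\mathcal{A}_{\bar{x}}^0 \subseteq \mathcal{A}_{\bar{e}^i}^0$ is used, so no difficulty arises there.
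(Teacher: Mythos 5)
Your proposal is correct and follows essentially the same route as the paper: both reduce the corollary to Theorem \ref{theorem: clutching condition}, observe that Condition E3 is only nontrivial at the vertices (so it amounts to membership in $\bar{A}_{G_\chi}(\R^2/\Lambda, V_B)$), and split Condition E2 into the endpoint equalities plus the interior condition $\varphi^i(\hat{x}) \in \mathcal{A}_{\bar{e}^i}^0$, reconciling vertex and edge pointwise clutching maps via Lemma \ref{lemma: inclusions of A_x's}. The only cosmetic difference is that you argue by two set inclusions where the paper runs a single chain of equivalences.
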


\begin{proof}
To prove this corollary, we would rewrite Theorem \ref{theorem:
clutching condition} by using $\mathcal{A}_{\bar{e}^i}$ and
$\bar{A}_{G_\chi} (\R^2/\Lambda, V_B).$ By Theorem \ref{theorem:
clutching condition}, a preclutching map $\Phi_{\hat{D}_\rho}$ is in
$\Omega_{\hat{D}_\rho, V_B}$ if and only if a set $\Psi = (
\psi_{\bar{x}} )_{\bar{x} \in \bar{D}_\rho}$ is determined by
$\Phi_{\hat{D}_\rho}.$ As we have seen in the proof of the theorem,
$gx=x^\prime$ with $x \ne x^\prime$ in Condition E3. is possible
only when $x$ and $x^\prime$ are $d^i$'s (of course, more precisely
when they are images of vertices). So, $\Psi$ is determined by
$\Phi_{\hat{D}_\rho}$ if and only if $(\psi_{\bar{d}^i})_{i \in
I_\rho}$ satisfies Condition E2. and E3. and $\Psi -
(\psi_{\bar{d}^i})_{i \in I_\rho}$ satisfies Condition E2. Here,
$(\psi_{\bar{d}^i})_{i \in I_\rho}$ satisfies Condition E2. if and
only if
\begin{equation}
\tag{*} \varphi^i(\hat{d}_+^i) = \psi_{\bar{d}^i} (\bar{d}^i) \quad
\text{ and } \quad \varphi^i(\hat{d}_-^{i+1}) =
\psi_{\bar{d}^{i+1}}^{-1} (\bar{d}^{i+1})
\end{equation}
for each $i \in I_\rho^-.$ Lemma \ref{lemma: inclusions of
A_x's}.(3) says that (*) implies $\varphi^i (\hat{x}) \in
\mathcal{A}_{\bar{e}^i}^0$ for $\hat{x} = \hat{d}_+^i,
\hat{d}_-^{i+1}.$ So, (*) could be redundantly rewritten as
\begin{equation}
\tag{**} \varphi^i(\hat{d}_+^i) = \psi_{\bar{d}^i} (\bar{d}^i),
\quad \varphi^i(\hat{d}_-^{i+1}) = \psi_{\bar{d}^{i+1}}^{-1}
(\bar{d}^{i+1}), \quad \text{and } \varphi^i (\hat{x}) \in
\mathcal{A}_{\bar{e}^i}^0
\end{equation}
for $\hat{x} = \hat{d}_+^i, \hat{d}_-^{i+1}.$ And,
$(\psi_{\bar{d}^i})_{i \in I_\rho}$ satisfies Condition E3. if and
only if
\begin{equation}
\tag{***} (\psi_{\bar{d}^i})_{i \in I_\rho} ~ \in ~ \bar{A}_{G_\chi}
(\R^2/\Lambda, V_B).
\end{equation}
Next, we deal with $\psi_{\bar{x}}$'s in $\Psi -
(\psi_{\bar{d}^i})_{i \in I_\rho},$ i.e. $\bar{x}$'s in $(
\bar{d}^i, \bar{d}^{i+1} )$ for some $i \in I_\rho^-.$ They satisfy
Condition E2. if and only if $\psi_{\bar{x}}(\bar{x}) = \varphi^i
(\hat{x}_+)$ for each $\bar{x}$ and $i$ such that $\bar{x} \in
(\bar{d}^i, \bar{d}^{i+1}).$ And, this is satisfied if and only if
$\varphi^i (\hat{x}_+) \in \mathcal{A}_{\bar{x}}^0 =
\mathcal{A}_{\bar{e}^i}^0$ and we have chosen $\psi_{\bar{x}}$'s
such that $\psi_{\bar{x}}(\bar{x}) = \varphi^i (\hat{x}_+)$ for each
$i,$ $\bar{x}.$ In summary, three conditions of this, (**), (***)
are equivalent conditions for $\Psi$ to be determined by
$\Phi_{\hat{D}_\rho}.$ Therefore, we obtain a proof.
\end{proof}

By using this corollary, we would show nonemptiness of
$\Omega_{\hat{D}_\rho, (W_{d^i})_{i \in I_\rho^+}}.$ For this, we
need a lemma.

\begin{lemma}
 \label{lemma: existence of barA}
For each $(W_{d^i})_{i \in I_\rho^+} \in A_{G_\chi} (\R^2/\Lambda,
\chi),$ if we put
\begin{equation*}
V_B = G_\chi \times_{(G_\chi)_{d^{-1}}} W_{d^{-1}}, \quad F_{V_B} =
G_\chi \times_{(G_\chi)_{d^{-1}}} ( |\bar{f}^{-1}| \times W_{d^{-1}}
),
\end{equation*}
then we can pick an element $(\psi_{\bar{d}^i})_{i \in I_\rho}$ in
$\bar{A}_{G_\chi} (\R^2/\Lambda, V_B)$ which determines
$(W_{d^i})_{i \in I_\rho^+}.$
\end{lemma}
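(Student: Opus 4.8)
The plan is to build the tuple $(\psi_{\bar{d}^i})_{i \in I_\rho}$ one $G_\chi$-orbit of indices at a time, producing each individual $\psi_{\bar{d}^i}$ from the surjectivity half of Proposition \ref{proposition: bijectivity with extensions} and then transporting it across its orbit by the $G_\chi$-action on saturations. First I would fix, for each $i \in I_\rho$, the data $N_2 = (G_\chi)_{d^i}$, the base point $\bar{d}^i = \bar{d}_0^i \in |\bar{f}^{-1}|$ with stabilizer $N_1 = (G_\chi)_{\bar{d}^i}$, the kernel $N_0$ of the $N_2$-action on $\bar{\mathbf{d}}^i := \pi^{-1}(d^i)$, and $m = |\bar{\mathbf{d}}^i|$, and set $F = (\res_{N_2}^{G_\chi} F_{V_B})|_{\bar{\mathbf{d}}^i}$. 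Note that $N_1 = (N_2)_{\bar{d}^i}$, since fixing $\bar{d}^i$ in $|\lineK_\rho|$ forces fixing its image $d^i$.

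The crucial preliminary step is to recognize $W_{d^i}$ as an element of $\ext_{N_1}^{N_2} F_{\bar{d}^i}$. Here I would use the computation from Section \ref{section: equivariant simplicial complex} that $(G_\chi)_{C(\bar{d}^i)} = (G_\chi)_{\bar{d}^i} = N_1$ (coming from $R_{C(\bar{x})} = R_{\bar{x}} \subset R_{b(\bar{f}^{-1})}$), together with the fact that, by definition of $F_{V_B} = G_\chi \times_{(G_\chi)_{d^{-1}}} (|\bar{f}^{-1}| \times W_{d^{-1}})$ and $N_1 \subset (G_\chi)_{d^{-1}}$, the fiber $F_{\bar{d}^i}$ is $\res_{N_1}^{(G_\chi)_{d^{-1}}} W_{d^{-1}}$ as an $N_1$-representation. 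Condition (4) of Definition \ref{definition: A_G} then reads exactly $\res_{N_1}^{N_2} W_{d^i} \cong F_{\bar{d}^i}$, i.e. $W_{d^i} \in \ext_{N_1}^{N_2} F_{\bar{d}^i}$.

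With this identification in hand, for each $G_\chi$-orbit of indices in $I_\rho$ I pick a representative $i_0$ and apply Proposition \ref{proposition: bijectivity with extensions} to obtain $\psi_{\bar{d}^{i_0}} \in \mathcal{A}_{\bar{d}^{i_0}}$ with $F/\psi_{\bar{d}^{i_0}} \cong W_{d^{i_0}}$. For any other index $i'$ with $d^{i'} = g\, d^{i_0}$ I set $\bar{\psi}_{d^{i'}} = g \cdot \bar{\psi}_{d^{i_0}}$; this is well defined because, as recorded just before Lemma \ref{lemma: conjugate pointwise clutching}, $g \cdot \bar{\psi}_x$ depends only on $gx$, and it lies in $\bar{\mathcal{A}}_{d^{i'}}$ since $g\,\bar{\mathcal{A}}_x = \bar{\mathcal{A}}_{gx}$. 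Because $g \cdot (-)$ is a genuine (associative) action on saturations, the assignment is moreover consistent for every pair of indices in the orbit, so the resulting tuple satisfies the equivariance clause defining $\bar{A}_{G_\chi}(\R^2/\Lambda, V_B)$. Finally Lemma \ref{lemma: conjugate pointwise clutching} gives $F/\psi_{\bar{d}^{i'}} \cong {}^g\!\left(F/\psi_{\bar{d}^{i_0}}\right) \cong {}^g W_{d^{i_0}}$, which by Condition (3) of Definition \ref{definition: A_G} is isomorphic to $W_{d^{i'}}$; hence $(\psi_{\bar{d}^i})_{i \in I_\rho}$ determines $(W_{d^i})_{i \in I_\rho^+}$.

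The main obstacle will be verifying the hypothesis of Proposition \ref{proposition: bijectivity with extensions} at every $d^i$: the proposition as stated needs $N_2$ to act transitively on $\bar{\mathbf{d}}^i$, whereas this can fail (for instance at barycenters with trivial isotropy, or at vertices of $\Z_4$- and $\D_4$-type). Here I would run through the points $d^i$ case by case using Tables \ref{table: isotropy of vertex} and \ref{table: isotropy of edge}, checking that each falls under either transitivity or one of the two supplementary situations collected in the Remark following Proposition \ref{proposition: bijectivity with extensions} ($N_2 = N_1 = N_0$ with $m = 2, 4$, or $N_2/N_0 \cong \Z_2$ acting freely with $m = 4$), for which extension-bijectivity is established in Lemma \ref{lemma: pointwise clutching for m=2 nontransitive} and Propositions \ref{proposition: psi for Z_2}, \ref{proposition: psi for trivial group}. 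I would also note that Condition (5) of Definition \ref{definition: A_G} plays no role here, since $\bar{A}_{G_\chi}$ constrains only the pointwise data and their $G_\chi$-equivariance, not the gluing along edges.
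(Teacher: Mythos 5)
Your proposal is correct and follows essentially the same route as the paper's proof: identify $W_{d^i}$ as an $N_2$-extension of the fiber via Definition \ref{definition: A_G}.(4), produce $\psi_{\bar{d}^i}$ from Proposition \ref{proposition: bijectivity with extensions}, and transport across each orbit using Lemma \ref{lemma: conjugate pointwise clutching} together with Definition \ref{definition: A_G}.(3). Your explicit flagging of the transitivity hypothesis is a fair point of care; the paper covers exactly those non-transitive cases in the Remark following Proposition \ref{proposition: bijectivity with extensions}, which is what licenses the application there as well.
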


\begin{proof}
For each $i \in I_\rho,$ put $\bar{\mathbf{x}} = \pi^{-1}(d^i) = \{
\bar{x}_j | \bar{x}_j = \bar{d}_j^i \text{ for } j \in \Z_m \}$ for
$m = j_\rho$ or 2. Put $F_i = \big( \res_{(G_\chi)_{d^i}}^{G_\chi}
F_{V_B} \big) |_{\bar{\mathbf{x}}},$ and $N_2 = (G_\chi)_{d^i},$
$N_1 = (G_\chi)_{C(\bar{d}^i)},$ $N_0 = H.$ Since $W_{d^{-1}}$ is
$\chi$-isotypical and $G_\chi$ fixes $\chi,$ $(\res_{N_0}^{N_2} F_i)
|_{\bar{x}_j}$'s are all isomorphic regardless of $j.$ So, the
$(G_\chi)_{d^i}$-bundle $F_i$ satisfies the assumption on $F$ in
Section \ref{section: pointwise clutching map}. Note that
\begin{equation*}
(F_i)_{\bar{x}_0} \cong
\res_{(G_\chi)_{C(\bar{d}^i)}}^{(G_\chi)_{d^{-1}}} W_{d^{-1}}
\end{equation*}
because $(G_\chi)_{C(\bar{d}^i)} = (G_\chi)_{\bar{x}_0}.$ This
implies
\begin{equation*}
(F_i)_{\bar{x}_0} \cong
\res_{(G_\chi)_{C(\bar{d}^i)}}^{(G_\chi)_{d^i}} W_{d^i}
\end{equation*}
by Definition \ref{definition: A_G}.(4), i.e. $W_{d^i}$ is a
$(G_\chi)_{d^i}$-extension of $(F_i)_{\bar{x}_0}.$ So, Proposition
\ref{proposition: bijectivity with extensions} says that there
exists an element $\psi_{\bar{d}^i}$ in $\mathcal{A}_{\bar{d}^i}$
which determines $W_{d^i}.$ If $d^{i^\prime} = g \cdot d^i$ for some
$g \in G_\chi,$ then the element $g \cdot \bar{\psi}_{d^i}$ in
$\bar{\mathcal{A}}_{\bar{d}_{i^\prime}^\rho}$ satisfies
\begin{equation*}
F_{i^\prime} \big/ g \cdot \bar{\psi}_{d^i} \cong ~ ^g W_{d^i} \cong
W_{d^{i^\prime}}
\end{equation*}
by Lemma \ref{lemma: conjugate pointwise clutching} and Definition
\ref{definition: A_G}.(3). So, we may assume that
$\bar{\psi}_{d_{i^\prime}^\rho} = g \cdot \bar{\psi}_{d^i}.$ Then,
$(\psi_{\bar{d}^i})_{i \in I_\rho}$ is in $\bar{A}_{G_\chi}
(\R^2/\Lambda, V_B)$ which determines $(W_{d^i})_{i \in I_\rho^+}.$
Therefore, we obtain a proof.
\end{proof}

\begin{proposition} \label{proposition: nonempty omega}
For each $(W_{d^i})_{i \in I_\rho^+} \in A_{G_\chi} (\R^2/\Lambda,
\chi),$ $\Omega_{\hat{D}_\rho, (W_{d^i})_{i \in I_\rho^+}}$ is
nonempty.
\end{proposition}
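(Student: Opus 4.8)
The plan is to combine Lemma \ref{lemma: existence of barA}, which supplies pointwise data at the points $\bar{d}^i$, with Corollary \ref{corollary: Omega}, which characterizes $\Omega_{\hat{D}_\rho, V_B}$ as a set of paths, thereby reducing nonemptiness to an interpolation problem edge by edge. Concretely, by Lemma \ref{lemma: existence of barA}, for the given $(W_{d^i})_{i \in I_\rho^+}$ and the stated $V_B, F_{V_B}$ I first fix an element $(\psi_{\bar{d}^i})_{i \in I_\rho} \in \bar{A}_{G_\chi}(\R^2/\Lambda, V_B)$ that determines $(W_{d^i})_{i \in I_\rho^+}$. By Corollary \ref{corollary: Omega}, it then suffices to produce, for each $i \in I_\rho^-$, a continuous map $\varphi^i : [\hat{d}_+^i, \hat{d}_-^{i+1}] \rightarrow \mathcal{A}_{\bar{e}^i}^0$ with $\varphi^i(\hat{d}_+^i) = \psi_{\bar{d}^i}(\bar{d}^i)$ and $\varphi^i(\hat{d}_-^{i+1}) = \psi_{\bar{d}^{i+1}}^{-1}(\bar{d}^{i+1})$. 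The disjoint union $\Phi_{\hat{D}_\rho} = \bigcup_{i \in I_\rho^-} \varphi^i$ will then lie in $\Omega_{\hat{D}_\rho, V_B}$ and, by construction of $(\psi_{\bar{d}^i})_{i \in I_\rho}$, will be carried by $p_\Omega \circ p_{\pi_0}$ to $(W_{d^i})_{i \in I_\rho^+}$, giving the nonemptiness of $\Omega_{\hat{D}_\rho, (W_{d^i})_{i \in I_\rho^+}}$.

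Next I would verify that both prescribed endpoint values genuinely lie in $\mathcal{A}_{\bar{e}^i}^0$. When $\bar{d}^i$ (respectively $\bar{d}^{i+1}$) is the barycenter of $\bar{e}^i$, this is immediate from Lemma \ref{lemma: inclusions of A_x's}.(1), since the barycenter is an interior point of $|\bar{e}^i|$. When instead it is the vertex $\bar{v}^i$ (respectively $\bar{v}^{i+1}$), the inclusions $\mathcal{A}_{\bar{v}^i}^0 \subset \mathcal{A}_{\bar{e}^i}^0$ and $\mathcal{A}_{\bar{v}^{i+1}}^{j_\rho-1} \subset \mathcal{A}_{\bar{e}^i}^1$ of Lemma \ref{lemma: inclusions of A_x's}.(3), together with parts (4) and (5) of that lemma, place the relevant values in $\mathcal{A}_{\bar{e}^i}^0$. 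So in all cases the two endpoints are well-defined points of $\mathcal{A}_{\bar{e}^i}^0$, and it remains to connect them by a path.

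The crux, and the step I expect to be the main obstacle, is showing that these two endpoints lie in the \emph{same} path component of $\mathcal{A}_{\bar{e}^i}^0$. Along the edge $\bar{e}^i$ the relevant finite set has $m = 2$, so Lemma \ref{lemma: pointwise clutching for m=2 nontransitive} identifies $\mathcal{A}_{\bar{e}^i}$ homeomorphically with $\mathcal{A}_{\bar{e}^i}^0$, and hence the path components of $\mathcal{A}_{\bar{e}^i}^0$ are classified, via Proposition \ref{proposition: bijectivity with extensions} (and its $m = 2$ extension noted in the subsequent remark), by the $(G_\chi)_{|e^i|}$-representation that the corresponding pointwise clutching map determines over the edge. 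By Lemma \ref{lemma: inclusions of A_x's}.(4) the representation $W_{d^i}$ determined by $\psi_{\bar{d}^i}$ restricts over $|e^i|$ to the representation determined by the edge-clutching map coming from $\psi_{\bar{d}^i}(\bar{d}^i)$, and symmetrically Lemma \ref{lemma: inclusions of A_x's}.(5) handles the $\bar{d}^{i+1}$ side. Now Definition \ref{definition: A_G}.(5) gives precisely $\res_{(G_\chi)_{|e^i|}}^{(G_\chi)_{d^i}} W_{d^i} \cong \res_{(G_\chi)_{|e^i|}}^{(G_\chi)_{d^{i+1}}} W_{d^{i+1}}$, so the two endpoints determine isomorphic extensions and therefore lie in one path component.

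Once this component-matching is in place, a connecting path $\varphi^i$ exists by definition of a path component, and the construction of $\Phi_{\hat{D}_\rho}$ is complete. The only nontrivial input is the compatibility supplied by Definition \ref{definition: A_G}.(5) combined with the classification of $\pi_0(\mathcal{A}_{\bar{e}^i})$; everything else is bookkeeping with the inclusions among the sets $\mathcal{A}_{\bar{x}}$ recorded in Lemma \ref{lemma: inclusions of A_x's}.
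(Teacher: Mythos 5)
Your proposal is correct and follows essentially the same route as the paper's proof: fix $(\psi_{\bar{d}^i})_{i \in I_\rho}$ via Lemma \ref{lemma: existence of barA}, use Lemma \ref{lemma: inclusions of A_x's} and Definition \ref{definition: A_G}.(5) together with Proposition \ref{proposition: bijectivity with extensions} to place the two endpoint values in one path component of $\mathcal{A}_{\bar{e}^i}^0,$ connect them by a path, and conclude with Corollary \ref{corollary: Omega}. The extra care you take in justifying the $\pi_0$-classification of $\mathcal{A}_{\bar{e}^i}$ via the $m=2$ cases is a reasonable elaboration of a step the paper leaves implicit, not a different argument.
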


\begin{proof}
By Lemma \ref{lemma: existence of barA}, we can pick an element
$(\psi_{\bar{d}^i})_{i \in I_\rho}$ in $\bar{A}_{G_\chi}
(\R^2/\Lambda, V_B)$ which determines $(W_{d^i})_{i \in I_\rho^+}.$
For $[\bar{d}^i, \bar{d}^{i+1}] \subset \bar{D}_\rho,$ Lemma
\ref{lemma: inclusions of A_x's} says that both $\psi_{\bar{d}^i}
(\bar{d}^i)$ and $\psi_{\bar{d}^{i+1}}^{-1} (\bar{d}^{i+1})$ are in
$(\mathcal{A}_{\bar{e}^i})^0$ which determine
$\res_{(G_\chi)_{|e^i|}}^{(G_\chi)_{d^i}} W_{d^i}$ and
$\res_{(G_\chi)_{|e^i|}}^{(G_\chi)_{d^{i+1}}} W_{d^{i+1}},$
respectively. Since these two are isomorphic by definition of
$A_{G_\chi} (\R^2/\Lambda, \chi),$ Proposition \ref{proposition:
bijectivity with extensions} says that $\psi_{\bar{d}^i}
(\bar{d}^i)$ and $\psi_{\bar{d}^{i+1}}^{-1} (\bar{d}^{i+1})$ are in
the same component of $(\mathcal{A}_{\bar{e}^i})^0.$ So, we can pick
a function $\varphi^i : [\hat{d}_+^i, \hat{d}_-^{i+1}] \rightarrow
(\mathcal{A}_{\bar{e}^i})^0 \subset \Iso (V_{\bar{f}^{-1}},
V_{\bar{f}^i})$ such that $\varphi^i (\hat{d}_+^i) =
\psi_{\bar{d}^i} (\bar{d}^i)$ and $\varphi^i (\hat{d}_-^{i+1}) =
\psi_{\bar{d}^{i+1}}^{-1} (\bar{d}^{i+1})$ for each $i \in
I_\rho^-.$ From this, $\Phi_{\hat{D}_\rho} = \cup_{i \in I_\rho^-}
\varphi^i$ is in $\Omega_{\hat{D}_\rho, V_B}$ by Corollary
\ref{corollary: Omega}. Therefore, we obtain a proof.
\end{proof}

%
%

\section{The case of $R/R_t \ne \langle \id \rangle, \D_1$
with $A\bar{c}+\bar{c}=\bar{l}_0$}
 \label{section: not id and D_1 cases}

Now, we are ready to calculate homotopy of equivariant clutching
maps.

\begin{theorem} \label{theorem: one point case}
If $\rho(G_\chi)$ is one in Table \ref{table: groups with one
element}, then $\pi_0 ( \Omega_{\hat{D}_\rho, (W_{d^i})_{i \in
I_\rho^+}} )$ is one point set for each element $(W_{d^i})_{i \in
I_\rho^+}$ $\in A_{G_\chi} (\R^2/\Lambda, \chi).$
\begin{table}[!ht]
{\footnotesize
\begin{tabular}{l|c}
$R/R_t$                       & $\Lambda_t$        \\
\hhline{=|=}
 $\D_{2,2}, \D_2, \D_4$                   & $\Lambda^\sq$     \\
\hline
 $\D_2, \D_{2,3}, \D_3, \D_6, \D_{3,2}$   & $\Lambda^\eq$     \\
\hline
 $\D_1$ with $A\bar{c}+\bar{c}=0,$                    & $\Lambda^\sq$     \\
 $\D_{1,4}$                               &
\end{tabular}}
\caption{\label{table: groups with one element} $\rho(G_\chi)$ such
that $\pi_0 ( \Omega_{\hat{D}_\rho, (W_{d^i})_{i \in I_\rho^+}} )$
consists of one element}
\end{table}
\end{theorem}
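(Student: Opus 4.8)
The plan is to show that $\Omega_{\hat{D}_\rho, (W_{d^i})_{i \in I_\rho^+}}$ is path-connected; since it is nonempty by Proposition \ref{proposition: nonempty omega}, this gives the claim. By Corollary \ref{corollary: Omega} an element of this space is a tuple $(\varphi^i)_{i \in I_\rho^-}$ of paths $\varphi^i \colon [\hat{d}_+^i, \hat{d}_-^{i+1}] \to \mathcal{A}_{\bar{e}^i}^0$ whose endpoints are prescribed by a compatible family $(\psi_{\bar{d}^i})_{i \in I_\rho} \in \bar{A}_{G_\chi} (\R^2/\Lambda, V_B)$ determining $(W_{d^i})_{i \in I_\rho^+}$. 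I would organise the argument around the evaluation map $\Omega_{\hat{D}_\rho, (W_{d^i})_{i \in I_\rho^+}} \to \mathcal{B}$ sending $(\varphi^i)$ to its endpoint family $(\psi_{\bar{d}^i})$, where $\mathcal{B}$ is the space of admissible families determining the fixed $(W_{d^i})$; the fibre over a point is the product over $i \in I_\rho^-$ of the spaces of paths in $\mathcal{A}_{\bar{e}^i}^0$ with fixed endpoints.

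First I would show that the base $\mathcal{B}$ is path-connected. For each orbit representative among the $d^i$, the elements of $\mathcal{A}_{\bar{d}^i}$ determining the prescribed $W_{d^i}$ form a single path component: this is Proposition \ref{proposition: bijectivity with extensions}, supplemented where the isotropy action on $\pi^{-1}(d^i)$ is not transitive by the corresponding results of Section \ref{section: pointwise clutching map}. Condition E3 of Theorem \ref{theorem: clutching condition} transports the data on a representative to its entire $G_\chi$-orbit, so a path in the representative's component lifts to a path in $\mathcal{B}$; hence $\mathcal{B}$ is connected.

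The heart of the matter is the fibre contribution, which through the long exact sequence of the evaluation fibration
\[
\pi_1(\mathcal{B}) \xrightarrow{\ \partial\ } \prod_{i \in I_\rho^-} \pi_1\big(\mathcal{A}_{\bar{e}^i}^0\big) \longrightarrow \pi_0\big(\Omega_{\hat{D}_\rho, (W_{d^i})_{i \in I_\rho^+}}\big) \longrightarrow \pi_0(\mathcal{B}) = *
\]
reduces the theorem to showing that $\partial$ is surjective; equivalently, for each edge $\bar{e}^i$ the loops coming from the endpoint data at $\bar{d}^i$ and $\bar{d}^{i+1}$ generate $\pi_1(\mathcal{A}_{\bar{e}^i}^0)$. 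This is exactly the hypothesis of Lemma \ref{lemma: relative homotopy}: with $X = \mathcal{A}_{\bar{e}^i}^0$, with $A$ the image of the admissible data at the left endpoint and $B$ the image at the right endpoint, I must verify $\imath_{0*}\pi_1(A) + \imath_{1*}\pi_1(B) = \pi_1(X)$, so that the relative homotopy set $[[0,1],0,1;X,A,B]$ is a single point.

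I expect this $\pi_1$-computation to be the main obstacle, and it must be carried out case by case. Using Lemma \ref{lemma: topology of mathcal A} to identify each path component with a homogeneous space $\Iso_{N_1}(F/\psi)/\Iso_{N_2}(F/\psi)$, and Lemma \ref{lemma: reduce to smaller matrix} to realise the resulting fundamental groups as subgroups of $\Z$ via multiplication by $\chi(\id)$, one finds $\pi_1(\mathcal{A}_{\bar{e}^i}^0) \cong \Z$ coming from $H$-isomorphisms along the edge. The decisive point, and the reason the groups in Table \ref{table: groups with one element} are precisely the dihedral ones (and why $\D_1$ with $A\bar{c}+\bar{c}=\bar{l}_0$ is excluded), is that in each listed case at least one endpoint vertex carries a reflection in its isotropy group, read off from Tables \ref{table: isotropy of vertex} and \ref{table: isotropy of edge}; this reflection forces the corresponding map $\imath_{0*}$ or $\imath_{1*}$ to surject onto $\pi_1(X) \cong \Z$. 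Verifying this surjectivity for every edge of $\bar{D}_\rho$, while accounting for the orbit-identifications recorded in the $\sim$ column of Table \ref{table: one-dimensional fundamental domain} so that coupled endpoints still supply a free generator, completes the proof.
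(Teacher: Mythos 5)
Your high-level architecture (first connect the endpoint data using Proposition \ref{proposition: bijectivity with extensions}, then analyse the paths over each edge with fixed or moving endpoints) does parallel the paper's proof, but the $\pi_1$-computation you call ``the heart of the matter'' is aimed at the wrong target, and where it is aimed at the right target it is only asserted. First, your blanket claim that $\pi_1(\mathcal{A}_{\bar{e}^i}^0)\cong\Z$ is false for most of the edges in question: for the six cases $\D_{2,2},\D_2,\D_4$ ($\Lambda^\sq$) and $\D_3,\D_6,\D_{3,2}$ ($\Lambda^\eq$), \emph{every} edge of $\bar{D}_\rho$ has $R_{b(e^i)}$ equal to a reflection $\D_{1,l}$ fixing $|e^i|$ (Table \ref{table: isotropy of edge}, Lemma \ref{lemma: isotropy at a point in an edge}), so the group acting on the two-point fibre $\bar{\mathbf{x}}^\prime$ is transitive and Proposition \ref{proposition: psi for cyclic}.(3) makes every path component of $\mathcal{A}_{\bar{e}^i}$ \emph{simply connected}. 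For these cases the theorem follows from that simple connectivity alone (this is exactly the paper's ``first category''); no reflection at a vertex enters, and there is no $\Z$ to surject onto. The reflections that matter here sit in the edge stabilizers, not the vertex stabilizers, and their effect is to kill $\pi_1$ of the edge spaces rather than to enlarge the image of $\imath_{0*}$ or $\imath_{1*}$.

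Second, for the cases where a genuine $\Z$ does appear ($\D_2,\D_{2,3}$ with $\Lambda^\eq$, where only half of an edge with $R_{b(e^i)}=\Z_2$ lies in $\bar{D}_\rho$, and $\D_1$ with $A\bar{c}+\bar{c}=0$, $\D_{1,4}$, where $\bar{e}^2$ has trivial stabilizer), the surjectivity you need is precisely the hard content, and ``the endpoint vertex carries a reflection'' does not establish it. The paper proves it by an explicit homotopy: it lets the endpoint value at the offending vertex traverse $\varphi^2$ backwards and then $\varphi^{\prime 2}$ forwards, verifies via Proposition \ref{proposition: psi for Z_2}.(1) (with a specific choice of indices $j,j^\prime,j^{\prime\prime}$) that this motion extends to a path $\psi_{\bar{d}^3}^t$ inside $\mathcal{A}_{\bar{v}^3}$, and propagates it to the orbit-equivalent vertex $v^2\sim v^3$ so that the whole family stays in $\bar{A}_{G_\chi}(\R^2/\Lambda,V_B)$. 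That this works depends on \emph{which} coordinate of the vertex space couples to which edge: in the $\Z_2$ case of Theorem \ref{theorem: by isotropy and chern} the vertex spaces also have $\pi_1\cong\Z$ by Proposition \ref{proposition: psi for Z_2}, yet $\pi_0(\Omega_{\hat{D}_\rho,(W_{d^i})})\cong\Z$ because the vertex loops feeding the middle edge factor through the simply connected factor $(\mathcal{A}_{\bar{v}^2})_{1,3}$. So a criterion phrased only in terms of the presence of a reflection cannot distinguish the two outcomes without the coordinate bookkeeping you omit. (You also treat the evaluation map $\Omega\to\mathcal{B}$ as a fibration without justification; the paper avoids this by constructing the homotopies directly.)
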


\begin{proof}
We divide these groups in the table into two categories. The first
are the following six kinds of groups: $\D_{2,2}, \D_2, \D_4$ when
$\Lambda_t = \Lambda^\sq,$ and $\D_3, \D_6, \D_{3,2}$ when
$\Lambda_t = \Lambda^\eq.$ The second are the remaining. We deal
with two categories one after another. When $R=\rho(G_\chi)$ is
given, put
\begin{equation*}
V_B = G_\chi \times_{(G_\chi)_{d^{-1}}} W_{d^{-1}}, \quad F_{V_B} =
G_\chi \times_{(G_\chi)_{d^{-1}}} ( |\bar{f}^{-1}| \times W_{d^{-1}}
)
\end{equation*}
for each $(W_{d^i})_{i \in I_\rho^+}$ $\in A_{G_\chi} (\R^2/\Lambda,
\chi).$

Assume that $R=\rho(G_\chi)$ is in the first category. By Table
\ref{table: one-dimensional fundamental domain}, $\bar{d}^i$'s are
all vertices. For two arbitrary $\Phi_{\hat{D}_\rho} = \bigcup_{i
\in I_\rho^-} \varphi^i$ and $\Phi_{\hat{D}_\rho}^\prime =
\bigcup_{i \in I_\rho^-} \varphi^{\prime i}$ in
$\Omega_{\hat{D}_\rho, (W_{d^i})_{i \in I_\rho^+}},$ let
$(\psi_{\bar{d}^i})_{i \in I_\rho}$ and
$(\psi_{\bar{d}^i}^\prime)_{i \in I_\rho}$ in $\bar{A}_{G_\chi}
(\R^2/\Lambda, V_B)$ be two elements determined by
$\Phi_{\hat{D}_\rho}$ and $\Phi_{\hat{D}_\rho}^\prime,$
respectively. We would construct a homotopy connecting
$\Phi_{\hat{D}_\rho}$ and $\Phi_{\hat{D}_\rho}^\prime$ in
$\Omega_{\hat{D}_\rho, (W_{d^i})_{i \in I_\rho^+}}.$ First, we show
that we may assume that $(\psi_{\bar{d}^i}^\prime)_{i \in I_\rho} =
(\psi_{\bar{d}^i})_{i \in I_\rho}.$ Since $\psi_{\bar{d}^i}$ and
$\psi_{\bar{d}^i}^\prime$ for $i \in I_\rho$ determine the same
representation $W_{d^i},$ these two are in the same path component
of $\mathcal{A}_{\bar{d}^i}$ by Proposition \ref{proposition:
bijectivity with extensions}. Take paths $\gamma^i : [0,1]
\rightarrow \mathcal{A}_{\bar{d}^i}$ for $i \in I_\rho$ such that
$\gamma^i(0) = \psi_{\bar{d}^i}^\prime$ and $\gamma^i(1) =
\psi_{\bar{d}^i}.$ If $d^{i^\prime} = g \cdot d^i$ for some $g \in
G_\chi,$ then $g \cdot \gamma^i$ satisfies $(g \cdot \gamma^i)(0) =
\psi_{\bar{d}^{i^\prime}}^\prime$ and $(g \cdot \gamma^i)(1) =
\psi_{\bar{d}^{i^\prime}}$ by Definition \ref{definition: barA_G}.
So, we may assume that $\gamma^{i^\prime} = g \cdot \gamma^i$ if
$d^{i^\prime} = g \cdot d^i.$ Here, these $\gamma^i$'s are well
defined because elements of $\mathcal{A}_{\bar{d}^i}$ are fixed by
$(G_\chi)_{d^i}.$ Recall that the parametrization on $\hat{e}^i =
[\hat{v}_+^i, \hat{v}_-^{i+1}]$ by $s \in [0,1]$ satisfies
$\hat{v}_+^i = 0,$ $b(\hat{e}^i) = 1/2,$ $\hat{v}_-^{i+1} = 1$ for
$i \in I_\rho^-.$ Then, we construct a homotopy $L^i (s,t) :
[\hat{d}_+^i, \hat{d}_-^{i+1}] \times [0,1] \rightarrow
\mathcal{A}_{\bar{e}^i}^0$ for each $i \in I_\rho^-$ as
\begin{equation*}
  \begin{array}{ll}
L^i (s,t) = \gamma^i ( (1-3s) t ) (\bar{d}^i) &
\text{for } s \in [0, \frac 1 3], \\
L^i (s,t) = \varphi^{\prime i} (3s-1) & \text{for } s \in [ \frac 1
3, \frac 2 3 ], \\
L^i (s,t) = \gamma^{i+1} ((3s-2)t)^{-1} (\bar{d}^{i+1})
& \text{for } s \in [\frac 2 3, 1]. \\
  \end{array}
\end{equation*}
If we put $L = \cup_{i \in I_\rho^-} L^i,$ then $L_t$ for each $t
\in [0,1]$ is in $\Omega_{\hat{D}_\rho, (W_{d^i})_{i \in I_\rho^+}}$
by Corollary \ref{corollary: Omega}, and $L$ connects
$\Phi_{\hat{D}_\rho}^\prime$ with $L_1$ in $\Omega_{\hat{D}_\rho,
(W_{d^i})_{i \in I_\rho^+}}$ which determines $(\psi_{\bar{d}^i})_{i
\in I_\rho}.$ So, we may put $\Phi_{\hat{D}_\rho}^\prime = L_1,$ and
we may assume that $\Phi_{\hat{D}_\rho}$ and
$\Phi_{\hat{D}_\rho}^\prime$ determine the same element
$(\psi_{\bar{d}^i})_{i \in I_\rho}$ in $\bar{A}_{G_\chi}
(\R^2/\Lambda, V_B).$

As in the proof of Proposition \ref{proposition: nonempty omega},
$\psi_{\bar{d}^i} (\bar{d}^i)$ and $\psi_{\bar{d}^{i+1}}^{-1}
(\bar{d}^{i+1})$ are in the same component
$(\mathcal{A}_{\bar{e}^i})_{\psi_{\bar{e}^i}}^0$ for some element
$\psi_{\bar{e}^i}$ in $\mathcal{A}_{\bar{e}^i}.$ Since $\varphi^i$
and $\varphi^{\prime i}$ have values in $\mathcal{A}_{\bar{e}^i}^0$
by Corollary \ref{corollary: Omega} and they connect
$\psi_{\bar{d}^i} (\bar{d}^i)$ with $\psi_{\bar{d}^{i+1}}^{-1}
(\bar{d}^{i+1}),$ $\varphi^i$ and $\varphi^{\prime i}$ have values
in $(\mathcal{A}_{\bar{e}^i})_{\psi_{\bar{e}^i}}^0.$ Here, note that
$(\mathcal{A}_{\bar{e}^i})_{\psi_{\bar{e}^i}}$ is simply connected
by Table \ref{table: isotropy of edge}, Lemma \ref{lemma: isotropy
at a point in an edge}, and Proposition \ref{proposition: psi for
cyclic}.(3). By simply connectedness, we can obtain a homotopy
$L^{\prime i} (s,t) : [\hat{d}_+^i, \hat{d}_-^{i+1}] \times [0,1]
\rightarrow \mathcal{A}_{\bar{e}^i}^0$ for $i \in I_\rho^-$ as
\begin{equation*}
\begin{array}{lll}
L^{\prime i} (s,t) \in
(\mathcal{A}_{\bar{e}^i})_{\psi_{\bar{e}^i}}^0, \quad & L^{\prime i}
(0,t) = \psi_{\bar{d}^i} (\bar{d}^i), \quad & L^{\prime i} (1,t) =
\psi_{\bar{d}^{i+1}}^{-1} (\bar{d}^{i+1}), \\
L^{\prime i} (s,0) = \varphi^i (s), \quad & L^{\prime i} (s,1) =
\varphi^{\prime i} (s) \quad &
\end{array}
\end{equation*}
for $s,t \in [0,1].$ Then, $L^\prime = \cup_{i \in I_\rho^-}
L^{\prime i}$ connects $\Phi_{\hat{D}_\rho}$ and
$\Phi_{\hat{D}_\rho}^\prime$ in $\Omega_{\hat{D}_\rho, (W_{d^i})_{i
\in I_\rho^+}}$ by Corollary \ref{corollary: Omega}. Therefore, we
obtain a proof for the first category. Here, we remark that simply
connectedness is critical in obtaining $L^\prime.$

Next, we deal with $R=\rho(G_\chi)$'s in the second category. Assume
that $R/R_t = \D_2$ with $\Lambda_\eq.$ The calculation for this
case also applies for other groups in the second category. In this
case, we have
\begin{equation*}
\bar{D}_\rho = [b(\bar{e}_2), \bar{v}_3, \bar{v}_0, \bar{v}_1,
\bar{v}_2] \text{ with } v^2 \sim v^3, \quad I_\rho = \{ 2, 3, 4, 5,
6 \},
\end{equation*}
and
\begin{equation*}
\begin{array}{lll}
(G_\chi)_{b(e^0)}/H = \D_{1, 2}, & (G_\chi)_{b(e^1)}/H = \D_1, &
(G_\chi)_{b(e^2)}/H = \Z_2, \\
(G_\chi)_{b(e^3)}/H = \D_1, & (G_\chi)_{v^0}/H = \D_2, &
(G_\chi)_{v^1}/H = \D_2, \\ (G_\chi)_{v^2}/H = \D_1, &
(G_\chi)_{v^3}/H = \D_1 &
\end{array}
\end{equation*}
by Table \ref{table: one-dimensional fundamental domain} and tables
in Section \ref{section: equivariant simplicial complex}. From
these, we especially obtain that path components of
$\mathcal{A}_{\bar{e}^i}$ for $i=0,1,3$ are all simply connected by
Proposition \ref{proposition: psi for cyclic}.(3) so that we would
focus on $[b(\bar{e}_2), \bar{v}_3]$ to calculate
$\Omega_{\hat{D}_\rho, (W_{d^i})_{i \in I_\rho^+}}.$ As in the proof
for the first category, we may assume that two arbitrary
$\Phi_{\hat{D}_\rho} = \bigcup_{i \in I_\rho^-} \varphi^i$ and
$\Phi_{\hat{D}_\rho}^\prime = \bigcup_{i \in I_\rho^-}
\varphi^{\prime i}$ in $\Omega_{\hat{D}_\rho, (W_{d^i})_{i \in
I_\rho^+}}$ determine the same element $(\psi_{\bar{d}^i})_{i \in
I_\rho}$ in $\bar{A}_{G_\chi} (\R^2/\Lambda, V_B).$ We would
construct a homotopy $L^{\prime i} (s, t) : [\hat{d}_+^i,
\hat{d}_-^{i+1}] \times [0,1] \rightarrow \mathcal{A}_{\bar{e}^i}^0$
for $i \in I_\rho^-$ such that $L^\prime = \cup L^{\prime i}$
satisfies
\begin{enumerate}
  \item $L_t^\prime \in \Omega_{\hat{D}_\rho, (W_{d^i})_{i \in I_\rho^+}},$
  \item $L_0^\prime = \Phi_{\hat{D}_\rho}$
  and $L_1^\prime = \Phi_{\hat{D}_\rho}^\prime.$
\end{enumerate}
First, we define $L^{\prime 2} (s,t) :
\partial([\hat{d}_+^2, \hat{d}_-^{3}] \times [0,1])
\rightarrow \mathcal{A}_{\bar{e}^2}^0$ as
\begin{equation*}
\begin{array}{ll}
L^{\prime 2} (1 / 2, t) = \psi_{\bar{d}^2} (\bar{d}^2)  & \text{for
} t \in [0, 1],    \\
L^{\prime 2} (s, 0)     = \varphi^2 (s)                 & \text{for
} s \in [1/2, 1],  \\
L^{\prime 2} (s, 1)     = \varphi^{\prime 2} (s)        & \text{for
} s \in [1/2, 1],  \\
L^{\prime 2} (1, t)     = \varphi^2 (1-t)               & \text{for
} t \in [0, 1/2],
\\
L^{\prime 2} (1, t)     = \varphi^{\prime 2} (t)                 &
\text{for } t \in [1/2,1].
\end{array}
\end{equation*}
Then, we can extend $L^{\prime 2}$ to $[\hat{d}_+^2, \hat{d}_-^{3}]
\times [0,1]$ because $L^{\prime 2}$ on the boundary is trivial in
$\pi_1 (\mathcal{A}_{\bar{e}^2}^0).$ Here, note that $L^{\prime
2}(1,t)^{-1}$ is in $\mathcal{A}_{\bar{v}_3}^3 = \Iso_H
(V_{\bar{f}^2}, V_{\bar{f}^{-1}}).$ If we apply Proposition
\ref{proposition: psi for Z_2}.(1) with $F =
F_{V_B}|_{\pi^{-1}(v^3)}$ and $j=1, j^\prime=0, j^{\prime
\prime}=3,$ then $L^{\prime 2} (1, t)$ and $\res_{0,1}
\psi_{\bar{d}^3}$ give the unique element in
$\mathcal{A}_{\bar{v}_3}$ for each $t \in [0,1],$ say
$\psi_{\bar{d}^3}^t.$ If we put $\psi_{\bar{d}^6}^t = g \cdot
\psi_{\bar{d}^3}^t$ where $g v^3 = v^2$ for some $g \in G_\chi,$
then five maps $\psi_{\bar{d}^2},$ $\psi_{\bar{d}^3}^t,$
$\psi_{\bar{d}^4},$ $\psi_{\bar{d}^5},$ $\psi_{\bar{d}^6}^t$ consist
of an element of $\bar{A}_{G_\chi} (\R^2/\Lambda, V_B),$ call it
$\Psi^t,$ which determines $(W_{d^i})_{i \in I_\rho^+}$ regardless
of $t.$ With these, define other $L^{\prime i}$'s on
$\partial([\hat{d}_+^i, \hat{d}_-^{i+1}] \times [0,1])$ as
\begin{align*}
L^{\prime i} (s, 0)     &= \varphi^i (s),      \\
L^{\prime i} (s, 1)     &= \varphi^{\prime i} (s)      \\
\end{align*}
for $i = 3, 4, 5$ and
\begin{align*}
L^{\prime 3} (0, t)     &= \psi_{\bar{d}^3}^t (\bar{d}^3),     \\
L^{\prime 3} (1, t)     &= \psi_{\bar{d}^4}^{-1} (\bar{d}^4),     \\
L^{\prime 4} (0, t)     &= \psi_{\bar{d}^4} (\bar{d}^4),     \\
L^{\prime 4} (1, t)     &= \psi_{\bar{d}^5}^{-1} (\bar{d}^5),     \\
L^{\prime 5} (0, t)     &= \psi_{\bar{d}^5} (\bar{d}^5),     \\
L^{\prime 5} (1, t)     &= \psi_{\bar{d}^6}^{t ~ -1} (\bar{d}^6)
\end{align*}
for $s, t \in [0,1].$ We can extend these $L^{\prime i}$'s to
$[\hat{d}_+^i, \hat{d}_-^{i+1}] \times [0,1]$ because path
components of $\mathcal{A}_{\bar{e}^i}$ for $i=0,1,3$ are all simply
connected. Then, it can be checked that $L_t^\prime$ determines
$\Psi^t$ and $L_t^\prime$ is in $\Omega_{\hat{D}_\rho, (W_{d^i})_{i
\in I_\rho^+}}$ by Corollary \ref{corollary: Omega}. So, $L^\prime$
is a wanted homotopy and we obtain a proof.

\end{proof}

This theorem gives a proof of Theorem \ref{main: only by isotropy}.
\begin{proof}[Proof of Theorem \ref{main: only by isotropy}]
By Theorem \ref{theorem: one point case} and Lemma \ref{lemma: lemma
for isomorphism}.(2), we obtain a proof.
\end{proof}

\begin{theorem} \label{theorem: by isotropy and chern}
If $\rho(G_\chi)$ is one in Table \ref{table: orientable rho(G)},
then
\begin{equation*}
p_\vect \times c_1 : \Vect_{G_\chi} (\R^2/\Lambda, \chi) \rightarrow
A_{G_\chi} ( \R^2/\Lambda, \chi ) \times H^2 (\R^2/\Lambda, \chi)
\end{equation*}
is injective. For each element $(W_{d^i})_{i \in I_\rho^+}$ in
$A_{G_\chi} ( \R^2/\Lambda, \chi ),$ the set of the first Chern
classes of bundles in the preimage of the element under $p_{\vect}$
is equal to the set $\{ \chi( \id ) ( l_\rho k + k_0 ) ~ | ~ k \in
\Z \}$ where $k_0$ is dependent on $(W_{d^i})_{i \in I_\rho^+}.$

\begin{table}[!ht]
{\footnotesize
\begin{tabular}{l|c}
$R/R_t$                       & $\Lambda_t$        \\
\hhline{=|=}
 $\Z_2, \Z_4$        & $\Lambda^\sq$     \\
\hline
 $\Z_6, \Z_3$                             & $\Lambda^\eq$
\end{tabular}}
\caption{\label{table: orientable rho(G)} $R$'s with nontrivial
cyclic $R/R_t$ }
\end{table}
\end{theorem}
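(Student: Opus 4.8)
The plan is to deduce the statement from Lemma \ref{lemma: lemma for isomorphism}.(1), so that everything reduces to analysing $\pi_0 ( \Omega_{\hat{D}_\rho, (W_{d^i})_{i \in I_\rho^+}} )$ together with the Chern class on it. Nonemptiness of each $\Omega_{\hat{D}_\rho, (W_{d^i})_{i \in I_\rho^+}}$ is already granted by Proposition \ref{proposition: nonempty omega}, so the two things I would actually establish are that $c_1 : \pi_0 ( \Omega_{\hat{D}_\rho, (W_{d^i})_{i \in I_\rho^+}} ) \rightarrow H^2(\R^2/\Lambda)$ is injective and that its image is the progression $\{ \chi(\id)(l_\rho k + k_0) \mid k \in \Z \}$. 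Throughout I would fix $(W_{d^i})_{i \in I_\rho^+}$ together with the associated $V_B, F_{V_B}$, and via Corollary \ref{corollary: Omega} describe an element of $\Omega_{\hat{D}_\rho, V_B}$ as a choice of $(\psi_{\bar{d}^i})_{i \in I_\rho} \in \bar{A}_{G_\chi}(\R^2/\Lambda, V_B)$ together with paths $\varphi^i$ in $\mathcal{A}_{\bar{e}^i}^0$ joining the prescribed endpoint values.

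The crucial difference from the dihedral cases of Theorem \ref{theorem: one point case} is the homotopy type of the edge spaces. Since $R/R_t = \Z_n$ contains no reflection, the edges occurring in $\bar{D}_\rho$ that meet only trivial isotropy have $(G_\chi)_{|e^i|} = H$ over a generic edge point, so Lemma \ref{lemma: pointwise clutching for m=2 nontransitive} identifies $\mathcal{A}_{\bar{e}^i}$ with $\Iso_H(V_{\bar{f}^{-1}}, V_{\bar{f}^i})$, whose fundamental group is $\Z$. By contrast, the vertices carrying the full isotropy $R_{v^i} = \Z_n$ have simply connected path components in $\mathcal{A}_{\bar{v}^i}$ by Proposition \ref{proposition: psi for cyclic}.(3). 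Consequently, with $(\psi_{\bar{d}^i})_{i \in I_\rho}$ fixed, I would first reduce, exactly as in the first category of Theorem \ref{theorem: one point case} by sliding along equivariantly chosen paths $\gamma^i$, to comparing two maps determining the same $(\psi_{\bar{d}^i})_{i \in I_\rho}$; what now survives this reduction is a residual winding number, because the loops in the $\mathcal{A}_{\bar{e}^i}^0$ can no longer all be contracted.

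To organize the surviving data I would apply Lemma \ref{lemma: relative homotopy} edge by edge, with $X$ the relevant component of $\mathcal{A}_{\bar{e}^i}^0$ and $A, B$ the images of the vertex components at its two ends, so that the homotopy classes of $\varphi^i$ rel endpoints form $\pi_1(X)/(\im A + \im B)$. Assembling these over $\bar{D}_\rho$ and imposing the equivariance identifications $\bar{\psi}_{d^{i^\prime}} = g \cdot \bar{\psi}_{d^i}$, which force the windings on identified edges to agree, together with the vertex matchings, which allow winding to be transferred from one edge to an adjacent one, I expect the total to collapse to a single copy of $\Z$. This would yield $\pi_0 ( \Omega_{\hat{D}_\rho, (W_{d^i})_{i \in I_\rho^+}} ) \cong \Z$.

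Finally I would evaluate $c_1$ on this $\Z$. The Chern class of $E^\Phi$ is the inequivariant Chern class, computed from the winding of the underlying clutching map over all of $\R^2/\Lambda$: equivariance replicates the winding over each of the $l_\rho = |\rho(G_\chi)|$ translates of the fundamental domain, and Lemma \ref{lemma: reduce to smaller matrix} converts a unit loop in $\Iso_H$ into multiplication by $\chi(\id)$ in $\pi_1(\Iso)$, so together these produce the step $\chi(\id) l_\rho$ between consecutive classes, while the constant $k_0$ is the Chern class of a chosen bundle in the preimage, depending on $(W_{d^i})_{i \in I_\rho^+}$. Since $c_1$ is then an affine injection $\Z \hookrightarrow H^2(\R^2/\Lambda) \cong \Z$ with image the stated progression, injectivity holds and Lemma \ref{lemma: lemma for isomorphism}.(1) gives injectivity of $p_\vect \times c_1$. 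The main obstacle will be the middle step: the winding-number bookkeeping across edges under the equivariance identifications and vertex constraints, proving the quotient is exactly $\Z$ (hence $c_1$ injective) and pinning down the precise normalization $\chi(\id) l_\rho$ of the Chern-class step.
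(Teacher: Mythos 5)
Your overall architecture coincides with the paper's: reduce via Lemma \ref{lemma: lemma for isomorphism}.(1) to computing $\pi_0\big(\Omega_{\hat{D}_\rho, (W_{d^i})_{i\in I_\rho^+}}\big)$, show it is a single copy of $\Z$ detected by a winding number on the edge(s) whose generic isotropy is $H$, and convert that winding into the Chern-class step $\chi(\id)\,l_\rho$ via Lemma \ref{lemma: reduce to smaller matrix} and the size of the orbit of the relevant edge. The gap is in the input to your bookkeeping. You assert that the vertices with $R_{v^i}=\Z_n$ have simply connected path components in $\mathcal{A}_{\bar{v}^i}$ by Proposition \ref{proposition: psi for cyclic}.(3). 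That proposition requires the stabilizer to act \emph{transitively} on $\pi^{-1}(v^i)$; this holds for all relevant vertices when $R/R_t=\Z_3,\Z_6$ and for $v^0,v^2$ when $R/R_t=\Z_4$, but it fails exactly where it matters most: for $v^2,v^3$ when $R/R_t=\Z_2$ and for $v^1,v^3$ when $R/R_t=\Z_4$, the stabilizer is $\Z_2$ acting freely with two orbits on the $j_\rho=4$ preimages, so Proposition \ref{proposition: psi for Z_2} applies instead and gives $\pi_1(\mathcal{A}_\psi)\cong\Z$, not simple connectivity. This is not cosmetic. If every vertex component were simply connected, your edge-by-edge application of Lemma \ref{lemma: relative homotopy} would pin each endpoint value up to contractible wiggling and each segment of $\bar{D}_\rho$ would contribute an \emph{independent} $\Z$, yielding $\Z^3$ for $\Z_2$ and $\Z^2$ for $\Z_4$ — contradicting the injectivity of $c_1$ you need. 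The collapse to a single $\Z$ is driven precisely by the non-simply-connected part: in the decomposition $\mathcal{A}_{\bar{v}}\cong\mathcal{A}_{j,j^\prime}\times\mathcal{A}_{j^\prime,j^{\prime\prime}}$ of Proposition \ref{proposition: psi for Z_2}, the same-orbit factor has simply connected components while the cross-orbit factor is a full $\Iso_H$ with $\pi_1\cong\Z$, and it is the latter that lets winding be transferred across such a vertex and identifies the windings of adjacent segments. So your ``vertex matchings which allow winding to be transferred'' are incompatible with your simple-connectivity claim; the argument needs the former and must abandon the latter.

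For comparison, the paper's proof (done for $R/R_t=\Z_2$) makes this precise by normalizing the clutching map to be constant on the two half-edges anchored at $b(\bar{e}^1), b(\bar{e}^3)$ (where $\mathcal{A}_{b(\bar{e}^i)}$ does have simply connected components, by Proposition \ref{proposition: psi for cyclic}.(3) with $m=2$), and then proving that restriction to $\bar{e}^2$ induces a bijection $\pi_0(\Omega_2)\to\pi_0(\Omega_{\hat{D}_\rho,(W_{d^i})_{i\in I_\rho^+}})$; the injectivity half is exactly the delicate point, and it uses simple connectivity of the components of $(\mathcal{A}_{\bar{v}^i})_{j,j+2}$ together with the formula expressing $L^{\prime 2}(0,t)$ through the vertex data to show the boundary loops are null in $\pi_1(\Iso_H(V_{\bar{f}^{-1}},V_{\bar{f}^2}))$. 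Your Chern-class paragraph is essentially right, but you should also record that orientation-preservation of the $\rho(G_\chi)$-action is what makes the $l_\rho$ translated windings add with the same sign rather than cancel.
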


\begin{proof}

When $R=\rho(G_\chi)$ is given, put
\begin{equation*}
V_B = G_\chi \times_{(G_\chi)_{d^{-1}}} W_{d^{-1}}, \quad F_{V_B} =
G_\chi \times_{(G_\chi)_{d^{-1}}} ( |\bar{f}^{-1}| \times W_{d^{-1}}
)
\end{equation*}
for each $(W_{d^i})_{i \in I_\rho^+}$ $\in A_{G_\chi} (\R^2/\Lambda,
\chi).$ We only deal with the case of $R/R_t = \Z_2$ and other cases
are calculated in a similar way. In this case, we have
\begin{equation*}
\begin{array}{lll}
\bar{D}_\rho = [b(\bar{e}^1), \bar{v}^2, \bar{v}^3, b(\bar{e}^3)], &
(G_\chi)_{b(e^1)}/H = \Z_2, & (G_\chi)_{b(e^2)}/H = \langle \id \rangle, \\
(G_\chi)_{b(e^3)}/H = \Z_2, & (G_\chi)_{v^2}/H = \Z_2, &
(G_\chi)_{v^3}/H = \Z_2
\end{array}
\end{equation*}
by Table \ref{table: one-dimensional fundamental domain} and tables
in Section \ref{section: equivariant simplicial complex}. And, no
two vertices $v^i$ and $v^{i^\prime}$ are in the same orbit by Table
\ref{table: one-dimensional fundamental domain}. By using these,
path components of
\begin{equation*}
\mathcal{A}_{b(\bar{e}^1)}, ~ \mathcal{A}_{b(\bar{e}^3)}, ~
(\mathcal{A}_{\bar{v}^2})_{j, j+2}, ~ (\mathcal{A}_{\bar{v}^3})_{j,
j+2}
\end{equation*}
for $j \in \Z_4$ are all simply connected by Proposition
\ref{proposition: psi for cyclic} and \ref{proposition: psi for
Z_2}, and we have $\mathcal{A}_{\bar{e}^2}^0 = \Iso_H
(V_{\bar{f}^{-1}}, V_{\bar{f}^2})$ by Lemma \ref{lemma: pointwise
clutching for m=2 nontransitive}. We will use these without
reference in this proof.

Pick an element $(\psi_{\bar{d}^i})_{i \in I_\rho}$ in
$\bar{A}_{G_\chi} (\R^2/\Lambda, V_B)$ which determines
$(W_{d^i})_{i \in I_\rho^+}$ by Lemma \ref{lemma: existence of
barA}. Here, we may assume that $\psi_{\bar{v}^2}^{-1} (\bar{v}^2) =
\psi_{b(\bar{e}^1)}(b(\bar{e}^1))$ and $\psi_{\bar{v}^3} (\bar{v}^3)
= \psi_{b(\bar{e}^3)}(b(\bar{e}^3)).$ We explain for this. By
Proposition \ref{proposition: psi for Z_2}, the map
\begin{equation*}
\mathcal{A}_{\bar{v}^2} \rightarrow
(\mathcal{A}_{\bar{v}^2})_{0,3}^0 \times
(\mathcal{A}_{\bar{v}^2})_{1,3}^3 ~, \quad \psi \mapsto \Big(
\psi^{-1}(\bar{v}^2), \big( \psi(\bar{v}_2^2) \psi(\bar{v}_1^2)
\big)^{-1} \Big)
\end{equation*}
is homeomorphic, and $(\mathcal{A}_{\bar{v}^2})_{0,3}^0 = \Iso_H
(V_{\bar{f}^{-1}}, V_{\bar{f}^1}) = \mathcal{A}_{\bar{e}^1}^0$ is
path-connected so that elements of each component of
$\mathcal{A}_{\bar{v}^2}$ can take any values of
$(\mathcal{A}_{\bar{v}^2})_{0,3}^0.$ Also, since
$\mathcal{A}_{b(\bar{e}^1)} \subset \mathcal{A}_{\bar{e}^1}$ by
Lemma \ref{lemma: restricted pointwise clutching}.(1), we may take
an element $\psi_{\bar{v}^2}^\prime$ in the component
$(\mathcal{A}_{\bar{v}^2})_{\psi_{\bar{v}^2}}$ such that
$\psi_{\bar{v}^2}^{\prime -1} (\bar{v}^2) = \psi_{b(\bar{e}^1)}
(b(\bar{e}^1)).$ So, we may assume that
\begin{equation}
\tag{*} \psi_{\bar{v}^2}^{-1} (\bar{v}^2) = \psi_{b(\bar{e}^1)}
(b(\bar{e}^1)).
\end{equation}
In the same reason, we may assume that
\begin{equation}
\tag{**} \psi_{\bar{v}^3} (\bar{v}^3) = \psi_{b(\bar{e}^3)}
(b(\bar{e}^3)).
\end{equation}
As in the proof of Theorem \ref{theorem: one point case}, each
$\Phi_{\hat{D}_\rho} = \cup_{i \in I_\rho^-} \varphi^i$ in
$\Omega_{\hat{D}_\rho, (W_{d^i})_{i \in I_\rho^+}}$ is homotopic to
an element which determines $(\psi_{\bar{d}^i})_{i \in I_\rho}.$ So,
we may assume that each $\Phi_{\hat{D}_\rho}$ determines
$(\psi_{\bar{d}^i})_{i \in I_\rho}$ so that it satisfies
\begin{equation}
\tag{***} \varphi^1 (1/2) = \varphi^1(1) = \psi_{b(\bar{e}^1)}
(b(\bar{e}^1)) , \quad \varphi^3(0) = \varphi^3 (1/2) =
\psi_{b(\bar{e}^3)} (b(\bar{e}^3))
\end{equation}
by (*) and (**).

Let $\imath_2$ be the map from $\Omega_2$ to $\Omega_{\hat{D}_\rho,
(W_{d^i})_{i \in I_\rho^+}}$ which sends an arbitrary
$\varphi^{\prime 2}$ to the map $\Phi_{\hat{D}_\rho} = \cup_{i \in
I_\rho^-} \varphi^i$ such that
\begin{equation*}
\varphi^1 \equiv \psi_{b(\bar{e}^1)} (b(\bar{e}^1)), \quad \varphi^2
= \varphi^{\prime 2}, \quad \varphi^3 \equiv \psi_{b(\bar{e}^3)}
(b(\bar{e}^3))
\end{equation*}
where $\Omega_2$ is defined as
\begin{equation*}
\big\{ \varphi^{\prime 2} \in C^0 \big([\hat{d}_+^2, \hat{d}_-^3 ],
\Iso_H (V_{\bar{f}^{-1}}, V_{\bar{f}^2}) \big) ~ | ~
 \varphi^{\prime 2} ( \hat{d}_+^2 ) = \psi_{\bar{v}^2} (\bar{v}_0^2) \text{
and } \varphi^{\prime 2} ( \hat{d}_-^3 ) = \psi_{\bar{v}^3}^{-1}
(\bar{v}_0^3) \big\}.
\end{equation*}
To calculate $\pi_0 ( \Omega_{\hat{D}_\rho, (W_{d^i})_{i \in
I_\rho^+}} ),$ we would show that
\begin{equation*}
\pi_0 (\imath_2) : \pi_0 (\Omega_2) \rightarrow \pi_0
(\Omega_{\hat{D}_\rho, (W_{d^i})_{i \in I_\rho^+}})
\end{equation*}
is bijective because it is is easy to calculate $\pi_0 (\Omega_2).$

First, we would show that each $\Phi_{\hat{D}_\rho} = \cup_{i \in
I_\rho^-} \varphi^i$ satisfying (***) is homotopic to an element
$\Phi_{\hat{D}_\rho}^\prime = \cup_{i \in I_\rho^-} \varphi^{\prime
i}$ in $\Omega_{\hat{D}_\rho, (W_{d^i})_{i \in I_\rho^+}}$ such that
$\varphi^{\prime 1} \equiv \psi_{b(\bar{e}^1)} (b(\bar{e}^1)),$
$\varphi^{\prime 3} \equiv \psi_{b(\bar{e}^3)} (b(\bar{e}^3)),$ and
$\Phi_{\hat{D}_\rho}^\prime$ determines $(\psi_{\bar{d}^i})_{i \in
I_\rho},$ i.e. surjectiveness of $\pi_0( \imath_2 ).$ For this, we
construct a homotopy $L^i (s,t) : [\hat{d}_+^i, \hat{d}_-^{i+1}]
\times [0,1] \rightarrow \mathcal{A}_{\bar{e}^i}^0$ for $i \in
I_\rho^-$ such that the homotopy $L = \cup_{i \in I_\rho^-} L^i$
satisfies
\begin{enumerate}
  \item $L_t \in \Omega_{\hat{D}_\rho, (W_{d^i})_{i \in I_\rho^+}}$
  for each $t \in [0,1],$
  \item $L_0 = \Phi_{\hat{D}_\rho},$ and $L_1$ is
  constant except $[\hat{d}_+^2,
\hat{d}_-^3],$
  \item $L_1$ determines
$(\psi_{\bar{d}^i})_{i \in I_\rho}.$
\end{enumerate}
Define $L^i (s,t) :
\partial([\hat{d}_+^i, \hat{d}_-^{i+1}] \times [0,1])
\rightarrow \mathcal{A}_{\bar{e}^i}^0$ for $i =1, 3$ as
\begin{equation*}
\begin{array}{ll}
L^1 (\frac 1 2,t) = L^1 (s,1) = \varphi^1 (\frac 1 2)   & \text{ for } s \in [\frac 1 2, 1], \\
L^1 (s,0)         = \varphi^1 (s)                       & \text{ for } s \in [\frac 1 2, 1], \\
L^1 (1,t)         = \varphi^1 (1-\frac 1 2 t),          &                \\
L^3 (\frac 1 2,t) = L^3 (s,1) = \varphi^3 (\frac 1 2)   & \text{ for } s \in [0, \frac 1 2], \\
L^3 (s,0)         = \varphi^3 (s)                       & \text{ for } s \in [0, \frac 1 2], \\
L^3 (0,t)         = \varphi^3 (\frac 1 2 t)             &                \\
\end{array}
\end{equation*}
for $t \in [0,1],$ and define a homotopy $L^i (s,t) : [\hat{d}_+^i,
\hat{d}_-^{i+1}] \times [0,1] \rightarrow \mathcal{A}_{\bar{e}^i}^0$
for $i =2$ as
\begin{equation*}
  \begin{array}{ll}
L^2 (s,t) = \gamma_2 ( (1 - 3s) t ) &
\text{for } s \in [0, \frac 1 3], \\
L^2 (s,t) = \varphi^2 (3s-1) & \text{for } s \in [\frac 1 3, \frac 2
3], \\
L^2 (s,t) = \gamma_3 ( (3s-2) t ) & \text{for } s \in [\frac 2 3,
1] \\
  \end{array}
\end{equation*}
where
\begin{equation*}
\begin{array}{ll}
\gamma_2 : [0,1] \rightarrow \Iso_H (V_{\bar{f}^{-1}},
V_{\bar{f}^2}), \quad & t \mapsto \Big(
\psi_{\bar{v}^2}(\bar{v}_2^2) \psi_{\bar{v}^2}(\bar{v}_1^2)
\Big)^{-1} \varphi^1 (1-\frac 1 2 t), \\
\gamma_3 : [0,1] \rightarrow \Iso_H (V_{\bar{f}^{-1}},
V_{\bar{f}^2}), \quad & t \mapsto \psi_{\bar{v}^3}(\bar{v}_2^3)
\psi_{\bar{v}^3}(\bar{v}_1^3) \varphi^3 (\frac 1 2 t).
\end{array}
\end{equation*}
By (***), $L^i$'s are well defined, and we can easily extend $L^i$'s
to the whole $[\hat{d}_+^i, \hat{d}_-^{i+1}] \times [0,1].$ If we
define pointwise preclutching maps $\psi_{\bar{v}^2}^t,$
$\psi_{\bar{v}^3}^t$ for $t \in [0,1]$ as
\begin{equation*}
\begin{array}{ll}
\psi_{\bar{v}^2}^t (\bar{v}_0^2) = \gamma_2(t), \quad &
\psi_{\bar{v}^2}^t (\bar{v}_1^2) =
\psi_{\bar{v}^2}(\bar{v}_1^2), \\
\psi_{\bar{v}^2}^t (\bar{v}_2^2) = \psi_{\bar{v}^2}(\bar{v}_2^2),
\quad & \psi_{\bar{v}^2}^t
(\bar{v}_3^2) = \varphi^1 (1 - \frac 1 2 t)^{-1}, \\
\psi_{\bar{v}^3}^t (\bar{v}_0^3) = \varphi^3 (\frac 1 2 t), \quad &
\psi_{\bar{v}^3}^t (\bar{v}_1^3) =
\psi_{\bar{v}^3}(\bar{v}_1^3), \\
\psi_{\bar{v}^3}^t (\bar{v}_2^3) = \psi_{\bar{v}^3}(\bar{v}_2^3),
\quad & \psi_{\bar{v}^3}^t (\bar{v}_3^3) = \gamma_3(t)^{-1},
\end{array}
\end{equation*}
then they are contained in $\mathcal{A}_{\bar{v}^2},$
$\mathcal{A}_{\bar{v}^3}$ by Proposition \ref{proposition: psi for
Z_2}, respectively. And, $L_t$ determines $\{ \psi_{b(\bar{e}^1)},
\psi_{\bar{v}^2}^t, \psi_{\bar{v}^3}^t, \psi_{b(\bar{e}^3)} \}$ and
$L_t$ is contained in $\Omega_{\hat{D}_\rho, (W_{d^i})_{i \in
I_\rho^+}}$ by Corollary \ref{corollary: Omega}. So, it is easily
checked that $L$ is a wanted homotopy.

Next, for two $\Phi_{\hat{D}_\rho} = \cup_{i \in I_\rho^-}
\varphi^i,$ $\Phi_{\hat{D}_\rho}^\prime = \cup_{i \in I_\rho^-}
\varphi^{\prime i}$ in $\Omega_{\hat{D}_\rho, (W_{d^i})_{i \in
I_\rho^+}}$ such that
\begin{enumerate}
  \item $\Phi_{\hat{D}_\rho}$ and $\Phi_{\hat{D}_\rho}^\prime$ determine
$(\psi_{\bar{d}^i})_{i \in I_\rho},$
  \item $\varphi^i =
\varphi^{\prime i} \equiv \psi_{b(\bar{e}^i)} (b(\bar{e}^i))$ for
$i=1,3,$
\end{enumerate}
assume that there exists a homotopy $L^{\prime i} (s,t) :
[\hat{d}_+^i, \hat{d}_-^{i+1}] \times [0,1] \rightarrow
\mathcal{A}_{\bar{e}^i}^0$ for $i \in I_\rho^-$ such that $L^\prime
= \cup_{i \in I_\rho^-} L^{\prime i}$ satisfies
\begin{enumerate}
  \item $L_t^\prime \in \Omega_{\hat{D}_\rho, (W_{d^i})_{i \in I_\rho^+}}$
  for each $t \in [0,1],$
  \item $L_0^\prime = \Phi_{\hat{D}_\rho}$ and
  $L_1^\prime = \Phi_{\hat{D}_\rho}^\prime.$
\end{enumerate}
Then, we would show that there exists a homotopy $L^{\prime \prime
i} (s,t) : [\hat{d}_+^i, \hat{d}_-^{i+1}] \times [0,1] \rightarrow
\mathcal{A}_{\bar{e}^i}^0$ for $i \in I_\rho^-$ such that $L^{\prime
\prime} = \cup_{i \in I_\rho^-} L^{\prime \prime i}$ satisfies
\begin{enumerate}
  \item $L_t^{\prime \prime} \in \Omega_{\hat{D}_\rho,
(W_{d^i})_{i \in I_\rho^+}}$
  for each $t \in [0,1],$
  \item $L_0^{\prime \prime} = \Phi_{\hat{D}_\rho}$ and
  $L_1^{\prime \prime} = \Phi_{\hat{D}_\rho}^\prime.$
  \item $L_t^{\prime \prime i} \equiv
\psi_{b(\bar{e}^i)}(b(\bar{e}^i))$ for each $t \in [0,1]$ and
$i=1,3,$
  \item $L_t^{\prime \prime}$ determines
  $(\psi_{\bar{d}^i})_{i \in I_\rho}$ for each $t \in [0,1],$
\end{enumerate}
i.e. injectiveness of $\pi_0( \imath_2 ).$ To begin with, we would
show that $[L^{\prime 2}(0,t)]$ and $[L^{\prime 2}(1,t)]$ are
trivial in $\pi_1 \big( \Iso_H (V_{\bar{f}^{-1}}, V_{\bar{f}^2})
\big).$ Note that $[L^{\prime 1} (\frac 1 2,t)]$ is trivial in
$\pi_1 \big( \Iso_H (V_{\bar{f}^{-1}}, V_{\bar{f}^1}) \big) $
because $[L^{\prime 1} (\frac 1 2,t)]$ is in the trivial $\pi_1
\big( (\mathcal{A}_{b(\bar{e}^1)})_{\psi_{b(\bar{e}^1)}}^0 \big)$
where we regard $[L^{\prime 1} (\frac 1 2,t)]$ as a loop. From this,
we obtain that $[L^{\prime 1} (1,t)]$ is trivial in $\pi_1 \big(
\Iso_H (V_{\bar{f}^{-1}}, V_{\bar{f}^1}) \big) $ because $L^{\prime
1}(s, t)$ for $t=0,1$ is constant and $L^{\prime 1}$ is defined over
$[\hat{d}_+^1, \hat{d}_-^2] \times [0,1].$ Let $L_t^\prime$
determine $(\psi_{\bar{d}^i}^t)_{i \in I_\rho}$ in $\bar{A}_{G_\chi}
(\R^2/\Lambda, V_B)$ for $t \in [0,1]$ so that
\begin{align}
\tag{****} L^{\prime 2}(0,t) &= \big( \psi_{\bar{v}^2}^t
(\bar{v}_2^2)
\psi_{\bar{v}^2}^t (\bar{v}_1^2) \big)^{-1} L^{\prime 1}(1,t), \\
\notag L^{\prime 2}(1,t) &= \big( \psi_{\bar{v}^3}^t (\bar{v}_2^3)
\psi_{\bar{v}^3}^t (\bar{v}_1^3) \big) L^{\prime 3}(0,t).
\end{align}
Here, $[ \big( \psi_{\bar{v}^2}^t (\bar{v}_2^2) \psi_{\bar{v}^2}^t
(\bar{v}_1^2) \big)^{-1} ]$ is trivial in $\pi_1 \big( \Iso_H
(V_{\bar{f}^1}, V_{\bar{f}^2}) \big)$ because it is contained in
$\pi_1 \big( (\mathcal{A}_{\bar{v}^2})_{1,3}^3 \big)$ and each
component of $(\mathcal{A}_{\bar{v}^2})_{1,3}^3$ is simply connected
by Proposition \ref{proposition: psi for Z_2}. From this and (****),
triviality of $[L^{\prime 1} (1,t)]$ gives triviality of $[L^{\prime
2}(0,t)]$ in $\pi_1 \big( \Iso_H (V_{\bar{f}^{-1}}, V_{\bar{f}^2})
\big).$ In the same reason, $[L^{\prime 2}(1,t)]$ is trivial in
$\pi_1 \big( \Iso_H (V_{\bar{f}^{-1}}, V_{\bar{f}^2}) \big).$ By
triviality, there exist two homotopies $F^i (s,t) : [0,1] \times
[0,1] \rightarrow \Iso_H (V_{\bar{f}^{-1}}, V_{\bar{f}^2})$ for
$i=2,3$ such that
\begin{align*}
F^2 (s, 0) &= F^2 (s, 1) = F^2 (0, t) = \psi_{\bar{v}^2} (\bar{v}_0^2), \\
F^2 (1, t) &= L^{\prime 2} (0, t), \\
F^3 (s, 0) &= F^3 (s, 1) = F^3 (1, t) = \psi_{\bar{v}^3}^{-1} (\bar{v}_0^3). \\
F^3 (0, t) &= L^{\prime 2} (1, t), \\
\end{align*}
Then, we construct a wanted homotopy $L^{\prime \prime}$ as
\begin{equation*}
  \begin{array}{ll}
L^{\prime \prime 2} (s,t) = F^2 (3s,t) &
\text{for } s \in [0, \frac 1 3], \\
L^{\prime \prime 2} (s,t) = L^{\prime 2} (3s-1,t) & \text{for }
s \in [\frac 1 3, \frac 2 3], \\
L^{\prime \prime 2} (s,t) = F^3 (3s-2,t)
& \text{for } s \in [\frac 2 3, 1], \\
L^{\prime \prime 1} (s,t) \equiv \psi_{b(\bar{e}^1)} (b(\bar{e}^1)), \\
L^{\prime \prime 3} (s,t) \equiv \psi_{b(\bar{e}^3)} (b(\bar{e}^3)). \\
  \end{array}
\end{equation*}

Since $\pi_0( \imath_2 )$ is bijective and $\pi_0( \Omega_2 )$ is in
one-to-one correspondence with $\pi_1 (\Iso_H ( V_{\bar{f}^{-1}},
V_{\bar{f}^2} ))$ by Lemma \ref{lemma: relative homotopy}, we have
$\pi_0 \big( \Omega_{\hat{D}_\rho, (W_{d^i})_{i \in I_\rho^+}} \big)
\cong \pi_1 \big(\Iso_H ( V_{\bar{f}^{-1}}, V_{\bar{f}^2} ) \big)$
which is isomorphic to $\Z$ by Lemma \ref{lemma: reduce to smaller
matrix}. As in the proof of \cite[Theorem A.]{Ki}, it can be shown
that Chern classes of equivariant vector bundles determined by
different classes in $\pi_0 \big( \Omega_{\hat{D}_\rho, (W_{d^i})_{i
\in I_\rho^+}} \big)$ are all different, and the statement on the
set of possible Chern classes is obtained. Here, we use the fact the
$\rho(G_\chi)$-action on $\R^2/\Lambda$ is orient-preserving and the
cardinality of the $G_\chi$-orbit of $b(e^2)$ is equal to $l_\rho.$
Then, we obtain a proof by Lemma \ref{lemma: lemma for isomorphism}.
\end{proof}

\section{The case of $R/R_t = \langle \id \rangle, \D_1$
with $A\bar{c}+\bar{c}=\bar{l}_0$}
 \label{section: id and D_1 cases}

In this section, we calculate homotopy of equivariant clutching maps
in cases of $R/R_t = \langle \id \rangle, \D_1$ with
$A\bar{c}+\bar{c}=\bar{l}_0.$ First, we show that we only have to
consider equivariant clutching maps with a simple form. Then, we
give a discrimination on whether two equivariant clutching maps with
the simple form are equivariantly homotopic or not. With these, we
can calculate homotopy of equivariant clutching maps in the case of
$R/R_t = \langle \id \rangle.$ But, in the case of $R/R_t = \D_1$
with $A\bar{c}+\bar{c}=\bar{l}_0,$ Lemma \ref{lemma: lemma for
isomorphism} is not applied and we should also consider
$G_\chi$-isomorphisms in addition to equivariant clutching maps as
in cases of $\rho(G_\chi) = \Z_n \times Z$ with odd $n$ or $\langle
-a_n \rangle$ with even $n/2$ of \cite{Ki}.

In these two cases, $\bar{D}_\rho = [\bar{v}^2, \bar{v}^3,
\bar{v}^0]$ with $v^i \sim v^{i^\prime}$ and $(G_\chi)_x = H$ for
each $x \in \R^2/\Lambda$ by Table \ref{table: one-dimensional
fundamental domain} and tables in Section \ref{section: equivariant
simplicial complex}. For each $(W_{d^i})_{i \in I_\rho^+}$ in
$A_{G_\chi} (\R^2/\Lambda, \chi),$ put
\begin{equation*}
V_B = G_\chi \times_{(G_\chi)_{d^{-1}}} W_{d^{-1}}, \quad F_{V_B} =
G_\chi \times_{(G_\chi)_{d^{-1}}} ( |\bar{f}^{-1}| \times W_{d^{-1}}
).
\end{equation*}
Since $V_{\bar{f}} \cong W_{d^{-1}}$ for each $\bar{f} \in
\lineK_\rho$ in these cases, we may assume that all $V_{\bar{f}}$'s
are the same as $H$-representation and $\Iso_H (V_{\bar{f}^{-1}},
V_{\bar{f}^i}) = \Iso_H ( V_{\bar{f}^{-1}} ).$ By this,
$\mathcal{A}_{\bar{e}^i}^0 = \Iso_H ( V_{\bar{f}^{-1}} ),$ and
Corollary \ref{corollary: Omega} is rewritten as follows:

\begin{lemma} \label{lemma: cluching condition for id, D_1}
A preclutching map $\Phi_{\hat{D}_\rho} = \varphi^2 \cup \varphi^3$
in $C^0(\hat{D}_\rho, V_B)$ is contained in $\Omega_{\hat{D}_\rho,
(W_{d^i})_{i \in I_\rho^+}}$ if and only if $\varphi^i$'s are
$\Iso_H ( V_{\bar{f}^{-1}} )$-valued and
\begin{align}
\tag{*} \varphi^2 (1)^{-1} \cdot g_0 \varphi^3 (1)^{-1} g_0^{-1}
\cdot
g_1^{-1} \varphi^2 (0) g_1 \cdot \varphi^3 (0) &= \id \text{ or} \\
\notag \varphi^2 (1)^{-1} \cdot (g_2 g_1) \varphi^3 (1) (g_2
g_1)^{-1} \cdot g_1^{-1} \varphi^2 (0) g_1 \cdot \varphi^3 (0) &=
\id
\end{align}
according to $R/R_t = \langle \id \rangle$ or $\D_1$ with
$A\bar{c}+\bar{c}=\bar{l}_0,$ respectively.
\end{lemma}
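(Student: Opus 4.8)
The plan is to specialize Corollary~\ref{corollary: Omega} to the two groups in question and then read off the single algebraic relation it imposes. First I would record the combinatorial data: by Table~\ref{table: one-dimensional fundamental domain} we have $\bar{D}_\rho = [\bar{v}^2, \bar{v}^3, \bar{v}^0]$, so $I_\rho = \{2,3,4\}$ with $\bar{d}^2 = \bar{v}^2$, $\bar{d}^3 = \bar{v}^3$, $\bar{d}^4 = \bar{v}^0$, $I_\rho^- = \{2,3\}$, and $\Phi_{\hat{D}_\rho} = \varphi^2 \cup \varphi^3$ with $\varphi^2$ over $[\hat{d}_+^2, \hat{d}_-^3] \subset |\bar{e}^2|$ and $\varphi^3$ over $[\hat{d}_+^3, \hat{d}_-^4] \subset |\bar{e}^3|$. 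Since $(G_\chi)_x = H$ for every $x$, the identity $\mathcal{A}_{\bar{e}^i}^0 = \Iso_H(V_{\bar{f}^{-1}})$ recorded just above the statement shows that the requirement ``$\varphi^i(\hat{x}) \in \mathcal{A}_{\bar{e}^i}^0$'' of Corollary~\ref{corollary: Omega} is exactly the requirement that the $\varphi^i$ be $\Iso_H(V_{\bar{f}^{-1}})$-valued. Thus the content to be extracted is the constraint on the four boundary values $\varphi^2(0), \varphi^2(1), \varphi^3(0), \varphi^3(1)$ coming from the existence of a tuple $(\psi_{\bar{d}^i})_{i \in I_\rho} \in \bar{A}_{G_\chi}(\R^2/\Lambda, V_B)$ with the prescribed endpoint values $\varphi^2(0)=\psi_{\bar{d}^2}(\bar{d}^2)$, $\varphi^2(1)=\psi_{\bar{d}^3}^{-1}(\bar{d}^3)$, $\varphi^3(0)=\psi_{\bar{d}^3}(\bar{d}^3)$, $\varphi^3(1)=\psi_{\bar{d}^4}^{-1}(\bar{d}^4)$.

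Next I would collapse that tuple to a single pointwise clutching map. The three points $d^2, d^3, d^4$ all equal the unique vertex image $v$, with $v^i \sim v^{i^\prime}$; hence in Definition~\ref{definition: barA_G} the relation $\bar{\psi}_{d^{i^\prime}} = g \cdot \bar{\psi}_{d^i}$ applied with $g = \id$ forces $\bar{\psi}_{d^2} = \bar{\psi}_{d^3} = \bar{\psi}_{d^4}$, so the three $\psi_{\bar{d}^i}$ are index-shifts of one saturation $\bar{\psi}_v \in \bar{\mathcal{A}}_v$. Because $(G_\chi)_v = H$ acts trivially on $\pi^{-1}(v)$ (so $N_2 = N_0 = H$) and exactly four square corners meet at $v$, i.e. $m = j_\rho = 4$, Proposition~\ref{proposition: psi for trivial group}.(1) applies: an element $\psi \in \mathcal{A}_v$ is precisely a choice of $\psi(\bar{x}_j) \in \Iso_H(F_{\bar{x}_j}, F_{\bar{x}_{j+1}})$ for $j \in \Z_4$ subject to the single closure relation $\psi^4 = \psi(\bar{x}_3)\psi(\bar{x}_2)\psi(\bar{x}_1)\psi(\bar{x}_0) = \id$. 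Modulo the translation into edge data, the lemma therefore amounts to rewriting $\psi^4 = \id$.

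The core step is this rewriting. I would take $\pi^{-1}(v) = \{\bar{v}^0, \bar{v}^1, \bar{v}^2, \bar{v}^3\}$, choose $\bar{x}_0 = \bar{v}^2$, and use the counterclockwise labelling of $\pi^{-1}(v)$ fixed earlier; each factor $\psi(\bar{x}_j)$ is then the gluing across one of the four edges $\bar{e}^0, \bar{e}^1, \bar{e}^2, \bar{e}^3$ meeting at $v$, taken in cyclic order. For the two edges $\bar{e}^2, \bar{e}^3$ lying in $\bar{D}_\rho$, Condition~E2 of Theorem~\ref{theorem: clutching condition} (via $\varphi^i(\hat{d}_+^i) = \psi_{\bar{d}^i}(\bar{d}^i)$ and Lemma~\ref{lemma: inclusions of A_x's}) identifies the corresponding factors directly with the boundary values $\varphi^2(0), \varphi^3(0)$. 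For the other two edges $\bar{e}^0, \bar{e}^1$ — which are the $g_0$- and $g_1$-translates of $\bar{e}^2, \bar{e}^3$ — equivariance of the extension $\Phi$ (Condition E1, $\Phi(g\hat{x}) = g\Phi(\hat{x})g^{-1}$) expresses the corresponding factors as conjugates by $g_0^{\pm1}$, $g_1^{\pm1}$ of the remaining boundary values $\varphi^2(1), \varphi^3(1)$, with the inverses governed by the direction of traversal at the far endpoints $g_0(\bar{v}^0)=\bar{v}^3$, $g_1(\bar{v}^0)=\bar{v}^1$. Substituting the four factors into $\psi^4=\id$ and using invariance of the relation under cyclic rotation yields the first line of $(*)$; conversely, given $\varphi^2, \varphi^3$ satisfying it, these four values define a legitimate $\psi \in \mathcal{A}_v$ by Proposition~\ref{proposition: psi for trivial group}.(1), and spreading it over all corners by the $G_\chi$-action (Lemma~\ref{lemma: conjugate pointwise clutching}) produces a tuple in $\bar{A}_{G_\chi}(\R^2/\Lambda, V_B)$, so Corollary~\ref{corollary: Omega} returns $\Phi_{\hat{D}_\rho} \in \Omega_{\hat{D}_\rho, (W_{d^i})_{i \in I_\rho^+}}$.

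For $R/R_t = \D_1$ with $A\bar{c}+\bar{c}=\bar{l}_0$ the method is identical, the only change being that one of the two edge identifications is realized by the glide $g_2$ rather than by a pure translation. Since $g_2$ is orientation-reversing, it carries the relevant edge to its neighbor with the opposite induced orientation; this removes the inversion on the associated factor and replaces its conjugator by $g_2 g_1$, producing the second line of $(*)$. The main obstacle throughout is the bookkeeping: fixing the counterclockwise cyclic order of the four corners about $v$ and then, factor by factor, attributing the correct endpoint ($0$ versus $1$), the correct occurrence of an inverse, and the correct conjugating element ($g_0$, $g_1^{-1}$, or $g_2 g_1$). Keeping the orientation conventions consistent — in particular correctly propagating the sign reversal introduced by the glide in the $\D_1$ case — is exactly what distinguishes the two displayed relations and is the delicate point of the argument.
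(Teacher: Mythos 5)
Your overall route is the paper's: specialize Corollary~\ref{corollary: Omega}, use $\mathcal{A}_{\bar{e}^i}^0 = \Iso_H(V_{\bar{f}^{-1}})$ to dispose of the valuedness condition, reduce the remaining constraint to the single closure relation $\psi_{\bar{v}^3}^4=\id$ at a vertex via Proposition~\ref{proposition: psi for trivial group}, and translate the four factors into the endpoint values of $\varphi^2,\varphi^3$ conjugated by $g_0,g_1$ (resp.\ $g_2g_1$); the converse is likewise handled by writing down the explicit $\psi_{\bar{v}^3}$ from the endpoint data and spreading it by the group action.

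However, one step in your second paragraph is wrong as stated and, taken literally, would produce the wrong relation. The points $d^2=v^2$, $d^3=v^3$, $d^4=v^0$ lie in one $G_\chi$-orbit but are in general \emph{distinct} points of $\R^2/\Lambda$ (for $\Lambda=\langle(2,0),(0,2)\rangle$ the four corner images are four different points; cf.\ Remark~\ref{remark: in orbit}). Hence the relation in Definition~\ref{definition: barA_G} cannot be applied with $g=\id$, and the three saturations do not coincide: one has $\bar{\psi}_{v^2}=g_1\cdot\bar{\psi}_{v^3}$ and $\bar{\psi}_{v^0}=g_0^{-1}\cdot\bar{\psi}_{v^3}$ (resp.\ $(g_2g_1)^{-1}\cdot\bar{\psi}_{v^3}$). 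This is not cosmetic: the conjugators $g_0$, $g_1^{-1}$, $g_2g_1$ appearing in $(*)$ arise precisely from these nontrivial group elements, so ``all three saturations are equal'' would erase them from the relation. Your third paragraph then silently reinstates the conjugations by invoking equivariance of the extension on the translated edges, which is the correct mechanism and recovers the paper's computation; delete the claim $d^2=d^3=d^4$ and replace it by the orbit relations above, and the argument coincides with the paper's proof.
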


\begin{proof}
We prove this lemma only for the case of $R/R_t = \id$ because proof
for another case is similar. We would prove that
$\Phi_{\hat{D}_\rho} = \varphi^2 \cup \varphi^3$ is in the set of
Corollary \ref{corollary: Omega} if and only if $\varphi^i$'s are
$\Iso_H ( V_{\bar{f}^{-1}} )$-valued and they satisfy (*). Since
$\mathcal{A}_{\bar{e}^i}^0 = \Iso_H ( V_{\bar{f}^{-1}} ),$ the
condition $\varphi^i (\hat{x}) \in \mathcal{A}_{\bar{e}^i}^0$ of
Corollary \ref{corollary: Omega} holds if and only if $\varphi^i$'s
are $\Iso_H ( V_{\bar{f}^{-1}} )$-valued. So, we do not need pay
attention to this condition. First, we prove sufficiency. Corollary
\ref{corollary: Omega} says that
\begin{equation*}
\begin{array}{ll}
\varphi^2 (0) = \psi_{\bar{v}^2} (\bar{v}^2), & \varphi^2 (1) =
\psi_{\bar{v}^3}^{-1} (\bar{v}^3), \\
\varphi^3 (0) = \psi_{\bar{v}^3} (\bar{v}^3), & \varphi^3 (1) =
\psi_{\bar{v}^0}^{-1} (\bar{v}^0)
\end{array}
\end{equation*}
for some $(\psi_{\bar{d}^i})_{i \in I_\rho}$ in $\bar{A}_{G_\chi}
(\R^2/\Lambda, V_B).$ Since $\psi_{\bar{v}^2} = g_1 \cdot
\psi_{\bar{v}^3}$ and $\psi_{\bar{v}^0} = g_0^{-1} \cdot
\psi_{\bar{v}^3},$ $\psi_{\bar{v}^3}^4 = \id$ is expressed as (*).
For necessity, define a pointwise preclutching map
$\psi_{\bar{v}^3}$ as
\begin{equation*}
\begin{array}{ll}
\psi_{\bar{v}^3} (\bar{v}^3) = \varphi^3 (0), \quad &
\psi_{\bar{v}^3} (\bar{v}_1^3) = g_1^{-1} \varphi^2 (0) g_1,   \\
\psi_{\bar{v}^3} (\bar{v}_2^3) = g_0 \varphi^3 (1)^{-1} g_0^{-1} ,
\quad & \psi_{\bar{v}^3} (\bar{v}_3^3) = \varphi^2 (1)^{-1}.
\end{array}
\end{equation*}
Then, (*) says that $\psi_{\bar{v}^3}$ is in
$\mathcal{A}_{\bar{v}^3}$ by Proposition \ref{proposition: psi for
trivial group}. If we put $\psi_{\bar{v}^2} = g_1 \cdot
\psi_{\bar{v}^3}$ and $\psi_{\bar{v}^0} = g_0^{-1} \cdot
\psi_{\bar{v}^3},$ then it is easily checked that $\varphi^i$'s are
in the set of Corollary \ref{corollary: Omega}. Therefore, we obtain
a proof.
\end{proof}

Given $(W_{d^i})_{i \in I_\rho^+},$ we would pick a simple
$(\psi_{\bar{d}^i})_{i \in I_\rho}$ in $\bar{A}_{G_\chi}
(\R^2/\Lambda, V_B)$ which determines $(W_{d^i})_{i \in I_\rho^+}$
as follows:

\begin{lemma} \label{lemma: simple psi}
For each $(W_{d^i})_{i \in I_\rho^+}$ in $A_{G_\chi} (\R^2/\Lambda,
\chi),$ there exists the unique $(\psi_{\bar{d}^i})_{i \in I_\rho}$
in $\bar{A}_{G_\chi} (\R^2/\Lambda, V_B)$ which determines
$(W_{d^i})_{i \in I_\rho^+}$ and satisfies
\begin{equation*}
\psi_{\bar{v}^2} (\bar{v}^2) = \psi_{\bar{v}^3}^{-1} (\bar{v}^3) =
\psi_{\bar{v}^3} (\bar{v}^3) = \id.
\end{equation*}
\end{lemma}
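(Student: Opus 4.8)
The plan is to exploit the fact that in both cases every vertex of $\bar{D}_\rho$ has isotropy group $(G_\chi)_{v^i} = H$ and that $\bar{v}^2, \bar{v}^3, \bar{v}^0$ all lie in a single $G_\chi$-orbit. By Definition \ref{definition: barA_G}, any $(\psi_{\bar{d}^i})_{i \in I_\rho}$ in $\bar{A}_{G_\chi}(\R^2/\Lambda, V_B)$ must satisfy the orbit-compatibility $\bar{\psi}_{v^2} = g_1 \cdot \bar{\psi}_{v^3}$ and $\bar{\psi}_{v^0} = g_0^{-1} \cdot \bar{\psi}_{v^3}$ (resp. $\bar{\psi}_{v^0} = (g_2 g_1)^{-1} \cdot \bar{\psi}_{v^3}$ in the $\D_1$ case), where $g_1 v^3 = v^2$ and $g_0 v^0 = v^3$. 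Hence the entire tuple is determined by the single pointwise clutching map $\psi_{\bar{v}^3} \in \mathcal{A}_{\bar{v}^3}$. Moreover, since $H$ fixes every point of $\pi^{-1}(v^3)$, we are exactly in the setting of Proposition \ref{proposition: psi for trivial group} with $N_2 = N_1 = N_0 = H$ and $m = j_\rho = 4$; in particular $\psi_{\bar{v}^3}$ may take arbitrary values in $\Iso_H(V_{\bar{f}^{-1}})$ at three of the four points $\bar{v}_j^3$, the fourth being pinned down by $\psi_{\bar{v}^3}^4 = \id$.

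For existence I would take $\psi_{\bar{v}^3}$ to be the all-identity map, $\psi_{\bar{v}^3}(\bar{v}_j^3) = \id$ for every $j \in \Z_4$, which lies in $\mathcal{A}_{\bar{v}^3}$ by Proposition \ref{proposition: psi for trivial group}.(1). Its quotient is the single representation $(F_{V_B})_{\bar{v}_0^3} \cong W_{d^{-1}} \cong W_{d^3}$, the last isomorphism being Definition \ref{definition: A_G}.(4) (all relevant isotropy groups equal $H$); the conjugated maps $\bar{\psi}_{v^2}, \bar{\psi}_{v^0}$ then determine $W_{d^2}, W_{d^0}$ by Lemma \ref{lemma: conjugate pointwise clutching} together with Definition \ref{definition: A_G}.(3). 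Because $\psi_{\bar{v}^3}$ is identically $\id$, the action formula $(g \cdot \bar{\psi}_{v^3})(\,\cdot\,) = g\,\psi_{\bar{v}^3}(\,\cdot\,)\,g^{-1} = \id$ shows that the three prescribed values $\psi_{\bar{v}^2}(\bar{v}^2)$, $\psi_{\bar{v}^3}^{-1}(\bar{v}^3)$, $\psi_{\bar{v}^3}(\bar{v}^3)$ are all $\id$, so the normalization is satisfied.

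For uniqueness I would run this computation in reverse. Starting from an arbitrary normalized tuple, the three conditions translate into $\psi_{\bar{v}^3}(\bar{v}_0^3) = \id$, $\psi_{\bar{v}^3}(\bar{v}_3^3) = \id$ (using $\psi^{-1}(\bar{v}_0^3) = \psi(\bar{v}_3^3)^{-1}$), and $\psi_{\bar{v}^3}(\bar{v}_k^3) = \id$, where $\bar{v}_k^3 = g_1^{-1}\bar{v}_0^2$ is the unique lift of $v^3$ carried to $\bar{v}_0^2$ by $g_1$ (here $\psi_{\bar{v}^2}(\bar{v}^2) = g_1\,\psi_{\bar{v}^3}(\bar{v}_k^3)\,g_1^{-1}$ since $g_1$ is an orientation-preserving translation). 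Provided the indices $0, 3, k$ are distinct, Proposition \ref{proposition: psi for trivial group}.(1) with $\psi_{\bar{v}^3}^4 = \id$ forces the remaining value to be $\id$ as well, so $\psi_{\bar{v}^3}$ is the all-identity map and hence the whole tuple coincides with the one just constructed.

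The main obstacle is the bookkeeping confirming $k \in \{1, 2\}$, i.e. that the condition imported from the neighboring vertex $\bar{v}^2$ is genuinely independent of the two conditions at $\bar{v}^3$; this is a case-by-case check using the clockwise vertex labeling of $P_\rho$ and the explicit translations $g_0, g_1$. Note that this argument is uniform across the two cases, since the three normalizations involve only $v^2, v^3$ and $g_1$. The only other point needing care is that all $W_{d^i}$ are $H$-isomorphic to $W_{d^{-1}}$, which follows routinely from the restriction conditions of Definition \ref{definition: A_G} because every isotropy group occurring here equals $H$.
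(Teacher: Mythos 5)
Your overall skeleton matches the paper's: reduce everything to a single map $\psi_{\bar{v}^3} \in \mathcal{A}_{\bar{v}^3}$ via the orbit conditions, invoke Proposition \ref{proposition: psi for trivial group} to see that three of the four values are free and the fourth is forced by $\psi_{\bar{v}^3}^4 = \id,$ and note that the "determines $(W_{d^i})_{i \in I_\rho^+}$" requirement is automatic because every isotropy group is $H.$ However, there is a genuine error in the middle of both your existence and uniqueness arguments: you assert that $(g_1 \cdot \bar{\psi}_{v^3})(\cdot) = g_1\, \psi_{\bar{v}^3}(\cdot)\, g_1^{-1} = \id$ whenever $\psi_{\bar{v}^3}$ is identically $\id.$ This is false in general. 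The symbol $\id$ denotes the identity only after the fibers $V_{\bar{f}}$ over the various faces have been identified with $W_{d^{-1}}$ via the fixed trivializations of $F_{V_B} = G_\chi \times_H (|\bar{f}^{-1}| \times W_{d^{-1}}),$ and the action of $g_1,$ written in these trivializations, is a possibly nontrivial $H$-isomorphism of $W_{d^{-1}}$ that depends on the face. Consequently $g_1\, \id\, g_1^{-1},$ computed between the two faces meeting at $\bar{v}_1^3$ and the two faces meeting at $\bar{v}^2,$ need not be $\id.$ The paper flags exactly this point ("two $g_1$'s act on different spaces... $g_1^{-1} \id g_1$ needs not be equal to $\id$") and for that reason does \emph{not} take the all-identity map: it sets $\psi_{\bar{v}^3}(\bar{v}^3) = \psi_{\bar{v}^3}(\bar{v}_3^3) = \id$ but $\psi_{\bar{v}^3}(\bar{v}_1^3) = g_1^{-1}\, \id\, g_1,$ precisely so that conjugating back by $g_1$ yields $\psi_{\bar{v}^2}(\bar{v}^2) = \id$ on the nose, and then $\psi_{\bar{v}^3}(\bar{v}_2^3)$ is whatever $\psi_{\bar{v}^3}^4 = \id$ forces it to be (generally not $\id$). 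Your appeal to "$g_1$ is an orientation-preserving translation" does not repair this, since the issue lives in the fiber identifications rather than in the geometry of the base.

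The same slip propagates into your uniqueness argument: the normalization $\psi_{\bar{v}^2}(\bar{v}^2) = \id$ pins down $\psi_{\bar{v}^3}(\bar{v}_1^3)$ to be $g_1^{-1}\, \id\, g_1,$ not $\id.$ Once that value is corrected, your argument does close up — three values are prescribed, the fourth is determined, so the normalized tuple is unique — which is exactly how the paper concludes. So the fix is local, but as written the constructed element does not satisfy the required normalization and the proof is incomplete.
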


\begin{proof}
Since $(G_\chi)_x = H$ for each $x \in \R^2/\Lambda,$ we note that
$W_{d^i}$'s are all isomorphic $H$-representations and any element
of $\mathcal{A}_{\bar{x}}$ for any $\bar{x}$ determines the same
$W_{d^{-1}}.$

First, we prove existence. Since $v^i \sim v^{i^\prime},$ it
suffices to construct a suitable element in
$\mathcal{A}_{\bar{v}^3}.$ Define $\psi_{\bar{v}^3}$ as
\begin{align}
\notag \psi_{\bar{v}^3} (\bar{v}^3)   &= \id, \\
\notag \psi_{\bar{v}^3} (\bar{v}_1^3) &= g_1^{-1} \id g_1, \\
\notag \psi_{\bar{v}^3} (\bar{v}_3^3) &= \id, \\
\tag{*} \psi_{\bar{v}^3} (\bar{v}_2^3) &= \psi_{\bar{v}^3}
(\bar{v}_3^3)^{-1} \psi_{\bar{v}^3} (\bar{v}^3)^{-1}
\psi_{\bar{v}^3} (\bar{v}_1^3)^{-1}.
\end{align}
Here, the expression $g_1^{-1} \id g_1$ might seem awkward, but two
$g_1$'s act on different spaces because $\id$ is an element of $\Iso
( V_{\bar{f}^{-1}}, V_{\bar{f}^2} ).$ So, $g_1^{-1} \id g_1$ needs
not be equal to $\id.$ By Proposition \ref{proposition: psi for
trivial group}, $\psi_{\bar{v}^3}$ is contained in
$\mathcal{A}_{\bar{v}^3}.$ Other maps $\psi_{\bar{v}^2},
\psi_{\bar{v}^0}$ are obtained by group action from
$\psi_{\bar{v}^3}$ so that $(\psi_{\bar{d}^i})_{i \in I_\rho}$ is
contained in $\bar{A}_{G_\chi} (\R^2/\Lambda, V_B).$ Since
$\psi_{\bar{v}^3} (\bar{v}_1^3) = g_1^{-1} \id g_1$ and
$\psi_{\bar{v}^2} = g_1 \cdot \psi_{\bar{v}^3},$ we obtain
$\psi_{\bar{v}^2} (\bar{v}^2) = \id$ so that $(\psi_{\bar{d}^i})_{i
\in I_\rho}$ satisfies the lemma.

Uniqueness is easy. Since $\psi_{\bar{v}^2} = g_1 \cdot
\psi_{\bar{v}^3},$ the condition $\psi_{\bar{v}^2} (\bar{v}^2) =
\id$ gives $\psi_{\bar{v}^3} (\bar{v}_1^3) = g_1^{-1} \id g_1.$ And,
$\psi_{\bar{v}^3}^4 = \id$ gives (*). So, $\psi_{\bar{v}^3}$ in the
above is the unique map to satisfy the lemma. Other maps
$\psi_{\bar{v}^2}, \psi_{\bar{v}^0}$ are uniquely obtained by group
action from $\psi_{\bar{v}^3}.$
\end{proof}
For the unique element  $(\psi_{\bar{d}^i})_{i \in I_\rho}$ of Lemma
\ref{lemma: simple psi}, put $c_0 = \psi_{\bar{v}^0}^{-1}
(\bar{v}^0).$ So, if $\Phi_{\hat{D}_\rho} = \varphi^2 \cup
\varphi^3$ in $\Omega_{\hat{D}_\rho, (W_{d^i})_{i \in I_\rho^+}}$
determines $(\psi_{\bar{d}^i})_{i \in I_\rho},$ then we have
\begin{equation*}
\varphi^2 (0) = \varphi^2 (1) = \varphi^3 (0) = \id, \quad \varphi^3
(1) = c_0.
\end{equation*}
Pick a loop $\sigma : [0,1] \rightarrow \Iso_H( V_{\bar{f}^{-1}} )$
with $\sigma(0) = \sigma(1) = \id$ such that $[\sigma]$ is a
generator of $\pi_1 (\Iso_H ( V_{\bar{f}^{-1}} ))$ where $\pi_1
(\Iso_H ( V_{\bar{f}^{-1}} )) \cong \Z$ by Lemma \ref{lemma: reduce
to smaller matrix}, and pick a path $\gamma_0 : [0,1] \rightarrow
\Iso_H( V_{\bar{f}^{-1}} )$ with $\gamma_0 (0) = \id$ and $\gamma_0
(1) = c_0.$ To calculate $\pi_0 \big( \Omega_{\hat{D}_\rho,
(W_{d^i})_{i \in I_\rho^+}} \big),$ we might consider only
$\Phi_{\hat{D}_\rho}$'s with the following simple form:

\begin{lemma} \label{lemma: simple Phi D_1}
In the case of $R/R_t = \langle \id \rangle$ or $\D_1$ with
$A\bar{c}+\bar{c}=\bar{l}_0,$ any $\Phi_{\hat{D}_\rho} = \varphi^2
\cup \varphi^3$ in $\Omega_{\hat{D}_\rho, (W_{d^i})_{i \in
I_\rho^+}}$ is homotopic to some $\Phi_{\hat{D}_\rho}^\prime =
\varphi^{\prime 2} \cup \varphi^{\prime 3}$ which satisfies
$\varphi^{\prime 2} \equiv \id,$ $\varphi^{\prime 3} =
\sigma^k.\gamma_0$ for some $k \in \Z.$
\end{lemma}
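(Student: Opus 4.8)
The plan is to reach the normal form in two stages: first normalize the endpoint data of $\Phi_{\hat{D}_\rho}$ so that it determines the distinguished element of Lemma \ref{lemma: simple psi}, and then transfer all the winding of $\varphi^2$ onto $\varphi^3$ by sliding the value carried by the shared vertex $\bar{v}^3$. Throughout I will only use that, in the present cases, $\mathcal{A}_{\bar{e}^i}^0 = \Iso_H(V_{\bar{f}^{-1}})$ is the full (connected, $\pi_1 \cong \Z$) isometry group and that the vertices $v^2, v^3, v^0$ form a single $G_\chi$-orbit with $(G_\chi)_x = H$.

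First I would reduce to the case where $\Phi_{\hat{D}_\rho}$ determines the unique simple $(\psi_{\bar{d}^i})_{i \in I_\rho}$ of Lemma \ref{lemma: simple psi}. Any $\Phi_{\hat{D}_\rho} \in \Omega_{\hat{D}_\rho, (W_{d^i})_{i \in I_\rho^+}}$ determines some $(\psi_{\bar{d}^i})_{i \in I_\rho}$, and since this and the simple one determine the same representations $W_{d^i}$, Proposition \ref{proposition: bijectivity with extensions} places them in the same path component of each $\mathcal{A}_{\bar{d}^i}$. Choosing $G_\chi$-equivariant connecting paths $\gamma^i$ and feeding them into the homotopy template from the first part of the proof of Theorem \ref{theorem: one point case} produces a homotopy inside $\Omega_{\hat{D}_\rho, (W_{d^i})_{i \in I_\rho^+}}$, after which we may assume $\varphi^2(0) = \varphi^2(1) = \varphi^3(0) = \id$ and $\varphi^3(1) = c_0$. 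Then $\varphi^2$ is a loop at $\id$ and $\varphi^3$ is a path from $\id$ to $c_0$ in $\Iso_H(V_{\bar{f}^{-1}})$.

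The main step is the transfer. The key observation is that, once the outer values $\varphi^2(0) = \id$ and $\varphi^3(1) = c_0$ are held fixed, the terms in the relation (*) of Lemma \ref{lemma: cluching condition for id, D_1} that are built from these outer values become constant, and (*) collapses to the single relation $\varphi^2(1) = \varphi^3(0)$; this holds uniformly for $R/R_t = \langle \id \rangle$ and for $R/R_t = \D_1$ with $A\bar{c}+\bar{c}=\bar{l}_0$, the only change being whether the frozen coordinate is built from $g_0$ or from the glide element $g_2 g_1$. Thus the pair $(\varphi^2, \varphi^3)$ is exactly a path from $\id$ to $c_0$ split at a \emph{free} interior value $\mu := \varphi^2(1) = \varphi^3(0)$. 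I would then concatenate $\varphi^2$ and $\varphi^3$ into one path $\Psi$ and slide the split parameter from the midpoint down to $0$, reparametrizing so that $\varphi^2_t$ hugs the initial value $\Psi(0) = \id$ while $\varphi^3_t$ absorbs the remainder, with $\mu_t = \Psi(\alpha(t)) \to \id$. At $t = 1$ this gives $\varphi^{\prime 2} \equiv \id$ and $\varphi^{\prime 3}$ equal to all of $\Psi$, a path from $\id$ to $c_0$. Because at each time $\mu_t \in \Iso_H(V_{\bar{f}^{-1}}) = \mathcal{A}_{\bar{e}^i}^0$ and $\varphi^2_t(1) = \varphi^3_t(0) = \mu_t$, Corollary \ref{corollary: Omega} (with the vertex datum supplied by Proposition \ref{proposition: psi for trivial group}) guarantees $L_t \in \Omega_{\hat{D}_\rho, (W_{d^i})_{i \in I_\rho^+}}$.

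Finally I would homotope $\varphi^{\prime 3}$ rel endpoints: since $\pi_1(\Iso_H(V_{\bar{f}^{-1}})) \cong \Z$ by Lemma \ref{lemma: reduce to smaller matrix}, the homotopy classes rel endpoints of paths $\id \to c_0$ form a $\Z$-torsor based at $\gamma_0$ and generated by $\sigma$, so $\varphi^{\prime 3} \simeq \sigma^k . \gamma_0$ for a unique $k$; this homotopy fixes all four endpoint values and keeps $\varphi^{\prime 2} \equiv \id$, hence stays in $\Omega_{\hat{D}_\rho, (W_{d^i})_{i \in I_\rho^+}}$. The hard part is the transfer step: the loop $\varphi^2$ cannot be contracted rel its endpoints when its class is nonzero, so the whole point is to first trivialize (*) down to $\varphi^2(1) = \varphi^3(0)$ by freezing the outer endpoints, which frees the shared vertex value and lets the split point be slid — and then to check, via Corollary \ref{corollary: Omega} and Proposition \ref{proposition: psi for trivial group}, that this sliding never leaves $\Omega_{\hat{D}_\rho, (W_{d^i})_{i \in I_\rho^+}}$.
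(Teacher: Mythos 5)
Your proposal is correct and follows essentially the same route as the paper's proof: first normalize the vertex values to the distinguished $(\psi_{\bar{d}^i})_{i\in I_\rho}$ of Lemma \ref{lemma: simple psi} using path-connectedness of the $\mathcal{A}_{\bar{d}^i}$'s, then transfer the winding of $\varphi^2$ into $\varphi^3$ by exploiting that condition (*) of Lemma \ref{lemma: cluching condition for id, D_1} only constrains the shared value $\varphi^2(1)=\varphi^3(0)$ once the outer endpoints are frozen, and finish rel endpoints via Lemma \ref{lemma: relative homotopy}. The only difference is in the mechanics of the transfer: you concatenate and slide the split point, while the paper prescribes boundary values on each square $[\hat{d}_+^i,\hat{d}_-^{i+1}]\times[0,1]$ (tracing $\varphi^2$ backwards along the right edge of $L^2$ and letting the clutching relation dictate $L^3(1,t)$) and extends inward --- the same mechanism in different packaging.
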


\begin{proof}
We only prove the case of $R/R_t = \D_1$ with
$A\bar{c}+\bar{c}=\bar{l}_0$ because proofs are similar. First, we
would construct a homotopy $L^i (s,t) : [\hat{d}_+^i,
\hat{d}_-^{i+1}] \times [0,1] \rightarrow \Iso(V_{\bar{f}^{-1}})$
for each $i \in I_\rho^- = \{ 2, 3 \}$ such that the homotopy $L =
L^2 \cup L^3$ satisfies
\begin{enumerate}
  \item $L_t \in \Omega_{\hat{D}_\rho, (W_{d^i})_{i \in I_\rho^+}}$
  for each $t \in [0,1],$
  \item $L_0 = \Phi_{\hat{D}_\rho}$ and $L_1^2 \equiv \id,$
  \item $L^3(0,1) = \id$ and $L^3(1,1) = c_0.$
\end{enumerate}
Let $\gamma^i : [0,1] \rightarrow \Iso_H (V_{\bar{f}^{-1}})$ for
$i=2,3$ be paths satisfying $\gamma^2(0)=\varphi^2(0),$
$\gamma^3(0)=\varphi^3(0),$ $\gamma^i(1)=\id.$ Define $L^2$ on
$\partial ( [\hat{d}_+^2, \hat{d}_-^3] \times [0,1] )$ as
\begin{equation*}
  \begin{array}{ll}
L^2 (s,0)   = \varphi^2 (s)      & \text{ for } s \in [0, 1],   \\
L^2 (0,t)   = \gamma^2 (t)       & \text{ for } t \in [0, 1],  \\
L^2 (s,1)   = \id                & \text{ for } s \in [0, 1],   \\
L^2 (1,t)   = \varphi^2 (1-2t)   & \text{ for } t \in [0, \frac 1 2], \\
L^2 (1,t)   = \gamma^2 (2t-1)    & \text{ for } t \in [\frac 1 2, 1], \\
  \end{array}
\end{equation*}
and define $L^3$ on $\big( [\hat{d}_+^3, \hat{d}_-^0] \times 0 \big)
\cup \big(
\partial [\hat{d}_+^3, \hat{d}_-^0] \times [0,1] \big)$ as
\begin{align}
\notag  L^3 (s,0) &= \varphi^3(s), \\
\notag  L^3 (0,t) &= \gamma^3 (t), \\
(g_2 g_1) L^3 (1,t) (g_2 g_1)^{-1} &= L^2 (1,t) \cdot L^3 (0,t)^{-1}
\cdot g_1^{-1} L^2 (0,t)^{-1} g_1
\end{align}
for each $s,t.$ Then, we can extend these to the whole $\hat{D}_\rho
\times [0,1]$ to obtain a wanted homotopy $L$ by Lemma \ref{lemma:
cluching condition for id, D_1}. So, any $\Phi_{\hat{D}_\rho} =
\varphi^2 \cup \varphi^3$ in $\Omega_{\hat{D}_\rho, (W_{d^i})_{i \in
I_\rho^+}}$ is homotopic to some $\Phi_{\hat{D}_\rho}^\prime =
\varphi^{\prime 2} \cup \varphi^{\prime 3}$ which satisfies
$\varphi^{\prime 2} \equiv \id,$ $\varphi^{\prime 3}(0) = \id,$
$\varphi^{\prime 3}(1) = c_0.$

Next, we would construct a homotopy $L^{\prime i} (s,t) :
[\hat{d}_+^i, \hat{d}_-^{i+1}] \times [0,1] \rightarrow
\Iso(V_{\bar{f}^{-1}})$ for each $i \in I_\rho^-$ so that the
homotopy $L^\prime = L^{\prime 2} \cup L^{\prime 3}$ satisfies
\begin{enumerate}
  \item $L_t^\prime \in \Omega_{\hat{D}_\rho, (W_{d^i})_{i \in I_\rho^+}}$
  for each $t \in [0,1],$
  \item $L_0^\prime = \Phi_{\hat{D}_\rho}^\prime$ and $L^{\prime 2} \equiv \id,$
  \item $L^{\prime 3}(0,t) = \id,$ $L^{\prime 3}(1,t) = c_0,$
  $L_1^{\prime 3} = \sigma^k.\gamma_0$ for some $k \in \Z.$
\end{enumerate}
Since $\big[ ~ [0,1],0,1;\Iso_H (V_{\bar{f}^{-1}}),\id,c_0 ~ \big]
\cong \pi_1 (\Iso_H (V_{\bar{f}^{-1}}))$ by Lemma \ref{lemma:
relative homotopy} and each class is represented by
$\sigma^k.\gamma_0$ for some $k \in \Z,$ a wanted $L^\prime$ is
easily obtained. Therefore, we obtain a proof.
\end{proof}

We can determine whether two arbitrary $\Phi_{\hat{D}_\rho}$ and
$\Phi_{\hat{D}_\rho}^\prime$ with the simple form are equivariantly
homotopic or not.

\begin{lemma} \label{lemma: simple homotopy D_1}
In the case of $R/R_t = \langle \id \rangle$ or $\D_1$ with
$A\bar{c}+\bar{c}=\bar{l}_0,$ let $\Phi_{\hat{D}_\rho} = \varphi^2
\cup \varphi^3$ and $\Phi_{\hat{D}_\rho}^\prime = \varphi^{\prime 2}
\cup \varphi^{\prime 3}$ in $\Omega_{\hat{D}_\rho, (W_{d^i})_{i \in
I_\rho^+}}$ satisfy
\begin{equation*}
\varphi^2 = \varphi^{\prime 2} \equiv \id, \quad \varphi^3 =
\sigma^k.\gamma_0, \quad \varphi^{\prime 3} =
\sigma^{k^\prime}.\gamma_0
\end{equation*}
for some $k, k^\prime \in \Z.$ Then, two $\Phi_{\hat{D}_\rho}$ and
$\Phi_{\hat{D}_\rho}^\prime$ are homotopic in $\Omega_{\hat{D}_\rho,
(W_{d^i})_{i \in I_\rho^+}}$ if and only if $k=k^\prime$ or $k
\equiv k^\prime \mod 2$ according to $R/R_t = \langle \id \rangle$
or $\D_1$ with $A\bar{c}+\bar{c}=\bar{l}_0,$ respectively.
\end{lemma}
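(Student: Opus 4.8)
The plan is to identify $\Omega_{\hat{D}_\rho, (W_{d^i})_{i \in I_\rho^+}}$ (abbreviate $\Omega$) with a space of pairs of paths in the connected group $G := \Iso_H(V_{\bar{f}^{-1}})$ and to read off $\pi_0(\Omega)$ from an endpoint fibration. By Lemma \ref{lemma: cluching condition for id, D_1}, an element of $\Omega$ is exactly a pair $(\varphi^2,\varphi^3)$ of $G$-valued paths whose endpoints $a=\varphi^2(0),\ b=\varphi^2(1),\ c=\varphi^3(0),\ d=\varphi^3(1)$ satisfy the single relation (*) of that lemma. First I would record two facts about $\pi_1(G)\cong\Z$ (Lemma \ref{lemma: reduce to smaller matrix}): inversion $x\mapsto x^{-1}$ acts as $-1$ on $\pi_1(G)$, whereas conjugation by any $g_i\in G_\chi$ acts trivially, since the $g_i$ act $\C$-linearly on the fiber and hence preserve the determinant even though the glide $g_2$ reverses orientation on the base. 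This asymmetry between inversion and conjugation is the decisive point, and Lemma \ref{lemma: simple Phi D_1} lets me finally specialize to the normal forms $\Phi_k=\id\cup\sigma^k.\gamma_0$.

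Next I would study the endpoint map $\mathrm{ev}:\Omega\to\mathcal{E}$, $(\varphi^2,\varphi^3)\mapsto(a,b,c,d)$, onto the endpoint space $\mathcal{E}=\{(a,b,c,d):\text{(*)}\}$. Solving (*) for $b$ shows $\mathcal{E}\cong G^3$ (free $a,c,d$), so $\mathcal{E}$ is connected with $\pi_1(\mathcal{E})\cong\Z^3$; and $\mathrm{ev}$ is a fibration whose fiber is $P_{a,b}(G)\times P_{c,d}(G)$, giving $\pi_0(\mathrm{fiber})\cong\Z\times\Z$, the pair of rel-endpoint classes of $(\varphi^2,\varphi^3)$. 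The long exact sequence then yields $\pi_0(\Omega)\cong\Z^2/\mathrm{im}(\partial)$, where $\partial:\Z^3\to\Z^2$ sends each endpoint-loop to the shift it induces on the two path-classes (the far endpoint of a path contributes $+1$, the near endpoint $-1$).

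The core computation is $\partial$ on the three generators of $\pi_1(\mathcal{E})$. Looping $a$ gives $(0,0)$ in both cases, since the $+1$ shift of $b$ cancels the $+1$ shift of $a$ (conjugation being trivial on $\pi_1$); looping $c$ gives $(1,-1)$ in both cases. The generator that detects the geometry is looping $d$: because $b$ depends on $d$ through ${}^{g_0}(d^{-1})$ when $R/R_t=\langle\id\rangle$ but through ${}^{g_2 g_1}(d)$ when $R/R_t=\D_1$, the induced shift of $b$ is $-1$ in the first case and $+1$ in the second. Hence $\partial(d\text{-gen})$ is $(-1,1)$ for $\langle\id\rangle$ and $(1,1)$ for $\D_1$, so $\mathrm{im}(\partial)=\langle(1,-1)\rangle$ (rank $1$) in the first case and $\mathrm{im}(\partial)=\langle(1,-1),(1,1)\rangle=\{(m,n):m+n\ \text{even}\}$ (index $2$) in the second. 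Reading this off, $\pi_0(\Omega)\cong\Z$ via $(m,n)\mapsto m+n$ for $\langle\id\rangle$ and $\pi_0(\Omega)\cong\Z_2$ via $(m,n)\mapsto m+n \bmod 2$ for $\D_1$. A simple form $\Phi_k$ has class $(0,k)$, so $\Phi_k\sim\Phi_{k'}$ iff $k=k'$, respectively iff $k\equiv k'\bmod 2$, which is the assertion.

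For the paper's idiom I would then make the two halves concrete. For sufficiency in the $\D_1$ case the class $(0,2)=\partial(d\text{-gen})-\partial(c\text{-gen})$ is realized by the explicit homotopy that drags $\varphi^3(1)$ once around $\sigma$ and $\varphi^3(0)$ once around $\sigma^{-1}$ while holding $\varphi^2\equiv\id$; one checks via (*) that $b$ stays $\id$, so the family lies in $\Omega$ and raises $[\varphi^3]$ by $2$. For necessity, the map $\Phi\mapsto[\varphi^2]+[\varphi^3]$ (reduced $\bmod 2$ in the $\D_1$ case) is manifestly constant along any path in $\Omega$, since every elementary endpoint-move lies in the kernel of the parity map. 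The step I expect to be the main obstacle is pinning down the sign of the $b$-shift under the $d$-loop, that is, verifying simultaneously that conjugation by the $g_i$ acts trivially on $\pi_1(G)$ and that the inversion present only in the $\langle\id\rangle$ form of (*) contributes the $-1$. Getting this one sign right is exactly what separates the rank-$1$ image ($\pi_0\cong\Z$) from the index-$2$ image ($\pi_0\cong\Z_2$), and so it is the entire content of the dichotomy; the remaining verifications (connectedness of $\mathcal{E}$, that $\mathrm{ev}$ is a fibration, and the $\pm1$ bookkeeping of endpoint shifts) are routine.
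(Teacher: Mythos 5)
Your proposal is correct, and it reaches the dichotomy by a genuinely different route than the paper. The paper argues by hand on homotopy squares: given a homotopy $L = L^2 \cup L^3$ it uses that $L^2$ is defined on the whole square with constant top and bottom to get $[L^2(0,t)]=[L^2(1,t)]$ in $\pi_1(\Iso_H(V_{\bar{f}^{-1}}))$, feeds this into the clutching relation of Lemma \ref{lemma: cluching condition for id, D_1} to get $[L^3(1,t)]=\pm[L^3(0,t)]$ (the sign being the whole point), and then uses triviality of the boundary loop of $L^3$ to conclude $k-k'=0$ or $2[L^3(0,t)]$; conversely it builds $L$ explicitly with $L^3(0,t)=\sigma^{(k-k')/2}(t)$. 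You instead package the same information into the exact sequence of the endpoint-evaluation fibration $\Omega \to \mathcal{E}\cong G^3$ and compute the monodromy $\partial:\Z^3\to\Z^2$; your three generator computations $(0,0)$, $(1,-1)$, and $(\mp 1,1)$ are correct, and the two arguments hinge on exactly the same pair of facts --- conjugation by the $g_i$ acts trivially on $\pi_1(\Iso_H(V_{\bar{f}^{-1}}))$ (for which your determinant remark should be routed through the injectivity of $\pi_1(\Iso_H(V))\to\pi_1(\Iso(V))$ from Lemma \ref{lemma: reduce to smaller matrix}) while inversion acts as $-1$, and the relation (*) contains $\varphi^3(1)^{-1}$ in the $\langle\id\rangle$ case but $\varphi^3(1)$ in the $\D_1$ case. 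What your approach buys is that it computes $\pi_0(\Omega_{\hat{D}_\rho,(W_{d^i})_{i\in I_\rho^+}})$ as a group ($\Z$ or $\Z_2$) in one stroke, in particular recovering the surjectivity of the normal forms (Lemma \ref{lemma: simple Phi D_1}) as a byproduct rather than as a separate construction; what the paper's route buys is that it stays entirely within explicit homotopies and needs no fibration machinery, which matches the constructive idiom used throughout the rest of the classification. Your only soft spots are points you yourself flag as routine (that the restricted evaluation map is a fibration, and that the $\pi_1(\mathcal{E})$-action on $\pi_0$ of the fiber is by translations so the orbit set is $\Z^2/\im(\partial)$); both are standard and hold here.
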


\begin{proof}
We only prove the case of $R/R_t = \D_1$ with
$A\bar{c}+\bar{c}=\bar{l}_0$ because proofs are similar.

First, we prove sufficiency. Let $L = L^2 \cup L^3$ be a homotopy
between $\Phi_{\hat{D}_\rho}$ and $\Phi_{\hat{D}_\rho}^\prime$ in
$\Omega_{\hat{D}_\rho, (W_{d^i})_{i \in I_\rho^+}}.$ Lemma
\ref{lemma: cluching condition for id, D_1} gives
\begin{equation}
\tag{*} g_1^{-1} L^2 (0,t) g_1 \cdot L^3 (0,t) \cdot L^2 (1,t)^{-1}
\cdot (g_2 g_1) L^3 (1,t) (g_2 g_1)^{-1} = \id
\end{equation}
for each $t$ because $L_t \in \Omega_{\hat{D}_\rho, (W_{d^i})_{i \in
I_\rho^+}}.$ Since $L^2$ is defined on $[\hat{d}_+^2, \hat{d}_-^3]
\times [0,1]$ and $L^2(s,t) \equiv \id$ for $t=0,1,$ we have
$[L^2(0,t)]=[L^2(1,t)]$ when they are regarded as elements in $\pi_1
(\Iso_H (V_{\bar{f}^{-1}})).$ Then, (*) says that
\begin{equation}
\tag{**} [L^3(1,t)] = [L^3(0,t)^{-1}]
\end{equation}
in $\pi_1 (\Iso_H (V_{\bar{f}^{-1}})).$ Since $L^3$ is defined over
$[\hat{d}_+^3, \hat{d}_-^0] \times [0,1],$ the homotopy $L^3$ on
$\partial ([\hat{d}_+^3, \hat{d}_-^0] \times [0,1])$ is trivial in
$\pi_1 (\Iso_H (V_{\bar{f}^{-1}})).$ From this and (**), we obtain a
proof of sufficiency.

Next, we prove necessity. Let $l = (k-k^\prime)/2.$ We would define
a homotopy $L = L^2 \cup L^3$ connecting $\Phi_{\hat{D}_\rho}$ and
$\Phi_{\hat{D}_\rho}^\prime$ in $\Omega_{\hat{D}_\rho, (W_{d^i})_{i
\in I_\rho^+}}.$ Define $L^2 (s,t) \equiv \id,$ and define $L^3
(s,t) :
\partial([\hat{d}_+^3, \hat{d}_-^0] \times [0,1])
\rightarrow \mathcal{A}_{\bar{e}^3}^0$ as
\begin{align}
\notag L^3 (s,0) &= \varphi^3 (s), \\
\notag L^3 (s,1) &= \varphi^{\prime 3} (s), \\
\notag L^3 (0,t) &= \sigma^l (t), \\
\tag{***} (g_2 g_1) L^3 (1,t) (g_2 g_1)^{-1} &= \Big( ~ g_1^{-1} L^2
(0,t) g_1 \cdot L^3 (0,t) \cdot L^2 (1,t)^{-1} ~ \Big)^{-1}
\end{align}
which satisfies (*). Since $L^2 \equiv \id,$ (***) gives $[L^3(1,t)]
= [L^3(0,t)^{-1}] = [\sigma^{-l}(t)]$ in $\pi_1 (\Iso_H
(V_{\bar{f}^{-1}})).$ Then, it is checked that the homotopy $L^3$ on
$\partial ([\hat{d}_+^3, \hat{d}_-^0] \times [0,1])$ is trivial in
$\pi_1 (\Iso_H (V_{\bar{f}^{-1}})).$ So, we can extend $L^3$ to the
whole $[\hat{d}_+^3, \hat{d}_-^0] \times [0,1].$ Therefore, we
obtain a proof.
\end{proof}

By using these lemmas, we can calculate $\pi_0 \big(
\Omega_{\hat{D}_\rho, (W_{d^i})_{i \in I_\rho^+}} \big).$

\begin{theorem} \label{theorem: by isotropy and chern for id}
In the case of $R/R_t = \langle \id \rangle,$
\begin{equation*}
p_\vect \times c_1 : \Vect_{G_\chi} (\R^2/\Lambda, \chi) \rightarrow
A_{G_\chi} ( \R^2/\Lambda, \chi ) \times H^2 (\R^2/\Lambda, \chi)
\end{equation*}
is injective. For each element $(W_{d^i})_{i \in I_\rho^+}$ in
$A_{G_\chi} ( \R^2/\Lambda, \chi ),$ the set of the first Chern
classes of bundles in the preimage of the element under $p_{\vect}$
is equal to the set $\{ \chi( \id ) ( l_\rho k + k_0 ) ~ | ~ k \in
\Z \}$ where $k_0$ is dependent on $(W_{d^i})_{i \in I_\rho^+}.$
\end{theorem}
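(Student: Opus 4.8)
The plan is to run the same argument as in the proof of Theorem \ref{theorem: by isotropy and chern} for the cyclic case, with the three preparatory lemmas of this section playing the role of the case-analysis of the $\mathcal{A}_{\bar{x}}$'s there. Fixing $(W_{d^i})_{i \in I_\rho^+} \in A_{G_\chi}(\R^2/\Lambda, \chi)$ and putting
\begin{equation*}
V_B = G_\chi \times_{(G_\chi)_{d^{-1}}} W_{d^{-1}}, \quad F_{V_B} = G_\chi \times_{(G_\chi)_{d^{-1}}} (|\bar{f}^{-1}| \times W_{d^{-1}}),
\end{equation*}
I would first compute $\pi_0(\Omega_{\hat{D}_\rho, (W_{d^i})_{i \in I_\rho^+}})$ and then analyze how $c_1$ behaves on it, so as to feed the result into Lemma \ref{lemma: lemma for isomorphism}.(1).

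First I would identify $\pi_0(\Omega_{\hat{D}_\rho, (W_{d^i})_{i \in I_\rho^+}})$ with $\Z$. By Proposition \ref{proposition: nonempty omega} this set is nonempty. Let $\Phi_k$ denote the simple-form map with $\varphi^2 \equiv \id$ and $\varphi^3 = \sigma^k.\gamma_0$ (using the maps produced by Lemma \ref{lemma: simple psi} and the generators $\sigma, \gamma_0$ of Lemma \ref{lemma: simple Phi D_1}). Then Lemma \ref{lemma: simple Phi D_1} shows every class is represented by some $\Phi_k$, while Lemma \ref{lemma: simple homotopy D_1}, in its $R/R_t = \langle \id \rangle$ alternative, shows $\Phi_k$ and $\Phi_{k'}$ lie in the same component exactly when $k = k'$. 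Hence $k \mapsto [\Phi_k]$ is a bijection $\Z \to \pi_0(\Omega_{\hat{D}_\rho, (W_{d^i})_{i \in I_\rho^+}})$.

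Next I would show that $c_1 : \pi_0(\Omega_{\hat{D}_\rho, (W_{d^i})_{i \in I_\rho^+}}) \to H^2(\R^2/\Lambda)$ is injective by computing $c_1(E^{\Phi_k})$ as a function of $k$. As in the proof of \cite[Theorem A.]{Ki} and of Theorem \ref{theorem: by isotropy and chern}, incrementing $k$ by one alters the clutching datum on the single edge $\bar{e}^3$ of the one-dimensional fundamental domain by a generator $\sigma$ of $\pi_1(\Iso_H(V_{\bar{f}^{-1}}))$, and by Lemma \ref{lemma: reduce to smaller matrix} this generator maps to $\chi(\id)$ times a generator of $\pi_1(\Iso(V_{\bar{f}^{-1}}))$, which supplies the factor $\chi(\id)$. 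The geometric input is that, since $R/R_t = \langle \id \rangle$ forces $\rho(G_\chi)$ to consist of translations, the $\rho(G_\chi)$-action on $\R^2/\Lambda$ is orientation-preserving and the $G_\chi$-orbit of $b(e^2)$ has cardinality $l_\rho$; replicating the local change over these $l_\rho$ translates contributes with a uniform sign, giving $c_1(E^{\Phi_k}) = \chi(\id)(l_\rho k + k_0)$ for a constant $k_0$ depending only on $(W_{d^i})_{i \in I_\rho^+}$. Since $\chi(\id) l_\rho \neq 0$, distinct $k$ yield distinct Chern classes, so $c_1$ is injective and the set of Chern classes over the preimage is exactly $\{\chi(\id)(l_\rho k + k_0) \mid k \in \Z\}$.

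Finally I would invoke Lemma \ref{lemma: lemma for isomorphism}.(1), whose two hypotheses — nonemptiness of each $\pi_0(\Omega_{\hat{D}_\rho, (W_{d^i})_{i \in I_\rho^+}})$ and injectivity of $c_1$ on it — are precisely what the previous two steps establish, yielding the injectivity of $p_\vect \times c_1$; the statement about the Chern-class set then transfers directly from the $\pi_0$-computation. The main obstacle is the Chern-class computation of the third step: one must make precise how the winding number $k$ on the fundamental edge propagates to the global degree over the torus, tracking both the multiplicity $l_\rho$ from orbit replication and the sign-coherence guaranteed by orientation-preservation. The factor $\chi(\id)$ is routine via Lemma \ref{lemma: reduce to smaller matrix}, but relating the relative-homotopy class of $\Phi_k$ to the integral $c_1(E^{\Phi_k})$ over $\R^2/\Lambda$ is where the genuine care is required, and is the reason for appealing to the analogous computation in \cite{Ki}.
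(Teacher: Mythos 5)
Your proposal is correct and follows essentially the same route as the paper's proof: reduce to the simple-form representatives $\Phi_k$ via Lemma \ref{lemma: simple Phi D_1}, distinguish them by Lemma \ref{lemma: simple homotopy D_1} to get $\pi_0(\Omega_{\hat{D}_\rho, (W_{d^i})_{i \in I_\rho^+}}) \cong \Z$, transfer the Chern-class computation from Theorem \ref{theorem: by isotropy and chern} (and \cite{Ki}), and conclude with Lemma \ref{lemma: lemma for isomorphism}. Your version is somewhat more explicit about the nonemptiness hypothesis (Proposition \ref{proposition: nonempty omega}) and about why the orientation-preserving, freely translating action yields the factor $l_\rho$, but the argument is the same.
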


\begin{proof}
First, we prove that $\pi_0( \Omega_{\hat{D}_\rho, (W_{d^i})_{i \in
I_\rho^+}} )$ is in one-to-one correspondence with $\Z.$ By Lemma
\ref{lemma: simple Phi D_1}, we may assume that arbitrary two
$\Phi_{\hat{D}_\rho} = \varphi^2 \cup \varphi^3$ and
$\Phi_{\hat{D}_\rho}^\prime = \varphi^{\prime 2} \cup
\varphi^{\prime 3}$ in $\Omega_{\hat{D}_\rho, (W_{d^i})_{i \in
I_\rho^+}}$ satisfy \begin{equation*} \varphi^2 = \varphi^{\prime 2}
\equiv \id, \quad \varphi^3 = \sigma^k.\gamma_0, \quad
\varphi^{\prime 3} = \sigma^{k^\prime}.\gamma_0
\end{equation*}
for some $k, k^\prime \in \Z.$ By Lemma \ref{lemma: simple homotopy
D_1}, $\Phi_{\hat{D}_\rho}$ is homotopic to
$\Phi_{\hat{D}_\rho}^\prime$ if and only if $k=k^\prime.$ So, we
obtain
\begin{equation*}
\pi_0( \Omega_{\hat{D}_\rho, (W_{d^i})_{i \in I_\rho^+}} ) \cong \Z.
\end{equation*}
As in Theorem \ref{theorem: by isotropy and chern}, the statement on
Chern classes is obtained. Therefore, we obtain a proof by Lemma
\ref{lemma: lemma for isomorphism}.
\end{proof}

Now, we can prove Theorem \ref{main: by isotropy and chern}.
\begin{proof}
Theorem \ref{theorem: by isotropy and chern} and Theorem
\ref{theorem: by isotropy and chern for id} give a prove Theorem
\ref{main: by isotropy and chern}.
\end{proof}

Now, we deal with the case of $R/R_t = \D_1$ with
$A\bar{c}+\bar{c}=\bar{l}_0.$

\begin{proposition} \label{proposition: two bundles}
In the case of $R/R_t = \D_1$ with $A\bar{c}+\bar{c}=\bar{l}_0,$ the
homotopy $\pi_0 \big( \Omega_{\hat{D}_\rho, (W_{d^i})_{i \in
I_\rho^+}} \big)$ for each element $(W_{d^i})_{i \in I_\rho^+}$ in
$A_{G_\chi} ( \R^2/\Lambda, \chi )$ has two elements which give
equivariant vector bundles with the same Chern class.
\end{proposition}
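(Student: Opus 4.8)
The plan is to combine the two reduction lemmas just proved with a Chern-class computation and an isomorphism analysis via Proposition \ref{proposition: equivalent condition for isomorphism}. First I would record that $\pi_0(\Omega_{\hat{D}_\rho, (W_{d^i})_{i \in I_\rho^+}})$ has exactly two elements. Indeed, by Lemma \ref{lemma: simple Phi D_1} every $\Phi_{\hat{D}_\rho}$ is homotopic in $\Omega_{\hat{D}_\rho, (W_{d^i})_{i \in I_\rho^+}}$ to a map of the simple form $\varphi^2 \equiv \id$, $\varphi^3 = \sigma^k.\gamma_0$ for some $k \in \Z$, and by Lemma \ref{lemma: simple homotopy D_1} two such maps with exponents $k$ and $k^\prime$ are homotopic precisely when $k \equiv k^\prime \bmod 2$. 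Hence $\pi_0(\Omega_{\hat{D}_\rho, (W_{d^i})_{i \in I_\rho^+}}) \cong \Z/2$, represented by $\Phi_0$ (for $k=0$) and $\Phi_1$ (for $k=1$); write $E_0 = E^{\Phi_0}$ and $E_1 = E^{\Phi_1}$ for the associated equivariant vector bundles. It then remains to show that $c_1(E_0) = c_1(E_1)$ and that $E_0 \not\cong E_1$.

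For the Chern classes I would argue exactly as in the proof of Theorem \ref{theorem: by isotropy and chern} (cf. \cite[Theorem A.]{Ki}). There $c_1(E^{\Phi_k})$ is computed as a sum of winding-number contributions of the clutching map along the $G_\chi$-orbit of the relevant edge, each contribution being $\pm\chi(\id)$ by Lemma \ref{lemma: reduce to smaller matrix}, with sign determined by whether the identifying group element preserves or reverses orientation. In the present case the extra factor $\sigma$ in $\varphi^3$ sits on the edge $e^3$, whose $G_\chi$-orbit is identified through elements of $R_t = R_0$ (orientation preserving) and of the coset $R_t g_2$ (orientation reversing, since $g_2$ is a glide). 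As these two families occur in equal number, their $\sigma$-contributions cancel in pairs, so $c_1(E^{\Phi_k})$ is independent of $k$; in particular $c_1(E_0) = c_1(E_1)$. This vanishing of the slope is forced in any case, since $E^{\Phi_k} \cong E^{\Phi_{k+2}}$ by Lemma \ref{lemma: simple homotopy D_1} and Proposition \ref{proposition: homotopy gives isomorphism}, while $c_1(E^{\Phi_k})$ is affine in $k$.

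The heart of the proposition, and the step where Lemma \ref{lemma: lemma for isomorphism} is unavailable, is to prove $E_0 \not\cong E_1$. By Proposition \ref{proposition: equivalent condition for isomorphism} an isomorphism would amount to a $G_\chi$-automorphism $\Theta$ of $F_{V_B}$ with $(p_{|\lineL|}^* \Theta)\Phi_0 = \Phi_1(p_{|\lineL|}^* \Theta)$. Since $(G_\chi)_x = H$ for every $x$, such a $\Theta$ is determined by an $H$-equivariant map $\theta : |\bar{f}^{-1}| \to \Iso_H(V_{\bar{f}^{-1}})$ on the single face, and because $|\bar{f}^{-1}|$ is contractible its effect on the reduced data $(\varphi^2, \varphi^3)$ is to conjugate $\Phi_0$, with the gluing across $e^3$ twisting $\theta$ by the orientation-reversing glide $g_2$. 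I would then reduce the conjugated map back to simple form using Lemma \ref{lemma: simple Phi D_1} and read off its exponent modulo $2$ from the relation of Lemma \ref{lemma: cluching condition for id, D_1}.

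The main obstacle is precisely this last bookkeeping: one must show that every admissible $\Theta$ shifts the exponent $k$ by an \emph{even} amount, i.e. acts trivially on $\pi_0 \cong \Z/2$, so that no $\Theta$ can carry $\Phi_0$ into the class of $\Phi_1$. The delicate point is the interaction of $\theta$ with the glide $g_2$: whereas in the orientation-preserving cases of \cite{Ki} conjugation by $\Theta$ simply preserves winding, here the reflection contributes a term $g_2 \theta g_2^{-1}$ whose winding is the negative of that of $\theta$, so the total shift is a difference of equal-and-opposite windings and is therefore even. Making this cancellation precise — tracking signs through the relation $\varphi^2(1)^{-1}\cdot (g_2 g_1)\varphi^3(1)(g_2 g_1)^{-1}\cdot g_1^{-1}\varphi^2(0) g_1 \cdot \varphi^3(0) = \id$ of Lemma \ref{lemma: cluching condition for id, D_1} — is the crux, and it mirrors the treatment of $\langle -a_n\rangle$ with even $n/2$ in \cite{Ki}. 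Concluding, $E_0$ and $E_1$ are non-isomorphic equivariant vector bundles with the same first Chern class, which is the assertion of the proposition.
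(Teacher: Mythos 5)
Your first two paragraphs prove exactly what the proposition asserts, and they follow the paper's route: reduce to the simple form $\varphi^2\equiv\id$, $\varphi^3=\sigma^k.\gamma_0$ via Lemma \ref{lemma: simple Phi D_1}, conclude $\pi_0\bigl(\Omega_{\hat{D}_\rho,(W_{d^i})_{i\in I_\rho^+}}\bigr)\cong\Z/2$ from Lemma \ref{lemma: simple homotopy D_1}, and then show $c_1(E^{\Phi_k})$ is independent of $k$. For the Chern-class step the paper merges the trivializations over $|\bar{f}^{-1}|\cup_c|\bar{f}^2|$ (possible because $\varphi^2\equiv\id$) and checks directly that the contribution of $\sigma^k.\gamma_0$ on $|\hat{e}^3|$ cancels against $g_1^{-1}g_2(\sigma^k.\gamma_0)^{-1}g_2^{-1}g_1$ on the adjacent edge; your pairing of orientation-preserving and orientation-reversing translates is the same cancellation stated more loosely. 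Your backup argument --- $c_1(E^{\Phi_k})$ is affine in $k$ while $E^{\Phi_k}\cong E^{\Phi_{k+2}}$ by Lemma \ref{lemma: simple homotopy D_1} and Proposition \ref{proposition: homotopy gives isomorphism}, so the slope must vanish --- is a genuinely cleaner route that avoids the explicit edge-pairing computation, at the cost of having to justify affineness (which is the same clutching-degree count as in \cite[Theorem A]{Ki}, so nothing is really lost).

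Your last two paragraphs, however, address a different statement. The proposition does not claim $E_0\not\cong E_1$; that is Theorem \ref{theorem: D_1}, proved separately via Lemma \ref{lemma: simple isomorphism} and Proposition \ref{proposition: equivalent condition for isomorphism}, and only the combination of the two yields Theorem \ref{main: not by isotropy and chern class}. So the ``heart of the proposition'' is not the non-isomorphism. Moreover, in your sketch of that step the claim that the glide contributes a term $g_2\theta g_2^{-1}$ ``whose winding is the negative of that of $\theta$'' is not right as stated: conjugation by a fixed group element preserves winding. In the paper's argument the sign flip enters through the reparametrization $s\mapsto 1-s$ in the adjacency identification (the term $\theta^1(1-s)$ in the relation $g_1^{-1}\theta^1(1-s)g_1\,\varphi^3(s)\,\theta^3(s)^{-1}=\varphi^{\prime 3}(s)$), combined with $[\theta^0]=[\theta^2]$ and the triviality of $[\theta]$ in $\pi_1(\Iso_H(V_{\bar{f}^{-1}}))$, which forces $[\theta^1]\equiv[\theta^3]\bmod 2$ and hence $k\equiv k'\bmod 2$. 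None of this affects the correctness of your proof of the proposition itself.
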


\begin{proof}
First, we obtain that $\pi_0( \Omega_{\hat{D}_\rho, (W_{d^i})_{i \in
I_\rho^+}} )$ has two elements for each $(W_{d^i})_{i \in I_\rho^+}$
$\in$ $A_{G_\chi} ( \R^2/\Lambda, \chi )$ similarly to the proof of
Theorem \ref{theorem: by isotropy and chern for id}.

To prove the second statement of the proposition, let
$\Phi_{\hat{D}_\rho} = \varphi^2 \cup \varphi^3$ in
$\Omega_{\hat{D}_\rho, (W_{d^i})_{i \in I_\rho^+}}$ be the element
to satisfy
\begin{equation*}
\varphi^2 \equiv \id \text{ and } \varphi^3 = \sigma^k.\gamma_0
\end{equation*}
for some $k \in \Z.$ We would show that the Chern class of the
bundle $E^{\Phi_{\hat{D}_\rho}}$ is independent of $k$ as in the
proof of \cite[Theorem C.]{Ki}. To resemble the proof, we need merge
trivializations over a pair of faces as explained in the below.
Denote by $|\bar{f}^{-1}| \cup_c |\bar{f}^2|$ the quotient of the
disjoint union $|\bar{f}^{-1}| \amalg |\bar{f}^2|$ by the
identification of two points $p_{|\lineL|}(\hat{x})$ in
$|\bar{f}^{-1}|$ and $p_{|\lineL|}(|c|(\hat{x}))$ in $|\bar{f}^2|$
for each $\hat{x}$ in $|\hat{e}^2|.$ Since $\varphi^2 \equiv \id,$
trivializations of $F_{V_B}$ over $|\bar{f}^{-1}|$ and $|\bar{f}^2|$
merge into a trivialization over $|\bar{f}^{-1}| \cup_c
|\bar{f}^2|.$ Pick $g_k$'s in $G_\chi$ such that
\begin{equation*}
\R^2/\Lambda \quad = \quad \pi \Big( g_1 |\bar{f}^{-1}| \cup_c g_1
|\bar{f}^2| \Big)
  ~ \bigcup \cdots \bigcup ~
\pi \Big( g_{l_0} |\bar{f}^{-1}| \cup_c g_{l_0} |\bar{f}^2| \Big)
\end{equation*}
for $k = 1, \cdots, l_0$ where $l_0 = |\Lambda_t / \Lambda|$ and
$g_1 = \id.$ Similar to $|\bar{f}^{-1}| \cup_c |\bar{f}^2|,$
trivializations of $F_{V_B}$ over $g_k |\bar{f}^{-1}|$ and $g_k
|\bar{f}^2|$ are merged through $g_k \varphi^2 g_k^{-1}.$ By using
these trivializations, we calculate Chern class.

Let $\Phi$ in $\Omega_{V_B}$ be the extension of
$\Phi_{\hat{D}_\rho}.$ Since $|\hat{e}^3| \cup g_1^{-1} g_2
|c|(|\hat{e}^3|)$ give an edge of $|\bar{f}^{-1}| \cup_c
|\bar{f}^2|,$ we investigate $\Phi$ on it to calculate Chern class.
Recall that we parameterize $|\hat{e}^3|$ by $s \in [0,1]$ so that
$\hat{v}_{0,+}^3=0$ and $\hat{v}_{0,-}^0=1.$ By using this,
parameterize $g_1^{-1} g_2 |c|(|\hat{e}^3|)$ with $[-1, 0]$ to
satisfy $g_1^{-1} g_2 (|c|(\hat{x})) = s-1$ for each $\hat{x} = s$
in $|\hat{e}^3|$ with $s \in [0,1].$ Then, $\Phi(s-1)$ for $s \in
[0,1]$ is equal to
\begin{align*}
\Phi(g_1^{-1} g_2 |c|(s)) &= g_1^{-1} g_2 \Phi(|c|(s)) g_2^{-1} g_1 \\
&= g_1^{-1} g_2 \Phi(s)^{-1} g_2^{-1} g_1 \\
&= g_1^{-1} g_2 (\sigma^k.\gamma_0)(s)^{-1} g_2^{-1} g_1.
\end{align*}
Here, note that $\Phi(-1) = g_1^{-1} g_2 \id g_2^{-1} g_1.$ Since
$\Phi(s) = \sigma^k.\gamma_0(s)$ on $|\hat{e}^3|$ and $\Phi(s-1) =
g_1^{-1} g_2 (\sigma^k.\gamma_0)(s)^{-1} g_2^{-1} g_1$ on $g_1^{-1}
g_2 |c|(|\hat{e}^3|)$ merge to cancel each other regardless of $k$
in the level of
\begin{equation*}
\Big[ [-1,1], -1, 1; \Iso_H (V_{\bar{f}^{-1}}), g_1^{-1} g_2 \id
g_2^{-1} g_1  ,c_0 \Big],
\end{equation*}
Chern class of $E^{\Phi_{\hat{D}_\rho}}$ is independent of $k \in
\Z.$ This gives a proof.
\end{proof}

Since elements in $\pi_0( \Omega_{\hat{D}_\rho, (W_{d^i})_{i \in
I_\rho^+}} )$ are not discriminated by their Chern classes, we need
to consider $G_\chi$-isomorphisms by Proposition \ref{proposition:
equivalent condition for isomorphism}. In the case of $R/R_t = \D_1$
with $A\bar{c}+\bar{c}=\bar{l}_0,$ $(G_\chi)_x = H$ for each $x \in
\R^2/\Lambda$ and $G_\chi$ acts transitively on $B$ so that any
$H$-isomorphism of $(\res_H^{G_\chi}
 F_{V_B}) |_{|\bar{f}^{-1}|},$ which is just a function from
$|\bar{f}^{-1}|$ to $\Iso_H (V_{\bar{f}^{-1}}),$ can be
equivariantly extended to $G_\chi$-isomorphism of $F_{V_B}$ over the
whole $|\lineK_\rho|.$ In this extension, we need restrict our
arguments to $|\partial \bar{f}^{-1}|$ or $\lineL_\rho$ because
$G_\chi$-isomorphisms are related with equivariant clutching maps
which are defined on $\hatL_\rho.$ A map $\theta: |\partial
\bar{f}^{-1}| \rightarrow \Iso_H (V_{\bar{f}^{-1}})$ can be extended
to an $H$-isomorphism $\bar{\theta}$ of $(\res_H^{G_\chi}
F_{V_B})|_{|\bar{f}^{-1}|}$ if $[\theta]$ is trivial in $\pi_1
(\Iso_H (V_{\bar{f}^{-1}})).$ For such a $\theta,$ define $\theta^i
: |\hat{e}^i| \rightarrow \Iso_H (V_{\bar{f}^{-1}})$ as the function
$\theta^i(s) = \theta|_{|\bar{e}^i|}( p_{|\lineL|} (s) )$ for each
$i \in \Z_{i_\rho}$ and $s \in |\hat{e}^i|.$ For $\theta$ and an
extension $\bar{\theta}$ such that $\theta = \bar{\theta}
|_{|\partial \bar{f}^{-1}|},$ there exists the $G_\chi$-isomorphism
of $F_{V_B}$ which extends $\bar{\theta}$ to the whole
$|\lineK_\rho|.$ An $G_\chi$-isomorphism $\Theta$ of $F_{V_B}$
satisfying the condition $(p_{|\lineL|}^* \Theta) \Phi
(p_{|\lineL|}^* \Theta)^{-1} = \Phi^\prime$ of Proposition
\ref{proposition: equivalent condition for isomorphism} has a simple
form at $|\partial \bar{f}^{-1}|$ as follows:

\begin{lemma} \label{lemma: simple isomorphism}
Assume that $R/R_t = \D_1$ with $A\bar{c}+\bar{c}=\bar{l}_0.$ Let
$\Phi_{\hat{D}_\rho} = \varphi^2 \cup \varphi^3$ and
$\Phi_{\hat{D}_\rho}^\prime = \varphi^{\prime 2} \cup
\varphi^{\prime 3}$ be elements in $\Omega_{\hat{D}_\rho,
(W_{d^i})_{i \in I_\rho^+}}$ such that
\begin{equation}
\label{equation: simple form of clutching map} \varphi^2 =
\varphi^{\prime 2} \equiv \id, \quad \varphi^3(0)= \varphi^{\prime
3}(0)=\id, \quad \varphi^3(1)= \varphi^{\prime 3}(1)=c_0,
\end{equation}
and let $\Phi$ and $\Phi^\prime$ be the extensions of
$\Phi_{\hat{D}_\rho}$ and $\Phi_{\hat{D}_\rho}^\prime,$
respectively. Let $\Theta$ be a $G_\chi$-isomorphism of $F_{V_B}$
satisfying $(p_{|\lineL|}^* \Theta) \Phi (p_{|\lineL|}^*
\Theta)^{-1} = \Phi^\prime.$ Then, there exists a
$G_\chi$-isomorphism $\Theta^\prime$ of $F_{V_B}$ which still
satisfies $(p_{|\lineL|}^* \Theta^\prime) \Phi (p_{|\lineL|}^*
\Theta^\prime)^{-1} = \Phi^\prime$ and also satisfies $\Theta^\prime
(\bar{v}^i)=\id$ for each $i.$
\end{lemma}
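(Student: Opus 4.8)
The plan is to produce $\Theta^\prime$ in the form $\Theta^\prime = \Xi\,\Theta$, where $\Xi$ is a $G_\chi$-isomorphism of $F_{V_B}$ commuting with $\Phi^\prime$ in the sense that $(p_{|\lineL|}^*\Xi)\Phi^\prime = \Phi^\prime(p_{|\lineL|}^*\Xi)$; equivalently, by Proposition \ref{proposition: equivalent condition for isomorphism} applied with $\Phi^\prime$ in both slots, $\Xi$ is a $G_\chi$-automorphism of $E^{\Phi^\prime}$. For any such $\Xi$ the composite $\Theta^\prime$ again satisfies $(p_{|\lineL|}^*\Theta^\prime)\Phi(p_{|\lineL|}^*\Theta^\prime)^{-1}=\Phi^\prime$, since the extra factor $(p_{|\lineL|}^*\Xi)$ merely conjugates $\Phi^\prime$ to itself. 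Thus it suffices to find $\Xi$ commuting with $\Phi^\prime$ whose vertex values are the prescribed automorphisms $\Xi(\bar v^i)=\Theta(\bar v^i)^{-1}$, for then $\Theta^\prime(\bar v^i)=\Xi(\bar v^i)\Theta(\bar v^i)=\id$.

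First I would record the vertex data. Because $\varphi^2=\varphi^{\prime 2}\equiv\id$ and $\varphi^3,\varphi^{\prime 3}$ agree at their endpoints by (\ref{equation: simple form of clutching map}), the clutching maps $\Phi$ and $\Phi^\prime$ coincide at every vertex, so each $\Theta(\bar v^i)$ is an $H$-automorphism of $V_{\bar f^{-1}}$ compatible with the pointwise gluing at $v^i$; in particular each $\Theta(\bar v^i)^{-1}$ is a legitimate fiber automorphism of $E^{\Phi^\prime}$, lying in $\Iso_H(V_{\bar f^{-1}})$ since $(G_\chi)_{v^i}=H$. Moreover, as $\Theta$ is $G_\chi$-equivariant and all $v^i$ lie in a single $G_\chi$-orbit, the four values $\Theta(\bar v^i)^{-1}$ are related to one another exactly by the equivariance relations that a $G_\chi$-automorphism must satisfy. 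Hence the prescribed vertex data is automatically consistent with equivariance, and the only question is whether it is the restriction of a globally defined $\Xi$.

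Next I would build $\Xi$ from boundary data on $|\partial\bar f^{-1}|$, following the extension mechanism described just before the lemma. On the edge $\bar e^2$, where $\varphi^{\prime 2}\equiv\id$, the commuting condition is vacuous and I am free to choose any path in $\Iso_H(V_{\bar f^{-1}})$ from $\Theta(\bar v^2)^{-1}$ to $\Theta(\bar v^3)^{-1}$; on $\bar e^3$, where $\varphi^{\prime 3}=\sigma^{k^\prime}.\gamma_0$ is nontrivial, commuting with $\Phi^\prime$ determines the value of $\Xi$ on the adjacent edge $c(\bar e^3)$ once a path on $\hat e^3$ from $\Theta(\bar v^3)^{-1}$ to $\Theta(\bar v^0)^{-1}$ has been chosen, and I am again free to choose the homotopy class of that path. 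The remaining edges $\bar e^0,\bar e^1$ are then forced by equivariance, since $|\partial\bar f^{-1}|$ decomposes into $G_\chi$-translates of the chosen edges.

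The main obstacle, and the step I would treat most carefully, is the $\pi_1$-condition needed to extend this boundary data over the face $|\bar f^{-1}|$: by the discussion preceding the lemma, $\Xi$ extends to a $G_\chi$-automorphism of $F_{V_B}$ exactly when the loop traced by $\Xi$ around $|\partial\bar f^{-1}|$ is null-homotopic in $\Iso_H(V_{\bar f^{-1}})$, whose fundamental group is $\Z$ by Lemma \ref{lemma: reduce to smaller matrix}. The class of this boundary loop is the sum of the classes of the interpolating paths on the four edges; since the vertex values are pinned while the classes of the paths on $\bar e^2$ and $\bar e^3$ may be chosen freely (the contributions of $\bar e^0,\bar e^1$ being then determined), the total winding is an affine function of the free integer choices with unit coefficient and can therefore be set to $0$. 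With the boundary loop rendered trivial, the extension exists and yields $\Xi$ commuting with $\Phi^\prime$ with $\Xi(\bar v^i)=\Theta(\bar v^i)^{-1}$. Setting $\Theta^\prime=\Xi\,\Theta$ gives the desired $G_\chi$-isomorphism, with $\Theta^\prime(\bar v^i)=\id$ immediate; a final routine check via Lemma \ref{lemma: cluching condition for id, D_1} confirms that $\Theta^\prime$ still intertwines $\Phi$ and $\Phi^\prime$.
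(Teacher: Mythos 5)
Your overall strategy---writing $\Theta^\prime=\Xi\,\Theta$ with $\Xi$ a $G_\chi$-automorphism of $E^{\Phi^\prime}$ realizing the vertex data $\Theta(\bar v^i)^{-1}$---is legitimate and genuinely different from the paper's proof (which instead deforms $\theta=\Theta|_{|\partial\bar f^{-1}|}$ directly, inserting a connecting path $\gamma$ from $\id$ to $\theta(\bar v^0)$ and its conjugates $\delta^i(\gamma)$ at the vertices, so that the modified boundary map is visibly homotopic to $\theta$ and hence still extends over the face). But your justification of the decisive step is wrong. You claim the total winding of the boundary loop of $\Xi$ is an affine function of the free integer choices ``with unit coefficient.'' It is not. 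Equivariance together with the commuting condition on $\hat e^2$ forces $\xi^2(s)=g_2\,\xi^0(s)\,g_2^{-1}$, so the classes of $\xi^0$ and $\xi^2$ move \emph{together}: shifting the choice on the $\{\bar e^0,\bar e^2\}$ pair by $n$ shifts the total winding by $2n$. On the other pair, the relation $\xi^1(1-s)=g_1\varphi^{\prime 3}(s)\,\xi^3(s)\,\varphi^{\prime 3}(s)^{-1}g_1^{-1}$ contains the orientation reversal $s\mapsto 1-s$, so shifting $\xi^3$ by $n$ shifts $\xi^1$ by $-n$ and the total winding by $0$. Hence the total winding can only be moved within a fixed class modulo $2$, and your argument gives no reason that this class is the even one. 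This is not a removable technicality: it is exactly the mod-$2$ phenomenon that makes this case exceptional---if the coefficient really were $1$, the same adjustment would let you intertwine $\sigma^k.\gamma_0$ with $\sigma^{k^\prime}.\gamma_0$ for any $k,k^\prime$, contradicting Theorem \ref{theorem: D_1} and Theorem \ref{main: not by isotropy and chern class}.

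The gap can be closed, but it requires an argument you did not give. Writing $T=2\ell(\xi^0)$ for the total winding, one needs $\ell(\xi^0)\in\Z$ for some (hence every) admissible path $\xi^0$ from $\Theta(\bar v^0)^{-1}$ to $\Theta(\bar v^1)^{-1}$; this holds because (\ref{equation: endomorphism for s=0,1}) makes $\theta(\bar v^1)=g_1c_0\,\theta(\bar v^0)\,c_0^{-1}g_1^{-1}$ a conjugate of $\theta(\bar v^0)$, so the two endpoints have the same determinant in $\Iso_H(V_{\bar f^{-1}})\cong\GL_k(\C)$ and any connecting path has integral winding. Only with this extra observation does $T=0$ become achievable and your $\Xi$ exist. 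The paper's construction avoids this entirely: since its $\theta^\prime$ is produced by an explicit homotopy $\gamma_t$ from $\theta$, triviality of $[\theta^\prime]$ in $\pi_1(\Iso_H(V_{\bar f^{-1}}))$ is inherited from $[\theta]$ with no winding bookkeeping. As written, your proof does not establish the lemma.
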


\begin{proof}
First, we show that $\Theta(\bar{v}^0)$ determines
$\Theta(\bar{v}^i)$ for $i=1,2,3.$ Put $\theta=\Theta|_{|\partial
\bar{f}^{-1}|}.$ By equivariance of $\Theta,$ $p_{|\lineL|}^*
\Theta$ on $|c(\hat{e}^2)| \cup |c(\hat{e}^3)|$ is calculated as
follows:
\begin{align}
\label{equation: isomorphism on adjacent edges} p_{|\lineL|}^*
\Theta(s) &= p_{|\lineL|}^* \Theta(\hat{x}) = g_2 p_{|\lineL|}^*
\Theta(g_2^{-1} \hat{x}) g_2^{-1}
\quad \text{ for } s=\hat{x} \in |c(\hat{e}^2)| \\
\notag          &= g_2 \theta^0 (1-s) g_2^{-1}, \\
\notag p_{|\lineL|}^* \Theta(s) &= p_{|\lineL|}^* \Theta(\hat{x}) =
g_1^{-1} p_{|\lineL|}^* \Theta(g_1 \hat{x}) g_1 \quad \text{ for } s=\hat{x} \in |c(\hat{e}^3)| \\
\notag          &= g_1^{-1} \theta^1 (s) g_1
\end{align}
where we recall that we parameterize each edge $c(\hat{e}^i)$
linearly by $s \in [0,1]$ so that $|c|(s) = 1-s$ for $s \in
|\hat{e}^i|.$ On $\hat{D}_\rho,$ the condition $(p_{|\lineL|}^*
\Theta) \Phi (p_{|\lineL|}^* \Theta)^{-1} = \Phi^\prime$ is written
as
\begin{align}
\label{equation: endomorphism condition} g_2 \theta^0(s) g_2^{-1} ~
\varphi^2(s)  ~  \theta^2(s)^{-1} &=
\varphi^{\prime 2} (s), \\
\notag g_1^{-1} \theta^1(1-s) g_1  ~  \varphi^3(s)  ~
\theta^3(s)^{-1} &= \varphi^{\prime 3} (s)
\end{align}
by using (\ref{equation: isomorphism on adjacent edges}). By
applying (\ref{equation: simple form of clutching map}) and
substituting $s=0,1$ to these, we obtain
\begin{equation*}
\begin{array}{ll}
g_2 \theta^0 (0) g_2^{-1} = \theta^2(0), \quad & g_2 \theta^0 (1)
g_2^{-1} = \theta^2(1), \\
g_1^{-1} \theta^1(1) g_1 = \theta^3(0), \quad & g_1^{-1} \theta^1(0)
g_1 = c_0 \theta^3(1) c_0^{-1}.
\end{array}
\end{equation*}
That is,
\begin{equation}
\label{equation: endomorphism for s=0,1}
\begin{array}{ll}
g_2 \theta (\bar{v}^0) g_2^{-1} = \theta (\bar{v}^2), \quad & g_2
\theta
(\bar{v}^1) g_2^{-1} = \theta (\bar{v}^3), \\
g_1^{-1} \theta (\bar{v}^2) g_1 = \theta (\bar{v}^3), \quad &
g_1^{-1} \theta (\bar{v}^1) g_1 = c_0 \theta (\bar{v}^0) c_0^{-1}.
\end{array}
\end{equation}
These show that $\theta(\bar{v}^0)$ determines $\theta(\bar{v}^i)$
for $i=1,2,3.$ Let us express these relations by three maps. Define
$\delta^i : \Iso_H (V_{\bar{f}^{-1}}) \rightarrow \Iso_H
(V_{\bar{f}^{-1}})$ for $i=1,2,3$ as
\begin{equation*}
A \mapsto g_1 c_0 A c_0^{-1} g_1^{-1}, ~ g_2 A g_2^{-1}, ~ g_1^{-1}
g_2 A g_2^{-1} g_1,
\end{equation*}
respectively. Note that $\delta^i( \theta(\bar{v}^0) ) =
\theta(\bar{v}^i)$ for $i=1,2,3$ by (\ref{equation: endomorphism for
s=0,1}). Let $\gamma: [0,1] \rightarrow \Iso_H (V_{\bar{f}^{-1}})$
be a path such that $\gamma(0)=\id$ and
$\gamma(1)=\theta(\bar{v}^0).$ By using these, we would define a new
map $\theta^\prime : |\partial \bar{f}^{-1}| \rightarrow \Iso_H
(V_{\bar{f}^{-1}})$ from which an extension $\Theta^\prime$ will be
obtained. If we define $\theta^{\prime i} : |\hat{e}^i| \rightarrow
\Iso_H (V_{\bar{f}^{-1}})$ for $i=0,3$ as
\begin{equation*}
\begin{array}{ll}
\theta^{\prime 0} (s) = \gamma (3s)         & \text{for } s \in [0, 1/3], \\
\theta^{\prime 0} (s) = \theta^0 (3s-1)       & \text{for } s \in [1/3, 2/3], \\
\theta^{\prime 0} (s) = \delta^1 (\gamma(3-3s)) & \text{for } s \in [2/3, 1], \\
\theta^{\prime 3} (s) = \delta^3 (\gamma(3s))   & \text{for } s \in
[0, 1/3], \\
\theta^{\prime 3} (s) = \theta^3 (3s-1)       & \text{for } s \in [1/3, 2/3], \\
\theta^{\prime 3} (s) = \gamma (3-3s)         & \text{for } s \in
[2/3, 1],
\end{array}
\end{equation*}
then it is easily checked that both are well defined and
\begin{equation*}
\theta^{\prime 0} (s) = \id, \qquad \theta^{\prime 3} (s) = \id
\end{equation*}
for $s=0,1.$ If we define $\theta^{\prime 1}$ and $\theta^{\prime
2}$ by substituting four $\theta^{\prime i}$'s for $\theta^i$'s in
(\ref{equation: endomorphism condition}), then $\theta^{\prime i}$'s
give the map $\theta^\prime : |\partial \bar{f}^{-1}| \rightarrow
\Iso_H (V_{\bar{f}^{-1}})$ satisfying $\theta^{\prime i} (s) =
\theta^\prime |_{|\bar{e}^i|}( p_{|\lineL|} (s) )$ for each $i$ and
$s.$ Similarly, if we define $\theta_t^\prime : |\partial
\bar{f}^{-1}| \rightarrow \Iso_H (V_{\bar{f}^{-1}})$ by using
$\gamma_t (s) = \gamma \Big( 1-t(1-s) \Big)$ instead of $\gamma$ in
definition of $\theta^\prime,$ then it gives a homotopy between
$\theta$ and $\theta^\prime.$ So, $[\theta]=[\theta^\prime]$ in
$\pi_1 ( \Iso_H (V_{\bar{f}^{-1}}) )$ is trivial, and
$\theta^\prime$ can be extended to the whole $|\lineK_\rho|.$ If
$\Theta^\prime$ is an extension of $\theta^\prime,$ then
$\Theta^\prime (\bar{v}^i)=\id$ for each $i$ by definition of
$\theta^\prime$ and also $(p_{|\lineL|}^* \Theta^\prime) \Phi
(p_{|\lineL|}^* \Theta^\prime)^{-1} = \Phi^\prime$ because
$\theta^\prime$ satisfies \ref{equation: endomorphism condition}.
Therefore, $\Theta^\prime$ is a wanted $H$-isomorphism.
\end{proof}

Now, we can finish classification.
\begin{theorem} \label{theorem: D_1}
In the case of $R/R_t = \D_1$ with $A\bar{c}+\bar{c}=\bar{l}_0,$ the
preimage of each $(W_{d^i})_{i \in I_\rho^+}$ in $A_{G_\chi} (
\R^2/\Lambda, \chi )$ under $p_\vect$ has two elements.
\end{theorem}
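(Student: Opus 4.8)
The plan is to identify the preimage of a fixed $(W_{d^i})_{i \in I_\rho^+}$ under $p_\vect$ with the set of isomorphism classes of the bundles $E^{\Phi_{\hat{D}_\rho}}$ as $\Phi_{\hat{D}_\rho}$ ranges over $\Omega_{\hat{D}_\rho, (W_{d^i})_{i \in I_\rho^+}},$ and then to count these classes. By Proposition \ref{proposition: nonempty omega} this set of clutching maps is nonempty, and since the isotropy representation $W_{d^{-1}}$ at $d^{-1}=b(f^{-1})$ forces $V_B = G_\chi \times_{(G_\chi)_{d^{-1}}} W_{d^{-1}},$ every bundle in the preimage arises as some $E^{\Phi_{\hat{D}_\rho}}.$ By Proposition \ref{proposition: two bundles} together with Lemmas \ref{lemma: simple Phi D_1} and \ref{lemma: simple homotopy D_1}, $\pi_0(\Omega_{\hat{D}_\rho, (W_{d^i})_{i \in I_\rho^+}})$ has exactly two elements, represented by the simple maps $\Phi_{\hat{D}_\rho}$ with $\varphi^2 \equiv \id$ and $\varphi^3 = \sigma^k.\gamma_0$ for $k=0$ and $k=1$ (the two homotopy classes being the even and odd values of $k$). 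Call these representatives $\Phi_0$ and $\Phi_1.$ Since homotopic maps yield isomorphic bundles (Proposition \ref{proposition: homotopy gives isomorphism}), the preimage has at most two elements, and the whole content of the theorem is to show that $E^{\Phi_0}$ and $E^{\Phi_1}$ are not isomorphic.

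To prove non-isomorphism I would argue by contradiction using Proposition \ref{proposition: equivalent condition for isomorphism}. Suppose $E^{\Phi_0} \cong E^{\Phi_1};$ then there is a $G_\chi$-isomorphism $\Theta$ of $F_{V_B}$ with $(p_{|\lineL|}^* \Theta) \Phi_0 = \Phi_1 (p_{|\lineL|}^* \Theta),$ where $\Phi_0, \Phi_1 \in \Omega_{V_B}$ are the extensions. By Lemma \ref{lemma: simple isomorphism} we may normalise $\Theta$ so that $\Theta(\bar{v}^i) = \id$ for every $i.$ Writing $\theta = \Theta|_{|\partial \bar{f}^{-1}|}$ and letting $\theta^i : |\hat{e}^i| \to \Iso_H(V_{\bar{f}^{-1}})$ be its restrictions, each $\theta^i$ is now a loop based at $\id$ and hence determines a class $[\theta^i] \in \pi_1(\Iso_H(V_{\bar{f}^{-1}})) \cong \Z,$ the last isomorphism coming from Lemma \ref{lemma: reduce to smaller matrix}. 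Because $\Theta$ is defined over the two-cell $|\bar{f}^{-1}|,$ the concatenation of the $\theta^i$ around $\partial \bar{f}^{-1}$ is null-homotopic, so
\[
[\theta^0] + [\theta^1] + [\theta^2] + [\theta^3] = 0 .
\]

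The decisive step is to read off from the endomorphism identities (\ref{equation: endomorphism condition}) a parity relation between $k=0$ and $k'=1.$ With $\varphi^2 = \varphi^{\prime 2} \equiv \id,$ the first identity gives $\theta^2 = g_2 \theta^0 g_2^{-1};$ since $G_\chi$ acts complex-linearly on the fibres, conjugation by $g_2$ is an orientation-preserving self-map of $\pi_1(\Iso_H(V_{\bar{f}^{-1}})),$ whence $[\theta^2] = [\theta^0].$ The second identity, once both sides are recognised as loops based at $\id$ (their endpoints vanish because $\theta^1, \theta^3$ are trivial at the vertices and $\varphi^3, \varphi^{\prime 3}$ share the endpoints $\id$ and $c_0$), expresses $[\theta^1]$ and $[\theta^3]$ in terms of the difference of the relative classes of $\varphi^{\prime 3}$ and $\varphi^3,$ which equals $k' - k = 1$ by Lemma \ref{lemma: relative homotopy}. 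Substituting $[\theta^2] = [\theta^0]$ and the second relation into the boundary equation collapses the unknowns $[\theta^0], [\theta^1], [\theta^3]$ into a single congruence that forces $k' - k$ to be even. This contradicts $k' - k = 1,$ so no such $\Theta$ exists and $E^{\Phi_0} \not\cong E^{\Phi_1};$ the preimage therefore has exactly two elements.

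The main obstacle I anticipate is the orientation and sign bookkeeping in this last paragraph: one must (i) verify that conjugation by each of $g_1$ and $g_2$ acts trivially on $\pi_1(\Iso_H(V_{\bar{f}^{-1}})) \cong \Z,$ which rests on the complex-linearity of the $G_\chi$-action and on $G_\chi$ fixing $\chi;$ (ii) fix once and for all the orientation of $\partial \bar{f}^{-1}$ used in the concatenation, matching the clockwise labelling of the $\bar{v}^i;$ and (iii) identify the relative homotopy class of $\varphi^3 = \sigma^k.\gamma_0$ with the integer $k$ and the loop $\varphi^{\prime 3} (\varphi^3)^{-1}$ with $k' - k$ under Lemma \ref{lemma: relative homotopy}. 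Once these conventions are nailed down, the parity obstruction emerges directly and matches the homotopy dichotomy of Lemma \ref{lemma: simple homotopy D_1}, confirming that the isomorphism relation coincides with the homotopy relation and that the two classes survive.
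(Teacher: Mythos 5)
Your proposal is correct and follows essentially the same route as the paper: reduce to the two simple representatives via Proposition \ref{proposition: two bundles} and Lemmas \ref{lemma: simple Phi D_1}, \ref{lemma: simple homotopy D_1}, normalise a hypothetical isomorphism $\Theta$ by Lemma \ref{lemma: simple isomorphism}, and extract the parity contradiction from the identities (\ref{equation: endomorphism condition}) together with the triviality of $[\theta]$ in $\pi_1(\Iso_H(V_{\bar{f}^{-1}}))$. The only difference is cosmetic: you make explicit the orientation bookkeeping (that conjugation by $g_1, g_2$ acts trivially on $\pi_1 \cong \Z$) which the paper leaves implicit.
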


\begin{proof}
By Proposition \ref{proposition: two bundles}, $\pi_0(
\Omega_{\hat{D}_\rho, (W_{d^i})_{i \in I_\rho^+}} )$ has two
elements $[\Phi_{\hat{D}_\rho}] \ne [\Phi_{\hat{D}_\rho}^\prime].$
We would show that they determine non-isomorphic equivariant
bundles. We may assume that $\Phi_{\hat{D}_\rho} = \varphi^2 \cup
\varphi^3,$ $\Phi_{\hat{D}_\rho}^\prime = \varphi^{\prime 2} \cup
\varphi^{\prime 3}$ with $\varphi^2 = \varphi^{\prime 2} \equiv
\id,$ $\varphi^3(s) = \sigma^k.\gamma_0,$ $\varphi^{\prime 3}(s) =
\sigma^{k^\prime}.\gamma_0$ for some $k, k^\prime \in \Z$ by Lemma
\ref{lemma: simple Phi D_1}. By Lemma \ref{lemma: simple homotopy
D_1}, $k$ is not congruent to $k^\prime$ modulo 2. Let $\Phi$ and
$\Phi^\prime$ be the extensions of $\Phi_{\hat{D}_\rho}$ and
$\Phi_{\hat{D}_\rho}^\prime,$ respectively.  Assume that two bundles
$E^\Phi$ and $E^{\Phi^\prime}$ are $G_\chi$-isomorphic, i.e.
$(p_{|\lineL|}^* \Theta) \Phi (p_{|\lineL|}^* \Theta)^{-1} =
\Phi^\prime$ for some $\Theta$ by Proposition \ref{proposition:
equivalent condition for isomorphism}. By Lemma \ref{lemma: simple
isomorphism}, we may assume that $\Theta(\bar{v}^i)=\id$ for each
$i.$ Put $\theta = \Theta |_{|\partial \bar{f}^{-1}|}.$ Formula
(\ref{equation: endomorphism condition}) says that
$[\theta^0(s)]=[\theta^2(s)]$ in $\pi_1(\Iso_H (V_{\bar{f}^{-1}}))$
because $\varphi^2 = \varphi^{\prime 2} \equiv \id.$ This gives
$[\theta^1(s)] \equiv [\theta^3(s)] \mod 2$ in $\pi_1(\Iso_H
(V_{\bar{f}^{-1}}))$ because $[\theta]$ is trivial in $\pi_1(\Iso_H
(V_{\bar{f}^{-1}})).$ From this, $k \equiv k^\prime \mod 2$ because
$g_1^{-1} \theta^1(1-s) g_1 ~ \varphi^3(s) ~ \theta^3(s)^{-1} =
\varphi^{\prime 3} (s)$ by (\ref{equation: endomorphism condition}).
This is a contradiction. Therefore, $E^\Phi$ and $E^{\Phi^\prime}$
are not $G_\chi$-isomorphic and we obtain a proof.
\end{proof}

In summary, we obtain a proof of Theorem \ref{main: not by isotropy
and chern class}.

\begin{proof}
Proposition \ref{proposition: two bundles} and Theorem \ref{theorem:
D_1} gives a proof of Theorem \ref{main: not by isotropy and chern
class}.
\end{proof}

\end{document}